\numberwithin{equation}{section}
\newtheorem{theorem}{Theorem}
\newtheorem{proposition}[theorem]{Proposition}
\newtheorem{lemma}[theorem]{Lemma}
\newtheorem{corollary}[theorem]{Corollary}
\newtheorem{definition}[theorem]{Definition}
\theoremstyle{definition}
\newtheorem{example}[theorem]{Example}
\newtheorem{remark}[theorem]{Remark}
\newcommand{\cG}{\mbox{${\cal G}$}}
\newcommand{\cO}{\mbox{${\cal O}$}}
\newcommand{\cU}{\mbox{${\cal U}$}}
\newcommand{\cW}{\mbox{${\cal W}$}}
\title{\textbf{Projective modules and Gröbner
bases for skew PBW extensions}}
\author{Oswaldo Lezama \& Claudia Gallego
\\
Seminario de Álgebra Constructiva - $\text{SAC}^2$\\
Departamento de Matemáticas\\
Universidad Nacional de Colombia, Bogotá, COLOMBIA\\
}
\date{}
\begin{document}
\maketitle
\begin{abstract}\noindent
Many rings and algebras arising in quantum mechanics, algebraic analysis, and non-commutative
algebraic geometry can be interpreted as skew $PBW$ (Poincaré-Birkhoff-Witt) extensions. In the
present paper we study two aspects of these non-commutative rings: its finitely generated
projective modules from a matrix-constructive approach, and the construction of the Gröbner theory
for its left ideals and modules. These two topics could be interesting in future eventual
applications of skew $PBW$ extensions in functional linear systems and in non-commutative algebraic
geometry.

\bigskip

\noindent \textit{Key words and phrases.} Skew $PBW$ extensions, noncommutative Gröbner bases,
projective modules, matrix-constructive methods, Buchberger's algorithm, stably free modules,
Hermite rings, stable rank.

\bigskip

\bigskip

\noindent 2010 \textit{Mathematics Subject Classification.} Primary: 16Z05. Secondary: 16D40,
15A21.
\end{abstract}

\newpage

\section{Introduction}

Many rings and algebras arising in quantum mechanics, algebraic analysis, and non-commutative
algebraic geometry can be interpreted as skew $PBW$ (Poincaré-Birkhoff-Witt) extensions. Indeed,
Weyl algebras, enveloping algebras of finite-dimensional Lie algebras (and its quantization), well
known classes of Ore algebras (for example, the algebra of shift operators and the algebra for
multidimensional discrete linear systems), Artamonov quantum polynomials, diffusion algebras, Manin
algebra of quantum matrices, Witten's deformation of $\mathcal{U}(\mathfrak{sl}(2,K)$, among many
others, are examples of skew $PBW$ extensions. This type of non-commutative rings were defined
firstly in \cite{Gallego2} and represent a generalization of $PBW$ extensions introduced by Bell
and Goodearl in \cite{Bell}. Some other authors have classified quantum algebras and other
non-commutative rings of polynomial type by similar notions: Levandovskyy in \cite{Levandovskyy}
defined the $G$-algebras, Bueso, Gómez-Torrecillas and Verschoren in \cite{Gomez-Torrecillas2}
introduced the $PBW$ rings, Panov in \cite{Panov} defined the so called $Q$-solvable algebras. In
all of cases they assume either that the ring of coefficients is a field or the variables commute
with the coefficients. As we will see below, for the skew $PBW$ extensions the ring of coefficients
is arbitrary and the variables non necessarily commute. Ring and module theoretical properties of
skew $PBW$ extensions have been studied in some recent papers (\cite{lezamareyes1},
\cite{lezamaore}, \cite{Reyes5}), in the present paper we are interested in two aspects of these
non-commutative rings: the study of finitely generated projective modules from a
matrix-constructive approach, and the construction of the Gröbner theory for left ideals and
modules. These two topics could be interesting in future eventual applications of skew $PBW$
extensions in functional linear systems (as it has been done for Ore algebras in \cite{Quadrat1},
\cite{Chyzak1}, \cite{Chyzak}, \cite{Chyzak3}, \cite{Chyzak2}, \cite{Quadrat2}, \cite{Quadrat7},
\cite{Fabianska}, \cite{Pommaret1}, \cite{Pommaret2}, \cite{Pommaret3}, \cite{Pommaret4},
\cite{Pommaret}, \cite{Quadrat}, \cite{Quadrat6}, \cite{Zerz} and \cite{Zhang}), and in
non-commutative algebraic geometry (see in \cite{Rogalski} Section 1.4 about non-commutative
Gröbner bases for some quantum algebras).

\section{Skew $PBW$ extensions}

In this first section we recall the definition of skew $PWB$ extensions, some their elementary
properties and we present some examples of this class of non-commutative rings of polynomial type
(see \cite{Gallego2} and \cite{lezamareyes1}).

\subsection{Definitions and elementary examples}\label{1.1}
We will see next that the skew $PBW$ extensions are a generalization of PBW extensions defined by
Bell and Goodearl in 1988 in \cite{Bell}.
\begin{definition}\label{gpbwextension}
Let $R$ and $A$ be rings, we say that $A$ is a skew $PBW$ extension of $R$ {\rm(}also called
$\sigma-PBW$ extension{\rm)}, if the following conditions hold:
\begin{enumerate}
\item[\rm (i)]$R\subseteq A$.
\item[\rm (ii)]There exist finite elements $x_1,\dots ,x_n\in A$ such $A$ is a left $R$-free module with basis
\begin{center}
$Mon(A):=Mon\{x_1,\dots,x_n\}=\{x^{\alpha}=x_1^{\alpha_1}\cdots
x_n^{\alpha_n}|\alpha=(\alpha_1,\dots ,\alpha_n)\in \mathbb{N}^n\}$.
\end{center}
\item[\rm (iii)]For every $1\leq i\leq n$ and $r\in R-\{0\}$ there exists $c_{i,r}\in R-\{0\}$ such that
\begin{equation}\label{sigmadefinicion1}
x_ir-c_{i,r}x_i\in R.
\end{equation}
\item[\rm (iv)]For every $1\leq i,j\leq n$ there exists $c_{i,j}\in R-\{0\}$ such that
\begin{equation}\label{sigmadefinicion2}
x_jx_i-c_{i,j}x_ix_j\in R+Rx_1+\cdots +Rx_n.
\end{equation}
Under these conditions we will write $A=\sigma(R)\langle x_1,\dots ,x_n\rangle$.
\end{enumerate}
\end{definition}
\begin{remark}\label{notesondefsigampbw}
(i) Since that $Mon(A)$ is a $R$-basis for $A$, the elements $c_{i,r}$ and $c_{i,j}$ in the above
definition are unique.

(ii) If $r=0$, then $c_{i,0}=0$: in fact, $0=x_i0=c_{i,0}x_i+s_i$, with $s_i\in R$, but since
$Mon(A)$ is a $R$-basis, then $c_{i,0}=0=s_i$.

(iii) In (iv), $c_{i,i}=1$: in fact, $x_i^2-c_{i,i}x_i^2=s_0+s_1x_1+\cdots+s_nx_n$, with $s_i\in
R$, hence $1-c_{i,i}=0=s_i$.

(iv) Let $i<j$, by (\ref{sigmadefinicion2}) there exist $c_{j,i},c_{i,j}\in R$ such that
$x_ix_j-c_{j,i}x_jx_i\in R+Rx_1+\cdots +Rx_n$ and $x_jx_i-c_{i,j}x_ix_j\in R+Rx_1+\cdots +Rx_n$,
but since $Mon(A)$ is a $R$-basis then $1=c_{j,i}c_{i,j}$, i.e., for every $1\leq i<j\leq n$,
$c_{i,j}$ has a left inverse and $c_{j,i}$ has a right inverse.

(v) Each element $f\in A-\{0\}$ has a unique representation in the form $f=c_1X_1+\cdots+c_tX_t$,
with $c_i\in R-\{0\}$ and $X_i\in Mon(A)$, $1\leq i\leq t$.
\end{remark}
The following proposition justifies the notation that we have introduced for the skew $PBW$
extensions.
\begin{proposition}\label{sigmadefinition}
Let $A$ be a skew $PBW$ extension of $R$. Then, for every $1\leq i\leq n$, there exist an injective
ring endomorphism $\sigma_i:R\rightarrow R$ and a $\sigma_i$-derivation $\delta_i:R\rightarrow R$
such that
\begin{center}
$x_ir=\sigma_i(r)x_i+\delta_i(r)$,
\end{center}
for each $r\in R$.
\end{proposition}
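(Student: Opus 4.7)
The plan is to use condition (iii) of Definition \ref{gpbwextension} to directly define the maps $\sigma_i$ and $\delta_i$, then verify the required algebraic properties by exploiting the uniqueness coming from the fact that $Mon(A)$ is a left $R$-basis.

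First I would set, for each $r\in R-\{0\}$, $\sigma_i(r):=c_{i,r}$, where $c_{i,r}$ is the unique element guaranteed by (iii) and Remark \ref{notesondefsigampbw}(i), and let $\delta_i(r):=x_ir-c_{i,r}x_i\in R$. I would then complete the definition by putting $\sigma_i(0):=0$ and $\delta_i(0):=0$; by Remark \ref{notesondefsigampbw}(ii), this is consistent with the formula $x_ir=\sigma_i(r)x_i+\delta_i(r)$ for every $r\in R$.

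The heart of the argument is then to check additivity, multiplicativity, injectivity of $\sigma_i$, and the $\sigma_i$-derivation property of $\delta_i$, each by writing $x_i(r+s)$ or $x_i(rs)$ in two different ways inside $A$ and comparing the coefficients of $x_i$ with the constant terms. For additivity I would expand $x_i(r+s)=\sigma_i(r+s)x_i+\delta_i(r+s)$ and equally $x_ir+x_is=(\sigma_i(r)+\sigma_i(s))x_i+(\delta_i(r)+\delta_i(s))$; since $\{1,x_i\}$ is part of the basis $Mon(A)$, uniqueness of the representation (Remark \ref{notesondefsigampbw}(v)) forces $\sigma_i(r+s)=\sigma_i(r)+\sigma_i(s)$ and $\delta_i(r+s)=\delta_i(r)+\delta_i(s)$. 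For multiplicativity I would expand $x_i(rs)$ as $(x_ir)s=\sigma_i(r)x_is+\delta_i(r)s=\sigma_i(r)\sigma_i(s)x_i+\sigma_i(r)\delta_i(s)+\delta_i(r)s$ and compare with $\sigma_i(rs)x_i+\delta_i(rs)$, again using $R$-linear independence of $1$ and $x_i$ to read off both $\sigma_i(rs)=\sigma_i(r)\sigma_i(s)$ and $\delta_i(rs)=\sigma_i(r)\delta_i(s)+\delta_i(r)s$. Taking $r=s=1$ in $x_i\cdot 1=\sigma_i(1)x_i+\delta_i(1)$ yields $\sigma_i(1)=1$ and $\delta_i(1)=0$, so $\sigma_i$ is a ring endomorphism.

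Finally, injectivity of $\sigma_i$ follows immediately from the same condition (iii): for any $r\in R-\{0\}$ we have $c_{i,r}\in R-\{0\}$, hence $\sigma_i(r)=c_{i,r}\neq 0$, so $\ker\sigma_i=\{0\}$. I do not anticipate a genuinely hard step; the only point that requires care is keeping track of the case $r=0$ in clause (iii), which is handled cleanly by Remark \ref{notesondefsigampbw}(ii), and the systematic invocation of the uniqueness of the expansion in $Mon(A)$, which is what lets us decouple the $x_i$-part from the constant part in each identity.
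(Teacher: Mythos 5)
Your proposal is correct and is exactly the standard argument: the paper itself only cites \cite{Gallego3} for this proposition, and the proof given there proceeds precisely as you do, defining $\sigma_i(r):=c_{i,r}$ and $\delta_i(r):=x_ir-c_{i,r}x_i$ and reading off additivity, multiplicativity, the $\sigma_i$-derivation identity, and injectivity from the uniqueness of coefficients in the left $R$-basis $Mon(A)$. Your handling of the edge cases ($r=0$ via Remark \ref{notesondefsigampbw}(ii), and injectivity from $c_{i,r}\neq 0$ for $r\neq 0$) is complete, so there is nothing to add.
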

\begin{proof} See \cite{Gallego3}
\end{proof}
A particular case of skew $PBW$ extension is when all derivations $\delta_i$ are zero. Another
interesting case is when all $\sigma_i$ are bijective and the constants $c_{ij}$ are invertible. We
have the following definition.
\begin{definition}\label{sigmapbwderivationtype}
Let $A$ be a skew $PBW$ extension.
\begin{enumerate}
\item[\rm (a)]
$A$ is quasi-commutative if the conditions {\rm(}iii{\rm)} and {\rm(}iv{\rm)} in Definition
\ref{gpbwextension} are replaced by
\begin{enumerate}
\item[\rm ($iii'$)]For every $1\leq i\leq n$ and $r\in R-\{0\}$ there exists $c_{i,r}\in R-\{0\}$ such that
\begin{equation}
x_ir=c_{i,r}x_i.
\end{equation}
\item[\rm ($iv'$)]For every $1\leq i,j\leq n$ there exists $c_{i,j}\in R-\{0\}$ such that
\begin{equation}
x_jx_i=c_{i,j}x_ix_j.
\end{equation}
\end{enumerate}
\item[\rm (b)]$A$ is bijective if $\sigma_i$ is bijective for
every $1\leq i\leq n$ and $c_{i,j}$ is invertible for any $1\leq i<j\leq n$.
\end{enumerate}
\end{definition}
Some elementary but interesting examples of skew $PBW$ extensions are the following.
\begin{example}\label{gpbwexample}
(i) Any $PBW$ extension is a bijective skew $PBW$ extension since in this case $\sigma_{i}=i_{R}$
for each $1\leq i\leq n$ and $c_{i,j}=1$ for every $1\leq i,j\leq n$ (see \cite{Bell}).

(ii) Any \textit{skew polynomial ring $R[x;\sigma ,\delta]$ of injective type}, i.e., with $\sigma$
injective, is a skew $PBW$ extension; in this case we have $R[x;\sigma ,\delta]=\sigma(R)\langle
x\rangle$. If additionally $\delta=0$, then $R[x;\sigma]$ is quasi-commutative.

(iii) Let $R[x_1;\sigma_1 ,\delta_1]\cdots [x_n;\sigma_n ,\delta_n]$ be an \textit{iterated skew
polynomial ring of injective type}, i.e., if the following conditions hold:
\begin{center}
For $1\leq i\leq n$, $\sigma_i$ is injective

For every $r\in R$ and $1\leq i\leq n$, $\sigma_i(r),\delta_i(r)\in R$

For $i<j$, $\sigma_j(x_i)=cx_i+d$, with $c,d\in R$ and $c$ has a left inverse.

For $i<j$, $\delta_j(x_i)\in R+Rx_1+\cdots +Rx_i$.
\end{center}
Then, $R[x_1;\sigma_1 ,\delta_1]\cdots [x_n;\sigma_n ,\delta_n]$ is a skew $PBW$ extension. Under
these conditions we have
\begin{center}
$R[x_1;\sigma_1 ,\delta_1]\cdots [x_n;\sigma_n ,\delta_n]=\sigma(R)\langle x_1,\dots,x_n\rangle$.
\end{center}
In particular, any \textit{Ore extension $R[x_1;\sigma_1 ,\delta_1]\cdots [x_n;\sigma_n ,\delta_n]$
of injective type}, i.e., for $1\leq i\leq n$, $\sigma_i$ is injective, is a skew $PBW$ extension.
In fact, in Ore extensions for every $r\in R$ and $1\leq i\leq n$, $\sigma_i(r),\delta_i(r)\in R$,
and for $i<j$, $\sigma_j(x_i)=x_i$ and $\delta_j(x_i)=0$. An important subclass of Ore extension of
injective type are the \textit{Ore algebras of injective type}, i.e., when $R=K[t_1,\dots,t_m]$,
$m\geq 0$. Thus, we have
\begin{center}
$K[t_1,\dots,t_m][x_1;\sigma_1 ,\delta_1]\cdots [x_n;\sigma_n
,\delta_n]=\sigma(K[t_1,\dots,t_m])\langle x_1,\dots,x_n\rangle$.
\end{center}
Some concrete examples of Ore algebras of injective type are the following.

\textit{The algebra of shift operators}: let $K$ be a field and $h\in K$, then the algebra of shift
operators is defined by $S_h:=K[t][x_h;\sigma_h,\delta_h]$, where $\sigma_h(p(t)):=p(t-h)$, and
$\delta_h:=0$ (observe that $S_h$ can be considered also as a skew polynomial ring of injective
type). Thus, $S_h$ is a quasi-commutative bijective skew $PBW$ extension.

The \textit{mixed algebra $D_h$}: let again $K$ be a field and $h\in K$, then the mixed algebra
$D_h$ is defined by $D_h:=K[t][x;i_{K[t]},\frac{d}{dt}][x_h;\sigma_h,\delta_h]$, where
$\sigma_h(x):=x$. Then, $D_h$ is a quasi-commutative bijective skew $PBW$ extension.

The \textit{algebra for multidimensional discrete linear systems} is defined by
$D:=K[t_1,\dots,t_n][x_1;\sigma_1,0]\cdots[x_n;\sigma_n,0]$, where $K$ is a field and
\begin{center}
$\sigma_i(p(t_1,\dots,t_n)):=p(t_1,\dots,t_{i-1},t_{i}+1,t_{i+1},\dots,t_n), \ \sigma_i(x_i)=x_i$,
$1\leq i\leq n$.
\end{center}
Thus, $D$ is a quasi-commutative bijective skew $PBW$ extension. Observe that all of these examples
are not $PBW$ extensions.

(iv) \textit{Additive analogue of the Weyl algebra}: let $K$ be a field, the $K$-algebra
$A_n(q_1,\dots,q_n)$ is generated by $x_1,\dots,x_n,y_1,\dots,y_n$ and subject to the relations:
\begin{center}
$x_jx_i = x_ix_j, y_jy_i = y_iy_j, \ 1 \leq i,j \leq n$,

$y_ix_j=x_jy_i, \ i\neq j$,

$y_ix_i = q_ix_iy_i + 1, \ 1\leq i\leq n$,
\end{center}
where $q_i\in K-\{0\}$. We observe that $A_n(q_1, \dots, q_n)$ is isomorphic to the iterated skew
polynomial ring $K[x_1, \dots, x_n][y_1; \sigma_1, \delta_1]\cdots [y_n; \sigma_n, \delta_n]$ over
the commutative polynomial ring $K[x_1, \dots, x_n]$:
\begin{center}
$\sigma_j(y_i):=y_i,\delta_j(y_i):=0,  \ 1\leq i < j\leq n$,

$\sigma_i(x_j):=x_j,\delta_i(x_j):=0, \ i\neq j$,

$\sigma_i(x_i):=q_ix_i,\delta_i(x_i):=1, \ 1\leq i\leq n$.

\end{center}
Thus, $A_n(q_1, \dots, q_n)$ satisfies the conditions of (iii) and is bijective; we have
\begin{center}
$A_n(q_1, \dots, q_n)=\sigma(K[x_1,\dots,x_n])\langle y_1,\dots,y_n\rangle$.
\end{center}

(v) \textit{Multiplicative analogue of the Weyl algebra}: let $K$ be a field, the $K$-algebra
$\mathcal{O}_n(\lambda_{ji})$ is generated by $x_1,\dots,x_n$ and subject to the relations:
\begin{center}
$x_jx_i =\lambda_{ji}x_ix_j ,\ 1\leq i<j\leq n$,
\end{center}
where $\lambda_{ji}\in K-\{0\}$. We note that $\mathcal{O}_n(\lambda_{ji})$ is isomorphic to the
iterated skew polynomial ring $K[x_1][x_2;\sigma_2]\cdots [x_n;\sigma_n]$
\begin{center}
$\sigma_j(x_i):=\lambda_{ji}x_i, \ 1\leq i<j\leq n$.
\end{center}
Thus, $\mathcal{O}_n(\lambda_{ji})$ satisfies the conditions of (iii), and hence
$\mathcal{O}_n(\lambda_{ji})$ is an iterated skew polynomial ring of injective type but it is not
Ore. Thus,
\begin{center}
$\mathcal{O}_n(\lambda_{ji})=\sigma(K[x_1])\langle x_2,\dots,x_n\rangle$.
\end{center}
Moreover, note that $\mathcal{O}_n(\lambda_{ji})$ is quasi-commutative and bijective.

(vi) \textit{$q$-Heisenberg algebra}: let $K$ be a field, the $K$-algebra $H_n(q)$ is generated by
$x_1,\dots,x_n,y_1,\dots,y_n,z_1,\dots,z_n$ and subject to the relations:
\begin{center}
$x_jx_i = x_ix_j, z_jz_i=z_iz_j, y_jy_i = y_iy_j, \ 1 \leq i,j \leq n$,

$z_jy_i=y_iz_j,z_jx_i=x_iz_j,y_jx_i=x_iy_j, \ i\neq j$,

$z_iy_i = qy_iz_i,z_ix_i=q^{-1}x_iz_i+y_i,y_ix_i = qx_iy_i, \ 1\leq i\leq n$,
\end{center}
with $q\in K-\{0\}$. Note that $H_n(q)$ is isomorphic to the iterated skew polynomial ring $K[x_1,
\dots, x_n][y_1; \sigma_1]\cdots [y_n; \sigma_n][z_1; \theta_1, \delta_1]\cdots [z_n; \theta_n,
\delta_n]$ on the commutative polynomial ring $K[x_1, \dots, x_n]$:
\begin{center}
$\theta_j(z_i):=z_i, \ \delta_j(z_i):=0, \sigma_j(y_i):=y_i, \ 1\leq i< j\leq n$,

$\theta_j(y_i):=y_i, \ \delta_j(y_i):=0,\theta_j(x_i):=x_i, \ \delta_j(x_i):=0,\sigma_j(x_i):=x_i,
\ i\neq j$,

$\theta_i(y_i):=qy_i, \ \delta_i(y_i):=0,\theta_i(x_i):=q^{-1}x_i, \
\delta_i(x_i):=y_i,\sigma_i(x_i):=qx_i, \ 1\leq i\leq n$,
\end{center}
Since $\delta_i(x_i)=y_i\notin K[x_1,\dots,x_n]$, then $H_n(q)$ is not a  skew $PBW$ extension of
$K[x_1,\dots,x_n]$, however, with respect to $K$, $H_n(q)$ satisfies the conditions of (iii), and
hence, $H_n(q)$ is a bijective skew $PBW$ extension of $K$:
\begin{center}
$H_n(q)=\sigma(K)\langle x_1,\dots,x_n;y_1,\dots,y_n;z_1,\dots,z_n\rangle$.
\end{center}
\end{example}

\begin{remark}We want to remark that the skew $PBW$ extensions are not a subclass of the collection of
iterated skew polynomial rings, take for example $\mathcal{U}(\mathcal{G})$ or the diffusion
algebra (see \cite{lezamareyes1} and Section \ref{1.3} below). On other hand, the skew polynomial
rings are not included in the class of skew $PBW$ extensions, take $R[x;\sigma,\delta]$, with
$\sigma$ not injective.
\end{remark}

\subsection{Basic properties}

Next we present some basic important properties of skew $PBW$ extensions. We start with some
notation that we will use frequently in the rest of this work.

\begin{definition}\label{1.1.6}
Let $A$ be a skew $PBW$ extension of $R$ with endomorphisms $\sigma_i$, $1\leq i\leq n$, as in
Proposition \ref{sigmadefinition}.
\begin{enumerate}
\item[\rm (i)]For $\alpha=(\alpha_1,\dots,\alpha_n)\in \mathbb{N}^n$,
$\sigma^{\alpha}:=\sigma_1^{\alpha_1}\cdots \sigma_n^{\alpha_n}$,
$|\alpha|:=\alpha_1+\cdots+\alpha_n$. If $\beta=(\beta_1,\dots,\beta_n)\in \mathbb{N}^n$, then
$\alpha+\beta:=(\alpha_1+\beta_1,\dots,\alpha_n+\beta_n)$.
\item[\rm (ii)]For $X=x^{\alpha}\in Mon(A)$,
$\exp(X):=\alpha$ and $\deg(X):=|\alpha|$.
\item[\rm (iii)]Let $0\neq f\in A$, $t(f)$ is the finite
set of terms that conform $f$, i.e., if $f=c_1X_1+\cdots +c_tX_t$, with $X_i\in Mon(A)$ and $c_i\in
R-\{0\}$, then $t(f):=\{c_1X_1,\dots,c_tX_t\}$.
\item[\rm (iv)]Let $f$ be as in {\rm(iii)}, then $\deg(f):=\max\{\deg(X_i)\}_{i=1}^t.$
\end{enumerate}
\end{definition}

The skew $PBW$ extensions can be characterized in a similar way as was done in
\cite{Gomez-Torrecillas} for $PBW$ rings.
\begin{theorem}\label{coefficientes}
Let $A$ be a left polynomial ring over $R$ w.r.t. $\{x_1,\dots,x_n\}$, i.e. the conditions (i) and
(ii) in Definition \ref{gpbwextension} are satisfied. $A$ is a skew $PBW$ extension of $R$ if and
only if the following conditions hold:
\begin{enumerate}
\item[\rm (a)]For every $x^{\alpha}\in Mon(A)$ and every $0\neq
r\in R$ there exist unique elements $r_{\alpha}:=\sigma^{\alpha}(r)\in R-\{0\}$ and $p_{\alpha
,r}\in A$ such that
\begin{equation}\label{611}
x^{\alpha}r=r_{\alpha}x^{\alpha}+p_{\alpha , r},
\end{equation}
where $p_{\alpha ,r}=0$ or $\deg(p_{\alpha ,r})<|\alpha|$ if $p_{\alpha , r}\neq 0$. Moreover, if
$r$ is left invertible, then $r_\alpha$ is left invertible.

\item[\rm (b)]For every $x^{\alpha},x^{\beta}\in Mon(A)$ there
exist unique elements $c_{\alpha,\beta}\in R$ and $p_{\alpha,\beta}\in A$ such that
\begin{equation}\label{612}
x^{\alpha}x^{\beta}=c_{\alpha,\beta}x^{\alpha+\beta}+p_{\alpha,\beta},
\end{equation}
where $c_{\alpha,\beta}$ is left invertible, $p_{\alpha,\beta}=0$ or
$\deg(p_{\alpha,\beta})<|\alpha+\beta|$ if $p_{\alpha,\beta}\neq 0$.
\end{enumerate}
\end{theorem}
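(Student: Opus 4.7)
The plan is to prove the two implications separately. The forward direction proceeds by induction using Proposition \ref{sigmadefinition} together with the commutation relations of Definition \ref{gpbwextension}(iv); the reverse direction is obtained by specializing (a) and (b) to exponent vectors of weight one. For the forward direction of (a), I would induct on $|\alpha|$; the base $|\alpha|=0$ is immediate. For the inductive step, let $i$ be the smallest index with $\alpha_i>0$ and write $x^\alpha = x_i\, x^{\alpha-e_i}$, which is in standard form because no $x_j$ with $j<i$ occurs in $x^{\alpha-e_i}$. The inductive hypothesis gives $x^{\alpha-e_i} r = \sigma^{\alpha-e_i}(r)\, x^{\alpha-e_i} + p'$ with $p' = 0$ or $\deg(p') < |\alpha|-1$. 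Left-multiplying by $x_i$ and using $x_i s = \sigma_i(s)\, x_i + \delta_i(s)$ from Proposition \ref{sigmadefinition} yields
\[
x^\alpha r = \sigma^\alpha(r)\, x^\alpha + \delta_i(\sigma^{\alpha-e_i}(r))\, x^{\alpha-e_i} + x_i p',
\]
where every summand after the leading one has degree $<|\alpha|$ (once $x_i p'$ is rewritten in standard form). Injectivity of each $\sigma_i$ forces $\sigma^\alpha(r)\neq 0$, uniqueness follows from $Mon(A)$ being an $R$-basis, and the left-invertibility statement is immediate because $\sigma^\alpha$ is a ring endomorphism.

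For (b), I would induct on $|\alpha|+|\beta|$. The nontrivial step picks the smallest $i$ with $\alpha_i>0$, writes $x^\alpha x^\beta = x_i(x^{\alpha-e_i}x^\beta)$, and applies the inductive hypothesis to obtain $x^{\alpha-e_i}x^\beta = c'\, x^{\alpha+\beta-e_i}+p'$ with $c'$ left invertible. Left-multiplying by $x_i$ and applying Proposition \ref{sigmadefinition} produces a leading piece $\sigma_i(c')\, x_i x^{\alpha+\beta-e_i}$ plus terms of lower degree. If some $j<i$ has $(\alpha+\beta-e_i)_j>0$ (forced by $\beta$), I further commute $x_i$ past the corresponding $x_j$ factors via $x_i x_j = c_{j,i}\, x_j x_i + (\text{degree} < 2)$; each swap contributes an additional factor $c_{j,i}$, which is left invertible by Remark \ref{notesondefsigampbw}(iv). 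The resulting leading coefficient $c_{\alpha,\beta}$ is a finite product of $\sigma$-images of such left-invertible constants, hence itself left invertible. The bookkeeping here is the main obstacle: I must verify that each rewriting step contributes only left-invertible factors to the leading coefficient, while the error terms (from $\delta_i$'s and from the lower-degree parts of the commutation relations) collectively assemble into a polynomial of degree strictly less than $|\alpha+\beta|$.

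For the reverse direction, specializing (a) with $\alpha=e_i$ gives $x_i r - \sigma_i(r)\, x_i = p_{e_i,r}$ with $\deg(p_{e_i,r}) < 1$, hence $p_{e_i,r} \in R$; this verifies (iii) with $c_{i,r}:=\sigma_i(r)\in R-\{0\}$. For (iv), set $c_{i,i}:=1$; for $i<j$, specializing (b) to $\alpha=e_j,\beta=e_i$ gives $x_j x_i = c_{e_j,e_i}\, x_i x_j + p_{e_j,e_i}$ with $p_{e_j,e_i}\in R+Rx_1+\cdots+Rx_n$, so $c_{i,j}:=c_{e_j,e_i}$ works; for $i>j$, specializing (b) to $\alpha=e_i,\beta=e_j$ yields $x_i x_j = c_{e_i,e_j}\, x_j x_i + p_{e_i,e_j}$, and left-multiplying by any left inverse $d$ of $c_{e_i,e_j}$ gives $x_j x_i - d\cdot x_i x_j \in R+Rx_1+\cdots+Rx_n$, so take $c_{i,j}:=d$.
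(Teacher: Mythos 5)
The paper itself gives no proof of this theorem (it defers to \cite{Gallego3}), so there is nothing internal to compare against; judged on its own, your outline is correct and is essentially the standard argument used in the cited source: induction on the length of $\alpha$, peeling off the leftmost variable, pushing it rightward with Proposition \ref{sigmadefinition} and the relations (iv), and reading off conditions (iii) and (iv) of Definition \ref{gpbwextension} from the weight-one cases for the converse. Your treatment of the leading coefficient in (b) -- a product of $\sigma$-images of the $c_{j,i}$ with $j<i$, each left invertible by Remark \ref{notesondefsigampbw}(iv), hence the product left invertible -- is the right mechanism, and the converse is handled correctly, including the point that for $i>j$ one must left-multiply by a left inverse of $c_{e_i,e_j}$ to recover the relation in the form the definition demands.

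The one point you should make explicit rather than defer: both the step $\deg(x_ip')<|\alpha|$ in (a) and the claim that the error terms in (b) stay below $|\alpha+\beta|$ rest on the assertion that left multiplication by $x_i$ raises the degree of a polynomial by at most one \emph{after rewriting in standard form}. That assertion is not free -- rewriting $x_iX$ in standard form already invokes the commutation relations and produces lower-order corrections whose degrees must themselves be controlled, which is precisely the content of (a) and (b) at smaller total degree. As written, your two separate inductions (on $|\alpha|$ for (a), on $|\alpha|+|\beta|$ for (b)) quietly borrow this fact from each other. The clean fix is a single simultaneous induction on total degree establishing (a), (b), and the bound $\deg(x^\gamma f)\leq|\gamma|+\deg(f)$ together; with that reorganization the ``bookkeeping obstacle'' you flag dissolves, and nothing else in your argument needs to change.
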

\begin{proof} See \cite{Gallego3}
\end{proof}
\begin{remark}\label{identities}
(i) A left inverse of $c_{\alpha,\beta}$ will be denoted by $c_{\alpha,\beta}'$. We observe that if
$\alpha=0$ or $\beta=0$, then $c_{\alpha,\beta}=1$ and hence $c_{\alpha,\beta}'=1$.

(ii) Let $\theta,\gamma,\beta\in \mathbb{N}^n$ and $c\in R$, then we have the following identities:
\begin{center}
$\sigma^\theta(c_{\gamma,\beta})c_{\theta,\gamma+\beta}=c_{\theta,\gamma}c_{\theta+\gamma,\beta}$,

$\sigma^\theta(\sigma^\gamma (c))c_{\theta,\gamma}=c_{\theta,\gamma}\sigma^{\theta+\gamma}(c)$.
\end{center}
In fact, since $x^{\theta}(x^{\gamma}x^{\beta})=(x^{\theta}x^{\gamma})x^{\beta}$, then
\begin{center}
$x^{\theta}(c_{\gamma,\beta}x^{\gamma+\beta}+p_{\gamma,\beta})=(c_{\theta,\gamma}x^{\theta+\gamma}+p_{\theta,\gamma})x^{\beta}$,

$\sigma^{\theta}(c_{\gamma,\beta})c_{\theta,\gamma+\beta}x^{\theta+\gamma+\beta}+p=c_{\theta,\gamma}c_{\theta+\gamma,\beta}x^{\theta+\gamma+\beta}+q$,
\end{center}
with $p=0$ or $\deg(p)<|\theta+\gamma+\beta|$, and, $q=0$ or $\deg(q)<|\theta+\gamma+\beta|$. From
this we get the first identity. For the second, $x^{\theta}(x^{\gamma}c)=(x^{\theta}x^{\gamma})c$,
and hence
\begin{center}
$x^{\theta}(\sigma^{\gamma}(c)x^{\gamma}+p_{\gamma,c})=(c_{\theta,\gamma}x^{\theta+\gamma}+p_{\theta,\gamma})c$,

$\sigma^{\theta}(\sigma^{\gamma}(c))c_{\theta,\gamma}x^{\theta+\gamma}+p=c_{\theta,\gamma}\sigma^{\theta+\gamma}(c)x^{\theta+\gamma}+q$,
\end{center}
with $p=0$ or $\deg(p)<|\theta+\gamma|$, and, $q=0$ or $\deg(q)<|\theta+\gamma|$. This proves the
second idenity.

(iii) We observe if $A$ is quasi-commutative, then from the proof of Theorem \ref{coefficientes}
(see \cite{Gallego3}) we conclude that $p_{\alpha,r}=0$ and $p_{\alpha,\beta}=0$ for every $0\neq
r\in R$ and every $\alpha,\beta \in \mathbb{N}^n$. On the other hand, note that the evaluation
function at $0$, i.e., $A\to R$, $f\in A\mapsto f(0)\in R$, is a ring surjective homomorphism with
kernel $\langle x_1,\dots,x_n \rangle$ the two-sided ideal generated by $x_1,\dots,x_n$. Thus,
$A/\langle x_1,\dots,x_n \rangle\cong R$.

(iv) If $A$ is bijective, then $c_{\alpha,\beta}$ is invertible for any $\alpha,\beta\in
\mathbb{N}^n$.

(v) In $Mon(A)$ we define
\begin{center}
$x^{\alpha}\succeq x^{\beta}\Longleftrightarrow
\begin{cases}
x^{\alpha}=x^{\beta}\\
\text{or} & \\
x^{\alpha}\neq x^{\beta}\, \text{but} \, |\alpha|> |\beta| & \\
\text{or} & \\
x^{\alpha}\neq x^{\beta},|\alpha|=|\beta|\, \text{but $\exists$ $i$ with} &
\alpha_1=\beta_1,\dots,\alpha_{i-1}=\beta_{i-1},\alpha_i>\beta_i.
\end{cases}$
\end{center}
It is clear that this is a total order on $Mon(A)$ called \textit{deglex} order. If
$x^{\alpha}\succeq x^{\beta}$ but $x^{\alpha}\neq x^{\beta}$, we write $x^{\alpha}\succ x^{\beta}$.
Each element $f\in A-\{0\}$ can be represented in a unique way as $f=c_1x^{\alpha_1}+\cdots
+c_tx^{\alpha_t}$, with $c_i\in R-\{0\}$, $1\leq i\leq t$, and $x^{\alpha_1}\succ \cdots \succ
x^{\alpha_t}$. We say that $x^{\alpha_1}$ is the \textit{leader monomial} of $f$ and we write
$lm(f):=x^{\alpha_1}$ ; $c_1$ is the \textit{leader coefficient} of $f$, $lc(f):=c_1$, and
$c_1x^{\alpha_1}$ is the \textit{leader term} of $f$ denoted by $lt(f):=c_1x^{\alpha_1}$. If $f=0$,
we define $lm(0):=0,lc(0):=0,lt(0):=0$, and we set $X\succ 0$ for any $X\in Mon(A)$ (see also
Section \ref{sec3}). We observe that
\begin{center}
$x^{\alpha}\succ x^{\beta}\Rightarrow lm(x^{\gamma}x^{\alpha}x^{\lambda})\succ
lm(x^{\gamma}x^{\beta}x^{\lambda})$, for every $x^{\gamma},x^{\lambda}\in Mon(A)$.
\end{center}
\end{remark}
Natural and useful results that we will use later are the following properties.
\begin{proposition}\label{1.1.10a}
Let $A$ be a bijective skew $PBW$ extension of a ring $R$. Then,
\begin{enumerate}
\item[\rm (i)] $A$ is a right $R$-free module with basis $Mon(A)$.
\item[\rm(ii)] If $R$ is a domain, the $A$ is a domain.
\end{enumerate}
\end{proposition}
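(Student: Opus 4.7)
The engine for both parts is Theorem~\ref{coefficientes}: for every $\alpha \in \mathbb{N}^n$ and $r \in R$ one has $x^\alpha r = \sigma^\alpha(r) x^\alpha + p_{\alpha,r}$ with $p_{\alpha,r}=0$ or $\deg(p_{\alpha,r}) < |\alpha|$, and $x^\alpha x^\beta = c_{\alpha,\beta} x^{\alpha+\beta} + p_{\alpha,\beta}$ with $p_{\alpha,\beta}=0$ or $\deg(p_{\alpha,\beta}) < |\alpha+\beta|$. Under the bijectivity hypothesis, each $\sigma^\alpha = \sigma_1^{\alpha_1}\cdots\sigma_n^{\alpha_n}$ is a bijection of $R$, and by Remark~\ref{identities}(iv) every $c_{\alpha,\beta}$ is invertible.

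For (i), I would prove spanning and linear independence in the deglex order from Remark~\ref{identities}(v). \emph{Spanning}: given $f$ written in the left $R$-basis as $f = r_{\alpha^*} x^{\alpha^*} + \cdots$ with leading monomial $x^{\alpha^*}$, surjectivity of $\sigma^{\alpha^*}$ produces $s_{\alpha^*}\in R$ with $\sigma^{\alpha^*}(s_{\alpha^*}) = r_{\alpha^*}$, so that $r_{\alpha^*} x^{\alpha^*} = x^{\alpha^*} s_{\alpha^*} - p_{\alpha^*,s_{\alpha^*}}$; subtracting $x^{\alpha^*} s_{\alpha^*}$ from $f$ strictly decreases the leading monomial and we iterate. \emph{Independence}: an alleged relation $\sum_\alpha x^\alpha s_\alpha = 0$ expands in the left $R$-basis as $\sum_\alpha \sigma^\alpha(s_\alpha)\, x^\alpha + \sum_\alpha p_{\alpha,s_\alpha} = 0$. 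The only contribution to the deglex-maximal monomial $x^{\alpha^*}$ is $\sigma^{\alpha^*}(s_{\alpha^*})$, since each correction $p_{\alpha,s_\alpha}$ only produces monomials of degree $<|\alpha|\le|\alpha^*|$ and, among the other equal-degree indices, $x^\alpha\neq x^{\alpha^*}$. Uniqueness in the left basis forces $\sigma^{\alpha^*}(s_{\alpha^*})=0$, and injectivity of $\sigma^{\alpha^*}$ gives $s_{\alpha^*}=0$; iterate.

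For (ii), take $f,g\neq 0$ with $lt(f)=c_1 x^{\alpha_1}$ and $lt(g)=d_1 x^{\beta_1}$. Applying the two identities above termwise gives
\[
c_i x^{\alpha_i}\, d_j x^{\beta_j} \;=\; c_i\,\sigma^{\alpha_i}(d_j)\, c_{\alpha_i,\beta_j}\, x^{\alpha_i+\beta_j} \;+\; (\text{strictly lower-degree terms}).
\]
Since deglex is preserved by translation ($\beta\prec\beta_1\Rightarrow \alpha_1+\beta\prec \alpha_1+\beta_1$, and symmetrically), the unique summand whose monomial attains the deglex maximum $x^{\alpha_1+\beta_1}$ is the one with $i=j=1$. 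Hence $lt(fg)=c_1\,\sigma^{\alpha_1}(d_1)\,c_{\alpha_1,\beta_1}\,x^{\alpha_1+\beta_1}$; because $R$ is a domain, $\sigma^{\alpha_1}$ is injective, and $c_{\alpha_1,\beta_1}$ is a unit, this leading coefficient is nonzero and $fg\neq 0$.

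The main obstacle is the bookkeeping of leading contributions: one must verify that no pair of products $c_i x^{\alpha_i} d_j x^{\beta_j}$ in the expansion of $fg$ can cancel at the monomial $x^{\alpha_1+\beta_1}$, and that each correction $p_{\alpha_i,d_j}$ or $p_{\alpha_i,\beta_j}$ lies strictly below it in deglex. Both facts reduce to translation-invariance of the deglex order, which is the single nontrivial ingredient beyond the identities of Theorem~\ref{coefficientes}.
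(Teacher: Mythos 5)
The paper does not give a proof of this proposition; it only cites \cite{lezamareyes1}, and your argument is correct and is essentially the standard one used there. Both parts rest exactly on the identities of Theorem~\ref{coefficientes} together with degree-compatibility and translation-invariance of the deglex order of Remark~\ref{identities}(v): surjectivity of $\sigma^{\alpha}$ drives the spanning step and injectivity the independence step in (i), while in (ii) the leading coefficient $c_1\,\sigma^{\alpha_1}(d_1)\,c_{\alpha_1,\beta_1}$ of $fg$ is a product of nonzero elements of the domain $R$ (left-invertibility of $c_{\alpha_1,\beta_1}$ already suffices here), so $fg\neq 0$.
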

\begin{proof} See \cite{lezamareyes1}
\end{proof}

\begin{proposition}\label{associatedpbw} Let $A$ be a skew $PBW$ extension of $R$. Then, there
exists a quasi-commutative skew $PBW$ extension $A^{\sigma}$ of $R$ in \;$n$ variables
$z_1,\dots,z_n$ defined by
\begin{center}
$z_ir=c_{i,r}z_i$, $z_jz_i=c_{i,j}z_iz_j$, $1\leq i,j\leq n$,
\end{center}
where $c_{i,r},c_{i,j}$ are the same constants that define $A$. If $A$ is bijective then
$A^{\sigma}$ is also bijective.
\end{proposition}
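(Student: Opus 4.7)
The plan is to construct $A^\sigma$ explicitly as a free left $R$-module and transplant the leading-term data of $A$ into a new associative multiplication. Concretely, I take $A^\sigma$ to be the free left $R$-module on symbols $\{z^\alpha : \alpha \in \mathbb{N}^n\}$, identify $r\in R$ with $rz^0$, set $z_i:=z^{e_i}$, and define on basis elements
\[
(r z^\alpha)(s z^\beta) \; := \; r\,\sigma^\alpha(s)\,c_{\alpha,\beta}\,z^{\alpha+\beta},
\]
where $\sigma^\alpha$ and $c_{\alpha,\beta}$ are the data already attached to $A$ via Proposition \ref{sigmadefinition} and Theorem \ref{coefficientes}(b); this is then extended $R$-bilinearly. (Equivalently, one can take $A^\sigma$ to be the associated graded ring of $A$ for the total-degree filtration; the two descriptions produce the same object, and either may serve as the formal definition.)

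The only non-routine step, and in my view the main obstacle, is associativity of this product on $A^\sigma$. Evaluating $((rz^\alpha)(sz^\beta))(tz^\gamma)$ and $(rz^\alpha)((sz^\beta)(tz^\gamma))$ with the formula above and cancelling the common factor $r\sigma^\alpha(s)$, the required equality reduces to
\[
c_{\alpha,\beta}\,\sigma^{\alpha+\beta}(t)\,c_{\alpha+\beta,\gamma} \;=\; \sigma^\alpha\bigl(\sigma^\beta(t)\bigr)\,\sigma^\alpha(c_{\beta,\gamma})\,c_{\alpha,\beta+\gamma}.
\]
Using the second identity of Remark \ref{identities}(ii) to commute $c_{\alpha,\beta}$ past $\sigma^{\alpha+\beta}(t)$ and then the first identity to rewrite $c_{\alpha,\beta}c_{\alpha+\beta,\gamma}$ as $\sigma^\alpha(c_{\beta,\gamma})c_{\alpha,\beta+\gamma}$, both sides agree. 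It is convenient that precisely the two compatibility identities forced by associativity of $A$ are exactly what the new product requires.

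Once associativity is in hand, the remaining verifications are bookkeeping with the chosen definitions. The map $r\mapsto rz^0$ is a subring embedding $R\hookrightarrow A^\sigma$ (using $\sigma^0=\mathrm{id}_R$ and $c_{0,0}=1$); $\{z^\alpha\}$ is a left $R$-basis by construction; and $z_i r = \sigma_i(r)z_i = c_{i,r}z_i$ by Proposition \ref{sigmadefinition} together with $c_{e_i,0}=1$. For the commutation relation one computes $z_j z_i = c_{e_j,e_i}z^{e_i+e_j}$ and $z_i z_j = c_{e_i,e_j}z^{e_i+e_j}$; then $z_j z_i = c_{i,j}z_i z_j$ follows from $c_{e_j,e_i}=c_{i,j}$ and $c_{e_i,e_j}=1$ for $i<j$, and from the relation $c_{i,j}c_{j,i}=1$ of Remark \ref{notesondefsigampbw}(iv) in the opposite case. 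Thus $A^\sigma = \sigma(R)\langle z_1,\dots,z_n\rangle$ satisfies conditions $(iii')$ and $(iv')$ of Definition \ref{sigmapbwderivationtype}(a), so it is quasi-commutative.

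Finally, the endomorphisms $\sigma_i$ and the commutation constants $c_{i,j}$ that govern Definition \ref{sigmapbwderivationtype}(b) are literally the same data in $A^\sigma$ as in $A$, so bijectivity of $A$ transfers to $A^\sigma$ with no further work.
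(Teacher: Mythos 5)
The paper itself offers no proof of this proposition---it simply cites \cite{lezamareyes1}---so there is nothing internal to compare your argument against; judged on its own terms, your construction is the standard one and the argument is correct. The reduction of associativity to the identity $c_{\alpha,\beta}\,\sigma^{\alpha+\beta}(t)\,c_{\alpha+\beta,\gamma}=\sigma^{\alpha}(\sigma^{\beta}(t))\,\sigma^{\alpha}(c_{\beta,\gamma})\,c_{\alpha,\beta+\gamma}$ is exactly what the two identities of Remark \ref{identities}(ii) deliver, and you are right that this is the only non-routine point. Two small imprecisions are worth tightening, though neither is a real gap. First, the product is not ``extended $R$-bilinearly'': it is additive in each argument and left $R$-linear only in the first one (the second argument is twisted by $\sigma^{\alpha}$), which is precisely why you must state the formula for general coefficients $r,s$ and then extend additively; well-definedness follows because $\{z^{\alpha}\}$ is a basis. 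Second, condition (ii) of Definition \ref{gpbwextension} requires that the actual ordered products $z_1^{\alpha_1}\cdots z_n^{\alpha_n}$ computed in $A^{\sigma}$ form a left $R$-basis, not merely the formal symbols $z^{\alpha}$; you should record that these coincide, which holds because $c_{\alpha,\beta}=1$ whenever $x^{\alpha}x^{\beta}$ is already a standard monomial (apply the uniqueness in Theorem \ref{coefficientes}(b) to each step of the product $z_1^{\alpha_1}\cdots z_n^{\alpha_n}$), so that $z_1^{\alpha_1}\cdots z_n^{\alpha_n}=z^{\alpha}$. With those two sentences added, the proof is complete.
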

\begin{proof}
See \cite{lezamareyes1}.
\end{proof}

\begin{theorem}\label{1.3.2}
Let $A$ be an arbitrary skew $PBW$ extension of the ring $R$. Then, $A$ is a filtered ring with
filtration given by
\begin{equation}\label{eq1.3.1a}
F_{m}:=
\begin{cases}
R,  &  \text{if $m=0$},\\
\{f\in A|\,\deg(f)\leq m\},  &  \text{if $m\geq 1$}
\end{cases}
\end{equation}
and the corresponding graded ring $Gr(A)$ is a quasi-commutative skew $PBW$ extension of $R$.
Moreover, if $A$ is bijective, then $Gr(A)$ is a quasi-commutative bijective skew $PBW$ extension
of $R$.
\end{theorem}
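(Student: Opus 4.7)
The plan is to verify the filtration axioms first, then construct $Gr(A)$ in the standard way, and finally read off the quasi-commutative skew $PBW$ structure by using that the ``error terms'' in the defining relations of $A$ have degree strictly below the leading term and therefore vanish when we pass to the associated graded.

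First I would check that $\{F_m\}_{m\geq 0}$ is a ring filtration: the chain $F_m\subseteq F_{m+1}$ and the exhaustion $A=\bigcup_m F_m$ are immediate from Definition \ref{1.1.6}. For the multiplicative property $F_m F_k\subseteq F_{m+k}$, it suffices by $R$-linearity to control products of monomials and coefficients. Applying Theorem \ref{coefficientes}, for any $x^{\alpha}\in Mon(A)$ with $|\alpha|\leq m$ and $r\in R$ we have $x^{\alpha}r=\sigma^{\alpha}(r)x^{\alpha}+p_{\alpha,r}$, where the remainder has degree strictly less than $|\alpha|$; likewise $x^{\alpha}x^{\beta}=c_{\alpha,\beta}x^{\alpha+\beta}+p_{\alpha,\beta}$ with $p_{\alpha,\beta}$ of degree $<|\alpha+\beta|$. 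Both identities confirm that the degree is subadditive under multiplication, giving $F_m F_k\subseteq F_{m+k}$.

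Next I form $Gr(A):=\bigoplus_{m\geq 0}F_m/F_{m-1}$ (with $F_{-1}:=0$) endowed with the usual induced multiplication, which is well defined because of the previous step. The component $F_0/F_{-1}$ is canonically identified with $R$, so $R\subseteq Gr(A)$. Let $z_i\in F_1/F_0$ denote the class of $x_i$. I would then show that the standard monomials $z^{\alpha}=z_1^{\alpha_1}\cdots z_n^{\alpha_n}$ form a left $R$-basis of $Gr(A)$: the element $z^{\alpha}$ is the class of $x^{\alpha}$ in $F_{|\alpha|}/F_{|\alpha|-1}$ (because iterated products of the $z_i$ in $Gr(A)$ lift componentwise to products in $A$), and $R$-linear independence follows from the fact that any nonzero $R$-combination of $x^{\alpha}$ with $|\alpha|=m$ has exact degree $m$ in $A$, hence does not lie in $F_{m-1}$. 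Thus condition (ii) of Definition \ref{gpbwextension} holds for $Gr(A)$.

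To get the quasi-commutative relations ($iii'$) and ($iv'$) of Definition \ref{sigmapbwderivationtype}, I read off the relations of $A$ modulo lower-degree terms. From $x_i r-c_{i,r}x_i\in R=F_0$ (condition (iii) of Definition \ref{gpbwextension}) I pass to $F_1/F_0$ and obtain $z_i r=c_{i,r}z_i$; from $x_jx_i-c_{i,j}x_ix_j\in R+Rx_1+\cdots+Rx_n=F_1$ I pass to $F_2/F_1$ and obtain $z_jz_i=c_{i,j}z_iz_j$. These are exactly the defining relations of the quasi-commutative skew $PBW$ extension of $R$ with the same constants $c_{i,r}$ and $c_{i,j}$, so $Gr(A)$ is (isomorphic to) $A^{\sigma}$ of Proposition \ref{associatedpbw}. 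Finally, if $A$ is bijective then the associated endomorphisms $\sigma_i$ and the scalars $c_{i,j}$ on $Gr(A)$ coincide with those of $A$ and are therefore bijective/invertible, giving the last statement.

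The routine work is the filtration check, which is entirely driven by Theorem \ref{coefficientes}; the only place requiring a little care is the verification that the monomials $z^{\alpha}$ really are a free basis and that the induced multiplication does not destroy the identification of $z^{\alpha}$ with the class of $x^{\alpha}$. This is the main (mild) obstacle and it is handled by the observation that in each quotient $F_m/F_{m-1}$ only the leading-degree part of any product survives, so the lower-order remainders $p_{\alpha,r}$ and $p_{\alpha,\beta}$ produced by Theorem \ref{coefficientes} are killed.
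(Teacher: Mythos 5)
The paper does not actually contain a proof of Theorem \ref{1.3.2}; it defers entirely to \cite{lezamareyes1}. Your argument is the standard associated-graded construction one would expect there, and it is correct: the filtration axioms follow from Theorem \ref{coefficientes}, the classes $z^{\alpha}=\overline{x^{\alpha}}$ form a left $R$-basis of $Gr(A)$ because a nonzero $R$-combination of monomials of degree $m$ has degree exactly $m$, and the relations (iii) and (iv) of Definition \ref{gpbwextension} descend to the quasi-commutative relations ($iii'$), ($iv'$) with the same nonzero constants $c_{i,r}$, $c_{i,j}$, which also gives the bijective case since the induced $\sigma_i$ on $Gr(A)$ coincide with those of $A$. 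The only step you compress is $F_mF_k\subseteq F_{m+k}$, where one should note that the remainder $p_{\alpha,r}x^{\beta}$ is again controlled by a second application of Theorem \ref{coefficientes}(b); this is routine and does not affect the argument.
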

\begin{proof}
See \cite{lezamareyes1}.
\end{proof}

The next theorem characterizes the quasi-commutative skew $PBW$ extensions.

\begin{theorem}\label{1.3.3}
Let $A$ be a quasi-commutative skew $PBW$ extension of a ring $R$. Then,
\begin{enumerate}
\item[\rm (i)]$A$ is isomorphic to an iterated skew polynomial ring of
endomorphism type.
\item[\rm (ii)]If $A$ is bijective, then each
endomorphism is bijective.
\end{enumerate}
\end{theorem}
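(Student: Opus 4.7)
The plan is to construct an iterated skew polynomial ring from the defining data of $A$ and produce an explicit $R$-algebra isomorphism onto $A$; part (ii) will then be handled by exhibiting inverses of the constructed endomorphisms.

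Quasi-commutativity combined with Proposition \ref{sigmadefinition} forces the derivations $\delta_i$ to vanish, so the full defining relations of $A$ reduce to $x_i r = \sigma_i(r) x_i$ with each $\sigma_i$ an injective ring endomorphism of $R$, together with $x_j x_i = c_{i,j} x_i x_j$ for all $i,j$. Setting $R_0 := R$, I construct iteratively $R_j := R_{j-1}[z_j; \bar{\sigma}_j]$ by prescribing $\bar{\sigma}_j : R_{j-1} \to R_{j-1}$ on generators via $\bar{\sigma}_j|_R := \sigma_j$ and $\bar{\sigma}_j(z_i) := c_{i,j} z_i$ for $1 \leq i < j$. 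The substantive step is to verify that this prescription extends to a well-defined ring endomorphism of $R_{j-1}$, i.e.\ respects the relations $z_i r = \sigma_i(r) z_i$ and $z_\ell z_i = c_{i,\ell} z_i z_\ell$ ($i < \ell < j$) already baked into $R_{j-1}$. Both verifications reduce, after a direct expansion, to the cocycle identities $\sigma^\theta(\sigma^\gamma(c)) c_{\theta,\gamma} = c_{\theta,\gamma} \sigma^{\theta+\gamma}(c)$ and $\sigma^\theta(c_{\gamma,\beta}) c_{\theta,\gamma+\beta} = c_{\theta,\gamma} c_{\theta+\gamma,\beta}$ of Remark \ref{identities}(ii), specialised to standard basis vectors; this is the place where the bulk of the bookkeeping lies.

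With $B := R_n$ in hand, I define the $R$-linear map $\phi : B \to A$ sending $z_1^{\alpha_1}\cdots z_n^{\alpha_n} \mapsto x_1^{\alpha_1}\cdots x_n^{\alpha_n}$. Multiplicativity of $\phi$ is immediate because the defining relations of $B$ and $A$ match term-for-term under $z_i \leftrightarrow x_i$, and bijectivity follows from the standard PBW basis theorem for iterated Ore extensions together with Definition \ref{gpbwextension}(ii). This yields (i).

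For (ii), assume $A$ is bijective, so each $\sigma_i$ is an automorphism of $R$ and each $c_{i,j}$ is a unit. To show each $\bar{\sigma}_j$ is bijective, I propose an inverse $\tau_j : R_{j-1} \to R_{j-1}$ defined on generators by $\tau_j|_R := \sigma_j^{-1}$ and $\tau_j(z_i) := \sigma_j^{-1}(c_{i,j}^{-1}) z_i$; an analogous cocycle check (now exploiting invertibility of the $c_{i,j}$) establishes well-definedness, after which the identities $\bar{\sigma}_j \tau_j = \tau_j \bar{\sigma}_j = \mathrm{id}$ need only be verified on $R$ and on each $z_i$, which is direct. The main obstacle throughout is the careful bookkeeping in the cocycle verifications for well-definedness, particularly the one for $\bar{\sigma}_j$ in part (i); once those are in place the rest of the argument is formal.
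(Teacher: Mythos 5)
The paper does not actually print a proof of Theorem \ref{1.3.3}; it only cites \cite{lezamareyes1}, so there is no in-text argument to compare against. Your proposal is the standard construction and, as far as I can check it, it is correct: $\delta_i=0$ follows from quasi-commutativity via Proposition \ref{sigmadefinition}; the prescriptions $\bar{\sigma}_j|_R=\sigma_j$, $\bar{\sigma}_j(z_i)=c_{i,j}z_i$ do define endomorphisms of $R_{j-1}$; the monomial-to-monomial map is a ring isomorphism because both sides are free left $R$-modules on their standard monomials with identical structure constants; and in the bijective case your candidate inverse $\tau_j$ satisfies $\bar{\sigma}_j\tau_j=\tau_j\bar{\sigma}_j=\mathrm{id}$ on generators, hence everywhere. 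The one place I would press you for precision is the phrase ``extends to a well-defined ring endomorphism'': the clean mechanism is the universal property of an Ore extension, applied iteratively --- to extend $\bar{\sigma}_j$ from $R_{i-1}$ to $R_{i-1}[z_i;\bar{\sigma}_i]$ you must verify $t_i\,\bar{\sigma}_j(a)=\bar{\sigma}_j(\bar{\sigma}_i(a))\,t_i$ for $t_i:=c_{i,j}z_i$ and $a$ ranging over $R$ and the earlier $z_\ell$, and these two conditions are exactly $c_{i,j}\sigma_i\sigma_j(r)=\sigma_j\sigma_i(r)c_{i,j}$ and $c_{i,j}\sigma_i(c_{\ell,j})c_{\ell,i}=\sigma_j(c_{\ell,i})c_{\ell,j}\sigma_\ell(c_{i,j})$, which are instances of the identities in Remark \ref{identities}(ii) (equivalently, associativity of $x_jx_ix_\ell$ in $A$ read off against the $R$-basis $Mon(A)$). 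Note also that neither injectivity of $\sigma_j$ nor invertibility of the $c_{i,j}$ is needed for part (i), only for part (ii), which your argument respects. So I regard the proposal as a correct proof outline whose remaining content is the routine bookkeeping you already flag.
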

\begin{proof}
See \cite{lezamareyes1}.
\end{proof}
\begin{theorem}[Hilbert Basis Theorem]\label{1.3.4}
Let $A$ be a bijective skew $PBW$ extension of $R$. If $R$ is a left Noetherian ring then $A$ is
also a left Noetherian ring.
\end{theorem}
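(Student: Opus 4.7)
The plan is to reduce the general bijective case to the classical Hilbert basis theorem for iterated skew polynomial rings of automorphism type, via the filtration-to-graded passage provided by Theorem \ref{1.3.2} and the structural description of Theorem \ref{1.3.3}.

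First I would analyze the filtration $\{F_m\}_{m\geq 0}$ of $A$ given in Theorem \ref{1.3.2}. It is exhaustive ($A=\bigcup_m F_m$ because every element of $A$ has finite degree by Definition \ref{1.1.6}), ascending, $F_0=R$, and $F_mF_{m'}\subseteq F_{m+m'}$. Moreover, each $F_m$ is a finitely generated left $R$-module, since it is spanned by the finite set $\{x^{\alpha}:|\alpha|\leq m\}\subseteq Mon(A)$ which is an $R$-basis of $A$. These properties make this a positive, exhaustive, $R$-Noetherian-compatible filtration in the classical sense.

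Next I would handle $Gr(A)$. By Theorem \ref{1.3.2}, since $A$ is bijective, $Gr(A)$ is a quasi-commutative bijective skew $PBW$ extension of $R$. By Theorem \ref{1.3.3}(i)--(ii), $Gr(A)$ is isomorphic to an iterated skew polynomial ring
\[
R[z_1;\theta_1][z_2;\theta_2]\cdots[z_n;\theta_n]
\]
where every $\theta_i$ is a ring automorphism (bijective endomorphism type). Now I apply the classical Hilbert basis theorem for skew polynomial rings: if $S$ is left Noetherian and $\theta$ is an automorphism of $S$, then $S[z;\theta]$ is left Noetherian. Starting from the hypothesis that $R$ is left Noetherian, an induction on the number of variables gives that $Gr(A)$ is left Noetherian.

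Finally I would lift Noetherianity from $Gr(A)$ to $A$ by the standard filtered-to-graded argument: given a left ideal $I\subseteq A$, define the induced filtration $F_mI:=I\cap F_m$ and form the associated graded left ideal $Gr(I)=\bigoplus_m F_mI/F_{m-1}I$ inside $Gr(A)$. Since $Gr(A)$ is left Noetherian, $Gr(I)$ is finitely generated; lifting any finite set of homogeneous generators $\overline{g_1},\dots,\overline{g_r}$ to elements $g_1,\dots,g_r\in I$, a degree induction (using $F_0=R$ and exhaustiveness) shows $I=Ag_1+\cdots+Ag_r$. Hence $A$ is left Noetherian.

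The main obstacle here is making the final filtered-to-graded lifting rigorous: one must verify carefully that the induced filtration on $I$ is Hausdorff and exhaustive ($\bigcap_m F_mI=0$ trivially because $F_{-1}=0$, and $I=\bigcup_m F_mI$ because $A=\bigcup_m F_m$), and then run the degree induction that turns homogeneous generators of $Gr(I)$ into actual generators of $I$. Everything else is either recorded in the preceding results of the paper or is a direct application of the classical Hilbert basis theorem for Ore extensions of automorphism type.
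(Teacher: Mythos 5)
Your proposal is correct and follows essentially the same route as the paper: pass to $Gr(A)$ via Theorem \ref{1.3.2}, identify it as an iterated skew polynomial ring of automorphism type by Theorem \ref{1.3.3}, conclude $Gr(A)$ is left Noetherian by the classical skew Hilbert basis theorem, and lift back to $A$ (the paper delegates this last filtered-to-graded step to Theorem 1.6.9 of McConnell--Robson, which is exactly the argument you sketch). No gaps.
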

\begin{proof}
We repeat the proof given in \cite{lezamareyes1}. According to Theorem \ref{1.3.2}, $Gr(A)$ is a
quasi-commutative skew $PBW$ extension, and by the hypothesis, $Gr(A)$ is also bijective. By
Theorem \ref{1.3.3}, $Gr(A)$ is isomorphic to an iterated skew polynomial ring
$R[z_1;\theta_1]\cdots [z_{n};\theta_n]$ such that each $\theta_i$ is bijective, $1\leq i\leq n$.
This implies that $Gr(A)$ is a left Noetherian ring, and hence, $A$ is left Noetherian (see
\cite{McConnell}, Theorem 1.6.9).
\end{proof}

Many other properties of skew $PBW$ extensions have been studied recently, for example, the Ore's
theorem and Goldie's theorem were proved in \cite{lezamaore}, prime ideals were investigated in
\cite{Acosta2}, the groups $K_i$, $i\geq 0$, of algebraic $K$-theory were computed in
\cite{lezamareyes1}, etc. We want to conclude this section with two results that estimate the
global and Krull dimension of bijective skew $PBW$ extensions. We denote by ${\rm lgld}(S)$ the
left global dimension of the ring $S$ and by ${\rm lKdim}(S)$ its left Krull dimension (see
\cite{Rotman2} and \cite{McConnell}).

\begin{theorem}\label{3.1.1a}
Let $A=\sigma(R)\langle x_1,\dots,x_n\rangle$ be a bijective skew $PBW$ extension of a ring $R$.
Then,
\begin{center}
${\rm lgld}(R)\leq {\rm lgld}(A)\leq{\rm lgld}(R)+n$, if ${\rm lgld}(R)<\infty$.
\end{center}
If $A$ is quasi-commutative, then
\begin{center}
 ${\rm lgld}(A)={\rm lgld}(R)+n$.
\end{center}
In particular, if $R$ is semisimple, then ${\rm lgld}(A)=n$.
\end{theorem}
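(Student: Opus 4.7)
The plan is to exploit the filtered--graded structure of $A$ provided by Theorems \ref{1.3.2} and \ref{1.3.3}, together with the well-known behaviour of left global dimension under adjunction of a twisted polynomial variable.

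First I would dispose of the lower bound $\mathrm{lgld}(R)\leq \mathrm{lgld}(A)$. By Proposition \ref{1.1.10a} the bijective extension $A$ is free both as a left and as a right $R$-module; in particular $R\hookrightarrow A$ is faithfully flat on both sides, so the classical change-of-rings inequality applies directly.

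For the upper bound I would pass to $Gr(A)$. By Theorem \ref{1.3.2} the filtration (\ref{eq1.3.1a}) is positive, exhaustive and separated, with associated graded ring a bijective quasi-commutative skew $PBW$ extension of $R$; by Theorem \ref{1.3.3} this $Gr(A)$ is an iterated skew polynomial ring $R[z_1;\theta_1]\cdots[z_n;\theta_n]$ in which every $\theta_i$ is an automorphism. Iterating the classical bound $\mathrm{lgld}(S[z;\theta])\leq \mathrm{lgld}(S)+1$ for $\theta$ an automorphism (McConnell--Robson 7.5.3) yields $\mathrm{lgld}(Gr(A))\leq \mathrm{lgld}(R)+n$. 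Combined with the standard filtered-to-graded descent $\mathrm{lgld}(A)\leq \mathrm{lgld}(Gr(A))$, valid because the filtration is positive and $Gr(A)$ is left Noetherian (Theorem \ref{1.3.4}), this gives the asserted upper bound. If moreover $A$ is itself quasi-commutative, Theorem \ref{1.3.3} exhibits $A$ as such an iterated skew polynomial ring, and iterating the sharper classical equality $\mathrm{lgld}(S[z;\theta])=\mathrm{lgld}(S)+1$ immediately produces $\mathrm{lgld}(A)=\mathrm{lgld}(R)+n$.

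For the semisimple corollary, the upper bound specialises to $\mathrm{lgld}(A)\leq n$ since $\mathrm{lgld}(R)=0$. The reverse inequality $\mathrm{lgld}(A)\geq n$ is where I expect the main obstacle: it does not follow formally from the filtered--graded machinery, because the general comparison of global dimensions runs only in the opposite direction. My preferred route is to combine the equality $\mathrm{lgld}(Gr(A))=\mathrm{lgld}(R)+n=n$, already available from the quasi-commutative case applied to $Gr(A)$, with the sharpened comparison $\mathrm{lgld}(A)=\mathrm{lgld}(Gr(A))$ valid for Zariskian filtrations; the positive filtration (\ref{eq1.3.1a}) on the left Noetherian ring $A$ is Zariskian after passage to the Rees ring, which should suffice. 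If that route proves awkward, an alternative is to construct directly a simple left $A$-module of projective dimension exactly $n$ by lifting a Koszul-type resolution from $Gr(A)$ using the quasi-commutation relations between the $z_i$ and the coefficients in $R$.
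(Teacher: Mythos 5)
The paper gives no proof here beyond citing \cite{lezamareyes1}, and for the two inequalities and the quasi-commutative equality your route is exactly the standard one used there: faithful flatness of $A$ as a right $R$-module (Proposition \ref{1.1.10a}) for the lower bound; the filtered-to-graded inequality ${\rm lgld}(A)\leq {\rm lgld}(Gr(A))$ combined with Theorems \ref{1.3.2} and \ref{1.3.3} and the skew polynomial estimate for the upper bound; and the exact formula for iterated skew polynomial rings of pure automorphism type in the quasi-commutative case. That part of your proposal is sound.

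The gap is in your treatment of the final assertion. You read ``if $R$ is semisimple then ${\rm lgld}(A)=n$'' as a claim about arbitrary bijective extensions and set out to prove ${\rm lgld}(A)\geq n$ in that generality; but that claim is false, and the ``sharpened comparison ${\rm lgld}(A)={\rm lgld}(Gr(A))$ for Zariskian filtrations'' you invoke does not exist. The first Weyl algebra $A_1(K)$ with ${\rm char}(K)=0$ is a bijective skew $PBW$ extension of the field $K$ in $n=2$ variables; its standard filtration is Zariskian and $Gr(A_1(K))\cong K[t,x]$ has global dimension $2$, yet ${\rm lgld}(A_1(K))=1$. Zariskian (or positive) filtrations yield only the one-sided inequality ${\rm lgld}(A)\leq {\rm lgld}(Gr(A))$, and the same example defeats your fallback of lifting a Koszul resolution of length $n$ from $Gr(A)$. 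In fact the sentence ``In particular, if $R$ is semisimple, then ${\rm lgld}(A)=n$'' is a specialization of the sentence that precedes it, i.e.\ it is asserted under the hypothesis that $A$ is quasi-commutative, where it is immediate from ${\rm lgld}(A)={\rm lgld}(R)+n=0+n$. No further argument is needed there, and the one you propose would be proving a false statement.
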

\begin{proof}
See \cite{lezamareyes1}.
\end{proof}

\begin{theorem}\label{3.3.4a}
Let $A$ be a bijective skew $PBW$ extension of a left Noetherian ring $R$. Then,
\begin{center}
${\rm lKdim}(R)\leq {\rm lKdim}(A)\leq {\rm lKdim}(R)+n$.
\end{center}
If $A$ is quasi-commutative, then
\begin{center}
${\rm lKdim}(A)={\rm lKdim}(R)+n$.
\end{center}
In particular, if $R=K$ is a field, then ${\rm lKdim}(A)=n$.
\end{theorem}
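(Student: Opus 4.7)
The plan is to mirror the strategy of Theorem~\ref{3.1.1a} and exploit the filtration structure of $A$ together with the iterated skew polynomial description of its associated graded ring. The backbone of the argument will be: (quasi-commutative case via Theorem~\ref{1.3.3}) $\Rightarrow$ (upper bound via the filtration from Theorem~\ref{1.3.2}) $\Rightarrow$ (lower bound via the free module structure) $\Rightarrow$ (field corollary).

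First I would dispose of the quasi-commutative case, which will serve as the engine. Since $A$ is assumed bijective, Theorem~\ref{1.3.3}(i)--(ii) identifies $A$ with an iterated skew polynomial ring $R[z_1;\theta_1]\cdots[z_n;\theta_n]$ where each $\theta_i$ is an automorphism. The classical theorem of McConnell--Robson (Proposition~6.5.4 in \cite{McConnell}) asserts that for a left Noetherian ring $S$ and an automorphism $\sigma$, one has ${\rm lKdim}(S[z;\sigma,\delta])={\rm lKdim}(S)+1$. Iterating this $n$ times, using Theorem~\ref{1.3.4} to keep each intermediate ring Noetherian, yields ${\rm lKdim}(A)={\rm lKdim}(R)+n$ in the quasi-commutative case.

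For the upper bound in general, I would invoke Theorem~\ref{1.3.2}: $A$ is filtered by the total degree, and $Gr(A)$ is a quasi-commutative bijective skew $PBW$ extension of $R$. Hence by the previous step ${\rm lKdim}(Gr(A))={\rm lKdim}(R)+n$. Then the standard filtered-to-graded comparison (see \cite{McConnell}, Corollary 6.5.6): whenever $Gr(A)$ is left Noetherian, ${\rm lKdim}(A)\leq {\rm lKdim}(Gr(A))$, delivers ${\rm lKdim}(A)\leq {\rm lKdim}(R)+n$. For the lower bound I would use that $R\hookrightarrow A$ and, by Proposition~\ref{1.1.10a}(i), $A$ is free (hence faithfully flat) as a right $R$-module; the functor $A\otimes_R(-)$ converts a strict chain of left $R$-modules realizing ${\rm lKdim}(R)$ into a strict chain of left $A$-modules, giving ${\rm lKdim}(R)\leq {\rm lKdim}(A)$. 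Finally, the ``in particular'' statement is immediate: when $R=K$ is a field, $A$ being quasi-commutative bijective, the formula ${\rm lKdim}(A)={\rm lKdim}(R)+n$ specializes to ${\rm lKdim}(A)=n$.

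The main obstacle, I expect, is not any single computation but rather verifying cleanly that the hypotheses of the three imported results are met at each step: bijectivity of the $\sigma_i$ is needed so that the $\theta_i$ in $Gr(A)$ are automorphisms (not merely injective endomorphisms), and left Noetherianity must be maintained through the iteration (guaranteed by Theorem~\ref{1.3.4} applied both to $A$ and to $Gr(A)$, the latter being a bijective skew $PBW$ extension by Theorem~\ref{1.3.2}). Once this bookkeeping is settled, the proof is essentially the same schematic argument used for the global dimension in Theorem~\ref{3.1.1a}.
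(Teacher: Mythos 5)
Your argument is correct and follows the same route as the proof the paper defers to in \cite{lezamareyes1} (and the same schematic used for Theorems \ref{1.3.4} and \ref{3.1.1a}): the quasi-commutative case via the iterated skew polynomial description of Theorem \ref{1.3.3}, the upper bound via the filtration of Theorem \ref{1.3.2} together with the filtered-to-graded comparison of Krull dimension, and the lower bound via faithful flatness of $A$ as a right $R$-module (Proposition \ref{1.1.10a}). One small caution: the formula ${\rm lKdim}(S[z;\sigma,\delta])={\rm lKdim}(S)+1$ is false when a nonzero derivation is present (e.g.\ $K[t][x;\frac{d}{dt}]$ has left Krull dimension $1$), so the McConnell--Robson result should be quoted only for $S[z;\sigma]$ with $\sigma$ an automorphism --- which is all you actually use, since Theorem \ref{1.3.3} yields an iterated extension of pure (bijective) endomorphism type.
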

\begin{proof}
See \cite{lezamareyes1}.
\end{proof}

\begin{remark}
The last three theorems are valid for the right side.
\end{remark}

\subsection{More examples}\label{1.3}

\noindent Many other important and interesting  examples of bijective skew $PBW$ extensions were
presented and discussed in \cite{lezamareyes1} and \cite{Reyes2}. In this section we recall other
key examples, some of them will be used later to illustrate the algorithms that will be presented
later in this paper.

\begin{example}\label{1.1.18}
According to \cite{diffusionauthor}, a \textit{diffusion algebra} $\mathcal{D}$ over a field $K$ is
generated by $\{D_i,x_i\mid 1\le i\le n\}$ over $K$ with relations
\begin{equation*}
x_ix_j=x_jx_i, \ \ x_iD_j=D_jx_i, \ \ 1\leq i,j\leq n.
\end{equation*}
\begin{equation*}\label{diffusionrela}
c_{ij}D_iD_j-c_{ji}D_jD_i=x_jD_i-x_iD_j,\ \ i<j, c_{ij},c_{ji}\in K^{*}.
\end{equation*}
Thus, $\mathcal{D}\cong \sigma(\boldsymbol{K[x_1,\dots,x_n]})\langle D_1,\dotsc,D_n\rangle$ is a
bijective non quasi-commutative skew $PBW$ extension of $K[x_1,\dots,x_n]$. Observe that
$\mathcal{D}$ is not a $PBW$ extension neither an iterated skew polynomial ring of bijective type
(see Example \ref{gpbwexample}).
\end{example}

\begin{example}
Viktor Levandovskyy has defined in \cite{Levandovskyy} the $G$-algebras and he has constructed the
theory of Gröbner bases for them (see Section \ref{chapter5} of the present overview for the
Gröbner theory of bijective skew $PBW$ extensions). Let $K$ be a field, a $K$-algebra $A$ is called
a \textit{$G$-algebra} if $K \subset Z(A)$ (center of $A$) and $A$ is generated by a finite set
$\{x_1, \ldots, x_n\}$ of elements that satisfy the following conditions: (a) the collection of
standard monomials of $A$ is a $K$-basis of $A$. (b) $x_jx_i= c_{ij}x_ix_j+d_{ij}$, for $1 \leq i <
j \leq n$, with $c_{ij} \in K-\{0\}$ and $d_{ij}\in A$. (c) There exists a total order $<_A$ on
${\rm Mon}(A)$ such that for $i<j$, $lm(d_{ij}) <_A x_ix_j$. According to this definition,
$G$-algebras appear like more general than skew $PBW$ extensions since $d_{ij}$ is not necessarily
linear; however, in $G$-algebras the coefficients of polynomials are in a field and they commute
with the variables $x_1,\dots,x_n$. Note that the class of $G$-algebras does not include the class
of skew $PBW$ extensions over fields. For example, consider the $K$-algebra $\mathcal{A}$ generated
by $x,y,z$ subject to the relations
\begin{equation*}
yx-q_2xy=x,\ \ \ \ \ \ zx-q_1xz=z,\ \ \ \ \ \ \ \ zy=yz, \ \ \ \ q_1,q_2\in K.
\end{equation*}
Thus, $\mathcal{A}$ is not a $G$-algebra in the sense of \cite{Levandovskyy}. Note that if
$q_1,q_2\neq 0$, then $\mathcal{A}\cong \sigma(K)\langle x,y,z\rangle$ is a bijective non
quasi-commutative skew $PBW$ extension of $K$.
\end{example}
\begin{example}
\textit{Witten's deformation of} $\mathcal{U}(\mathfrak{sl}(2,K)$. E. Witten introduced and studied
a 7-parameter deformation of the universal enveloping algebra \linebreak
$\mathcal{U}(\mathfrak{sl}(2,K))$ over the field $K$, depending on a 7-tuple of parameters
\linebreak $\underline{\xi}=(\xi_1,\dotsc,\xi_7)$ of $K$ and subject to relations
\begin{equation*}
    xz-\xi_1zx=\xi_2x,\ \ \ zy-\xi_3yz=\xi_4y,\ \ \ \ yx-\xi_5xy=\xi_6z^2+\xi_7z.
\end{equation*}
    The resulting algebra is denoted by $W(\underline{\xi})$ and it is assumed that $\xi_1\xi_3\xi_5\neq 0$ (see \cite{Levandovskyy}).
    Note that if $\xi_2\xi_4\xi_6\neq 0$, then $W(\underline{\xi})\cong \sigma(\boldsymbol{\sigma(K[x])\langle z\rangle})\langle y\rangle$ is a bijective
    non quasi-commutative skew $PBW$ extension of
    $\sigma(K[x])\langle z\rangle$, and in turn, $\sigma(K[x])\langle z\rangle$ is a bijective non quasi-commutative skew $PBW$ extension of
    $K[x]$. In \cite{Levandovskyy} is proved that the only way that $W(\underline{\xi})$ is a
    $G$-algebra is when $\xi_1=\xi_3$ and $\xi_2=\xi_4$. Thus, in general,
    $W(\underline{\xi})$ is a skew $PBW$ extension but is not a $G$-algebra.
\end{example}

\begin{example}\label{1.1.21}
In \cite{Gomez-Torrecillas} (see also \cite{Gomez-Torrecillas2}) Bueso, Gómez-Torrecillas and
Verschoren defined a type of rings and algebras called \textit{left $PBW$ rings}. Many of rings and
algebras considered in \cite{lezamareyes1} (see also \cite{Reyes2}) can be interpreted also as left
$PBW$ rings. Next we present an example of skew $PBW$ extension that is not a left $PBW$ ring: let
$K$ be a field; for any $0\neq q\in K$, let $\mathcal{R}$ be an algebra generated by the variables
$a,b,c,d$ subject to the relations
\begin{align*}
ba&=qab,\ \ \ \ db=qbd,\ \ \ \ ca=qac,\ \ \ \ dc=qcd\\
bc&=\mu cb,\ \ \ \ ad-da=(q^{-1}-q)bc.
\end{align*}
for some $\mu\in K$. Then $\mathcal{R}$ is not a left $PBW$ ring unless $\mu=1$ (see
\cite{Gomez-Torrecillas2}). Thus, for $\mu\neq 1$, $\mathcal{R}\cong \sigma(K[b])\langle
a,c,d\rangle$ is a bijective non quasi-commutative skew $PBW$ extension of $K[b]$ that is not a
left $PBW$ ring.
\end{example}


\section{Finitely generated projective modules}

\noindent One of the main purposes of the present work is to study finitely generated projective
modules over skew $PBW$ extensions. Recall that if $S$ is a ring and $P$ is a module over $S$, $P$
is said to be \textit{projective} is there exists a $S$-module $P'$ and a free $S$-module $F$ such
that $P\oplus P'\cong F$; in particular, $P$ is a \textit{finitely generated projective module} if
there exists $r\geq 0$ such that $P\oplus P'\cong S^r$. Note that any free module is projective
(the null module $0=S^0$ is free by definition). Given a ring $S$, one of classical questions in
homological algebra is to determine if any finitely generated projective $S$-module is free. It is
well known that this is the case when $S$ is a principal ideal domain, or when $S$ is local (see a
matrix constructive proof of this fact below, Proposition \ref{5.1.1}), or when
$S=R[x_1,\dots,x_n]$, with $R$ a principal ideal domain (Quillen-Suslin Theorem, see \cite{Lam}).
For skew $PBW$ extensions, in general, the answer to this question is negative, the next trivial
example shows this (\cite{Lam}): if $K$ is a division ring, then $S:=K[x,y]$ has a module $P$ such
that $P\oplus S\cong S^2$, but $P$ is not free. Thus, instead of this problem, we can ask if for
skew $PBW$ extensions is true the \textit{Serre's theorem}, i.e., if any finitely generated
projective module $P$ is \textit{stably free}, i.e., there exist $r,s\geq 0$ such that $P\oplus
S^s\cong S^r$ (see Definition \ref{6.2.7}). We will say that a ring $S$ is $PSF$ is any finitely
generated projective $S$-module is stably free (Definition \ref{PFrings}).

\subsection{Serre's theorem}

\noindent Next we will prove the Serre's theorem for bijective skew $PBW$ extensions (see also
\cite{lezamareyes1}). Some preliminaries are needed.

\begin{proposition}\label{3.2.1b}
Let $S$ be a filtered ring. If $Gr(S)$ is left regular, then $S$ is left regular.
\end{proposition}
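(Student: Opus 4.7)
The plan is to show that any finitely generated left $S$-module $M$ has finite projective dimension by lifting a projective resolution from $Gr(S)$. I read "left regular" as: every finitely generated left module has a finite projective resolution by finitely generated projectives (this in particular forces the left Noetherian property, which will be used to ensure good filtrations on successive kernels). It is also implicit that the filtration $\{F_n\}$ on $S$ is exhaustive and separated with $1 \in F_0$, as is the case for the filtration (\ref{eq1.3.1a}) arising from the skew $PBW$ setting.

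First I fix a finitely generated $S$-module $M$ with generators $m_1,\dots,m_k$ and equip it with the good filtration $F_n M := \sum_i F_n \cdot m_i$. Then $Gr(M)$ is a finitely generated $Gr(S)$-module, hence by hypothesis $d := \mathrm{pd}_{Gr(S)}(Gr(M)) < \infty$. Next I build a filtered free resolution $\cdots \to L_1 \to L_0 \to M \to 0$ with each $L_i$ a finitely generated filtered free $S$-module and with the induced filtration good on each successive kernel $K_i$; this is possible because $Gr(S)$ is left Noetherian, so one can inductively lift homogeneous generators of $Gr(K_i)$ to $K_i$. Passing to the associated graded yields an exact complex $\cdots \to Gr(L_1) \to Gr(L_0) \to Gr(M) \to 0$ of finitely generated graded $Gr(S)$-modules.

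Having built this resolution up to stage $d$, the kernel $Gr(K_d) = \ker(Gr(L_{d-1}) \to Gr(L_{d-2}))$ is the $d$-th syzygy in a projective resolution of $Gr(M)$ of length $d$, hence is projective over $Gr(S)$. The crux is then the lifting lemma: if $N$ is a filtered $S$-module with separated exhaustive good filtration and $Gr(N)$ is projective over $Gr(S)$, then $N$ is projective over $S$. Applied to $N = K_d$, this yields $\mathrm{pd}_S(M) \leq d$, completing the proof.

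The main obstacle is precisely this lifting lemma, which is the step that genuinely transfers homological information from $Gr(S)$ to $S$. I would prove it via the Rees ring $\widetilde{S} := \bigoplus_n F_n S \cdot t^n$ and the canonical identifications $\widetilde{S}/(t-1) \cong S$ and $\widetilde{S}/(t) \cong Gr(S)$: the Rees module $\widetilde{N}$ is $t$-torsion-free and $\widetilde{N}/t\widetilde{N} \cong Gr(N)$ is flat, from which one deduces that $\widetilde{N}$ is flat, and after inverting $t$ the projectivity is inherited by $N \cong \widetilde{N}/(t-1)\widetilde{N}$ using finite generation together with a standard lifting of an $S$-projection along the filtered epimorphism realizing $N$ as a quotient of a filtered free module. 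The separated/exhaustive hypothesis is what ensures that $N$ is recovered faithfully from $\widetilde{N}$ and from $Gr(N)$, so this reduction is valid.
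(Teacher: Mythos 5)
Your argument is correct and is essentially the proof the paper delegates to \cite{McConnell}, Proposition 7.7.4: equip a finitely generated module with a good filtration, lift a finite free resolution so that the associated graded complex resolves $Gr(M)$, and invoke the key lemma that a filtered module with good filtration whose associated graded module is projective is itself projective. Your Rees-ring route to that last lemma is a standard variant of the direct splitting-lifting argument in the cited reference, and it applies here because the relevant filtrations are positive, hence separated and exhaustive.
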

\begin{proof}
See \cite{McConnell}, Proposition 7.7.4.
\end{proof}
\begin{proposition}\label{3.2.3}
If $R$ is a left regular and left Noetherian ring and $\sigma$ is an automorphism, then
$R[x;\sigma]$ is left regular.
\end{proposition}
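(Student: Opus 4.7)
Set $S := R[x;\sigma]$. The plan is to prove directly that every finitely generated left $S$-module $M$ has finite projective dimension, by building a change-of-rings argument along the unique relation $xr = \sigma(r)x$.

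First I would record that $S$ is left Noetherian. This follows from Theorem \ref{1.3.4} applied to the bijective quasi-commutative skew $PBW$ extension $S = \sigma(R)\langle x\rangle$ in one variable (or the classical Hilbert basis theorem for skew polynomial rings with an automorphism); moreover, because $\sigma$ is bijective, $S$ is free as a right $R$-module with basis $\{x^i : i\ge 0\}$.

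The technical engine of the proof is a Rees-type short exact sequence of left $S$-modules, constructed for an \emph{arbitrary} left $S$-module $M$:
\begin{equation*}
0 \longrightarrow S\otimes_R M \xrightarrow{\ \phi\ } S\otimes_R M \xrightarrow{\ \mu\ } M \longrightarrow 0,
\end{equation*}
where $\mu(s\otimes m) := sm$ and $\phi(s\otimes m):= sx\otimes m - s\otimes xm$. Exactness is verified using the decomposition $S\otimes_R M = \bigoplus_{i\ge 0} x^i\otimes M$ (available because $\sigma$ is an automorphism, so $xR=Rx$): surjectivity of $\mu$ is immediate from $\mu(1\otimes m)=m$; in the direct sum decomposition the map $\phi$ acts as ``shift-up minus $x$-multiplication'', from which injectivity and $\operatorname{im}\phi=\ker\mu$ follow by a triangular argument on the top-degree component. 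Next, since $S$ is right $R$-flat and $S\otimes_R R^{I}=S^{I}$, the induction functor $S\otimes_R -$ sends $R$-projective resolutions to $S$-projective resolutions; hence
\begin{equation*}
\mathrm{pd}_S(S\otimes_R N)\ \le\ \mathrm{pd}_R(N)\quad \text{for every left $R$-module $N$.}
\end{equation*}
Combining this with the Rees sequence gives the main inequality
\begin{equation*}
\mathrm{pd}_S(M)\ \le\ \mathrm{pd}_R(M)+1 \qquad \text{for every left $S$-module }M.
\end{equation*}

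It remains to bound $\mathrm{pd}_R(M)$ when $M$ is finitely generated over $S$. I would fix a finitely generated $R$-submodule $M_0\subseteq M$ with $SM_0=M$ and consider the ascending chain $M_n:=\sum_{k=0}^n x^kM_0$. Each $M_n$ is finitely generated over $R$ (using $x^kR=Rx^k$), so by left regularity of $R$ each $\mathrm{pd}_R(M_n)<\infty$. Using that $\sigma$ is an automorphism, $x^kM_0$ is isomorphic to the $\sigma^{-k}$-twist of $M_0$ as $R$-module, so $\mathrm{pd}_R(x^kM_0)=\mathrm{pd}_R(M_0)$; by induction on $n$ together with the short exact sequence $0\to M_{n-1}\to M_n\to M_n/M_{n-1}\to 0$ and the fact that $M_n/M_{n-1}$ is an $R$-quotient of $x^nM_0$, one controls $\mathrm{pd}_R(M_n)$ in terms of $\mathrm{pd}_R(M_0)$. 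Finally, the standard bound for countable directed colimits of finitely generated modules over a left Noetherian ring yields $\mathrm{pd}_R(M)=\mathrm{pd}_R(\varinjlim M_n)<\infty$, and the main inequality then forces $\mathrm{pd}_S(M)<\infty$.

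The main obstacle is the last step: although $M$ is finitely generated over $S$, it is typically not finitely generated over $R$, so left regularity of $R$ cannot be applied to $M$ directly; one must bootstrap it through the $\sigma$-twisted ascending chain $\{M_n\}$ while keeping the $R$-projective dimensions under uniform control. This is precisely where the hypothesis that $\sigma$ be an automorphism (not merely injective) is essential — it is what allows $\sigma^{-1}$ to transport projective resolutions and to guarantee that $M=\bigcup_n M_n$ is a directed union of $R$-modules of bounded projective dimension.
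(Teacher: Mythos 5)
The paper offers no proof of this proposition beyond the citation of \cite{McConnell}, Theorem 7.7.5, so your argument has to stand on its own; your overall plan (a Rees-type sequence giving $\mathrm{pd}_S(M)\le \mathrm{pd}_R(M)+1$, plus an exhaustion of $M$ by the $R$-submodules $M_n=\sum_{k\le n}x^kM_0$) is indeed the strategy behind the cited theorem. Before the main issue, one repairable slip: the map $\phi(s\otimes m)=sx\otimes m-s\otimes xm$ is not well defined on $S\otimes_RM$ with the untwisted $R$-action on the source. Balancedness requires $srx\otimes m=sx\otimes rm$, whereas $srx\otimes m=sx\sigma^{-1}(r)\otimes m=sx\otimes\sigma^{-1}(r)m$; the source must be $S\otimes_RM^{\sigma^{-1}}$, where $M^{\sigma^{-1}}$ carries the action $r\ast m:=\sigma^{-1}(r)m$. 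Since $\sigma$ is an automorphism this twist is an autoequivalence of the category of left $R$-modules, so $\mathrm{pd}_R(M^{\sigma^{-1}})=\mathrm{pd}_R(M)$ and your inequality survives, but the sequence as written does not exist. Similarly, $x^kM_0$ is only a \emph{quotient} of $M_0^{\sigma^{-k}}$, not isomorphic to it, since $x$ may annihilate part of $M_0$ (take $M=S/Sx$).

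The genuine gap is the last step. The countable-union bound you invoke gives $\mathrm{pd}_R\bigl(\bigcup_nM_n\bigr)\le 1+\sup_n\mathrm{pd}_R(M_n)$ and therefore needs a \emph{uniform} bound on $\mathrm{pd}_R(M_n)$; left regularity only gives finiteness for each $n$ separately, and it does not imply finite global dimension, so no uniform bound is automatic. Your proposed control fails precisely here: from $M_n/M_{n-1}$ being an $R$-quotient of $x^nM_0$ one cannot bound $\mathrm{pd}_R(M_n/M_{n-1})$ by $\mathrm{pd}_R(M_0)$ --- quotients of a module of finite projective dimension can have strictly larger, a priori unbounded, projective dimension --- so the induction on $n$ yields bounds that may grow without limit. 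As written, your argument establishes only the weaker statement that ${\rm lgld}(R)<\infty$ implies ${\rm lgld}(R[x;\sigma])<\infty$. The missing idea is a Noetherian stabilization of the graded pieces: multiplication by $x$ induces $R$-epimorphisms $(M_{n-1}/M_{n-2})^{\sigma^{-1}}\twoheadrightarrow M_n/M_{n-1}$, and the graded $S$-submodule of $\bigoplus_n M_n/M_{n-1}$ killed by $x$ is finitely generated, hence concentrated in finitely many degrees; therefore these epimorphisms are isomorphisms for $n\gg0$, the pieces $M_n/M_{n-1}$ are eventually constant up to $\sigma$-twist, $\sup_n\mathrm{pd}_R(M_n)<\infty$, and only then does the colimit step close. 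That stabilization is the heart of why \emph{regularity}, and not merely finite global dimension, passes to $R[x;\sigma]$, and it is absent from your sketch.
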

\begin{proof}
See \cite{McConnell}, Theorem 7.7.5.
\end{proof}

\begin{proposition}\label{3.5}
If $B$ is a filtered ring with filtration $\{B_p\}_{p\geq 0}$ such that $Gr(B)$ is left Noetherian,
left regular, and flat as right $B_0$-module, then $B$ is $PSF$ when $B_0$ is $PSF$.
\end{proposition}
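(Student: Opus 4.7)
The plan is to transfer the $PSF$ property along the tower $B_0 \to Gr(B) \to B$, reducing the problem on $B$ to the hypothesis on $B_0$ via good filtrations. As a first observation, by Proposition \ref{3.2.1b} the left regularity of $Gr(B)$ passes to $B$, and $B$ inherits left Noetherianity from $Gr(B)$ by the standard filtered-graded argument; in particular every finitely generated $B$-module admits a finite projective resolution by finitely generated projective $B$-modules. Hence it suffices to exhibit stable freeness for a fixed finitely generated projective $P$.

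Given such a $P$, fix a decomposition $B^n \cong P \oplus Q$ and endow $B^n$ with the natural filtration $(B^n)_p := B_p^n$. This induces good filtrations on $P$ and $Q$, and the splitting $Gr(B)^n \cong Gr(P) \oplus Gr(Q)$ exhibits $Gr(P)$ as a finitely generated graded projective $Gr(B)$-module. The next step is to show $Gr(P)$ is stably free over $Gr(B)$. Since $Gr(B)$ is positively graded with $Gr(B)_0 = B_0$ and is flat as a right $B_0$-module, the functor $N \mapsto N/Gr(B)_{>0} N \cong N\otimes_{Gr(B)} B_0$ carries finitely generated graded projective $Gr(B)$-modules to finitely generated projective $B_0$-modules. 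Applying it to $Gr(P)$ yields a $B_0$-module which is stably free by the $PSF$-hypothesis on $B_0$; a graded Nakayama-type lifting argument, which uses precisely the positive grading of $Gr(B)$ together with the right-flatness of $Gr(B)$ over $B_0$, then upgrades this to an isomorphism $Gr(P) \oplus Gr(B)^s \cong Gr(B)^r$ for suitable $r,s\geq 0$.

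Finally, I would lift the stable freeness back from $Gr(B)$ to $B$. By choosing preimages of a graded free basis, construct a filtered homomorphism $\varphi\colon B^r \to P \oplus B^s$ whose associated graded equals the isomorphism above. Because $Gr(B)$ is left Noetherian and all filtrations in sight are good, exhaustive and separated, the classical filtered-to-graded principle (see \cite{McConnell}, Chapter 7) ensures that $\varphi$ itself is an isomorphism as soon as $Gr(\varphi)$ is. Thus $P \oplus B^s \cong B^r$, and $B$ is $PSF$. The main obstacle is this last lifting together with the graded Nakayama step of the previous paragraph: both rely essentially on the positive grading of $Gr(B)$, on the right-flatness of $Gr(B)$ over $B_0$, and on the fact that the filtrations induced on $P$ and $Q$ from the natural filtration of $B^n$ are good; without these inputs the base change to $B_0$ would not preserve stable freeness, and a graded isomorphism would not faithfully propagate back to a filtered one.
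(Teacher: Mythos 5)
The paper itself gives no argument here --- it simply cites \cite{McConnell}, Theorem 12.3.2 --- so your proposal has to be judged on its own merits, and it contains a genuine gap at its very first structural step. You fix a splitting $B^n\cong P\oplus Q$, give $B^n$ the standard filtration $B_p^n$, and assert that the induced filtrations on $P$ and $Q$ yield a graded splitting $Gr(B)^n\cong Gr(P)\oplus Gr(Q)$, so that $Gr(P)$ is a finitely generated graded \emph{projective} $Gr(B)$-module. This is false in general: the projection $\pi\colon B^n\to B^n$ onto $P$ is merely $B$-linear and need not be filtered of degree $0$, so $B_p^n\neq (P\cap B_p^n)\oplus(Q\cap B_p^n)$; the induced filtration on $P$ is good and gives a strict exact sequence $0\to Gr(P)\to Gr(B)^n\to Gr(B^n/P)\to 0$, but this sequence of graded modules need not split. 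A concrete failure: for the Weyl algebra $A_1(K)$ with its standard filtration, a non-free projective left ideal $P$ typically has $Gr(P)$ equal to a height-two ideal of $K[x,y]$, which is not projective. Since the rest of your argument (base change to $B_0$, graded Nakayama, lifting a graded isomorphism to a filtered one) takes the graded projectivity of $Gr(P)$ as its starting point, the proof does not go through as written.

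The repair is exactly why the standard proof (the one in \cite{McConnell}) routes everything through finite free resolutions rather than through $Gr(P)$ itself. Your middle paragraph is the correct and useful ingredient, but it should be applied to \emph{arbitrary} finitely generated graded projective $Gr(B)$-modules, not to $Gr(P)$: it shows that every such module is stably free (flatness of $Gr(B)$ over $B_0$, graded Nakayama, and the $PSF$ hypothesis on $B_0$). Combined with left Noetherianity and left regularity of $Gr(B)$, this gives every finitely generated graded $Gr(B)$-module a finite resolution by finitely generated graded \emph{free} modules. One then equips $P$ with a good filtration, lifts a finite graded free resolution of $Gr(P)$ to a finite free resolution of $P$ over $B$ (this is the legitimate use of the filtered-to-graded lifting principle), and concludes by the equivalence (i)$\Leftrightarrow$(ii) of Theorem \ref{stablefreeconditions}: a projective module with a finite free resolution is stably free. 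Your final "lift the isomorphism" step is sound in principle but is never needed once the argument is organized this way.
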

\begin{proof}
See \cite{McConnell}, Theorem 12.3.2.
\end{proof}

\begin{theorem}\label{3.2.1a}
Let $A$ be a bijective skew $PBW$ extension of a ring $R$. If $R$ is a left regular and left
Noetherian ring, then $A$ is left regular.
\end{theorem}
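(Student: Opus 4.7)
The plan is to reduce left regularity of $A$ to that of its associated graded ring, and then compute the associated graded ring explicitly as an iterated skew polynomial ring to which Proposition \ref{3.2.3} can be applied repeatedly.

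First, by Theorem \ref{1.3.2} the bijective skew $PBW$ extension $A$ is a filtered ring whose associated graded ring $Gr(A)$ is a quasi-commutative bijective skew $PBW$ extension of $R$ in $n$ variables. Then, by Theorem \ref{1.3.3}, $Gr(A)$ is isomorphic to an iterated skew polynomial ring of endomorphism type
\begin{equation*}
Gr(A)\cong R[z_1;\theta_1][z_2;\theta_2]\cdots[z_n;\theta_n],
\end{equation*}
in which each $\theta_i$ is a \emph{bijective} endomorphism, i.e., an automorphism. This is exactly the setup in which Proposition \ref{3.2.3} produces left regularity.

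Next, I would argue by induction on the number of variables. The base ring $R_0:=R$ is left regular and left Noetherian by hypothesis. Assume $R_i:=R[z_1;\theta_1]\cdots[z_i;\theta_i]$ is left regular and left Noetherian; then $\theta_{i+1}$ restricted to $R_i$ is an automorphism of a left regular, left Noetherian ring, so Proposition \ref{3.2.3} gives that $R_{i+1}=R_i[z_{i+1};\theta_{i+1}]$ is left regular. To keep the induction going we also need left Noetherianity of $R_{i+1}$, which is immediate from the classical Hilbert basis theorem for skew polynomial rings (equivalently, a direct consequence of Theorem \ref{1.3.4} applied to the quasi-commutative bijective skew $PBW$ extension $R_{i+1}$ of $R$). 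After $n$ steps we conclude $Gr(A)$ is left regular.

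Finally, since $A$ is filtered and $Gr(A)$ is left regular, Proposition \ref{3.2.1b} applies and yields that $A$ itself is left regular. The only subtle point is making sure the hypothesis of Proposition \ref{3.2.3} is available at every step; the critical ingredient is that bijectivity of $A$ forces each $\theta_i$ in the iterated skew polynomial presentation of $Gr(A)$ to be bijective, not merely injective. If any $\theta_i$ failed to be an automorphism, Proposition \ref{3.2.3} would not apply, so Theorem \ref{1.3.3}(ii) is doing the essential work here.
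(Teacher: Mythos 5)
Your argument is correct and follows the same route as the paper: pass to $Gr(A)$ via Theorem \ref{1.3.2}, identify it as an iterated skew polynomial ring of automorphism type via Theorem \ref{1.3.3}, apply Proposition \ref{3.2.3} iteratively, and descend to $A$ with Proposition \ref{3.2.1b}. Your explicit induction maintaining left Noetherianity at each step is a detail the paper's one-line proof leaves implicit, but it is the same proof.
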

\begin{proof}
Theorems \ref{1.3.2} and \ref{1.3.3} say that $Gr(A)$ is isomorphic to a iterated skew polynomial
ring of automorphism type with coefficients in $R$, then the result follows from Propositions
\ref{3.2.3} and \ref{3.2.1b}.
\end{proof}

\begin{theorem}[Serre's theorem]\label{3.6}
Let $A=\sigma(R)\langle x_1,\dots,x_n\rangle$ be a bijective skew $PBW$ extension of a ring $R$
such that $R$ is left Noetherian, left regular and $PSF$. Then $A$ is $PSF$.
\end{theorem}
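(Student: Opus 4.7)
The plan is to apply Proposition \ref{3.5} to $B=A$ with $B_0=R$, using the canonical filtration from Theorem \ref{1.3.2}. This reduces the theorem to verifying three properties of $Gr(A)$: that it is left Noetherian, left regular, and flat as a right $R$-module. Once all three are in place, the PSF hypothesis on $R=B_0$ transfers directly to $A$.

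By Theorems \ref{1.3.2} and \ref{1.3.3}, bijectivity of $A$ guarantees that $Gr(A)$ is a quasi-commutative bijective skew $PBW$ extension of $R$, isomorphic to an iterated skew polynomial ring $R[z_1;\theta_1]\cdots[z_n;\theta_n]$ in which every $\theta_i$ is an automorphism. Right $R$-flatness comes for free from Proposition \ref{1.1.10a}(i), which gives $Gr(A)$ right $R$-free, hence flat. For left Noetherianity I would apply the Hilbert basis theorem (Theorem \ref{1.3.4}, or its classical skew polynomial version) inductively along the chain $R\subset R[z_1;\theta_1]\subset\cdots\subset Gr(A)$, using that left Noetherianity is inherited at each stage.

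For left regularity I would iterate Proposition \ref{3.2.3}. At the base step $R$ is left regular, left Noetherian, and $\theta_1$ is an automorphism, so $R[z_1;\theta_1]$ is left regular. Combining this with left Noetherianity and checking that $\theta_2$ extends to an automorphism of $R[z_1;\theta_1]$, a further application of Proposition \ref{3.2.3} gives left regularity of $R[z_1;\theta_1][z_2;\theta_2]$; after $n$ steps $Gr(A)$ is left regular. With the three hypotheses of Proposition \ref{3.5} verified, the PSF property of $R$ lifts to $A$, which is the desired conclusion.

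The hard part will be the inductive regularity step: at each stage one must check that the next endomorphism $\theta_{k+1}$ really is an automorphism of the full iterated ring $R[z_1;\theta_1]\cdots[z_k;\theta_k]$, not merely of $R$. This is where the bijectivity hypothesis on $A$ is genuinely used, through the quasi-commutative structure inherited by $Gr(A)$; apart from this bookkeeping, the theorem collapses cleanly onto the machinery already assembled in Propositions \ref{3.2.1b}--\ref{3.5}.
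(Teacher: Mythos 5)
Your proof is correct and follows essentially the same route as the paper: reduce to Proposition \ref{3.5} via the filtration of Theorem \ref{1.3.2}, and verify left Noetherianity (Theorem \ref{1.3.4}), left regularity (iterating Proposition \ref{3.2.3} on the iterated skew polynomial presentation from Theorem \ref{1.3.3}, which is exactly the content of Theorem \ref{3.2.1a}), and right $R$-flatness (Proposition \ref{1.1.10a}). The automorphism concern you flag at the end is already settled by Theorem \ref{1.3.3}(ii), which asserts bijectivity of each $\theta_i$ when $A$ is bijective.
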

\begin{proof}
By Theorem \ref{1.3.2}, $A$ is filtered, $A_0=R$, and $Gr(A)$ is a quasi-commutative bijective skew
$PBW$ extension of $R$; Theorem \ref{1.3.4} says that $Gr(A)$ is left Noetherian, and Theorem
\ref{3.2.1a} implies that $Gr(A)$ is left regular. Moreover, $Gr(A)$ is flat as right $R$-module
(see Proposition \ref{1.1.10a}), then assuming that $R$ is $PSF$ we get from Proposition \ref{3.5}
that $A$ is $PSF$.
\end{proof}

From Serre's theorem we conclude that the study of finitely generated projective modules over
bijective skew $PBW$ extensions is reduced to the investigation of stably free modules (of course
under certain conditions on the ring $R$ of coefficients). In a more general framework, and as
preparatory material for posterior studies in next sections, we are interested in studying when
stably free modules over enough arbitrary non-commutative rings are free. A well known result in
this direction is the Stafford's Theorem that we will show later. Many characterizations of stably
free modules will be presented also. There are different techniques to investigate stably free
modules, one of the purposes of the present work is to combine homological and matrix constructive
methods.

\subsection{$\mathcal{RC}$ and $\mathcal{IBN}$ rings}

In this section we recall some notations and well known elementary properties of linear algebra for
left modules over non-commutative rings. All rings are non-commutative and modules will be
considered on the left; $S$ will represent an arbitrary non-commutative ring; $S^r$ is the left
$S$-module of columns of size $r\times 1$; if $S^s\xrightarrow{f}S^r$ is an $S$-homomorphism then
there is a matrix associated to $f$ in the canonical bases of $S^r$ and $S^s$, denoted $F:=m(f)$,
and disposed by columns, i.e., $F\in M_{r\times s}(S)$. In fact, if $f$ is given by
\begin{align*}
S^s& \xrightarrow{f} S^r\ , \ \textbf{\emph{e}}_j \mapsto \textbf{\emph{f}}_j
\end{align*}
where $\{\textbf{\emph{e}}_1,\dots,\textbf{\emph{e}}_s\}$ is the canonical basis of $S^s$, $f$ can
be represented by a matrix, i.e., if $\textbf{\emph{f}}_j:=\begin{bmatrix}f_{1j} & \dots &
f_{rj}\end{bmatrix}^T$, then the matrix of $f$ in the canonical bases of $S^s$ and $S^r$ is
\begin{center}
$F:=\begin{bmatrix}\textbf{\emph{f}}_1 & \cdots & \textbf{\emph{f}}_s\end{bmatrix}=
\begin{bmatrix}
f_{11} & \cdots & f_{1s}\\
\vdots & & \vdots\\
f_{r1} & \cdots & f_{rs}
\end{bmatrix}\in M_{r\times s}(S)$.
\end{center}
Note that $Im(f)$ is the column module of $F$, i.e., the left $S$-module generated by the columns
of $F$, denoted by $\langle F \rangle$:
\begin{center}
$Im(f)=\langle f(\textbf{\emph{e}}_1),\dots,f(\textbf{\emph{e}}_s)\rangle=\langle
\textbf{\emph{f}}_1,\dots ,\textbf{\emph{f}}_s \rangle =\langle F \rangle$.
\end{center}
Moreover, observe that if $\textbf{\emph{a}}:=(a_1,\dots,a_s)^T\in S^s$, then
\begin{equation}\label{equ3.1.1}
f(\textbf{\emph{a}})=(\textbf{\emph{a}}^TF^T)^T.
\end{equation}
In fact,
\begin{align*}
f(\textbf{\emph{a}})&=a_1f(\textbf{\emph{e}}_1)+\cdots+a_sf(\textbf{\emph{e}}_s)=a_1\textbf{\emph{f}}_1+\cdots+a_s\textbf{\emph{f}}_s\\
&=a_1\begin{bmatrix}f_{11}\\ \vdots \\
f_{r1}\end{bmatrix}+\cdots+a_s\begin{bmatrix}f_{1s}\\ \vdots \\
f_{rs}\end{bmatrix}\\
&=\begin{bmatrix}a_1f_{11}+\cdots +a_sf_{1s}\\
\vdots \\
a_1f_{r1}+\cdots +a_sf_{rs}
\end{bmatrix}\\
& =(\begin{bmatrix}a_1 & \cdots & a_s\end{bmatrix}
\begin{bmatrix}f_{11} & \cdots & f_{r1}\\
\vdots & & \vdots \\f_{1s} & \cdots & f_{rs}\end{bmatrix})^T\\
&=(\textbf{\emph{a}}^TF^T)^T.
\end{align*}
Observe that function $m:Hom_{S}(S^s,S^r)\rightarrow M_{r\times s}(S)$ is bijective; moreover, if
$S^r\xrightarrow{g}S^p$ is a homomorphism, then the matrix of $gf$ in the canonical bases is
$m(gf)=(F^TG^T)^T$. Thus, $f:S^r\rightarrow S^r$ is an isomorphism if and only if $F^T\in GL_r(S)$.
Finally, let $C\in M_r(S)$; the columns of $C$ conform a basis of $S^r$ if and only if $C^T\in
GL_r(S)$.

We recall also that
\begin{center}
$Syz(\{\textbf{\emph{f}}_1,\dots ,\textbf{\emph{f}}_s\}):=\{\textbf{\emph{a}}:=(a_1,\dots,a_s)^T\in
S^s|a_1\textbf{\emph{f}}_1+\cdots+a_s\textbf{\emph{f}}_s=\textbf{0}\}$.
\end{center}
Note that
\begin{equation}
Syz(\{\textbf{\emph{f}}_1,\dots ,\textbf{\emph{f}}_s\})=\ker(f),
\end{equation}
but $Syz(\{\textbf{\emph{f}}_1,\dots ,\textbf{\emph{f}}_s\})\neq \ker(F)$ since we have
\begin{equation}\label{313}
\textbf{\emph{a}}\in Syz(\{\textbf{\emph{f}}_1,\dots ,\textbf{\emph{f}}_s\})\Leftrightarrow
\textbf{\emph{a}}^TF^T=\textbf{0}.
\end{equation}
A matrix characterization of finitely generated (f.g.) projective modules can be formulated in the
following way.
\begin{proposition}\label{5.1.1}
Let $S$ be an arbitrary ring and $M$ a $S$-module. Then, $M$ is a f.g. projective $S$-module if and
only if there exists a square matrix $F$ over $S$ such that $F^T$ is idempotent and $M=\langle
F\rangle$.
\end{proposition}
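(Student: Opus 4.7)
The plan is to translate the standard homological characterization ``projective $=$ direct summand of a free module'' into the matrix language set up in this section, keeping careful track of the column convention that makes $F^T$ rather than $F$ appear in the idempotence condition.

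For the forward direction, assume $M$ is finitely generated projective, so that $M\oplus N\cong S^r$ for some $r\geq 0$ and some $S$-module $N$. Viewing $M$ as a submodule of $S^r$ under this isomorphism, the projection $\pi\colon S^r\to S^r$ onto $M$ along $N$ is an $S$-endomorphism with $\pi^2=\pi$ and $\mathrm{Im}(\pi)=M$. Let $F:=m(\pi)\in M_r(S)$. By the computation in (\ref{equ3.1.1}) and the composition rule $m(gf)=(F^TG^T)^T$ recalled in this section, $\pi^2=\pi$ translates into $(F^T)^2=F^T$, i.e.\ $F^T$ is idempotent, while $M=\mathrm{Im}(\pi)=\langle F\rangle$ by the description of the image as the column module.

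For the converse, suppose $F\in M_r(S)$ is square with $F^T$ idempotent and $M=\langle F\rangle$. Define $\pi\colon S^r\to S^r$ as the $S$-homomorphism whose matrix in the canonical basis is $F$, so that $\mathrm{Im}(\pi)=\langle F\rangle=M$. The composition rule $m(\pi\circ\pi)=(F^TF^T)^T$ together with $(F^T)^2=F^T$ gives $\pi^2=\pi$, so $\pi$ is an idempotent endomorphism of $S^r$. Hence $S^r=\mathrm{Im}(\pi)\oplus\ker(\pi)=M\oplus\ker(\pi)$, which exhibits $M$ as a direct summand of the finitely generated free module $S^r$ and so as a finitely generated projective $S$-module.

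There is no real obstacle; the only mildly delicate point is bookkeeping. Because the paper writes homomorphisms as $f(\textbf{\emph{a}})=(\textbf{\emph{a}}^TF^T)^T$, the order of matrix multiplication for compositions is the reverse of the order of the homomorphisms, which is precisely why the idempotence condition is stated for $F^T$ rather than for $F$ itself. As long as one applies the composition formula $m(gf)=(F^TG^T)^T$ consistently, the equivalence between ``$\pi$ is idempotent'' and ``$F^T$ is idempotent'' is immediate, and the rest of the argument is the classical direct-summand-of-a-free-module characterization.
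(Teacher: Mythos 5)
Your proof is correct and follows essentially the same route as the paper's: represent the projection onto $M$ by its matrix $F$, use the composition rule $m(gf)=(F^TG^T)^T$ to translate $\pi^2=\pi$ into $F^TF^T=F^T$, and conversely read an idempotent $F^T$ as an idempotent endomorphism exhibiting $\langle F\rangle$ as a direct summand of $S^s$. The only (trivial) difference is that the paper disposes of the case $M=0$ separately by taking $F=0$.
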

\begin{proof}
$\Rightarrow)$: If $M=0$, then $F=0$; let $M\neq 0$, there exists $s\geq 1$ and a $M'$ such that
$S^s=M\oplus M'$; let $f:S^s\to S^s$ be the projection on $M$ and $F$ the matrix of $f$ in the
canonical basis of $S^s$. Then, $f^2=f$ and $(F^TF^T)^T=F$, so $F^TF^T=F^T$; note that
$M=Im(f)=\langle F\rangle$.

$\Leftarrow)$: Let $f:S^s\to S^s$ be the homomorphism defined by $F$ (see (\ref{equ3.1.1})); from
$F^TF^T=F^T$ we get that $f^2=f$, moreover, since $M=\langle F\rangle$, then $Im(f)=M$ and hence
$M$ is direct summand of $S^s$, i.e., $M$ is f.g. projective (observe that the complement $M'$ of
$M$ is $\ker(f)$ and $f$ is the projection on $M$).
\end{proof}
\begin{remark}\label{5.1.2}
(i) When $S$ is commutative, or when we consider right modules instead of left modules,
(\ref{equ3.1.1}) says that $f(\textbf{\emph{a}})=F\textbf{\emph{a}}$. Moreover, in such cases
$Syz(\{\textbf{\emph{f}}_1,\dots ,\textbf{\emph{f}}_s\})=\ker(F)$ and the matrix of a compose
homomorphism $gf$ is given by $m(gf)=m(g)m(f)$. Note that $f:S^r\rightarrow S^r$ is an isomorphism
if and only if $F\in GL_r(S)$; moreover, $C\in GL_r(S)$ if and only if its columns conform a basis
of $S^r$. In addition, Proposition \ref{5.1.1} says that $M$ is a f.g. projective $S$-module if and
only if there exists a square matrix $F$ over $S$ such that $F$ is idempotent and $M=\langle
F\rangle$.

(ii) When the matrices of homomorphisms of left modules are disposed by rows instead of by columns,
i.e., if $S^{1\times s}$ is the left free module of rows vectors of length $s$ and the matrix of
the homomorphism $S^{1\times s} \xrightarrow{f} S^{1\times r}$ is defined by
\begin{center}
$F'=\begin{bmatrix}
f'_{11} & \cdots & f'_{1r}\\
\vdots & & \vdots\\
f'_{s1} & \cdots & f'_{sr}
\end{bmatrix}:=
\begin{bmatrix}
f_{11} & \cdots & f_{r1}\\
\vdots & & \vdots\\
f_{1s} & \cdots & f_{rs}
\end{bmatrix}\in M_{s\times r}(S)$,
\end{center}
then
\begin{equation}\label{euq3.2.1}
f(a_1,\dots,a_s)=(a_1,\dots,a_s)F',
\end{equation}
i.e., $f(\textbf{\emph{a}}^T)=\textbf{\emph{a}}^TF^T$. Thus, the values given by (\ref{euq3.2.1})
and (\ref{equ3.1.1}) agree since $F'=F^T$. Moreover, the composed homomorphism $gf$ means that $g$
acts first and then acts $f$, and hence, the matrix of $gf$ is given by $m(gf)=m(g)m(f)$. Note that
$f:S^{1\times r}\rightarrow S^{1\times r}$ is an isomorphism if and only if $m(f)\in GL_r(S)$;
moreover, $C\in GL_r(S)$ if and only if its rows conform a basis of $S^{1\times r}$. This left-row
notation is used in \cite{Cohn1}. Observe that with this notation, the proof of Proposition
\ref{5.1.1} says that $M$ is a f.g. projective $S$-module if and only if there exists a square
matrix $F$ over $S$ such that $F$ is idempotent and $M=\langle F\rangle$, but in this case $\langle
F\rangle$ represents the module generated by the rows of $F$. Note that Proposition \ref{5.1.1}
could has been formulated this way: In fact, the set of idempotents matrices of $M_s(S)$ coincides
with the set $\{F^T|F\in M_s(S), F^T \ \text{idempotent}\}$.
\end{remark}
\begin{definition}[\cite{Lam}]\label{8.1.2}
Let $S$ be a ring.
\begin{enumerate}
\item[{\rm(i)}]$S$ satisfies the rank condition
{\rm(}$\mathcal{RC}${\rm)} if for any integers $r,s\geq 1$, given an epimorphism
$S^r\xrightarrow{f} S^s$, then $r\geq s$.
\item[{\rm(ii)}]$S$ is
an $\mathcal{IBN}$ ring {\rm(}Invariant Basis Number{\rm)} if for any integers $r,s\geq 1$,
$S^r\cong S^s$ if and only if $r=s$.
\end{enumerate}
\begin{proposition}
Let $S$ be a ring.
\begin{enumerate}
\item[\rm (i)]$S$ is $\mathcal{RC}$ if and only if given any matrix $F\in M_{s\times
r}(S)$ the following condition holds:
\begin{center}
if $F$ has a right inverse then $r\geq s$.
\end{center}
\item[\rm (ii)]$S$ is $\mathcal{RC}$ if and only if given any matrix $F\in M_{s\times
r}(S)$ the following condition holds:
\begin{center}
if $F$ has a left inverse then $s\geq r$.
\end{center}
\end{enumerate}
\begin{proof}
See \cite{Gallego3}.
\end{proof}
\end{proposition}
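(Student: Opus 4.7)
The plan is to prove both equivalences by translating between the matrix inverse conditions and the defining condition of $\mathcal{RC}$ (namely that every epimorphism $S^r \twoheadrightarrow S^s$ forces $r \geq s$). The key dictionary is the matrix convention established in the excerpt: a homomorphism $f : S^r \to S^s$ corresponds to $F := m(f) \in M_{s\times r}(S)$, and for $f : S^s \to S^r$, $g : S^r \to S^p$ the composition rule reads $m(gf) = (F^T G^T)^T$, where $F = m(f)$, $G = m(g)$. Unwinding this, if $f : S^r \to S^s$ has matrix $F$ and $g : S^s \to S^r$ has matrix $G$, then $fg = \mathrm{id}_{S^s}$ is equivalent to $FG = I_s$, and $gf = \mathrm{id}_{S^r}$ is equivalent to $GF = I_r$. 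The other key input is that every surjection of $S$-modules onto a free (hence projective) module splits.

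For part (i), forward direction: assume $S$ is $\mathcal{RC}$ and $F \in M_{s\times r}(S)$ has a right inverse $G \in M_{r\times s}(S)$, i.e., $FG = I_s$. Interpreting $F$ as the matrix of $f : S^r \to S^s$ and $G$ as that of $g : S^s \to S^r$, the identity $FG = I_s$ translates to $fg = \mathrm{id}_{S^s}$, so $f$ is a split epimorphism. By $\mathcal{RC}$, $r \geq s$. Reverse direction: suppose every $F \in M_{s\times r}(S)$ with a right inverse satisfies $r \geq s$, and let $f : S^r \twoheadrightarrow S^s$ be an epimorphism. Since $S^s$ is projective, $f$ splits; choose $g : S^s \to S^r$ with $fg = \mathrm{id}_{S^s}$. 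Then $F := m(f)$ and $G := m(g)$ satisfy $FG = I_s$, so by hypothesis $r \geq s$.

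Part (ii) is entirely parallel, but with the opposite one-sided inverse. If $F \in M_{s\times r}(S)$ has a left inverse $H \in M_{r\times s}(S)$, i.e., $HF = I_r$, then writing $F = m(f)$ for $f : S^r \to S^s$ and $H = m(h)$ for $h : S^s \to S^r$, the equation $HF = I_r$ translates via the composition formula to $hf = \mathrm{id}_{S^r}$, making $h : S^s \twoheadrightarrow S^r$ a split epimorphism. So $\mathcal{RC}$ forces $s \geq r$. Conversely, given any epimorphism $h : S^s \twoheadrightarrow S^r$, project onto the free module $S^r$: it splits by some $f : S^r \to S^s$ with $hf = \mathrm{id}_{S^r}$, yielding $HF = I_r$ at the matrix level, so the hypothesis gives $s \geq r$.

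The only real point of vigilance is bookkeeping the matrix convention: because matrices act on the right in the formula $f(\textbf{\emph{a}}) = (\textbf{\emph{a}}^T F^T)^T$, and because $m(gf) = (F^T G^T)^T$ rather than $m(g)m(f)$, one must check that ``right inverse of $F$'' corresponds to ``right inverse of $f$ as a map'' (i.e., a section making $f$ a split epi) and not to the symmetric condition. Once this translation is verified once, both (i) and (ii) follow by the same three-line argument using projectivity of free modules and the definition of $\mathcal{RC}$.
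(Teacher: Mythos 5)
Your overall architecture is the standard (and surely the intended) one: free modules are projective, so every epimorphism onto $S^s$ splits, and the splitting is encoded by a one-sided invertible matrix; the easy direction just reverses this. The paper itself only cites \cite{Gallego3}, so the comparison has to be with what a correct proof must do. The problem is that your ``key dictionary'' is unwound incorrectly, at precisely the point you flag as needing vigilance. With the paper's convention $f(\textbf{\emph{a}})=(\textbf{\emph{a}}^TF^T)^T$, the composite of $g:S^s\to S^r$ (matrix $G\in M_{r\times s}(S)$) followed by $f:S^r\to S^s$ (matrix $F\in M_{s\times r}(S)$) has matrix $m(fg)=(G^TF^T)^T$, so $fg=i_{S^s}$ is equivalent to $G^TF^T=I_s$, \emph{not} to $FG=I_s$: over a noncommutative ring $(G^TF^T)^T\neq FG$ in general, since the $(j,i)$ entry of $(G^TF^T)^T$ is $\sum_k g_{ki}f_{jk}$ while that of $FG$ is $\sum_k f_{jk}g_{ki}$. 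Already for $r=s=1$ the two conditions read $ba=1$ versus $ab=1$, which differ in a ring that is not Dedekind-finite. Consequently, in the forward direction of (i) the hypothesis $FG=I_s$ does not make your map $f$ surjective, and in the reverse direction the splitting $fg=i_{S^s}$ does not hand you a right inverse of $F$. The same misstep is repeated verbatim in (ii).

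The repair is short but it is not the argument you wrote. Given $FG=I_s$, the left $S$-module homomorphism $S^{1\times r}\to S^{1\times s}$, $\textbf{\emph{a}}\mapsto \textbf{\emph{a}}G$, is a split epimorphism (a preimage of $\textbf{\emph{b}}$ is $\textbf{\emph{b}}F$, because scalars act on the left and the matrix on the right); in the paper's column convention this is the map whose matrix is $G^T\in M_{s\times r}(S)$, and $\mathcal{RC}$ then gives $r\geq s$. Conversely, splitting an epimorphism $S^r\to S^s$ yields $G^TF^T=I_s$, i.e.\ it is the $s\times r$ matrix $G^T$ (not $F$) that acquires a right inverse, namely $F^T$; since $G^T$ has the same size $s\times r$, condition (i) still returns $r\geq s$. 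So your conclusions are all correct and the proof is salvageable with this substitution, but as written the central translation step is false, and the failure occurs exactly in the noncommutative setting that makes the proposition (and the paper's elaborate transpose conventions) nontrivial in the first place.
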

\end{definition}
\begin{proposition}
$\mathcal{RC}\Rightarrow \mathcal{IBN}$.
\end{proposition}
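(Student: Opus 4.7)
The plan is to unpack both definitions and observe that an isomorphism is, in particular, a surjection that can be inverted, so the rank condition can be applied in both directions. The statement really comes down to a one-line symmetry argument.

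More concretely, I would start by assuming $S$ is $\mathcal{RC}$ and fixing integers $r, s \geq 1$ together with an isomorphism $\varphi \colon S^r \xrightarrow{\sim} S^s$. Since any isomorphism is in particular surjective, $\varphi$ is an epimorphism $S^r \to S^s$, so by the rank condition we obtain $r \geq s$. Next I would apply exactly the same reasoning to the inverse $\varphi^{-1} \colon S^s \xrightarrow{\sim} S^r$, which is itself an isomorphism and hence an epimorphism, yielding $s \geq r$. Combining the two inequalities gives $r = s$. The reverse implication of $\mathcal{IBN}$ (that $r = s$ forces $S^r \cong S^s$) is trivial via the identity, so nothing more is needed.

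There is essentially no obstacle: the only thing to verify is that the $\mathcal{RC}$ hypothesis is indeed stated for every pair of positive integers independently, so that we may apply it to both the pair $(r,s)$ via $\varphi$ and to the pair $(s,r)$ via $\varphi^{-1}$. No matrix-theoretic reformulation (such as the proposition preceding the statement) is required for this implication, although one could rephrase the argument by saying that the matrix $m(\varphi)^T$ has both a left and a right inverse and then invoking part (i) and part (ii) of the previous proposition to obtain $r \geq s$ and $s \geq r$ respectively. Either formulation fits in a few lines.
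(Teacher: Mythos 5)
Your proof is correct and follows exactly the same route as the paper: an isomorphism is an epimorphism, so $\mathcal{RC}$ applied to $\varphi$ gives $r\geq s$ and applied to $\varphi^{-1}$ gives $s\geq r$. The additional remark that the trivial direction of $\mathcal{IBN}$ needs no argument is fine and adds nothing problematic.
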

\begin{proof}
Let $S^r\xrightarrow{f}S^s$ be an isomorphism, then $f$ is an epimorphism, and hence $r\geq s$;
considering $f^{-1}$ we get that $s\geq r$.
\end{proof}
\begin{example}\label{5.1.5}
Most of rings are $\mathcal{RC}$, and hence, $\mathcal{IBN}$.

(i) Any field $K$ is $\mathcal{RC}$: let $K^r\xrightarrow{f}K^s$ be an epimorphism, then
$\dim(K^r)=r=\dim(\ker(f))+s$, so $r\geq s$.

(ii) Let $S$ and $T$ be rings and let $S\xrightarrow{f} T$ be a ring homomorphism, if $T$ is a
$\mathcal{RC}$ ring then $S$ is also a $\mathcal{RC}$ ring. In fact, $T$ is a right $S$-module,
$t\cdot s:=tf(s)$; suppose that $S^r\xrightarrow{f}S^s$ is an epimorphism, then $T\otimes_S
S^r\xrightarrow{i_T\otimes f}T\otimes_S S^s$ is also an epimorphism of left $T$-modules, i.e., we
have an epimorphism $T^r\rightarrow T^s$, so $r\geq s$ (a similar result and proof is valid for the
$\mathcal{IBN}$ property).

(iii) We can apply the property proved in (ii) in many situations. For example, any commutative
ring $S$ is $\mathcal{RC}$: let $J$ be a maximal ideal of $S$, then the canonical homomorphism
$S\rightarrow S/J$ shows that $S$ is $\mathcal{RC}$ since $S/J$ is a field.

(iv) Any ring $S$ with finite uniform dimension (Goldie dimension, see \cite{McConnell} and
\cite{Goodearl}) is $\mathcal{RC}$: in fact, suppose that $S^r\xrightarrow{f}S^s$ is an
epimorphism, then $S^r\cong S^s\oplus M$ and hence $r\, {\rm udim}(S)=s\, {\rm udim}(S)+{\rm
udim}(M)$, so $r\geq s$.

(v) Since any left Noetherian ring $S$ has finite uniform dimension, then $S$ is $\mathcal{RC}$. In
particular, any left Artinian ring is $\mathcal{RC}$.
\end{example}
Since the objects studied in the present monograph are the skew $PBW$ extensions, it is natural to
investigate the $\mathcal{IBN}$ and $\mathcal{RC}$ properties for these rings.
\begin{proposition}\label{7.3.1}
Let $B$ be a filtered ring. If $Gr(B)$ is $\mathcal{RC}$ $($$\mathcal{IBN}$$)$, then $B$ is
$\mathcal{RC}$ $($$\mathcal{IBN}$$)$.
\end{proposition}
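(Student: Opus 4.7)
I treat the $\mathcal{RC}$ case in detail; the $\mathcal{IBN}$ case is entirely analogous, replacing one-sided inverses by two-sided ones throughout. By the matrix characterization in the immediately preceding Proposition, $B$ is $\mathcal{RC}$ if and only if every matrix $F\in M_{s\times r}(B)$ admitting a right inverse satisfies $r\geq s$. So I assume $F\in M_{s\times r}(B)$ and $G\in M_{r\times s}(B)$ satisfy $FG=I_s$ and set out to prove $r\geq s$.

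The strategy is to lift this matrix identity to one over $Gr(B)$. Concretely, I equip the free modules $B^r$ and $B^s$ with shifted filtrations $F_p B^r:=\bigoplus_{j=1}^r B_{p+a_j}$ and $F_p B^s:=\bigoplus_{i=1}^s B_{p+b_i}$, with integer shifts $a_j,b_i$ chosen so that the $B$-linear maps $f\colon B^r\to B^s$ and $g\colon B^s\to B^r$ determined by $F$ and $G$ both become filtered homomorphisms of degree zero. Applying the associated-graded functor then produces graded $Gr(B)$-linear maps $\overline{f}\colon Gr(B)^r\to Gr(B)^s$ and $\overline{g}\colon Gr(B)^s\to Gr(B)^r$; since the identity map is trivially strict, the relation $fg=\mathrm{id}_{B^s}$ descends to $\overline{f}\,\overline{g}=\mathrm{id}_{Gr(B)^s}$, so in matrices $\overline{F}\,\overline{G}=I_s$ over $Gr(B)$. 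The $\mathcal{RC}$ hypothesis on $Gr(B)$ then forces $r\geq s$, as required. In the $\mathcal{IBN}$ case one also has $GF=I_r$, which descends to $\overline{G}\,\overline{F}=I_r$ and yields $r=s$ from $\mathcal{IBN}$ of $Gr(B)$.

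The main obstacle is the simultaneous choice of shifts: demanding that both $f$ and $g$ be filtered of degree zero imposes $\mathrm{ord}(F_{ij})\leq a_j-b_i$ and $\mathrm{ord}(G_{ji})\leq b_i-a_j$, which combined give the bound $\mathrm{ord}(F_{ij})+\mathrm{ord}(G_{ji})\leq 0$ for every pair $(i,j)$. This is not automatic. To get around it one exploits the cancellations implied by $\sum_j F_{ij}G_{ji}=1\in B_0$: the high-degree contributions to this sum must cancel pairwise. Equivalently, and perhaps more transparently, one can lift the identity $FG=I_s$ to a matrix identity over the Rees ring $R(B):=\bigoplus_{p\geq 0} B_p t^p$ by multiplying each entry by an appropriate power of~$t$ determined by the shifts $a_j$, $b_i$; since $R(B)/tR(B)\cong Gr(B)$, reduction modulo~$t$ then delivers the identity $\overline{F}\,\overline{G}=I_s$ in $M_s(Gr(B))$ and completes the argument.
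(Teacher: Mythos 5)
The central step of your argument --- descending the identity $FG=I_s$ over $B$ to an identity $\overline{F}\,\overline{G}=I_s$ over $Gr(B)$ --- is precisely the point you flag as ``the main obstacle,'' and neither of your two proposed resolutions closes it. The required shifts simply do not exist in general: as you observe, degree-zero filteredness of both $f$ and $g$ forces $\mathrm{ord}(F_{ij})+\mathrm{ord}(G_{ji})\leq 0$, and since the filtration is positive this would force every pair of matching nonzero entries to lie in $B_0$. Already for $B=K[x]$ with the degree filtration, $F=\begin{bmatrix}1+x & -1\end{bmatrix}$ and $G=\begin{bmatrix}1 & x\end{bmatrix}^T$ satisfy $FG=1$, yet the constraints $1\leq a_1-b_1$ and $0\leq b_1-a_1$ are incompatible; note also that taking leading symbols entrywise gives $\overline{F}\,\overline{G}=x\cdot 1+(-1)\cdot x=0\neq 1$, so the symbol map genuinely fails to preserve the identity (it is neither additive nor multiplicative where cancellation occurs). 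The appeal to ``pairwise cancellation'' in $\sum_j F_{ij}G_{ji}=1$ does not produce the shifts, and the Rees-ring variant inherits the identical difficulty: the ``appropriate powers of $t$'' are determined by the very shifts whose existence is in question, and without them the homogenized product either is not $I_s$ or reduces to $0$ modulo $t$. (This example defeats only the descent mechanism, not the proposition, but it shows the mechanism cannot be repaired by a cleverer choice of shifts.)

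The paper circumvents this by never asking the splitting $g$ to be filtered. It works with the epimorphism $f:B^r\to B^s$ alone, keeps the standard filtration on $B^r$, and filters $B^s$ by the image filtration $F_p(B^s):=B_p\bigl(B_0\cdot f(e_1)+\cdots+B_0\cdot f(e_r)\bigr)$; with this choice $f$ is filtered and strict essentially by construction, so $Gr(f):Gr(B)^r\to Gr(B^s)$ is surjective. The real content is then to show that $Gr(B^s)$, computed with respect to this nonstandard filtration, is still graded-free of rank $s$ over $Gr(B)$, which the paper does by exhibiting the canonical basis of $B^s$ as a filtered basis after a uniform shift by some $p$. Your proposal contains no analogue of this step, and it is the step that actually carries the proof; as written, the argument does not go through.
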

\begin{proof}
Let $\{B_p\}_{p\geq 0}$ be the filtration of $B$ and $f:B^r\to B^s$ an epimorphism. For $M:=B^r$ we
consider the standard positive filtration given by
\begin{center}
$F_0(M):=B_0\cdot e_1+\cdots+ B_0\cdot e_r$, $F_p(M):=B_pF_0(M)$, $p\geq 1$,
\end{center}
where $\{e_i\}_{i=1}^r$ is the canonical basis of $B^r$. Let $e_i':=f(e_i)$, then $B^s$ is
generated by $\{e_i'\}_{i=1}^r$ and $N:=B^s$ has an standard positive filtration given by
\begin{center}
$F_0(N):=B_0\cdot e_1'+\cdots+ B_0\cdot e_r'$, $F_p(N):=B_pF_0(N)$, $p\geq 1$.
\end{center}
Note that $f$ is filtered and strict: In fact, $f(F_p(M))=B_pf(F_0(M))=B_p(B_0\cdot f(e_1)+\cdots
+B_0\cdot f(e_r))=B_p(B_0\cdot e_1'+\cdots+ B_0\cdot e_r')=B_pF_0(N)=F_p(N)$. This implies that
$Gr(M)\xrightarrow{Gr(f)}Gr(N)$ is surjective. If we prove that $Gr(M)$ and $Gr(N)$ are free over
$Gr(B)$ with bases of $r$ and $s$ elements, respectively, then from the hypothesis we conclude that
$r\geq s$ and hence $B$ is $\mathcal{RC}$.

Since every $e_i\in F_0(M)$ and $F_p(M)=\sum_{i=1}^r \oplus B_p\cdot e_i$, $M$ is filtered-free
with filtered-basis $\{e_i\}_{i=1}^r$, so $Gr(M)$ is graded-free with graded-basis
$\{\overline{e_i}\}_{i=1}^r$, $\overline{e_i}:=e_i+F_{-1}(M)=e_i$ (recall that by definition of
positive filtration, $F_{-1}(M):=0$). For $Gr(N)$ note that $N$ is also filtered-free with respect
the filtration $\{F_p(N)\}_{p\geq 0}$ given above: Indeed, we will show next that the canonical
basis $\{f_j\}_{j=1}^s$ of $N$ is a filtered basis. If $f_j=x_{j1}\cdot e_1'+\cdots+x_{jr}\cdot
e_r'$, with $x_{ji}\in B_{p_{ij}}$, let $p:=\max\{p_{ij}\}$, $1\leq i\leq r$, $1\leq j\leq s$, then
$f_j\in F_p(N)$, moreover, for every $q$, $B_{q-p}\cdot f_1\oplus \cdots \oplus B_{q-p}\cdot
f_s\subseteq B_{q-p}F_p(N)\subseteq F_q(N)$ (recall that for $k<0$, $B_k=0$); in turn, let $x\in
F_q(N)$, then $x=b_1\cdot f_1+\cdots+b_s\cdot f_s$ and in $Gr(N)$ we have $\overline{x}\in
Gr(N)_q$, $\overline{x}=\overline{b_1}\cdot \overline{f_1}+\cdots+\overline{b_s}\cdot
\overline{f_s}$, if $b_j\in B_{u_j}$, let $u:=\max\{u_{j}\}$, so $\overline{b_j}\cdot
\overline{f_j}\in Gr(N)_{u+p}$, so $q=u+p$, i.e., $u=q-p$ and hence $x\in B_{q-p}\cdot f_1\oplus
\cdots \oplus B_{q-p}\cdot f_s$, Thus, we have proved that $B_{q-p}\cdot f_1\oplus \cdots \oplus
B_{q-p}\cdot f_s=F_q(N)$, for every $q$, and consequently, $\{f_j\}_{j=1}^s$ is a filtered basis of
$N$. From this we conclude that $Gr(N)$ is graded-free with graded-basis
$\{\overline{f_j}\}_{j=1}^s$, $\overline{f_j}:=f_j+F_{p-1}(N)$.

We can repeat the previuos proof for the $\mathcal{IBN}$ property but assuming that $f$ is an
isomorphism.
\end{proof}

\begin{corollary}
Let $A$ be a skew $PBW$ extension of a ring $R$. Then, $A$ is $\mathcal{RC}$ $(\mathcal{IBN})$ if
and only if $R$ is $\mathcal{RC}$ $(\mathcal{IBN})$.
\end{corollary}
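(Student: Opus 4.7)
The plan is to bootstrap both directions of the corollary directly from the tools already collected in the excerpt: the ring homomorphisms relating $R$ and $A$ (together with Example \ref{5.1.5}(ii)), the filtered/graded machinery of Theorems \ref{1.3.2}--\ref{1.3.3}, and the filtration-to-ring transfer provided by Proposition \ref{7.3.1}.

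For the ``only if'' direction, I would use the ring inclusion $R\hookrightarrow A$ given by condition (i) of Definition \ref{gpbwextension}. Since $A$ is a free left $R$-module (hence in particular $A$ is faithfully flat as a right $R$-module via the $R$-bimodule structure), the argument of Example \ref{5.1.5}(ii) applies verbatim: tensoring an epimorphism $R^r\twoheadrightarrow R^s$ up to $A\otimes_R(-)$ produces an epimorphism $A^r\twoheadrightarrow A^s$, so if $A$ is $\mathcal{RC}$ then $r\ge s$ and $R$ is $\mathcal{RC}$; the same argument with ``isomorphism'' in place of ``epimorphism'' handles $\mathcal{IBN}$.

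For the ``if'' direction, I would work through the associated graded ring. By Theorem \ref{1.3.2}, $A$ carries the filtration (\ref{eq1.3.1a}) and $Gr(A)$ is a quasi-commutative skew $PBW$ extension of $R$. Because $Gr(A)$ is quasi-commutative, Remark \ref{identities}(iii) provides a surjective ring homomorphism $Gr(A)\to R$ (evaluation at $0$). Invoking Example \ref{5.1.5}(ii) with this homomorphism, if $R$ is $\mathcal{RC}$ (resp.\ $\mathcal{IBN}$) then so is $Gr(A)$. Finally, Proposition \ref{7.3.1} transfers the property from $Gr(A)$ to the filtered ring $A$, giving that $A$ is $\mathcal{RC}$ (resp.\ $\mathcal{IBN}$).

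The proof should be essentially a two-line chaining of results, so there is no serious obstacle; the only point that deserves a moment of care is checking that the hypotheses of Example \ref{5.1.5}(ii) are met in each application, i.e.\ that we really do have ring homomorphisms $R\to A$ and $Gr(A)\to R$ (the first is the defining inclusion, the second comes from the quasi-commutativity of the associated graded and Remark \ref{identities}(iii)). Everything else is a direct appeal to the machinery already proved.
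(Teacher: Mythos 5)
Your proof is correct and follows essentially the same route as the paper: the forward direction via the inclusion $R\hookrightarrow A$ and Example \ref{5.1.5}(ii), and the converse by passing to $Gr(A)$, pulling the property back from $R$ along a surjection onto $R$, and lifting to $A$ with Proposition \ref{7.3.1}. The only (harmless) difference is that you use the single evaluation-at-$0$ homomorphism $Gr(A)\to R$ from Remark \ref{identities}(iii) where the paper iterates the one-variable evaluation through the iterated skew polynomial presentation of Theorem \ref{1.3.3}; your parenthetical about faithful flatness is unnecessary, since Example \ref{5.1.5}(ii) only needs right exactness of the tensor product.
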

\begin{proof}
We consider only the proof for $\mathcal{RC}$, the case $\mathcal{IBN}$ is completely analogous.

$\Rightarrow)$: Since $R\hookrightarrow A$, Example \ref{5.1.5} shows that if $A$ is
$\mathcal{RC}$, then $R$ is $\mathcal{RC}$.

$\Leftarrow)$: We consider first the skew polynomial ring $R[x;\sigma]$ of endomorphism type, then
$R[x;\sigma]\to R$ given by $p(x)\to p(0)$ is a ring homomorphism, so $R[x;\sigma]$ is
$\mathcal{RC}$ since $R$ is $\mathcal{RC}$. By Theorem \ref{1.3.3}, $Gr(A)$ is isomorphic to an
iterated skew polynomial ring $R[z_1;\theta_1]\cdots [z_{n};\theta_n]$, so $Gr(A)$ is
$\mathcal{RC}$. Only rest to apply Proposition \ref{7.3.1}.
\end{proof}

\begin{remark}\label{5.1.6}
(i) The condition $\mathcal{IBN}$ for rings is independent of the side we are considering the
modules. In fact, if we define left $\mathcal{IBN}$ rings and right $\mathcal{IBN}$ rings,
depending on left or right free $S$-modules, then $S$ is left $\mathcal{IBN}$ if and only if $S$ is
right $\mathcal{IBN}$ (see \cite{Lezama6}). The same is true for the $\mathcal{RC}$ property.

(ii) \textbf{From now on we will assume that all rings considered in the present work are
$\mathcal{RC}$}.
\end{remark}

\subsection{Characterizations of stably free modules}

\begin{definition}\label{6.2.7}
Let $M$ be a $S$-module and $t\geq 0$ an integer. $M$ is stably free of rank $t\geq 0$ if there
exist an integer $s\geq 0$ such that $S^{s+t}\cong S^s\oplus M$.
\end{definition}
The rank of $M$ is denoted by $\rm{rank}(M)$. Note that any stably free module $M$ is finitely
generated and projective. Moreover, as we will show in the next proposition, $\rm{rank}(M)$ is well
defined, i.e., $\rm{rank}(M)$ is unique for $M$.
\begin{proposition}\label{stablyfree1}
Let $t,t',s,s'\geq 0$ integers such that $S^{s+t}\cong S^s\oplus M$ and $S^{s'+t'}\cong
S^{s'}\oplus M$. Then, $t'=t$.
\end{proposition}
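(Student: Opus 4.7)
The plan is to reduce the well-definedness of the rank to the $\mathcal{IBN}$ property of $S$, which we may invoke under the running assumption (Remark \ref{5.1.6}(ii)) that all rings in this work are $\mathcal{RC}$, hence $\mathcal{IBN}$.

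First I would add an $S^{s'}$ summand to both sides of the first isomorphism and an $S^{s}$ summand to both sides of the second, obtaining
\begin{center}
$S^{s+t}\oplus S^{s'}\cong S^{s}\oplus M\oplus S^{s'}$ \quad and \quad $S^{s'+t'}\oplus S^{s}\cong S^{s'}\oplus M\oplus S^{s}$.
\end{center}
The right-hand sides are obviously isomorphic (associativity and commutativity of the direct sum on modules), so splicing the two isomorphisms together gives $S^{s+t+s'}\cong S^{s+s'+t'}$.

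Now I would invoke the $\mathcal{IBN}$ property: since $S$ is $\mathcal{RC}$, and $\mathcal{RC}\Rightarrow \mathcal{IBN}$ by the proposition proved just before Example \ref{5.1.5}, the isomorphism $S^{s+t+s'}\cong S^{s+s'+t'}$ forces $s+t+s'=s+s'+t'$, and hence $t=t'$.

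The only subtlety is making sure we really have the $\mathcal{IBN}$ hypothesis available; this is precisely what Remark \ref{5.1.6}(ii) guarantees as a standing assumption, and for the bijective skew $PBW$ extensions that are the focus of the paper this is not a loss of generality in view of the corollary stating that $A$ is $\mathcal{RC}$ whenever $R$ is. No real obstacle arises beyond this bookkeeping step.
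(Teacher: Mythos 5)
Your argument is correct and is essentially the paper's own proof: both add complementary free summands to the two isomorphisms, identify the right-hand sides, and conclude via the $\mathcal{IBN}$ property (available since $\mathcal{RC}\Rightarrow\mathcal{IBN}$ and $\mathcal{RC}$ is the standing assumption of Remark \ref{5.1.6}). No gaps.
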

\begin{proof}
We have $S^{s'}\oplus S^{s+t}\cong S^{s'}\oplus S^s\oplus M$ and $S^{s}\oplus S^{s'+t'}\cong
S^{s}\oplus S^{s'}\oplus M$, then since $S$ is an $\mathcal{IBN}$ ring,  $s'+s+t=s+s'+t'$, and
hence $t'=t$.
\end{proof}
\begin{corollary}\label{527}
$M$ is stably free of rank $t\geq 0$ if and only if there exist integers $r,s\geq 0$ such that
$S^r\cong S^s\oplus M$, with $r\geq s$ and $t=r-s$.
\end{corollary}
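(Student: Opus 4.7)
The plan is to observe that this corollary is essentially a reformulation of Definition \ref{6.2.7} with the parameter $s+t$ relabeled as $r$, so the proof should be a short bidirectional check that invokes Proposition \ref{stablyfree1} only to guarantee that $t$ is well defined.

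First I would handle the forward implication. Assuming $M$ is stably free of rank $t$, Definition \ref{6.2.7} hands us an integer $s \geq 0$ with $S^{s+t} \cong S^s \oplus M$. Setting $r := s + t$ immediately gives $S^r \cong S^s \oplus M$, and since $t \geq 0$ we have $r \geq s$ and $t = r - s$, as required.

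Conversely, suppose integers $r, s \geq 0$ are given with $S^r \cong S^s \oplus M$, $r \geq s$, and $t = r - s$. Then $t \geq 0$ and $r = s + t$, so $S^{s+t} \cong S^s \oplus M$, which is exactly the statement that $M$ is stably free of some rank $t' \geq 0$ with witness $s$. By Proposition \ref{stablyfree1}, any two choices of witness integers produce the same rank, so this $t' = t$ and the rank of $M$ is indeed $r - s$.

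There is no genuine obstacle here; the only subtle point is conceptual, namely ensuring that the $t$ in the statement coincides with the rank defined in Definition \ref{6.2.7} rather than being merely some integer satisfying the numerical relation. This is precisely what Proposition \ref{stablyfree1} guarantees, so invoking it closes the argument. The rest is a pure relabeling.
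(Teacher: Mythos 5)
Your proof is correct and follows essentially the same relabeling argument as the paper ($r:=s+t$ in one direction, $r=s+(r-s)$ in the other); the extra appeal to Proposition \ref{stablyfree1} for well-definedness of the rank is a reasonable precision but not something the paper's own proof spells out, since that uniqueness was already established just before the corollary.
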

\begin{proof}
If $M$ is stably free of rank $t$, then $S^{s+t}\cong S^s\oplus M$ for some integers $s,t\geq 0$,
then taking $r:=s+t$ we get the result. Conversely, if there exist integers $r,s\geq 0$ such that
$S^r\cong S^s\oplus M$, with $r\geq s$, then $S^{s+r-s}\cong S^s\oplus M$, i.e., $M$ is stably free
of rank $r-s$.
\end{proof}
\begin{proposition}\label{528}
Let $M$ be an $S$-module and let $r,s\geq 0$ integers such that $S^r\cong S^s\oplus M$. Then $r\geq
s$.
\end{proposition}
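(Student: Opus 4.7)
The plan is to reduce the statement directly to the rank condition, which we may invoke thanks to the blanket assumption in Remark \ref{5.1.6}(ii) that every ring under consideration is $\mathcal{RC}$.

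First, I would name the given isomorphism $\varphi\colon S^r \xrightarrow{\sim} S^s\oplus M$ and the canonical projection $\pi\colon S^s\oplus M \twoheadrightarrow S^s$ onto the first direct summand. Note that $\pi$ is surjective since it has the canonical inclusion $S^s\hookrightarrow S^s\oplus M$ as a right inverse. Composing, I obtain the homomorphism
\begin{equation*}
f := \pi\circ\varphi \colon S^r \longrightarrow S^s,
\end{equation*}
which is surjective because it is the composition of a surjection with an isomorphism.

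Next, I would simply apply Definition \ref{8.1.2}(i): since $S$ is $\mathcal{RC}$ and $f$ is an epimorphism from $S^r$ onto $S^s$, we conclude $r\geq s$, which is the desired inequality.

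There is no real obstacle here beyond a bookkeeping one: one must keep track of which factor is being projected onto so that the epimorphism points from $S^r$ to $S^s$ (and not the other way around), and one must remember that the conclusion relies on the standing $\mathcal{RC}$ hypothesis of Remark \ref{5.1.6}(ii) rather than on any special property of skew $PBW$ extensions. The result is therefore a one-line consequence of the rank condition and will be used, together with Proposition \ref{stablyfree1} and Corollary \ref{527}, to ensure that the rank of a stably free module is well defined and non-negative.
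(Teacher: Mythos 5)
Your proof is correct and follows exactly the route the paper takes: the paper's proof likewise observes that (composing the given isomorphism with) the canonical projection yields an epimorphism $S^r\rightarrow S^s$ and then invokes the standing $\mathcal{RC}$ assumption. Your version merely spells out the composition $\pi\circ\varphi$ explicitly, which the paper leaves implicit.
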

\begin{proof}
The canonical projection $S^r\rightarrow S^s$ is an epimorphism, but since we are assuming that $S$
is $\mathcal{RC}$, then $r\geq s$.
\end{proof}
\begin{corollary}\label{5.2.9}
$M$ is stably free if and only if there exist integers $r,s\geq 0$ such that $S^r\cong S^s\oplus
M$.
\end{corollary}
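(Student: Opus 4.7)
The plan is to derive Corollary \ref{5.2.9} as an immediate consequence of Corollary \ref{527} combined with Proposition \ref{528}. Corollary \ref{527} already gives the equivalence but carries the extra condition $r\geq s$ in the ``if'' direction; Proposition \ref{528}, which invokes the standing $\mathcal{RC}$ hypothesis (Remark \ref{5.1.6}(ii)), tells us that this extra condition comes for free. So the statement of Corollary \ref{5.2.9} is essentially a rewrite of Corollary \ref{527} after deleting a now-redundant hypothesis.

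Concretely, for the forward implication I would just unfold Definition \ref{6.2.7}: if $M$ is stably free of some rank $t\geq 0$, then $S^{s+t}\cong S^{s}\oplus M$ for some $s\geq 0$, and setting $r:=s+t$ yields integers $r,s\geq 0$ with $S^{r}\cong S^{s}\oplus M$. For the reverse implication, assume $r,s\geq 0$ satisfy $S^{r}\cong S^{s}\oplus M$. Proposition \ref{528} gives $r\geq s$, so we may apply Corollary \ref{527} (or directly write $r=s+(r-s)$ with $r-s\geq 0$ and read off $S^{s+(r-s)}\cong S^{s}\oplus M$) to conclude that $M$ is stably free of rank $r-s$, and in particular stably free.

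There is really no obstacle here; the only subtle point is that without the $\mathcal{RC}$ assumption one could not automatically upgrade an arbitrary decomposition $S^{r}\cong S^{s}\oplus M$ to one with $r\geq s$, which is exactly why Proposition \ref{528} (and hence Remark \ref{5.1.6}(ii)) is needed. The corollary is essentially a bookkeeping statement consolidating the preceding two results.
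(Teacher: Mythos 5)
Your proposal is correct and follows exactly the paper's own route: the paper proves Corollary \ref{5.2.9} as a direct consequence of Corollary \ref{527} and Proposition \ref{528}, which is precisely the combination you describe. Your additional remark about where the $\mathcal{RC}$ hypothesis enters is accurate and consistent with the paper's standing assumption in Remark \ref{5.1.6}.
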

\begin{proof}
This is a direct consequence of Corollary \ref{527} and Proposition \ref{528}.
\end{proof}
Next we will present many characterizations of stably free modules over non-commutative rings
\begin{theorem}\label{stablefreeconditions}
Let $M$ be an $S$-module. Then, the following conditions are equivalent
\begin{enumerate}
\item[\rm (i)]$M$ is stably free.
\item[\rm (ii)]$M$ is projective and has a finite free resolution:
\begin{center}
$0 \rightarrow S^{t_k}\xrightarrow{f_k}S^{t_{k-1}}\xrightarrow{f_{k-1}}\cdots \xrightarrow{f_2}
S^{t_1}\xrightarrow{f_1} S^{t_0}\xrightarrow{f_0} M\rightarrow 0$.
\end{center}
In this case
\begin{equation}\label{eq5.2.2}
{\rm rank}(M)=\sum_{i=0}^k (-1)^it_i.
\end{equation}
\item[\rm (iii)]$M$ is isomorphic to the kernel of an
epimorphism of free modules: $M\cong \ker(\pi)$, $\pi:S^r\rightarrow S^s$.
\item[\rm (iv)]$M$ is projective and has a finite presentation $S^s\xrightarrow{f_1} S^r\xrightarrow{f_0} M\rightarrow
0$, where $\ker(f_0)$ is stably free.
\item[\rm (v)]$M$ has a finite presentation $S^s\xrightarrow{f_1} S^r\xrightarrow{f_0} M\rightarrow
0$, where $f_1$ has a left inverse.
\end{enumerate}
\end{theorem}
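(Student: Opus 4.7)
The plan is to establish the equivalences by making (i) a hub and proving each of the other conditions equivalent to (i), with one mild induction used for (ii)$\Rightarrow$(i).

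First I would dispose of the cheap direct pieces. For (i)$\Rightarrow$(iii), start from $S^{s+t}\cong S^s\oplus M$ and let $\pi\colon S^{s+t}\to S^s$ be the canonical projection onto the first summand; then $\pi$ is surjective and $\ker(\pi)\cong M$. Conversely, for (iii)$\Rightarrow$(i), the short exact sequence $0\to M\to S^r\xrightarrow{\pi} S^s\to 0$ splits because $S^s$ is free (hence projective), giving $S^r\cong S^s\oplus M$, so $M$ is stably free by Corollary \ref{5.2.9}. For (i)$\Leftrightarrow$(v), use essentially the same splitting: going down, take $f_1$ to be the inclusion $S^s\hookrightarrow S^{s+t}$, whose left inverse is the projection, and $f_0$ the projection $S^{s+t}\to M$. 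Going up, any $f_1$ with a left inverse is injective and its image is a direct summand; combined with $\mathrm{im}(f_1)=\ker(f_0)$ this yields $S^r\cong S^s\oplus M$.

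For the implication (i)$\Rightarrow$(ii) I would simply take the finite free resolution of length one $0\to S^s\xrightarrow{f_1}S^{s+t}\xrightarrow{f_0}M\to 0$ supplied by the splitting; this also verifies the rank formula in the trivial case as $(s+t)-s=t=\mathrm{rank}(M)$. The substantial step is (ii)$\Rightarrow$(i), which I would prove by induction on the length $k$ of the resolution. If $k=0$ then $M\cong S^{t_0}$ is free and stably free of rank $t_0$. If $k\geq 1$, because $M$ is projective the surjection $f_0\colon S^{t_0}\twoheadrightarrow M$ splits, giving $S^{t_0}\cong M\oplus K$ with $K:=\ker(f_0)$; the tail $0\to S^{t_k}\to\cdots\to S^{t_1}\to K\to 0$ is then a free resolution of $K$ of length $k-1$, and $K$ is projective as a summand of $S^{t_0}$. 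By induction $K$ is stably free, say $S^{u+v}\cong S^u\oplus K$; adding $S^u$ to $S^{t_0}\cong M\oplus K$ gives $S^{t_0+u}\cong M\oplus S^{u+v}$, whence $M$ is stably free. The rank identity (\ref{eq5.2.2}) then follows by an Euler-characteristic calculation: the alternating sum $\sum(-1)^i t_i$ equals $t_0-\mathrm{rank}(K)=t_0-(t_1-t_2+\cdots)=\mathrm{rank}(M)$, using the inductive value of $\mathrm{rank}(K)$ together with the additivity in the split sequence $0\to K\to S^{t_0}\to M\to 0$.

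Finally, for the two directions involving (iv): (i)$\Rightarrow$(iv) is obtained by reusing the length-one resolution from (i)$\Rightarrow$(ii), since $M$ is clearly projective and $\ker(f_0)\cong S^s$ is free, hence stably free. For (iv)$\Rightarrow$(i), since $M$ is projective the presentation $S^s\xrightarrow{f_1}S^r\xrightarrow{f_0}M\to 0$ gives $S^r\cong M\oplus K$ with $K:=\ker(f_0)$; using the hypothesis that $K$ is stably free, write $S^{u+v}\cong S^u\oplus K$ and add $S^u$ to both sides to obtain $S^{r+u}\cong M\oplus S^{u+v}$. Together with Proposition \ref{528}, this shows $M$ is stably free.

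I expect the only real obstacle to be clean bookkeeping in the induction for (ii)$\Rightarrow$(i), both to make sure the truncated resolution is genuinely shorter and projective and to extract the rank formula (\ref{eq5.2.2}); every other implication is essentially a direct consequence of splitting a short exact sequence whose outer term is free.
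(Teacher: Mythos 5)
Your proof is correct. Note that the paper does not actually present an argument for this theorem; it simply cites \cite{Lang}, \cite{Sturmfels}, and \cite{McConnell}, so there is no in-text proof to compare against. Your self-contained hub-and-spoke argument around (i) is sound: every implication except (ii)$\Rightarrow$(i) reduces to splitting a short exact sequence whose quotient (or whose left-hand map's image) is a free direct summand, and then invoking Corollary \ref{5.2.9}; the induction on the length $k$ of the resolution for (ii)$\Rightarrow$(i) is handled properly, since the truncation $0\to S^{t_k}\to\cdots\to S^{t_1}\to \ker(f_0)\to 0$ is exact and $\ker(f_0)$ is projective as a summand of $S^{t_0}$, so the inductive hypothesis applies. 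The rank computation $\mathrm{rank}(M)=t_0-\mathrm{rank}(\ker(f_0))$ and hence the alternating sum (\ref{eq5.2.2}) is legitimate, though you should note explicitly that it rests on the well-definedness of the rank (Proposition \ref{stablyfree1}), i.e.\ on the standing $\mathcal{RC}$/$\mathcal{IBN}$ assumption of Remark \ref{5.1.6}; without it the Euler-characteristic bookkeeping would not determine a unique integer.
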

\begin{proof}
See \cite{Lang}, Chapter 21, \cite{Sturmfels}, and \cite{McConnell}, Chapter 11).
\end{proof}
\begin{definition}
A finite presentation
\begin{equation}\label{equ6.2.14}
S^s\xrightarrow{f_1} S^r\xrightarrow{f_0} M\rightarrow 0
\end{equation}
of a $S$-module $M$ is minimal if $f_1$ has a left inverse.
\end{definition}
\begin{corollary}
Let $M$ be an $S$-module. Then, $M$ is stably free if and only if $M$ has a minimal presentation.
\end{corollary}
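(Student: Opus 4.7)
The plan is to observe that this corollary is essentially a tautological restatement of the equivalence (i) $\Leftrightarrow$ (v) in Theorem \ref{stablefreeconditions}, combined with the definition of minimal presentation just introduced. So the proof should be very short: unfold the definition, invoke the theorem, done.

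More concretely, for the forward direction I would assume $M$ is stably free and apply the implication (i) $\Rightarrow$ (v) of Theorem \ref{stablefreeconditions} to obtain a finite presentation $S^s \xrightarrow{f_1} S^r \xrightarrow{f_0} M \to 0$ in which $f_1$ admits a left inverse. This is exactly the content of Definition preceding this corollary (minimality), so $M$ has a minimal presentation. For the converse, I would assume $M$ has a minimal presentation; unfolding the definition, this gives a finite presentation with $f_1$ having a left inverse, which is condition (v) of Theorem \ref{stablefreeconditions}, and the implication (v) $\Rightarrow$ (i) of that theorem then yields that $M$ is stably free.

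There is really no obstacle here: the entire content has been packaged into Theorem \ref{stablefreeconditions} and the definition of minimal presentation, and the corollary merely names the equivalence. If anything, the one thing worth double-checking is that the definition of minimal presentation as stated (namely $f_1$ has a left inverse) matches verbatim clause (v) of the theorem, which it does. Consequently the proof can be stated in a single sentence citing the equivalence (i) $\Leftrightarrow$ (v) of Theorem \ref{stablefreeconditions}.
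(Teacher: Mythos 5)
Your proposal is correct and coincides exactly with the paper's proof, which simply cites the equivalence (i)$\Leftrightarrow$(v) of Theorem \ref{stablefreeconditions}. Nothing further is needed.
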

\begin{proof}
Theorem \ref{stablefreeconditions}, (i)$\Leftrightarrow$(v).
\end{proof}
Unimodular matrices are closely related with the stably free modules.
\begin{definition}\label{unimodular}
Let $F$ be a matrix over $S$ of size $r\times s$. Then
\begin{enumerate}
\item[\rm (i)]Let $r\geq s$. $F$ is unimodular if and only if $F$ has a left inverse.
\item[\rm (ii)]Let $s\geq r$. $F$ is unimodular if and only if $F$ has a right inverse.
\end{enumerate}
The set of unimodular column matrices of size $r\times 1$ is denoted by $Um_{c}(r,S)$. $Um_r(s,S)$
is the set of unimodular row matrices of size $1\times s$.
\end{definition}
\begin{remark}\label{sizeinverses}
Note that a column matrix is unimodular if and only if the left ideal generated by its entries
coincides with $S$, and a row matrix is unimodular if and only if the right ideal generated by its
entries is $S$.
\end{remark}

We can add some others characterizations of stably free modules (compare with \cite{Quadrat}, Lemma
16).
\begin{corollary}\label{stablyfreecharacterization1}
Let $M$ be an $S$-module. Then the following conditions are equivalent:
\begin{enumerate}
\item[\rm (i)]$M$ is stably free.
\item[\rm (ii)]$M$ is projective and has a finite system of generators $\textbf{f}_1, \dots ,\textbf{f}_r$
such that $Syz\{\textbf{f}_1, \dots ,\textbf{f}_r\}$ is the module generated by the columns of a
matrix $F_1$ of size $r\times s$ such that $F_1^{T}$ has a right inverse.
\item[\rm (iii)]$M$ is projective and has a finite system of generators $\textbf{f}_1, \dots ,\textbf{f}_r$
such that $Syz\{\textbf{f}_1, \dots ,\textbf{f}_r\}$ is the module generated by the columns of a
matrix $F_1$ of size $r\times s$ such that $F_1^{T}$ is unimodular.
\end{enumerate}
\end{corollary}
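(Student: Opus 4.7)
The strategy is to reduce everything to the criterion ``$f_1$ has a left inverse'' furnished by Theorem \ref{stablefreeconditions}(v), using the dictionary between homomorphisms and matrices from Section 3.2. The key translation tool is the composition rule $m(gf)=(F^{T}G^{T})^{T}$, which converts the condition ``$f_1$ has a left inverse'' into ``$F_1^{T}$ has a right inverse''.

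For (i) $\Rightarrow$ (iii), I would start from a minimal presentation $S^{s}\xrightarrow{f_1}S^{r}\xrightarrow{f_0}M\to 0$ supplied by Theorem \ref{stablefreeconditions}(v), and take as generators $\textbf{f}_i:=f_0(\textbf{e}_i)$. Then $\mathrm{Syz}\{\textbf{f}_1,\dots,\textbf{f}_r\}=\ker(f_0)=\mathrm{Im}(f_1)=\langle F_1\rangle$ where $F_1:=m(f_1)$. If $g$ is a left inverse of $f_1$ and $G:=m(g)$, the composition rule gives $(F_1^{T}G^{T})^{T}=I_s$, so $G^{T}$ is a right inverse of $F_1^{T}$. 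Moreover, since $g$ is then surjective, the standing $\mathcal{RC}$ assumption (Remark \ref{5.1.6}(ii)) forces $r\geq s$; hence by Definition \ref{unimodular} the matrix $F_1^{T}$, of size $s\times r$ with $r\geq s$, is unimodular. Projectivity of $M$ is automatic because stably free modules are projective.

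The implication (iii) $\Rightarrow$ (ii) is just the content of Definition \ref{unimodular}: once one knows that the presence of any right inverse already forces $r\geq s$, unimodularity of the $s\times r$ matrix $F_1^{T}$ and the existence of a right inverse for $F_1^{T}$ are the same statement. For (ii) $\Rightarrow$ (i), I would assemble the presentation $S^{s}\xrightarrow{f_1}S^{r}\xrightarrow{f_0}M\to 0$ by letting $f_0(\textbf{e}_i):=\textbf{f}_i$ and letting $f_1$ be the homomorphism with matrix $F_1$; exactness at $S^r$ follows from $\mathrm{Im}(f_1)=\langle F_1\rangle=\mathrm{Syz}\{\textbf{f}_1,\dots,\textbf{f}_r\}=\ker(f_0)$. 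Given $H$ with $F_1^{T}H=I_s$, the homomorphism $g$ with $m(g):=H^{T}$ satisfies $m(gf_1)=(F_1^{T}(H^{T})^{T})^{T}=(F_1^{T}H)^{T}=I_s$, so $gf_1=\mathrm{id}_{S^{s}}$, and Theorem \ref{stablefreeconditions}(v) concludes that $M$ is stably free.

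The main (essentially the only) obstacle is the bookkeeping imposed by the column convention of Section 3.2: because $m(gf)=(F^{T}G^{T})^{T}$ rather than $m(g)m(f)$, the condition that should naively read ``$F_1$ has a left inverse'' must be phrased in terms of $F_1^{T}$ having a right inverse, and one must verify that the relevant dimension inequality $r\geq s$ needed for Definition \ref{unimodular}(ii) to apply to the $s\times r$ matrix $F_1^{T}$ is indeed forced by the $\mathcal{RC}$ hypothesis.
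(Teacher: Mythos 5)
Your argument is correct, and it is surely the intended one: the paper gives no proof of this corollary (it simply defers to \cite{Gallego3}), and your reduction to Theorem \ref{stablefreeconditions}(v) via the dictionary $m(gf)=(F^{T}G^{T})^{T}$, together with the observation that the $\mathcal{RC}$ hypothesis forces $r\geq s$ so that clause (ii) of Definition \ref{unimodular} applies to the $s\times r$ matrix $F_1^{T}$, is exactly the natural route. The only step you gloss is (iii)$\Rightarrow$(ii): since Definition \ref{unimodular} is case-split on the dimensions, ``$F_1^{T}$ unimodular'' with $s>r$ would a priori mean left-invertible rather than right-invertible; but then $f_1$ would be split surjective, giving $\ker(f_0)=Im(f_1)=S^{r}$ and $M=0$, which is stably free and hence admits a generating system witnessing (ii) by your own implication (i)$\Rightarrow$(ii), so the equivalence survives this degenerate reading.
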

\begin{proof}
See \cite{Gallego3}.
\end{proof}
Another interesting result about stably free modules over arbitrary $\mathcal{RC}$ rings  is
presented next. For this, we recall that if $M$ is a finitely presented left $S$-module with
presentation $S^s\xrightarrow{f_1} S^r\xrightarrow{f_0} M\rightarrow 0$ and $F_1$ is the matrix of
$f_1$ in the canonical bases, then the right $S$-module $M^T$ defined by $M^T:=S^s/Im(f_1^{T})$,
where $f_1^T:S^r\rightarrow S^s$ is the homomorphism of right free $S$-modules induced by the
matrix $F_1^T$, is called the \textit{transposed module} of $M$. Thus, $M^T$ is given by the
presentation $S^r\xrightarrow{f_1^T} S^s\rightarrow M^T\rightarrow 0$.
\begin{theorem}\label{3.2.19a}
Let $M$ be an $S$-module with exact sequence $0\rightarrow
S^s\xrightarrow{f_1}S^r\xrightarrow{f_0}M\rightarrow 0$. Then, $M^T\cong Ext_S^{1}(M,S)$ and the
following conditions are equivalent:
\begin{enumerate}
\item[\rm (i)]$M$ is stably free.
\item[\rm (ii)]$M$ is projective.
\item[\rm (iii)]$M^T=0$.
\item[\rm (iv)]$F_1^T$ has a right inverse.
\item[\rm (v)]$f_1$ has a left inverse.
\end{enumerate}
\end{theorem}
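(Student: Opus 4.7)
The plan is to first establish the isomorphism $M^T\cong {\rm Ext}^1_S(M,S)$ and then prove the five-way equivalence via a short cycle, relying heavily on Theorem \ref{stablefreeconditions} for the stably free characterization.

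For the isomorphism, I would apply the contravariant functor ${\rm Hom}_S(-,S)$ to the given exact sequence. Since $S^r$ is free and hence projective, we obtain an exact sequence of right $S$-modules
\begin{equation*}
0\to {\rm Hom}_S(M,S)\to {\rm Hom}_S(S^r,S)\xrightarrow{f_1^{\ast}}{\rm Hom}_S(S^s,S)\to {\rm Ext}^1_S(M,S)\to 0.
\end{equation*}
Under the canonical identifications ${\rm Hom}_S(S^r,S)\cong S^r$ and ${\rm Hom}_S(S^s,S)\cong S^s$ as right $S$-modules, the map $f_1^{\ast}$ is precisely $f_1^T$ (the right-module map induced by the matrix $F_1^T$). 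Therefore ${\rm Ext}^1_S(M,S)\cong {\rm coker}(f_1^T)=S^s/{\rm Im}(f_1^T)=M^T$.

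For the equivalences, I would run the cycle (i)$\Rightarrow$(ii)$\Rightarrow$(v)$\Rightarrow$(i) and then tie in (iv) and (iii). The implication (i)$\Rightarrow$(ii) is immediate since any stably free module is a direct summand of a free module. For (ii)$\Rightarrow$(v), projectivity of $M$ splits the short exact sequence $0\to S^s\to S^r\to M\to 0$, which yields a retraction, i.e., a left inverse of $f_1$. The implication (v)$\Rightarrow$(i) is exactly Theorem \ref{stablefreeconditions}, (v)$\Rightarrow$(i). For (v)$\Leftrightarrow$(iv), I would use the composition rule from Remark \ref{5.1.2}: if $g$ is a left inverse of $f_1$, then $m(gf_1)=(F_1^TG^T)^T=I$, equivalently $F_1^TG^T=I$; hence $f_1$ has a left inverse if and only if $F_1^T$ has a right inverse.

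Finally, for (iii)$\Leftrightarrow$(iv) I would argue directly: $M^T=0$ iff $f_1^T:S^r\to S^s$ is surjective as a map of right $S$-modules. Since $S^s$ is free (hence projective) on the right, such a surjection splits, producing a right inverse $h:S^s\to S^r$ whose matrix gives a matrix right inverse to $F_1^T$; conversely, a matrix right inverse for $F_1^T$ makes $f_1^T$ surjective. The main obstacle I anticipate is keeping the left/right and column/row matrix conventions consistent throughout: one has to be careful that the identification ${\rm Hom}_S(S^r,S)\cong S^r$ yields $f_1^{\ast}$ as multiplication by $F_1^T$ and not by $F_1$, and then verify that ``right inverse of $f_1^T$ as a right-module map'' genuinely matches ``right inverse of the matrix $F_1^T$.'' Once the conventions are nailed down, all five conditions chain together cleanly via the cycle (i)$\Rightarrow$(ii)$\Rightarrow$(v)$\Leftrightarrow$(iv)$\Leftrightarrow$(iii) together with (v)$\Rightarrow$(i).
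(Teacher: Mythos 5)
Your argument is correct and complete. Note that the paper itself does not prove Theorem \ref{3.2.19a}; it simply refers the reader to \cite{Chyzak3}, so there is no in-text proof to compare against. Your proposal supplies the standard self-contained argument: the identification $M^T\cong {\rm Ext}_S^1(M,S)$ follows exactly as you say, because $0\to S^s\to S^r\to M\to 0$ is a projective resolution of length one, so ${\rm Ext}^1_S(M,S)$ is the cokernel of the dualized map, and the computation $\phi\circ f_1 \leftrightarrow F_1^T\mathbf{u}$ under ${\rm Hom}_S(S^r,S)\cong S^r$ confirms that this dualized map is precisely the right-module homomorphism $f_1^T$ of the paper's definition of the transposed module. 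The cycle (i)$\Rightarrow$(ii)$\Rightarrow$(v)$\Rightarrow$(i) is sound, with (v)$\Rightarrow$(i) legitimately delegated to Theorem \ref{stablefreeconditions}; the bridges (v)$\Leftrightarrow$(iv) via the composition rule $m(gf_1)=(F_1^TG^T)^T$ and (iii)$\Leftrightarrow$(iv) via splitting the surjection $f_1^T$ onto the projective right module $S^s$ (using the right-module conventions of Remark \ref{5.1.2}) are both handled correctly. The one convention point you flagged --- that $f_1^{\ast}$ acts by left multiplication by $F_1^T$ on column vectors viewed as a right module, matching the matrix notion of ``right inverse of $F_1^T$'' --- is indeed the only place where the left/right bookkeeping could go wrong, and your treatment of it is accurate.
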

\begin{proof}
See \cite{Chyzak3}.
\end{proof}

\subsection{Stafford's theorem: a constructive proof}

A well known result due Stafford says that any left ideal of the Weyl algebras $D:=A_n(K)$ or
$B_n(K)$, with $\text{\rm char}(K)=0$, is generated by two elements, (see \cite{Stafford2} and
\cite{Quadrat}). From the Stafford's Theorem follows that any stably free left module $M$ over $D$
with $\text{\rm rank}(M)\geq 2$ is free. In \cite{Quadrat} is presented a constructive proof of
this result that we want to study for arbitrary $\mathcal{RC}$ rings. Actually, we will consider
the generalization given in \cite{Quadrat} staying that any stably free left $S$-module $M$ with
$\text{\rm rank}(M)\geq \text{\rm sr}(S)$ is free, where $\text{\rm sr}(S)$ denotes the stable rank
of the ring $S$. Our proof have been adapted from \cite{Quadrat}, however we do not need the
involution of ring $S$ used in \cite{Quadrat} because of our left notation for modules and column
representation for homomorphism. This could justify our special left-column notation. In order to
apply the results to bijective skew $PBW$ extensions we will estimate the stable rank of such
extensions.

\begin{definition}
Let $S$ be a ring and $\textbf{v}:=\begin{bmatrix}v_1 & \dots & v_r\end{bmatrix}^T\in Um_c(r,S)$ an
unimodular column vector. $\textbf{v}$ is called stable $($reducible$)$ if there exists
$a_1,\dots,a_{r-1}\in S$ such that $\textbf{v}':=\begin{bmatrix}v_1+a_1v_r & \dots &
v_{r-1}+a_{r-1}v_r\end{bmatrix}^T$ is unimodular. It says that the left stable rank of $S$ is
$d\geq 1$, denoted ${\rm sr}(S)=d$, if $d$ is the least positive integer such that every unimodular
column vector of length $d+1$ is stable. It says that ${\rm sr}(S)=\infty$ if for every $d\geq 1$
there exits a non stable unimodular column vector of length $d+1$.
\end{definition}
\begin{remark}\label{7.3.2}
In a similar way is defined the right stable rank of $S$, however, both ranks coincide; we list
next some well known properties of the stable rank (see \cite{Artamonov2}, \cite{Bass2},
\cite{Chen}, \cite{McConnell}, \cite{Quadrat}, \cite{Stafford2}, \cite{Stafford3}),
\cite{Vaserstein}, or also \cite{Garcia}).
\end{remark}

\begin{proposition}\label{6.2.29}
Let $S$ be a ring and $\textbf{v}:=\begin{bmatrix}v_1 & \dots & v_r\end{bmatrix}^T$ an unimodular
stable column vector over $S$, then there exists $U\in E_r(S)$ such that
$U\textbf{v}=\textbf{e}_1$.
\end{proposition}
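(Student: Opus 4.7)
The plan is to construct $U$ as a product of four commuting blocks of elementary matrices, each one killing part of the vector. Throughout I use the convention that $E_{ij}(a) = I + a e_{ij}$ acts on column vectors by left multiplication, so $E_{ij}(a)$ adds $a$ times row $j$ to row $i$.

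First, I would use stability to improve $\textbf{v}$. By hypothesis there exist $a_1,\dots,a_{r-1}\in S$ such that $v_i' := v_i + a_i v_r$ ($1\le i\le r-1$) make $[v_1',\dots,v_{r-1}']^T$ unimodular as a column of length $r-1$. The elementary matrices $E_{i,r}(a_i)$ for $i=1,\dots,r-1$ all alter different rows, hence commute, and their product $U_1\in E_r(S)$ sends $\textbf{v}$ to $[v_1',\dots,v_{r-1}',v_r]^T$.

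Second, I would force the last coordinate to become $1$. Unimodularity of $[v_1',\dots,v_{r-1}']^T$ yields $c_1,\dots,c_{r-1}\in S$ with $c_1 v_1' + \cdots + c_{r-1} v_{r-1}' = 1$. I then apply the elementary matrices $E_{r,i}((1-v_r)c_i)$ for $i=1,\dots,r-1$; these all modify only row $r$ and so commute, and their product $U_2\in E_r(S)$ changes the last entry to
\[
v_r + \sum_{i=1}^{r-1}(1-v_r)c_i v_i' \;=\; v_r + (1-v_r)\cdot 1 \;=\; 1,
\]
leaving the first $r-1$ entries unchanged. The vector is now $[v_1',\dots,v_{r-1}',1]^T$.

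Third, having a $1$ in the last position, I clear the first $r-1$ entries by applying $E_{i,r}(-v_i')$ for $i=1,\dots,r-1$; call the product $U_3\in E_r(S)$. This yields $\textbf{e}_r$. Finally, to move $\textbf{e}_r$ to $\textbf{e}_1$ I apply $E_{1,r}(1)$ (which makes the first entry $1$) followed by $E_{r,1}(-1)$ (which kills the last entry), giving $\textbf{e}_1$; call the product $U_4\in E_r(S)$. Taking $U := U_4 U_3 U_2 U_1 \in E_r(S)$, we obtain $U\textbf{v} = \textbf{e}_1$.

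The argument is essentially the classical one, so there is no deep obstacle; the only care needed is bookkeeping in a non-commutative setting. In particular, since scalars act on the left of column entries, in step two the multiplier must be written as $(1-v_r)c_i$ rather than $c_i(1-v_r)$, because the identity $c_1 v_1' + \cdots + c_{r-1} v_{r-1}' = 1$ produced by left-unimodularity places the $c_i$'s on the left of the $v_i'$'s. Once that detail is respected, commutativity of each block and the ordering $U_4 U_3 U_2 U_1$ compatible with left action on columns make the verification routine.
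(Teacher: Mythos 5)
Your proof is correct, and it is the standard argument for this fact; the paper itself gives no internal proof but only cites Quadrat--Robertz, where essentially this same four-stage elementary reduction appears. The two points that need care in the non-commutative setting --- that each block of transvections commutes because they touch disjoint rows while reading from an untouched row, and that the multiplier $(1-v_r)c_i$ must sit to the left so that the relation $\sum_i c_i v_i'=1$ (left unimodularity of the column, per Remark \ref{sizeinverses}) can be factored out as $(1-v_r)\sum_i c_iv_i'=1-v_r$ --- are both handled correctly.
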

\begin{proof}
See \cite{Quadrat}
\end{proof}
Next we present a lemma that checks when a stably free module is free.
\begin{lemma}\label{matricialhermite2}
Let $S$ be a ring and $M$ a stably free $S$-module given by a minimal presentation
$S^s\xrightarrow{f_1}S^r\xrightarrow{f_0}M\rightarrow 0$. Let $g_1:S^r\to S^s$ such that
$g_1f_1=i_{S^s}$. Then the following conditions are equivalent:
\begin{enumerate}
\item[\rm (i)]$M$ is free of dimension $r-s$.
\item[\rm (ii)]There exists a matrix $U\in GL_r(S)$ such that
$UG_1^T=\begin{bmatrix}I_s\\
0\end{bmatrix}$, where $G_1$ is the matrix of $g_1$ in the canonical bases. In such case, the last
$r-s$ columns of $U^T$ conform a basis for $M$. Moreover, the first $s$ columns of $U^T$ conform
the matrix $F_1$ of $f_1$ in the canonical bases.
\item[\rm (iii)]There exists
a matrix $V\in GL_r(S)$ such that $G_1^T$ coincides with the first $s$ columns of $V$, i.e.,
$G_1^T$ can be completed to an invertible matrix $V$ of $GL_r(S)$.
\end{enumerate}
\end{lemma}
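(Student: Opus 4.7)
The plan is to exploit the splitting that the minimality of the presentation provides, together with the dictionary between left-column matrix identities and statements about bases of $S^r$, as established in Proposition \ref{5.1.1} and Remark \ref{5.1.2}.

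First I would fix notation: since $g_1f_1=i_{S^s}$, the composition rule for matrices in the left-column convention (i.e., $m(gf)=(F^TG^T)^T$, so $G_1F_1=I_s$) gives the matrix identity $G_1F_1=I_s$, equivalently $F_1^TG_1^T=I_s$. The splitting also yields the internal decomposition $S^r=\mathrm{Im}(f_1)\oplus\ker(g_1)$, with $M\cong\ker(g_1)$. The columns of $F_1$ form a basis of $\mathrm{Im}(f_1)$ because $f_1$ is a split mono, so freeness of $M$ of rank $r-s$ is equivalent to the existence of $r-s$ elements $\textbf{u}_1,\dots,\textbf{u}_{r-s}$ in $\ker(g_1)$ that, together with the columns of $F_1$, form a basis of $S^r$. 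By the criterion at the end of Proposition \ref{5.1.1}/Remark \ref{5.1.2}, this is equivalent to saying that $W:=[F_1\mid \textbf{u}_1\mid\cdots\mid\textbf{u}_{r-s}]$ satisfies $W^T\in GL_r(S)$, hence $W\in GL_r(S)$.

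Next I would establish (ii)$\Leftrightarrow$(iii) by direct matrix manipulation: from $UG_1^T=\begin{bmatrix}I_s\\0\end{bmatrix}$ one reads $G_1^T=U^{-1}\begin{bmatrix}I_s\\0\end{bmatrix}$, so $V:=U^{-1}\in GL_r(S)$ has $G_1^T$ as its first $s$ columns; conversely, given any completion $V=[G_1^T\mid C]\in GL_r(S)$, take $U:=V^{-1}$ and observe $UV=I_r$ forces $UG_1^T=\begin{bmatrix}I_s\\0\end{bmatrix}$.

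Finally I would prove (i)$\Leftrightarrow$(ii). For (i)$\Rightarrow$(ii), take a basis $\textbf{u}_1,\dots,\textbf{u}_{r-s}$ of $\ker(g_1)\cong M$, form $W:=[F_1\mid \textbf{u}_1\mid\cdots\mid\textbf{u}_{r-s}]\in GL_r(S)$ as in paragraph one, and compute $G_1W=[G_1F_1\mid G_1\textbf{u}_1\mid\cdots]=[I_s\mid 0]$; transposing, $W^TG_1^T=\begin{bmatrix}I_s\\0\end{bmatrix}$, so $U:=W^T$ works, and by construction $U^T=W$ has $F_1$ as its first $s$ columns and the basis of $M$ as its last $r-s$ columns. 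For the converse (ii)$\Rightarrow$(i), set $W:=U^T\in GL_r(S)$ and write $W=[A\mid B]$ with $A$ of size $r\times s$; transposing the hypothesis gives $G_1W=[I_s\mid 0]$, so $G_1A=I_s$ and $G_1B=0$. The columns of $W$ form a basis of $S^r$, so every $\textbf{v}\in S^r$ has a unique expression $\textbf{v}=A\textbf{x}+B\textbf{y}$; if $\textbf{v}\in\ker(g_1)$, applying $G_1$ forces $\textbf{x}=0$, so the columns of $B$ form a basis of $\ker(g_1)\cong M$, proving $M$ is free of rank $r-s$.

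The only genuine pitfall I expect is bookkeeping: the left-column convention $m(gf)=(F^TG^T)^T$ places the transposes in non-standard positions, so at every step (particularly when passing between ``the columns of $C$ form a basis of $S^r$'' and ``$C^T\in GL_r(S)$'', and between $UG_1^T=\begin{bmatrix}I_s\\0\end{bmatrix}$ and $G_1W=[I_s\mid 0]$) one must transpose correctly. The other subtle point is the ``moreover'' clause in (ii): it is not automatic from a given $U$, but one can always \emph{replace} $U$ by $(W')^T$ where $W':=[F_1\mid B]$, since $W'$ still lies in $GL_r(S)$ by the decomposition $S^r=\mathrm{Im}(f_1)\oplus\ker(g_1)$, and this replacement preserves the identity $UG_1^T=\begin{bmatrix}I_s\\0\end{bmatrix}$.
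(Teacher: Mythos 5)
Your overall route is the same as the paper's: split $S^r={\rm Im}(f_1)\oplus\ker(g_1)$, assemble a square matrix from $F_1$ and a basis of $\ker(g_1)\cong M$, and invoke the criterion that the columns of $C$ form a basis of $S^r$ iff $C^T\in GL_r(S)$; your (ii)$\Leftrightarrow$(iii) is verbatim the paper's, and your remark about the ``moreover'' clause is apt. The module-theoretic skeleton is correct. The genuine defect is exactly the pitfall you named and then walked into: the key identities in your (i)$\Leftrightarrow$(ii) argument are written with the rule $(AB)^T=B^TA^T$, which fails over a noncommutative ring. Concretely, $m(g_1f_1)=(F_1^TG_1^T)^T=I_s$ yields only $F_1^TG_1^T=I_s$, \emph{not} $G_1F_1=I_s$ (already for $1\times 1$ matrices the former reads $ab=1$ and the latter $ba=1$, and these are independent conditions in a general ring); likewise $\textbf{\emph{u}}_j\in\ker(g_1)$ means $\textbf{\emph{u}}_j^TG_1^T=\textbf{0}$, not $G_1\textbf{\emph{u}}_j=\textbf{0}$; the matrix of $g_1\circ h$ is $(W^TG_1^T)^T$, not the literal product $G_1W$; and a left-linear combination of the columns of $W=[A\mid B]$ is $(\textbf{\emph{x}}^TA^T+\textbf{\emph{y}}^TB^T)^T$, not $A\textbf{\emph{x}}+B\textbf{\emph{y}}$. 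So the chain ``$G_1W=[I_s\mid 0]$, transposing, $W^TG_1^T=\begin{bmatrix}I_s\\0\end{bmatrix}$'' both asserts an unjustified identity and transposes it by an invalid rule; the same problem affects your derivation of $G_1A=I_s$, $G_1B=0$ in (ii)$\Rightarrow$(i).

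The repair is mechanical and recovers the paper's proof. For (i)$\Rightarrow$(ii), let $h$ be the endomorphism of $S^r$ whose matrix by columns is $W$; since $g_1h(\textbf{\emph{e}}_i)=\textbf{\emph{e}}_i$ for $i\leq s$ and $g_1h(\textbf{\emph{e}}_{s+j})=\textbf{0}$, the composition rule gives $m(g_1h)=(W^TG_1^T)^T=[I_s\mid 0]$ directly, hence $W^TG_1^T=\begin{bmatrix}I_s\\0\end{bmatrix}$ with $U:=W^T\in GL_r(S)$ by the basis criterion. For (ii)$\Rightarrow$(i), write $U$ with first $s$ rows $A^T$ and last $r-s$ rows $B^T$, so the hypothesis says $A^TG_1^T=I_s$ and $B^TG_1^T=0$; then for $\textbf{\emph{v}}^T=\textbf{\emph{x}}^TA^T+\textbf{\emph{y}}^TB^T$ one computes $g_1(\textbf{\emph{v}})=(\textbf{\emph{v}}^TG_1^T)^T=\textbf{\emph{x}}$, and your conclusion that the columns of $B=[$last $r-s$ columns of $U^T]$ form a basis of $\ker(g_1)\cong M$ follows; this finish is in fact slightly more direct than the paper's passage through the auxiliary map $l$ with matrix $[I_s \ \ 0]$.
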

\begin{proof}
By the hypothesis, the exact sequence $0\rightarrow
S^s\xrightarrow{f_1}S^r\xrightarrow{f_0}M\rightarrow 0$ splits, so $F_1^T$ admits a right inverse
$G_1^T$, where $F_1$ is the matrix of $f_1$ in the canonical bases and $G_1$ is the matrix of
$g_1:S^r\to S^s$, with $g_1f_1=i_{S^s}$, i.e., $F_1^TG_1^T=I_s$. Moreover, there exists $g_0:M\to
S^r$ such that $f_0g_0=i_M$. From this we get also the split sequence $0\to
M\xrightarrow{g_0}S^r\xrightarrow{g_1}S^s\to 0$. Note that $M\cong \ker(g_1)$.

${\rm (i)} \Rightarrow {\rm (ii)}$: We have $S^r=\ker(g_1)\oplus Im(f_1)$; by the hypothesis
$\ker(g_1)$ is free. If $s=r$ then $\ker(g_1)=0$ and hence $f_1$ is an isomorphism, so
$f_1g_1=i_{S^s}$, i.e., $G_1^TF_1^T=I_s$. Thus, we can take $U:=F_1^{T}$.

Let $r>s$; if $\{\textbf{\emph{e}}_1, \dots ,\textbf{\emph{e}}_s\}$ is the canonical basis of
$S^s$, then $\{\textbf{\emph{u}}_1, \dots ,\textbf{\emph{u}}_s\}$ is a basis of $Im(f_1)$ with
$\textbf{\emph{u}}_i:=f_1(\textbf{\emph{e}}_i)$, $1\leq i\leq s$; let $\{\textbf{\emph{v}}_1, \dots
,\textbf{\emph{v}}_p\}$ be a basis of $\ker(g_1)$ with $p=r-s$. Then, $\{\textbf{\emph{v}}_1, \dots
,\textbf{\emph{v}}_p,\textbf{\emph{u}}_1, \dots ,\textbf{\emph{u}}_s\}$ is a basis of $S^r$. We
define $S^r\xrightarrow{h} S^r$ by $h(\textbf{\emph{e}}_i):=\textbf{\emph{u}}_i$ for $1\leq i\leq
s$, and $h(\textbf{\emph{e}}_{s+j})=\textbf{\emph{v}}_j$ for $1\leq j\leq p$. Clearly $h$ is
bijective; moreover,
$g_1h(\textbf{\emph{e}}_i)=g_1(\textbf{\emph{u}}_i)=g_1f_1(\textbf{\emph{e}}_i)=\textbf{\emph{e}}_i$
and $g_1h(\textbf{\emph{e}}_{s+j})=g_1(\textbf{\emph{v}}_j)=\textbf{0}$, i.e.,
$H^TG_1^T=\begin{bmatrix}I_s \\0\end{bmatrix}$. Let $U:=H^T$, so we observe that the last $p$
columns of $U^T$ conform a basis of $\ker(g_1)\cong M$ and the first $s$ columns of $U^T$ conform
$F_1$.

${\rm (ii)} \Rightarrow {\rm (i)}$: Let $U_{(k)}$ the $k$-th row of $U$, then
\begin{center}
$UG_1^T=[U_{(1)}\cdots U_{(s)}\cdots U_{(r)}]^TG_1^T=\begin{bmatrix}I_s\\ 0\end{bmatrix}$,
\end{center}
so
$U_{(i)}G_1^T=\textbf{\emph{e}}_i^T$, $1\leq i\leq s$, $U_{(s+j)}G_1^T=\textbf{0}$, $1\leq j\leq p$
with $p:=r-s$. This means that $(U_{(s+j)})^T\in \ker(g_1)$ and hence $\langle (U_{(s+j)})^T|1\leq
j\leq p\rangle\subseteq \ker(g_1)$. On the other hand, let $\textbf{\emph{c}}\in \ker(g_1)\subseteq
S^r$, then $\textbf{\emph{c}}^TG_1^T=\textbf{0}$ and $\textbf{\emph{c}}^TU^{-1}UG_1^T=\textbf{0}$,
thus
$\textbf{\emph{c}}^TU^{-1}\begin{bmatrix}I_s \\
0\end{bmatrix}=\textbf{0}$ and hence $(\textbf{\emph{c}}^TU^{-1})^T\in \ker(l)$, where $l:S^r\to
S^s$ is the homomorphism with matrix $\begin{bmatrix}I_s & 0\end{bmatrix}$. Let
$\textbf{\emph{d}}=[d_1, \dots ,d_r ]^T\in
\ker(l)$, then $[d_1, \dots ,d_r ]\begin{bmatrix}I_s \\
0\end{bmatrix}=\textbf{0}$ and from this we conclude that $d_1=\cdots =d_s=0$, i.e.,
$\ker(l)=\langle \textbf{\emph{e}}_{s+1}, \textbf{\emph{e}}_{s+2},\dots,
\textbf{\emph{e}}_{s+p}\rangle$. From $(\textbf{\emph{c}}^TU^{-1})^T\in \ker(l)$ we get that
$(\textbf{\emph{c}}^TU^{-1})^T=a_1\cdot \textbf{\emph{e}}_{s+1}+\cdots +a_p\cdot
\textbf{\emph{e}}_{s+p}$, so $\textbf{\emph{c}}^TU^{-1}=(a_1\cdot \textbf{\emph{e}}_{s+1}+\cdots
+a_p\cdot \textbf{\emph{e}}_{s+p})^T$, i.e., $\textbf{\emph{c}}^T=(a_1\cdot
\textbf{\emph{e}}_{s+1}+\cdots +a_p\cdot \textbf{\emph{e}}_{s+p})^TU$ and from this we get that
$\textbf{\emph{c}}\in \langle (U_{(s+j)})^T|1\leq j\leq p\rangle$. This proves that
$\ker(g_1)=\langle (U_{(s+j)})^T|1\leq j\leq p\rangle$; but since $U$ is invertible, then
$\ker(g_1)$ is free of dimension $p$. We have proved also that the last $p$ columns of $U^T$
conform a basis for $\ker(g_1)\cong M$.

${\rm (ii)} \Leftrightarrow {\rm (iii)}$: $UG_1^T=\begin{bmatrix}I_s\\0\end{bmatrix}$ if and only
if $G_1^T=U^{-1}\begin{bmatrix}I_s \\0\end{bmatrix}$, but the first $s$ columns of
$U^{-1}\begin{bmatrix}I_s
\\0\end{bmatrix}$ coincides with the first $s$ columns of $U^{-1}$;
taking $V:=U^{-1}$ we get the result.
\end{proof}

\begin{theorem}\label{7.3.6}
Let $S$ be a ring. Then any stably free $S$-module $M$ with \linebreak ${\rm rank}(M)\geq \text{\rm
sr}(S)$ is free with dimension equals to ${\rm rank}(M)$.
\end{theorem}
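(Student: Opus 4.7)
The plan is to translate the statement, via Lemma \ref{matricialhermite2}, into a matrix-completion problem and then to resolve that problem by induction on $s$, exploiting the stability of unimodular columns supplied by Proposition \ref{6.2.29}. Given $M$ stably free of rank $t \geq \mathrm{sr}(S)$, I would fix a minimal presentation $0 \to S^s \xrightarrow{f_1} S^r \xrightarrow{f_0} M \to 0$ with $r = s+t$ and a splitting $g_1$ of $f_1$ whose matrix $G_1$ satisfies $F_1^T G_1^T = I_s$. By Lemma \ref{matricialhermite2}(ii), it is enough to produce $U \in GL_r(S)$ with $U G_1^T = \begin{bmatrix} I_s \\ 0 \end{bmatrix}$.

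I would argue by induction on $s$. The case $s = 0$ gives $M \cong S^r = S^t$ directly. For $s \geq 1$, observe that the first column $\mathbf{g}$ of $G_1^T$ has length $r = s+t \geq 1 + \mathrm{sr}(S)$ and is unimodular, because $F_1^T G_1^T = I_s$ exhibits the first row of $F_1^T$ as a left inverse of $\mathbf{g}$. Thus $\mathbf{g}$ is stable, and Proposition \ref{6.2.29} produces $U_1 \in E_r(S)$ with $U_1 \mathbf{g} = \mathbf{e}_1$.

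Next, I would set $A' := U_1 G_1^T$ and $B' := F_1^T U_1^{-1}$, so that $B'A' = I_s$ is preserved and the first column of $A'$ is $\mathbf{e}_1 \in S^r$; inspecting the first column of $B'A' = I_s$ forces the first column of $B'$ to be $\mathbf{e}_1 \in S^s$. Decomposing in blocks
\[
A' = \begin{pmatrix} 1 & \mathbf{u}^T \\ \mathbf{0} & A'' \end{pmatrix}, \qquad B' = \begin{pmatrix} 1 & \mathbf{v}^T \\ \mathbf{0} & B'' \end{pmatrix},
\]
one block multiplication delivers both $B'' A'' = I_{s-1}$ and the relation $\mathbf{u}^T = -\mathbf{v}^T A''$. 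Multiplying $A'$ on the left by $U_2 := \begin{pmatrix} 1 & \mathbf{v}^T \\ \mathbf{0} & I_{r-1} \end{pmatrix} \in GL_r(S)$ kills the block $\mathbf{u}^T$, leaving $U_2 A' = \begin{pmatrix} 1 & \mathbf{0}^T \\ \mathbf{0} & A'' \end{pmatrix}$. The pair $(A'', B'')$ now encodes a minimal presentation of a stably free module of rank $(r-1) - (s-1) = t \geq \mathrm{sr}(S)$, so the inductive hypothesis furnishes $U_3 \in GL_{r-1}(S)$ with $U_3 A'' = \begin{bmatrix} I_{s-1} \\ 0 \end{bmatrix}$. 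Taking $U := \begin{pmatrix} 1 & \mathbf{0}^T \\ \mathbf{0} & U_3 \end{pmatrix} U_2 U_1 \in GL_r(S)$ then yields $UG_1^T = \begin{bmatrix} I_s \\ 0 \end{bmatrix}$, closing the induction.

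The substantive step is the single call to Proposition \ref{6.2.29} at each stage: that is exactly where the hypothesis $t \geq \mathrm{sr}(S)$ is consumed, since the first column of $G_1^T$ must have length strictly greater than $\mathrm{sr}(S)$ to be stable. The remainder is block-matrix bookkeeping; the only point demanding care is to confirm that the rank $t$ is preserved along the recursion (both $r$ and $s$ drop by one in tandem) and that the left-column convention is applied consistently, so that all of $U_1$, $U_2$, and $\mathrm{diag}(1,U_3)$ genuinely lie in $GL_r(S)$ and compose to the matrix $U$ demanded by Lemma \ref{matricialhermite2}.
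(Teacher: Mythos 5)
Your proposal is correct and takes essentially the same route as the paper's proof: reduce via Lemma \ref{matricialhermite2} to completing $G_1^T$ to $\begin{bmatrix}I_s\\0\end{bmatrix}$, apply Proposition \ref{6.2.29} to the first column (unimodular of length $r>{\rm sr}(S)$, which is where the hypothesis ${\rm rank}(M)\geq {\rm sr}(S)$ is used), and peel off one column at a time by induction on $s$, carrying the left inverse along to keep the remaining block unimodular. The only cosmetic difference is that you clear the residual top-row block at each stage using the identity $\mathbf{u}^T=-\mathbf{v}^TA''$, whereas the paper leaves an upper-unitriangular block and removes it by elementary row operations at the end.
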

\begin{proof}
Since $M$ is stably free it has a minimal presentation, and hence, it is given by an exact sequence
\begin{center}
$0\rightarrow S^s\xrightarrow{f_1}S^r\xrightarrow{f_0}M\rightarrow 0$;
\end{center}
moreover, note that ${\rm rank}(M)=r-s$. Since this sequence splits, $F_1^T$ admits a right inverse
$G_1^T$, where $F_1$ is the matrix of $f_1$ in the canonical bases and $G_1$ is the matrix of
$g_1:S^r\to S^s$, with $g_1f_1=i_{S^s}$. The idea of the proof is to find a matrix $U\in GL_r(S)$
such that $UG_1^T=\begin{bmatrix}I_s\\0\end{bmatrix}$ and then apply Lemma \ref{matricialhermite2}.

We have $F_1^TG_1^T=I_s$ and from this we get that the first column $\textbf{\emph{g}}_1$ of
$G_1^T$ is unimodular, but since $r>r-s\geq \text{\rm sr}(S)$, then $\textbf{\emph{g}}_1$ is
stable, and by Proposition \ref{6.2.29}, there exists $U_1\in E_r(S)$ such that
$U_1\textbf{\emph{g}}_1=\textbf{\emph{e}}_1$. If $s=1$, we finish since
$G_1^T=\textbf{\emph{g}}_1$.

Let $s\geq 2$; we have
\begin{center}
$U_1G_1^T=\begin{bmatrix}1 & *\\
0 & F_2\end{bmatrix}$, $F_2\in M_{(r-1)\times (s-1)}(S)$.
\end{center}
Note that $U_1G_1^T$ has a left inverse (for instance $F_1^TU_1^{-1}$), and the form of this left
inverse is
\begin{center}
$L=\begin{bmatrix}1 & *\\
0 & L_2\end{bmatrix}$, $L_2\in M_{(s-1)\times (r-1)}(S)$,
\end{center}
and hence $L_2F_2=I_{s-1}$. The first column of $F_2$ is unimodular and since $r-1>r-s\geq
\text{\rm sr}(S)$ we apply again Proposition \ref{6.2.29} and we obtain a matrix $U_2'\in
E_{r-1}(S)$ such that
\begin{center}
$U_2'F_2=\begin{bmatrix}1 & *\\
0 & F_3\end{bmatrix}$, $F_3\in M_{(r-2)\times (s-2)}(S)$.
\end{center}
Let
\begin{center}
$U_2:=\begin{bmatrix}1 & 0\\
0 & U_2'\end{bmatrix}\in E_r(S)$,
\end{center}
then we have
\begin{center}
$U_2U_1G_1^T=\begin{bmatrix}1 & * & *\\
0 & 1 & *\\
0 & 0 & F_3\end{bmatrix}$.
\end{center}
By induction on $s$ and multiplying on the left by elementary matrices we get a matrix $U\in
E_r(S)$ such that
\begin{center}
$UG_1^T=\begin{bmatrix}I_s\\0\end{bmatrix}$.
\end{center}
\end{proof}
\begin{corollary}[Stafford]\label{5.3.7}
Let $D:=A_n(K)$ or $B_n(K)$, with $\text{\rm char}(K)=0$. Then, any stably free left $D$-module $M$
satisfying $\text{\rm rank}(M)\geq 2$ is free.
\end{corollary}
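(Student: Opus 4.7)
The plan is to derive the corollary as a direct consequence of Theorem \ref{7.3.6}, specialized to $D = A_n(K)$ or $B_n(K)$. Since Theorem \ref{7.3.6} guarantees that any stably free module $M$ over a ring $S$ with $\text{\rm rank}(M) \geq \text{\rm sr}(S)$ is free of dimension $\text{\rm rank}(M)$, it is enough to show that the stable rank satisfies $\text{\rm sr}(D) \leq 2$. Once this estimate is in hand, the hypothesis $\text{\rm rank}(M) \geq 2$ immediately matches the hypothesis $\text{\rm rank}(M) \geq \text{\rm sr}(D)$ of Theorem \ref{7.3.6}, and the conclusion that $M$ is free follows.

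Therefore the proof reduces to recalling the classical estimate on the stable rank of the Weyl algebras. First I would invoke Stafford's theorem to the effect that, for $\text{\rm char}(K)=0$, every left ideal of $A_n(K)$ and $B_n(K)$ is generated by at most two elements; this is the result cited in the paragraph preceding the statement (see \cite{Stafford2}, \cite{Quadrat}). From the two-generator property combined with the standard relation between the number of generators of left ideals and the stable rank over left Noetherian rings (both Weyl algebras are left Noetherian), one deduces $\text{\rm sr}(A_n(K)) \leq 2$ and $\text{\rm sr}(B_n(K)) \leq 2$. This is where the actual mathematical content sits; the rest of the corollary is essentially bookkeeping.

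With these two ingredients I would conclude as follows: let $M$ be a stably free left $D$-module with $\text{\rm rank}(M) \geq 2$. Then $\text{\rm rank}(M) \geq 2 \geq \text{\rm sr}(D)$, and Theorem \ref{7.3.6} supplies an invertible matrix $U \in GL_r(S)$ of the shape constructed in its proof (iterated elementary reductions via Proposition \ref{6.2.29}), whose last $r-s$ columns of $U^T$ give an explicit basis of $M$ via Lemma \ref{matricialhermite2}. Thus $M$ is free of dimension $r - s = \text{\rm rank}(M)$.

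The main obstacle is not the reduction itself, which is formal, but the stable rank estimate $\text{\rm sr}(D) \leq 2$. This is a genuinely deep result of Stafford, and in a self-contained write-up one would either have to cite it or devote substantial work to reproving it. In the context of the present paper this is acceptable because the point of Theorem \ref{7.3.6} and Lemma \ref{matricialhermite2} is precisely to reduce Stafford's Corollary \ref{5.3.7} to a known bound on $\text{\rm sr}(D)$ plus the constructive matrix procedure already established, so the corollary should be proved by simply specializing Theorem \ref{7.3.6} to $S=D$ and quoting the stable rank estimate.
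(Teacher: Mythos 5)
Your proposal matches the paper's proof, which is exactly the one-line specialization of Theorem \ref{7.3.6} to $S=D$ using the known fact that ${\rm sr}(D)=2$ for ${\rm char}(K)=0$. The extra discussion of how the stable rank bound follows from Stafford's two-generator theorem is reasonable background but not part of the paper's argument, which simply quotes ${\rm sr}(D)=2$ as known.
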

\begin{proof}
The results follows from Theorem \ref{7.3.6} since ${\rm sr}(D)=2$.
\end{proof}


\section{Hermite rings}

Rings for which all stably free modules are free have occupied special attention in homological
algebra. In this section we will consider matrix-constructive interpretation of such rings. The
material presented here can be considered as preparatory for the next section when we will study
the Hermite condition for skew $PBW$ extensions. Recall that all rings considered are
$\mathcal{RC}$ (see Remark \ref{5.1.6}).

\subsection{Matrix descriptions of Hermite rings}

\begin{definition}\label{PFrings}
Let $S$ be a ring.
\begin{enumerate}
\item[\rm (i)]$S$ is a $PF$ ring if every f.g. projective $S$-module is free.
\item[\rm (ii)]$S$ is a $PSF$ ring if every f.g. projective $S$-module is stably free.
\item[\rm (iii)]$S$ is a Hermite ring, property denoted by $H$, if any stably
free $S$-module is free.
\end{enumerate}
\end{definition}
The right versions of the above rings (i.e., for right modules) are defined in a similar way and
denoted by $PF_r$, $PSF_r$ and $H_r$, respectively. We say that $S$ is a $\mathcal{PF}$ ring if $S$
is $PF$ and $PF_r$ simultaneously; similarly, we define the properties $\mathcal{PSF}$ and
$\mathcal{H}$. However, we will prove below later that these properties are left-right symmetric,
i.e., they can be denoted simply by $\mathcal{PF}$, $\mathcal{PSF}$ and $\mathcal{H}$.

From Definition \ref{PFrings} we get that
\begin{align}
& \label{eqPF} H \cap\  PSF = PF.
\end{align}
The following theorem gives a matrix description of $H$ rings (see \cite{Cohn1} and compare with
\cite{Lezama5} for the particular case of commutative rings. In \cite{Chen} is presented a
different and independent proof of this theorem for right modules).
\begin{theorem}\label{6.2.1}
Let $S$ be a ring. Then, the following conditions are equivalent.
\begin{enumerate}
\item[\rm (i)]$S$ is $H$.
\item[\rm (ii)]For every $r\geq 1$, any unimodular row matrix $\textbf{u}$ over $S$ of size $1\times
r$ can be completed to an invertible matrix of $GL_r(S)$ adding $r-1$ new rows.
\item[\rm (iii)]For every $r\geq 1$, if $\textbf{u}$ is an unimodular row matrix of size $1\times
r$, then there exists a matrix $U\in GL_r(S)$ such that $\textbf{u}U=(1,0,\dots, 0)$.
\item[$\rm (iv)$]For every $r\geq 1$, given an unimodular matrix $F$ of size $s\times r$,
$r\geq s$, there exists $U\in GL_r(S)$ such that
\begin{center}
$FU= \begin{bmatrix}I_s & | & 0\end{bmatrix}$.
\end{center}
\end{enumerate}
\end{theorem}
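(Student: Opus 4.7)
The plan is to prove the four conditions equivalent via the cycle (ii)$\Leftrightarrow$(iii), (iii)$\Leftrightarrow$(iv), and (iv)$\Leftrightarrow$(i), where only the last draws on the Hermite hypothesis, through Lemma \ref{matricialhermite2}.

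The equivalence (ii)$\Leftrightarrow$(iii) is purely formal: if $\mathbf{u}U=(1,0,\dots,0)$ with $U\in GL_r(S)$, then $\mathbf{u}$ is the first row of $U^{-1}$, so $U^{-1}$ completes $\mathbf{u}$; conversely, if $V\in GL_r(S)$ has $\mathbf{u}$ as its first row, the first row of $VV^{-1}=I_r$ reads $\mathbf{u}V^{-1}=(1,0,\dots,0)$, and $U:=V^{-1}$ works. The implication (iv)$\Rightarrow$(iii) is the special case $s=1$. For (iii)$\Rightarrow$(iv) I would argue by induction on $s$: given $F$ of size $s\times r$ with right inverse $G$, its first row $\mathbf{f}_1$ is unimodular because the first column of $G$ is a right inverse of it, so (iii) supplies $U_1\in GL_r(S)$ with $\mathbf{f}_1 U_1=(1,0,\dots,0)$. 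Writing $FU_1=\begin{bmatrix}1 & \mathbf{0}\\ \mathbf{c} & F_2\end{bmatrix}$ and expanding the identity $(FU_1)(U_1^{-1}G)=I_s$ blockwise produces both a column $\mathbf{d}$ satisfying $F_2\mathbf{d}=-\mathbf{c}$ and a right inverse of $F_2$; right-multiplication by $\begin{bmatrix}1 & \mathbf{0}\\ \mathbf{d} & I_{r-1}\end{bmatrix}$ then clears $\mathbf{c}$ without disturbing the first row, and induction closes on the unimodular $(s-1)\times(r-1)$ matrix $F_2$.

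For the substantive equivalence (iv)$\Leftrightarrow$(i) I would invoke Lemma \ref{matricialhermite2}. Given $F$ unimodular of size $s\times r$ with right inverse $G$, define left module homomorphisms $f_1:S^s\to S^r$ and $g_1:S^r\to S^s$ via the matrices $F_1:=G$ and $G_1:=F$; the identity $m(g_1f_1)=(F_1^TG_1^T)^T=(FG)^T=I_s$ shows $g_1f_1=i_{S^s}$, so $f_1$ is split injective and $M:=\mathrm{coker}(f_1)$ is stably free with minimal presentation. Under hypothesis (i), $M$ is free, and Lemma \ref{matricialhermite2}(iii) exhibits $G_1^T=F^T$ as the first $s$ columns of some $V\in GL_r(S)$; transposing shows $F$ is the first $s$ rows of $V^T$, so $U:=(V^T)^{-1}\in GL_r(S)$ satisfies $FU=[I_s\mid 0]$. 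Conversely, if $M$ is stably free with minimal presentation $0\to S^s\xrightarrow{f_1}S^r\to M\to 0$ and splitting $g_1f_1=i_{S^s}$, the same computation gives $G_1F_1=I_s$, so $G_1$ is $s\times r$ unimodular; applying (iv) yields $U\in GL_r(S)$ with $G_1U=[I_s\mid 0]$, and transposing recasts this as condition (ii) of Lemma \ref{matricialhermite2} with $U^T$ in place of its $U$, so $M$ is free.

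The main subtle point is the inductive step (iii)$\Rightarrow$(iv): one cannot clear the column $\mathbf{c}$ by the obvious row operation, since only right multiplications are permitted and the first row $(1,\mathbf{0})$ must remain intact. Extracting $\mathbf{d}$ from the right inverse of $FU_1$ is the essential constructive ingredient, and it is precisely the propagation of unimodularity through the induction that makes this work. Beyond that, the proof is bookkeeping to reconcile the column-left convention of the paper with the form $G_1U=[I_s\mid 0]$ naturally produced by Lemma \ref{matricialhermite2}.
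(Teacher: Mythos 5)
Your cycle is well chosen, and three of the four links are sound: the formal equivalence (ii)$\Leftrightarrow$(iii), the specialization (iv)$\Rightarrow$(iii) to $s=1$, and the induction for (iii)$\Rightarrow$(iv) --- extracting from the block expansion of $(FU_1)(U_1^{-1}G)=I_s$ both a column $\mathbf{d}$ with $F_2\mathbf{d}=-\mathbf{c}$ and a right inverse of $F_2$ --- are all correct, and that last argument uses only products in the right order, so it survives noncommutativity. The gap is in (iv)$\Leftrightarrow$(i), where you transpose products of matrices. Over the noncommutative rings this paper is concerned with, $(AB)^T\neq B^TA^T$ and $V\in GL_r(S)$ does not imply $V^T\in GL_r(S)$ (transposition is an isomorphism onto $M_r(S^{\mathrm{op}})$, not an anti-automorphism of $M_r(S)$). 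Concretely: with $F_1:=G$ and $G_1:=F$ you have $m(g_1f_1)=(G^TF^T)^T$, and your step $(F_1^TG_1^T)^T=(FG)^T{}^T=FG$ is exactly the forbidden identity --- for $s=1$ it asserts $\sum_i g_if_i=\sum_i f_ig_i$ --- so $g_1f_1=i_{S^s}$ does \emph{not} follow from $FG=I_s$ under your assignment, and the stably free module $M$ is never legitimately produced. The converse direction has the same defect twice: $F_1^TG_1^T=I_s$ does not yield $G_1F_1=I_s$, and $G_1U=[I_s\mid 0]$ does not transpose to $U^TG_1^T=\left[\begin{smallmatrix}I_s\\ 0\end{smallmatrix}\right]$ with $U^T$ invertible. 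The paper's left-column conventions (Remark \ref{5.1.2}) are set up precisely so that no involution of $S$ is needed; these identities cannot be smuggled back in.

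The link to Lemma \ref{matricialhermite2} can be repaired. Put $F_1:=F^T$ and $G_1:=G^T$; then $F_1^TG_1^T=FG=I_s$, so $g_1f_1=i_{S^s}$ holds honestly and $M:=\mathrm{coker}(f_1)$ is stably free with a minimal presentation. If $S$ is $H$, the lemma's implication (i)$\Rightarrow$(ii) produces $U\in GL_r(S)$ with $UG_1^T=\left[\begin{smallmatrix}I_s\\ 0\end{smallmatrix}\right]$ \emph{and} --- this is the ``moreover'' clause you need --- whose transpose has $F_1$ as its first $s$ columns; entrywise that says the first $s$ rows of $U$ are exactly $F$ (this is bookkeeping about entries, not a transposed product, hence legitimate). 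Therefore $FU^{-1}$ equals the first $s$ rows of $UU^{-1}=I_r$, i.e.\ $FU^{-1}=[I_s\mid 0]$. For (iv)$\Rightarrow$(i), a minimal presentation hands you the unimodular $s\times r$ matrix $F_1^T$; condition (iv) gives $F_1^TU=[I_s\mid 0]$, so $\mathrm{Im}(f_1)$ is the span of the first $s$ rows of $U^{-1}$, which are part of a basis of $S^{1\times r}$, and $M$ is free on the images of the remaining $r-s$ rows. (The paper itself only cites an external reference for this theorem, so your argument has to stand on its own, and at present the (i)$\Leftrightarrow$(iv) link does not.)
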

\begin{proof}
See \cite{Gallego3}.
\end{proof}

\begin{remark}\label{6.2.3}
In a similar way as we observed in Remark \ref{5.1.2}, if we consider right modules and the right
$S$-module structure on the module $S^r$ of columns vectors, the conditions of the previous theorem
can be reformulated properly, see \cite{Gallego3}.
\end{remark}

\subsection{Matrix characterization of $PF$ rings}

In \cite{Cohn1} are given some matrix characterizations of projective-free rings, in this
subsection we present another matrix interpretation of this important class of rings. The main
result presented here (Corollary \ref{6.2.4}) extends Theorem 6.2.2 in \cite{Lezama5}. This result
has been proved independently also in \cite{Chen}, Proposition 11.4.9. A matrix proof of a
Kaplansky theorem about finitely generated projective modules over local rings is also included.

\begin{theorem}\label{6.3.1a}
Let $S$ be a Hermite ring and $M$ a f.g. projective module given by the column module of a matrix
$F\in M_s(S)$, with $F^T$ idempotent. Then, $M$ is free with $dim(M)=r$ if and only if there exists
a matrix $U\in M_s(S)$ such that $U^T\in GL_s(S)$ and
\begin{equation}\label{matrixequivalence}
(U^T)^{-1}F^TU^T=\begin{bmatrix}0 & 0\\
0 & I_r \end{bmatrix}^T.
\end{equation}
In such case, a basis of $M$ is given by the last $r$ rows of $(U^T)^{-1}$.
\end{theorem}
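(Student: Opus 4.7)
The approach is to exploit that the idempotency of $F^T$ makes the $S$-linear map $f:S^s\to S^s$ with matrix $F$ (in the columns convention of Proposition \ref{5.1.1}) a projection onto $M=\mathrm{Im}(f)$ with kernel $M':=\ker(f)$, so that $S^s=M\oplus M'$. In the paper's convention, $f(a)=(a^T F^T)^T$, and because $f^2=f$ the summands are cut out by the row equations $M=\{a\in S^s:a^T F^T=a^T\}$ and $M'=\{a\in S^s:a^T F^T=\mathbf{0}\}$. The right-hand side $\begin{bmatrix}0 & 0\\ 0 & I_r\end{bmatrix}^T$ of the stated identity equals $E_r:=\begin{bmatrix}0 & 0\\ 0 & I_r\end{bmatrix}$ since $E_r$ is symmetric, so the identity $(U^T)^{-1}F^T U^T=E_r$ rewrites, upon putting $H:=U^{-1}$ and using $H^T=(U^T)^{-1}$, as $H^T F^T=E_r H^T$. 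Row by row, this is precisely the assertion that the first $s-r$ columns of $H$ lie in $M'$ and the last $r$ lie in $M$; since $H$ is invertible, that amounts to giving a basis of $S^s$ adapted to the decomposition $S^s=M\oplus M'$.

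For $(\Rightarrow)$, assuming $M\cong S^r$, the relation $S^s\cong M\oplus M'\cong S^r\oplus M'$ shows that $M'$ is stably free, hence free by the Hermite hypothesis, with $\dim(M')=s-r$ by the standing $\mathcal{RC}$ assumption (Remark \ref{5.1.6}). Picking bases $\{v_i\}_{i=1}^{s-r}$ of $M'$ and $\{u_j\}_{j=1}^r$ of $M$, I assemble the $s\times s$ matrix $H$ having these columns in that order; then $H$ is invertible. The required identity $H^T F^T=E_r H^T$ is checked row by row: for $i\le s-r$ the $i$-th row reads $v_i^T F^T=\mathbf{0}$, holding because $v_i\in M'$, and for $i>s-r$ the $i$-th row reads $u_{i-(s-r)}^T F^T=u_{i-(s-r)}^T$, holding because $u_{i-(s-r)}\in M$. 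Setting $U:=H^{-1}$ gives $U^T=(H^T)^{-1}\in GL_s(S)$ and the desired identity.

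For $(\Leftarrow)$, given $U\in M_s(S)$ with $U^T\in GL_s(S)$ satisfying $(U^T)^{-1}F^T U^T=E_r$, the invertibility of $U^T$ forces that of $U$ by transposing the inverse, and I set $H:=U^{-1}$. The hypothesis becomes $H^T F^T=E_r H^T$; reading the $i$-th row, the $i$-th column $h_i$ of $H$ satisfies $h_i^T F^T=\mathbf{0}$ for $i\le s-r$ and $h_i^T F^T=h_i^T$ for $i>s-r$, placing the first $s-r$ columns of $H$ in $M'$ and the last $r$ in $M$. Since the columns of $H$ form a basis of $S^s=M\oplus M'$, uniqueness of the direct-sum decomposition forces $\{h_{s-r+1},\dots,h_s\}$ to be a basis of $M$; these are the transposes of the last $r$ rows of $H^T=(U^T)^{-1}$, so $M$ is free of dimension $r$ with the claimed basis.

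The main obstacle is purely notational: it is the row-acting equation $H^T F^T=E_r H^T$, rather than the naively expected column equation $FH=HE_r$, that encodes a direct-sum-adapted basis of $S^s$ over a non-commutative ring, and this is exactly what the transposed form $(U^T)^{-1}F^T U^T=E_r^T$ in the statement unpacks to upon setting $H=U^{-1}$. Once this translation is made, the two directions are mirror images of each other, reducing to assembling (respectively, disassembling) a basis of $S^s$ adapted to $M\oplus M'$ from the columns of $H$, with the Hermite hypothesis used exactly once, to guarantee that the complementary summand $M'=\ker(f)$ is free.
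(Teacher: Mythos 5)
Your overall strategy is the right one and is essentially the argument the paper has in mind (the paper itself only cites \cite{Gallego3} for the proof, but compare Lemma \ref{matricialhermite2} and Remark \ref{5.1.2}): use idempotency of $F^T$ to split $S^s=M\oplus M'$ with $M=\{a: a^TF^T=a^T\}$ and $M'=\ker(f)=\{a:a^TF^T=\mathbf{0}\}$, observe that the stated identity is equivalent to $W F^T=E_rW$ with $W:=(U^T)^{-1}$ and $E_r$ the symmetric block matrix, read this row by row as saying the transposed rows of $W$ give a basis of $S^s$ adapted to $M'\oplus M$, and use the Hermite hypothesis exactly once to make $M'$ free (of dimension $s-r$ by $\mathcal{IBN}$). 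All of that is correct.

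The one step that is actually wrong as written is the claim that invertibility of $U^T$ ``forces that of $U$ by transposing the inverse,'' together with the identification $H^T=(U^T)^{-1}$ for $H:=U^{-1}$. Over a non-commutative ring $(AB)^T\neq B^TA^T$ in general (the transpose is an anti-isomorphism onto matrices over $S^{op}$), so $U^T\in GL_s(S)$ does not imply $U\in GL_s(S)$, and $(U^{-1})^T$ need not equal $(U^T)^{-1}$. This is precisely the subtlety the paper's conventions are designed to sidestep, which is why the theorem hypothesizes $U^T\in GL_s(S)$ rather than $U\in GL_s(S)$ and why the basis criterion in Section 3.2 reads ``the columns of $C$ form a basis of $S^r$ if and only if $C^T\in GL_r(S)$.'' Fortunately your argument never truly needs $U^{-1}$: in the backward direction define $H:=\bigl((U^T)^{-1}\bigr)^T$, so that $H^T=(U^T)^{-1}$ holds by construction and the columns of $H$ form a basis because $H^T\in GL_s(S)$; in the forward direction, after assembling $H$ from the adapted basis, what you know is $H^T\in GL_s(S)$ (not that $H$ is invertible), and you should set $U:=\bigl((H^T)^{-1}\bigr)^T$ so that $U^T=(H^T)^{-1}$ and $(U^T)^{-1}=H^T$. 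With these substitutions every step of your proof goes through verbatim, and the basis of $M$ is, as you say, given by the last $r$ rows of $(U^T)^{-1}$.
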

\begin{proof}
See \cite{Gallego3}.
\end{proof}
From the previous theorem we get the following matrix description of $PF$ rings.
\begin{corollary}\label{matrixpfrings}
Let $S$ be a ring. $S$ is $PF$ if and only if for each $s\geq 1$, given a matrix $F\in M_s(S)$,
with $F^T$ idempotent, there exists a matrix $U\in M_s(S)$ such that $U^T\in GL_s(S)$ and
\begin{equation}\label{6.3.1}
(U^T)^{-1}F^TU^T=\begin{bmatrix}0 & 0\\
0 & I_r \end{bmatrix}^T,
\end{equation}
where $r=dim(\langle F\rangle)$, $0\leq r\leq s$.
\end{corollary}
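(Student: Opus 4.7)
The proof is essentially a direct translation between the module-theoretic formulation of $PF$ and the matrix formulation, mediated by the two earlier results \textbf{Proposition \ref{5.1.1}} (f.g.\ projective modules $\leftrightarrow$ column modules of matrices with idempotent transpose) and \textbf{Theorem \ref{6.3.1a}} (characterization of freeness of such a projective by the conjugacy of $F^T$ to a standard projection). The plan is to run both implications through these two results, making sure that in the forward direction the Hermite hypothesis needed by Theorem \ref{6.3.1a} is actually available.

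For the forward direction, I would first observe that $PF$ implies Hermite: every stably free module is, in particular, f.g.\ projective, hence free by hypothesis. Now let $F \in M_s(S)$ with $F^T$ idempotent. By Proposition \ref{5.1.1}, $M := \langle F \rangle$ is f.g.\ projective, so by the $PF$ hypothesis $M$ is free; let $r := \dim(M) \leq s$. Since $S$ is Hermite, Theorem \ref{6.3.1a} applies and produces a matrix $U \in M_s(S)$ with $U^T \in GL_s(S)$ satisfying equation~(\ref{6.3.1}), which is exactly what is required.

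For the converse, assume the matrix condition. Let $N$ be an arbitrary f.g.\ projective $S$-module. By Proposition \ref{5.1.1} there exists $s \geq 1$ and $F \in M_s(S)$ with $F^T$ idempotent and $N = \langle F\rangle$. Applying the hypothesis to this $F$, I obtain $U$ satisfying equation~(\ref{6.3.1}), and this conjugation relation identifies the idempotent $F^T$ (up to the change of basis $U^T$) with the standard projection onto the last $r$ coordinates, so $N = \langle F \rangle$ is isomorphic to a direct summand of $S^s$ of the form $S^r$; equivalently, this is exactly the converse half of Theorem \ref{6.3.1a}, which gives $N$ free of dimension $r$. Hence every f.g.\ projective $S$-module is free, i.e.\ $S$ is $PF$.

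The only mildly delicate point is that Theorem \ref{6.3.1a} is stated under a standing Hermite hypothesis. I do not expect this to be a real obstacle: in the forward direction Hermite is free because $PF \Rightarrow H$, and in the converse direction what one actually needs is the implication ``matrix condition $\Rightarrow$ $\langle F\rangle$ free,'' whose proof is just reading off a basis from the last $r$ rows of $(U^T)^{-1}$ and does not use Hermite. If one prefers to cite Theorem \ref{6.3.1a} as a black box in both directions, one can first note that the matrix condition trivially implies Hermite (any stably free module is f.g.\ projective, hence presentable as $\langle F\rangle$ with $F^T$ idempotent, and then the standard-projection form shows it is free), and then invoke Theorem \ref{6.3.1a} legitimately.
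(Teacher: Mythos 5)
Your proof is correct and follows essentially the route the paper intends: the paper defers the proof to \cite{Gallego3}, but explicitly introduces the corollary as a consequence of Theorem \ref{6.3.1a} combined with Proposition \ref{5.1.1}, which is exactly your derivation. Your care over the standing Hermite hypothesis is well placed and correctly resolved: in the forward direction $PF\Rightarrow H$ makes Theorem \ref{6.3.1a} available, and in the converse the conjugation relation yields freeness of $\langle F\rangle$ directly (conjugate idempotents have isomorphic image modules), with no appeal to the Hermite property.
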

\begin{proof}
See \cite{Gallego3}.
\end{proof}
\begin{remark}\label{6.3.3a}
(i) If we consider right modules instead of left modules, then the previous corollary can be
reformulated in the following way: $S$ is $PF_r$ if and only if for each $s\geq 1$, given an
idempotent matrix $F\in M_s(S)$, there exists a matrix $U\in GL_s(S)$ such that
\begin{equation}\label{6.3.3b}
UFU^{-1}=\begin{bmatrix}0 & 0\\
0 & I_r \end{bmatrix},
\end{equation}
where $r=dim(\langle F\rangle)$, $0\leq r\leq s$, and $\langle F\rangle$ represents the right
$S$-module generated by the columns of $F$. The proof is as in the commutative case, see
\cite{Lezama5}.

(ii) Considering again left modules and disposing the matrices of homomorphisms by rows and
composing homomorphisms from the left to the right (see Remark \ref{5.1.2}), we  get the
characterization (\ref{6.3.3b}) for the $PF$ property. However, observe that in this case $\langle
F\rangle$ represents the left $S$-module generated by the rows of $F$. Note that Corollary
\ref{matrixpfrings} could has been formulated this way: In fact,
\begin{center}
$\begin{bmatrix}0 & 0\\
0 & I_r \end{bmatrix}^T=\begin{bmatrix}0 & 0\\
0 & I_r \end{bmatrix}$
\end{center}
and we can rewrite (\ref{6.3.1}) as (\ref{6.3.3b}) changing $F^T$ by $F$  (see Remark \ref{5.1.2})
and $(U^T)^{-1}$ by $U$.

(iii) If $S$ is a commutative ring, of course $PF=PF_r=\mathcal{PF}$. However, we will prove in
Corollary \ref{4.5} that the projective-free property is left-right symmetric for general rings.
\end{remark}
\begin{corollary}\label{6.2.4}
$S$ is $PF$ if and only if for each $s\geq 1$, given an idempotent matrix $F\in M_s(S)$, there
exists a matrix $U\in GL_s(S)$ such that
\begin{equation}\label{eq6.2.4}
UFU^{-1}=\begin{bmatrix}0 & 0\\
0 & I_r \end{bmatrix},
\end{equation}
where $r=dim(\langle F\rangle)$, $0\leq r\leq s$, and $\langle F\rangle$ represents the left
$S$-module generated by the rows of $F$.
\end{corollary}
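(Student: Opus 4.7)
The plan is to derive this statement from Corollary \ref{matrixpfrings} by a purely notational change, along the lines already outlined in Remark \ref{6.3.3a}(ii). The content of Corollary \ref{matrixpfrings} is exactly the same matrix normal form, stated in the column-oriented convention where the idempotent hypothesis is placed on $F^T$ and $\langle F\rangle$ means the left $S$-module generated by the \emph{columns} of $F$; what has to be shown is that the row-oriented reformulation is equivalent.

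First I would set up the dictionary. Given an idempotent matrix $F\in M_s(S)$, put $G:=F^T$. Then $G^T=F$ is idempotent, so $G$ satisfies the hypothesis of Corollary \ref{matrixpfrings}. Moreover, the left $S$-module generated by the rows of $F$ is identified (via the canonical transpose-identification of $S^{1\times s}$ and $S^s$) with the column module $\langle G\rangle$ of $G$, so the integer $r=\dim(\langle F\rangle)$ appearing in the present statement agrees with the integer $r=\dim(\langle G\rangle)$ of Corollary \ref{matrixpfrings}.

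Next I would run the equivalence. By Corollary \ref{matrixpfrings}, $S$ is $PF$ if and only if for every $s\geq 1$ and every such $G$ there exists $U\in M_s(S)$ with $U^T\in GL_s(S)$ satisfying
\begin{equation*}
(U^T)^{-1}G^TU^T=\begin{bmatrix}0 & 0\\ 0 & I_r\end{bmatrix}^T.
\end{equation*}
Substituting $G^T=F$ and using
\begin{equation*}
\begin{bmatrix}0 & 0\\ 0 & I_r\end{bmatrix}^T=\begin{bmatrix}0 & 0\\ 0 & I_r\end{bmatrix},
\end{equation*}
this reads $(U^T)^{-1}F\,U^T=\begin{bmatrix}0 & 0\\ 0 & I_r\end{bmatrix}$. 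Setting $V:=(U^T)^{-1}\in GL_s(S)$, so that $V^{-1}=U^T$, the condition becomes $VFV^{-1}=\begin{bmatrix}0 & 0\\ 0 & I_r\end{bmatrix}$, which is exactly (\ref{eq6.2.4}). The converse direction is obtained by reversing this substitution: given $V\in GL_s(S)$ conjugating $F$ into block form, take $U:=(V^{-1})^T$, verify $U^T=V^{-1}\in GL_s(S)$, and transpose the resulting identity back to recover the hypothesis of Corollary \ref{matrixpfrings} applied to $G=F^T$.

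There is no real obstacle; the whole argument is a careful bookkeeping of transposes between the column-and-composition convention used in Proposition \ref{5.1.1} and the row-and-composition convention described in Remark \ref{5.1.2}(ii). The only point worth double-checking is that the equality $\dim(\langle F\rangle_{\text{row}})=\dim(\langle F^T\rangle_{\text{col}})$ is automatic because the row and column modules are literally the same finitely generated left $S$-module under the canonical identification of $S^{1\times s}$ with $S^s$, and because $\dim$ is well defined by the $\mathcal{IBN}$ property, which holds under our standing $\mathcal{RC}$ assumption (Remark \ref{5.1.6}).
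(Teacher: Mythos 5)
Your proposal is correct and follows essentially the same route as the paper: the paper's proof consists precisely of invoking Remark \ref{6.3.3a}(ii), which records the same transpose dictionary between Corollary \ref{matrixpfrings} and the row-oriented statement that you spell out explicitly. Your version just makes the bookkeeping (the substitutions $G:=F^T$, $V:=(U^T)^{-1}$, and the symmetry of the block matrix) fully explicit.
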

\begin{proof}
This is the content of the part (ii) in the previous remark.
\end{proof}
\begin{corollary}\label{4.5}
Let $S$ be a ring. $S$ is $PF$ if and only if $S$ is $PF_r$, i.e., $PF=PF_r=\mathcal{PF}$.
\end{corollary}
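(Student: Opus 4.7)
The plan is to observe that the two matrix characterizations already at our disposal give identical conditions on the ring, so the symmetry is immediate. First I would recall the matrix description of $PF$ provided by Corollary \ref{6.2.4}: a ring $S$ is $PF$ if and only if for every $s\geq 1$ and every idempotent matrix $F\in M_s(S)$, there is some $U\in GL_s(S)$ such that
\begin{equation*}
UFU^{-1}=\begin{bmatrix}0 & 0\\ 0 & I_r\end{bmatrix},
\end{equation*}
where $r=\dim(\langle F\rangle)$ with $\langle F\rangle$ denoting the left $S$-module generated by the rows of $F$.

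Next I would invoke Remark \ref{6.3.3a}(i), which states the analogous matrix criterion for $PF_r$ in terms of the right module structure: $S$ is $PF_r$ if and only if for every idempotent matrix $F\in M_s(S)$ there exists $U\in GL_s(S)$ such that the very same equation $UFU^{-1}=\begin{bmatrix}0 & 0\\ 0 & I_r\end{bmatrix}$ holds, where now $r=\dim(\langle F\rangle)$ is interpreted in the right-module sense.

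Comparing the two characterizations, the matrix condition on $F$ and the existence of the conjugating invertible $U\in GL_s(S)$ are literally identical; the only apparent difference is the side on which the module $\langle F\rangle$ is interpreted, but this module-theoretic information is completely determined by the block form on the right-hand side (the integer $r$ equals the size of the identity block, which is intrinsic to the similarity class of $F$ in $M_s(S)$, independent of side). Consequently, the hypothesis of $PF$ and that of $PF_r$ are the same statement about $S$, so $PF\Leftrightarrow PF_r$, and the common property may be written $\mathcal{PF}$.

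The only minor issue worth double-checking is that the integer $r$ is well defined from the matrix condition alone, not from the choice of side; but this follows because if $F$ is similar to $\begin{bmatrix}0 & 0\\ 0 & I_r\end{bmatrix}$ via $GL_s(S)$, then $r$ is both the dimension of the free left module $\langle F\rangle$ (by Corollary \ref{6.2.4}) and the dimension of the free right module $\langle F\rangle$ (by Remark \ref{6.3.3a}(i)), so no ambiguity arises. Thus the corollary is a direct restatement of the coincidence of the two characterizations, and no further argument is needed.
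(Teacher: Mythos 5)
Your proposal is correct and takes essentially the same route as the paper: both arguments come down to the observation that the similarity condition $UFU^{-1}=\begin{bmatrix}0 & 0\\ 0 & I_r\end{bmatrix}$ characterizing $PF$ (Corollary \ref{6.2.4}) and $PF_r$ (Remark \ref{6.3.3a}) is one and the same, side-independent, condition on idempotent matrices. The one point you treat lightly --- that $r$ is intrinsic to the similarity class, i.e.\ that conjugating $F$ by $U$ changes neither the isomorphism class of the left module generated by its rows nor that of the right module generated by its columns --- is exactly what the paper supplies via the factorizations $UFU^{-1}=XY$, $F=YX$ with $X:=UF$, $Y:=U^{-1}$ and Proposition 0.3.1 of \cite{Cohn1}; your justification of this step by citing Corollary \ref{6.2.4} and Remark \ref{6.3.3a} back at themselves is circular as written, but the fact is elementary and easily proved directly, so this is a presentational rather than a mathematical gap.
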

\begin{proof}
Let $F\in M_s(S)$ be an idempotent matrix, if $S$ is $PF$, then there exists $P\in GL_s(S)$ such
that
\begin{equation*}
UFU^{-1}=\begin{bmatrix}0 & 0\\
0 & I_r \end{bmatrix},
\end{equation*}
where $r$ is the dimension of the left $S$-module generated by the rows of $F$. Observe that
$UFU^{-1}$ is also idempotent, moreover, the matrices $X:=UF$ and $Y:=U^{-1}$ satisfy $UFU^{-1}=XY$
and $F=YX$, then from Proposition 0.3.1 in \cite{Cohn1} we conclude that the left $S$-module
generated by the rows of $UFU^{-1}$ coincides with the left $S$-module generated by the rows of
$F$, and also, the right $S$-module generated by the columns of $UFU^{-1}$ coincides with the right
$S$-module generated by the columns of $F$. This implies that the $S$-module generated by the rows
of $F$ coincides with the right $S$-module generated by the columns of $F$. This means that $S$ is
$PF_r$. The symmetry of the problem completes the proof.
\end{proof}
Another interesting matrix characterization of $\mathcal{PF}$ rings is given in \cite{Cohn1},
Proposition 0.4.7: a ring $S$ is $\mathcal{PF}$ if and only if given an idempotent matrix $F\in
M_s(S)$ there exist matrices $X\in M_{s\times r}(S), Y\in M_{r\times s}(S)$ such that $F=XY$ and
$YX=I_r$. A similar matrix interpretation can be given for $PSF$ rings using Proposition 0.3.1 in
\cite{Cohn1} and Corollary \ref{5.2.9}.
\begin{proposition}
Let $S$ be a ring. Then,
\begin{enumerate}
\item[\rm (i)]$S$ is $PSF$ if and only if given an idempotent matrix $F\in M_r(S)$ there exist $s\geq 0$
and matrices $X\in M_{(r+s)\times r}(S), Y\in M_{r\times (r+s)}(S)$ such that
\begin{center}
$\begin{bmatrix}F & 0\\ 0 & I_s\end{bmatrix}=XY$ and $YX=I_r$.
\end{center}
\item[\rm (ii)]$PSF=PSF_r=\mathcal{PSF}$.
\end{enumerate}
\end{proposition}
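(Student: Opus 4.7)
The plan is to derive both parts from Proposition 0.3.1 in \cite{Cohn1} (which says that two idempotent matrices $E$ and $E'$ have isomorphic image modules if and only if there exist rectangular matrices $X,Y$ of compatible sizes with $E=XY$ and $E'=YX$), combined with Corollary \ref{5.2.9} (characterizing stably free modules by $S^r\cong S^s\oplus M$).

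For part (i), the $(\Leftarrow)$ direction is immediate: if $M$ is any f.g.\ projective $S$-module, Proposition \ref{5.1.1} supplies an idempotent $F\in M_r(S)$ with $M=\langle F\rangle$; the hypothesis then provides $s\geq 0$ and matrices $X,Y$ factoring the block-diagonal idempotent $E=\mathrm{diag}(F,I_s)$ and $I_r$ as $E=XY$, $I_r=YX$. Cohn's result then gives $M\oplus S^s=\mathrm{Im}(E)\cong \mathrm{Im}(I_r)=S^r$, so $M$ is stably free by Corollary \ref{5.2.9}. For the $(\Rightarrow)$ direction, I would assume $S$ is $PSF$ and let $F\in M_r(S)$ be idempotent. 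Both $M:=\langle F\rangle$ and its complement $M':=\langle I-F\rangle$ are f.g.\ projective (with $S^r=M\oplus M'$), hence both stably free under $PSF$. After using this stability and Corollary \ref{5.2.9} to arrange an isomorphism $M\oplus S^s\cong S^r$ for a suitable $s\geq 0$, Cohn's proposition delivers the required rectangular matrices $X\in M_{(r+s)\times r}(S)$ and $Y\in M_{r\times(r+s)}(S)$.

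For part (ii), the key observation is that the matrix condition in (i) is invariant under transposition: $F$ is idempotent iff $F^T$ is, and $XY=E$, $YX=I_r$ transposes to $Y^TX^T=E^T$, $X^TY^T=I_r$. Combining this with the row-based reformulation of Proposition \ref{5.1.1} recorded in Remark \ref{5.1.2}, the same matrix criterion characterizes $PSF$ on the left and $PSF_r$ on the right, yielding the symmetry $PSF=PSF_r=\mathcal{PSF}$.

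The main obstacle is the dimension bookkeeping in the $(\Rightarrow)$ direction of (i): Cohn's proposition readily produces factorizations once one knows $M\oplus S^{s'}\cong S^{r'}$ for \emph{some} $r'$, but arranging $r'$ to equal the specific $r$ that is the size of the given idempotent $F$ requires exploiting the stable-freeness of both $M$ and the complement $M'=\langle I-F\rangle$ under the $PSF$ hypothesis, rather than merely the stable-freeness of $M$ alone.
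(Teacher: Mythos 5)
Your overall route is the same as the paper's: the paper proves this proposition by a bare citation of Proposition 0.3.1 in \cite{Cohn1} together with Corollary \ref{5.2.9}, and your $(\Leftarrow)$ direction of (i) and your transposition argument for (ii) are correct instances of exactly that reduction. The problem is the $(\Rightarrow)$ direction of (i), which you rightly single out as the delicate point but do not actually close. By Cohn's proposition, the required factorization of $\begin{bmatrix}F & 0\\ 0 & I_s\end{bmatrix}$ against $I_r$ is equivalent to an isomorphism $M\oplus S^s\cong S^r$ for the \emph{given} $r$; since $S$ is assumed $\mathcal{RC}$ (hence $\mathcal{IBN}$), such an $s$ is forced to equal $r-t$ with $t={\rm rank}(M)$. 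Your suggestion is to obtain this from the stable freeness of the complement $M'=\langle I-F\rangle$, but the arithmetic does not come out: writing $t'=r-t$ and $M'\oplus S^b\cong S^{b+t'}$, the identity $M\oplus M'\cong S^r$ only yields
\[
M\oplus S^{\,b+t'}\;\cong\;M\oplus M'\oplus S^b\;\cong\;S^{\,r+b},
\]
which is of the form $M\oplus S^s\cong S^{\,s+t}$, i.e.\ just stable freeness of $M$ again; it equals $S^r$ precisely when $b=0$, i.e.\ when $M'$ is actually free, which is not given.

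What is really needed is $M\oplus S^{\,r-t}\cong S^r$, i.e.\ that the minimal number of free summands required to trivialize $M$ is at most ${\rm rank}(M')$ for the particular complement determined by $F$; this does not follow from $M$ and $M'$ merely being stably free without some form of cancellation. To repair the statement you should either prove $M\oplus S^{\,r-t}\cong S^r$ under an additional hypothesis (e.g.\ a Hermite or stable-rank assumption), or weaken the matrix condition to ask for $YX=I_{r'}$ for some $r'\geq 0$ (so that the two idempotents being compared are $\begin{bmatrix}F & 0\\ 0 & I_s\end{bmatrix}$ and $I_{r'}$), in which case Cohn's Proposition 0.3.1 and Corollary \ref{5.2.9} do give the equivalence with $PSF$ immediately, and your argument for (ii) goes through verbatim. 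The paper's own one-line proof does not address this point either, so you have located a genuine issue rather than merely missed the intended trick.
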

\begin{proof}
Direct consequence of Proposition 0.3.1 in \cite{Cohn1} and Corollary \ref{5.2.9}.
\end{proof}
For the $H$ property we have a similar characterization that proves the symmetry of this condition.
\begin{proposition}\label{4.7}
Let $S$ be a ring. Then,
\begin{enumerate}
\item[\rm (i)]$S$ is $H$ if and only if given an idempotent matrix $F\in M_r(S)$ with factorization
\begin{center}
$\begin{bmatrix}F & 0\\ 0 & 1\end{bmatrix}=XY$ and $YX=I_r$, for some matrices $X\in M_{(r+1)\times
r}(S), Y\in M_{r\times (r+1)}(S)$,
\end{center}
there exist matrices $X'\in M_{r\times (r-1)}(S),Y'\in M_{(r-1)\times r}(S)$ such that $F=X'Y'$ and
$Y'X'=I_{r-1}$.
\item[\rm (ii)]$H=H_r=\mathcal{H}$.
\end{enumerate}
\end{proposition}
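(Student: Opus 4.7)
The plan is to mirror the argument of the previous proposition (about $PSF$), invoking Cohn's Proposition 0.3.1: for an idempotent matrix $E\in M_n(S)$, a factorization $E=XY$ with $YX=I_k$ exists if and only if the column module $\langle E\rangle$ is free of rank $k$.

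For part (i), the forward direction runs as follows. Assume $S$ is $H$ and suppose $F\in M_r(S)$ is idempotent with $\begin{bmatrix}F & 0\\ 0 & 1\end{bmatrix}=XY$ and $YX=I_r$. By Cohn's result, $\langle F\rangle\oplus S$ is free of rank $r$, so $\langle F\rangle$ is stably free of rank $r-1$. The Hermite property gives $\langle F\rangle\cong S^{r-1}$, and applying Cohn's result in the reverse direction to $F$ yields $X'\in M_{r\times(r-1)}(S)$, $Y'\in M_{(r-1)\times r}(S)$ with $F=X'Y'$ and $Y'X'=I_{r-1}$.

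For the converse, assume the matrix condition and let $M$ be stably free with $M\oplus S^s\cong S^n$. I induct on $s$; the case $s=0$ is immediate. For $s\ge 1$, set $N:=M\oplus S^{s-1}$, so $N\oplus S\cong S^n$. Form the $n\times n$ idempotent $F:=\iota\pi$, where $\pi\colon S^n\to N$ is the projection along the $S$ complement and $\iota\colon N\hookrightarrow S^n$ the inclusion; then $\langle F\rangle=N$. By Cohn applied to $\langle F\rangle\oplus S\cong S^n$, one obtains a factorization $\begin{bmatrix}F & 0\\ 0 & 1\end{bmatrix}=XY$ with $YX=I_n$. The matrix hypothesis then delivers $F=X'Y'$ with $Y'X'=I_{n-1}$, hence $N\cong S^{n-1}$ and $M\oplus S^{s-1}\cong S^{n-1}$; the induction hypothesis gives $M$ free, using Proposition \ref{stablyfree1} to match ranks.

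For part (ii), observe that the matrix condition in (i) is invariant under transposition: $F$ is idempotent iff $F^T$ is; the identity $(AB)^T=B^TA^T$ together with $I_r^T=I_r$ converts the hypothesis $\begin{bmatrix}F & 0\\ 0 & 1\end{bmatrix}=XY$, $YX=I_r$ into the same shape of hypothesis for $F^T$ (with $Y^T,X^T$ in place of $X,Y$), and the conclusion transposes symmetrically. Running the argument of (i) with row modules in place of column modules yields the analogous characterization of $H_r$. Since the two matrix conditions differ only by applying transposition to all matrices involved, they are equivalent, and hence $H=H_r=\mathcal{H}$.

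The main technical point lies in the $(\Leftarrow)$ direction of (i): one must exhibit the $n\times n$ idempotent $F$ whose column module is precisely $N$ (not a larger quotient or summand), and verify that Cohn's proposition produces a factorization of exactly the block form $\begin{bmatrix}F & 0\\ 0 & 1\end{bmatrix}$ required by the hypothesis, with the identity of size $n$ rather than some smaller integer. Once this matching of sizes is accomplished, the induction on $s$ and the transposition argument for (ii) are formal.
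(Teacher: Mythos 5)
Your proof is correct and follows exactly the route the paper itself signals: the text defers the proof of Proposition \ref{4.7} to an external reference, but the surrounding discussion (the $\mathcal{PF}$ and $PSF$ characterizations via Proposition 0.3.1 of \cite{Cohn1} together with Corollary \ref{5.2.9}) makes clear that the intended argument is precisely yours, namely translating freeness of the module of an idempotent into a factorization $E=XY$, $YX=I_k$, stripping one free summand at a time by induction on $s$, and using invariance of the matrix condition under transposition for the left--right symmetry in (ii). You also correctly isolate and handle the one delicate point, the construction of the $n\times n$ idempotent with module exactly $N$ in the converse direction, so there is no gap.
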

\begin{proof}
See \cite{Gallego3}.
\end{proof}
We conclude this subsection given a matrix constructive proof of a well known Kaplansky's theorem.
\begin{proposition}
Any local ring $S$ is $\mathcal{PF}$.
\end{proposition}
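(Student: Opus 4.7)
The plan is to verify the matrix criterion of Corollary \ref{6.2.4}: for each idempotent $F\in M_s(S)$, I must exhibit $U\in GL_s(S)$ such that $UFU^{-1}=\begin{pmatrix}0 & 0\\0 & I_r\end{pmatrix}$, with $r$ the dimension of the row-module $\langle F\rangle$. Since $S$ is local, let $J$ denote its (unique) maximal ideal, so that $S/J$ is a division ring and a matrix in $M_s(S)$ is invertible iff its reduction modulo $J$ is invertible in $M_s(S/J)$.

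The crucial lemma is: if $F\neq 0$ is an idempotent in $M_s(S)$, then at least one entry of $F$ is a unit. Suppose, for contradiction, that every entry of $F$ lies in $J$. Then $I-F$ reduces mod $J$ to $I$, hence $I-F\in GL_s(S)$; but $I-F$ is also idempotent, and multiplying $(I-F)^2=I-F$ on the left by $(I-F)^{-1}$ yields $I-F=I$, that is $F=0$, a contradiction.

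Equipped with this, I would proceed by induction on $s$. The base case $s=1$ is elementary: any idempotent $f\in S$ satisfies $f(1-f)=0$; if $f\in J$ then $1-f$ is a unit (since $1+J$ consists of units), forcing $f=0$, while if $f\notin J$ then $f$ itself is a unit, forcing $f=1$. For the inductive step, after conjugating by a permutation matrix (which lies in $GL_s(S)$) we may assume $f_{11}$ is a unit. Let $\mathbf{v}$ be the first column of $F$; then $F\mathbf{v}=\mathbf{v}$ because $\mathbf{v}\in\mathrm{Im}(F)$, and the matrix $U_1:=[\mathbf{v}\mid \mathbf{e}_2\mid\cdots\mid \mathbf{e}_s]$ lies in $GL_s(S)$ because its reduction mod $J$ is upper-triangular with units on the diagonal. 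A direct block computation gives
\begin{equation*}
U_1^{-1}FU_1=\begin{pmatrix}1 & w\\ 0 & F_1\end{pmatrix},
\end{equation*}
and expanding the idempotence equation forces $F_1^2=F_1$ together with $wF_1=0$. Then taking $V_1:=\begin{pmatrix}1 & -w\\ 0 & I_{s-1}\end{pmatrix}\in GL_s(S)$ (whose inverse is obtained by replacing $-w$ by $w$) and using $wF_1=0$ produces
\begin{equation*}
V_1^{-1}(U_1^{-1}FU_1)V_1=\begin{pmatrix}1 & 0\\ 0 & F_1\end{pmatrix}.
\end{equation*}

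Applying the inductive hypothesis to the idempotent $F_1\in M_{s-1}(S)$ and assembling the conjugations, I obtain a matrix in $GL_s(S)$ that conjugates $F$ to a diagonal matrix whose entries are $0$'s and $1$'s; a final permutation gathers the $1$'s in the bottom-right corner, giving the form demanded by Corollary \ref{6.2.4}, and hence $S$ is $PF$. Combined with Corollary \ref{4.5}, this shows $S$ is $\mathcal{PF}$. The main obstacle is the unit-entry lemma, whose proof rests squarely on the characterization of invertibility in local rings via the residue division ring $S/J$; once this is in hand, the block-matrix manipulations are essentially routine, and the argument does not require $S$ to be commutative.
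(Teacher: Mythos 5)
Your overall strategy coincides with the paper's: verify the criterion of Corollary \ref{6.2.4} by induction on $s$, splitting off a $1\times 1$ block by conjugation. The block computations with $U_1$ and $V_1$ in the case where $f_{11}$ is a unit are correct (and arguably cleaner than the paper's explicit entrywise verifications). However, there is a genuine gap at the reduction step. Your unit-entry lemma only guarantees that \emph{some} entry $f_{ij}$ of $F$ is a unit, and conjugation by a permutation matrix permutes the diagonal entries of $F$ among themselves, so it can never move an off-diagonal unit into the $(1,1)$ position. Worse, a nonzero idempotent over a local ring need not have any unit on its diagonal: over $S=\mathbb{F}_2$ (local, with $J=0$) the matrix
\begin{equation*}
F=\begin{pmatrix}0&1&1\\1&0&1\\1&1&0\end{pmatrix}
\end{equation*}
is a nonzero idempotent all of whose diagonal entries lie in $J$. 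For such an $F$ the assertion ``after conjugating by a permutation matrix we may assume $f_{11}$ is a unit'' fails, and your induction never gets started.

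The gap is repairable, and it is instructive to compare with how the paper avoids it: since $f_{11}+(1-f_{11})=1$ and $S$ is local, either $f_{11}$ or $1-f_{11}$ is a unit, and the paper treats these two cases separately with explicit conjugating matrices; in the second case the conjugation produces a block of the form $\begin{pmatrix}0&0\\0&F_2\end{pmatrix}$ rather than $\begin{pmatrix}1&0\\0&F_1\end{pmatrix}$, and the induction proceeds identically. Alternatively, you could keep your unit-entry lemma and, when the only units sit at off-diagonal positions, first conjugate by an elementary matrix $I+e_{ij}$: this replaces the diagonal entry $f_{ii}$ by $f_{ii}+f_{ji}$, which is a unit whenever $f_{ji}$ is a unit and $f_{ii}\in J$, after which a permutation brings it to position $(1,1)$. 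Either patch completes your argument; as written, the case where no diagonal entry of $F$ is a unit is simply not covered.
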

\begin{proof}
Let $M$ a projective left $S$-module. By Remark \ref{5.1.2}, part (ii), there exists an idempotent
matrix $F=[f_{ij}]\in M_s(S)$ such that the module generated by the rows of $F$ coincides with $M$.
According to Corollary \ref{6.2.4}, we need to show that there exists $U\in GL_s(S)$ such that the
relation (\ref{eq6.2.4}) holds. The proof is by induction on $s$.

$s=1$: In this case $F=[f_{ij}]=[f]$; since $S$ is local, its idempotents are trivial, then $f=1$ or $f=0$ and hence $M$ is free.\\
$s=2$: In view of fact that $S$ is local, two possibilities may arise:

$f_{11}$  is invertible. Then, one can find  $G\in GL_{2}(S)$ such that
$GFG^{-1}=\begin{bmatrix}1&0\\0 & f\end{bmatrix}$,
for some $f\in S$. For this it is enough to take $G=\begin{bmatrix}1& f_{11}^{-1}f_{12}\\
-f_{21}f_{11}^{-1} & 1\end{bmatrix}$; to show that this matrix is invertible with inverse
\begin{center}
$G^{-1}=\begin{bmatrix}f_{11}& -f_{12}\\ f_{21} & -f_{21}f_{11}^{-1}f_{12}+1\end{bmatrix}$
\end{center}
we can
use the relations that exist between the entries of $F$. See for example that $GG^{-1}=I_{2}$:
\begin{itemize}
\item[] $f_{11}+f_{11}^{-1}f_{12}f_{21}=1$ because $f_{11}^{2}+f_{12}f_{21}=f_{11}$ and $f_{11}$ is invertible;
\item[] $-f_{12}-f_{11}^{-1}f_{12}f_{21}f_{11}^{-1}f_{12}+f_{11}^{-1}f_{12}=-f_{12}+(1-f_{11}^{-1}f_{12}f_{21})f_{11}^{-1}f_{12}=-f_{12}+f_{11}f_{11}^{-1}f_{12}=0$;
\item[] $-f_{21}f_{11}^{-1}f_{11}+f_{21}=0$;
\item[] $f_{21}f_{11}^{-1}f_{12}-f_{21}f_{11}^{-1}f_{12}+1=1$.
\end{itemize}
Similar calculations show that $G^{-1}G=I_{2}$. Since $F$ is idempotent, $f$ so is; applying the
case $s=1$ we get the result.

$1-f_{11}$ is invertible. In the same way, we can find $H\in GL_{2}(S)$ such that
$HFH^{-1}=\begin{bmatrix}0&0\\0 & g\end{bmatrix}$; for this it is enough to take
\begin{center}
$H=\begin{bmatrix}1& -(1-f_{11})^{-1}f_{12}\\ f_{21} &
-f_{21}(1-f_{11})^{-1}f_{12}+1\end{bmatrix}$;
\end{center}
note that $H^{-1}=\begin{bmatrix}1-f_{11}& (1-f_{11})^{-1}f_{12}\\
-f_{21} & 1\end{bmatrix}$. Indeed $HH^{-1}=I_{2}$:
\begin{itemize}
\item[] $1-f_{11}+(1-f_{11})^{-1}f_{12}f_{21}=1-f_{11}+f_{11}=1$ because $f_{12}f_{21}=(1-f_{11})f_{11}$;
\item[] $(1-f_{11})^{-1}f_{12}-(1-f_{11})^{-1}f_{12}=0$;
\item[] $f_{21}(1-f_{11})+f_{21}(1-f_{11})^{-1}f_{12}f_{21}-f_{21}=f_{21}(1-f_{11})+f_{21}f_{11}-f_{21}=0$;
\item[] $f_{21}(1-f_{11})^{-1}f_{12}-f_{21}(1-f_{11})^{-1}f_{12}+1=1$.
\end{itemize}
An analogous calculation shows that $H^{-1}H=I_{2}$. Note that  $g$ is an idempotent of $S$, then
$g=0$  or $g=1$ and the statement follows.

Now suppose that the result holds for $s-1$; considering both possibilities for $f_{11}$ we have:

If $f_{11}$ is invertible, taking
\[G=\begin{bmatrix} 1& f_{11}^{-1}f_{12}& f_{11}^{-1}f_{13} &\cdots &f_{11}^{-1}f_{1s}\\ -f_{21}f_{11}^{-1} & 1 & 0 &\cdots & 0\\
 -f_{31}f_{11}^{-1} & 0 & 1&\cdots & 0\\ \vdots & & &\cdots & \\-f_{s1}f_{11}^{-1} & 0 & 0& \cdots & 1\end{bmatrix}\]
we have that $G\in GL_{s}(S)$ and its inverse is:
\[G^{-1}=\begin{bmatrix} f_{11}& -f_{12}& -f_{13} &\cdots &-f_{1s}\\ f_{21} & -f_{21}f_{11}^{-1}f_{12}+1 & -f_{21}f_{11}^{-1}f_{13}&\cdots & -f_{21}f_{11}^{-1}f_{1s}\\
 f_{31} & -f_{31}f_{11}^{-1}f_{12} & -f_{31}f_{11}^{-1}f_{13}+1&\cdots & -f_{31}f_{11}^{-1}f_{1s}\\ \vdots & & &\cdots & \\f_{s1} & -f_{s1}f_{11}^{-1}f_{12} & -f_{s1}f_{11}^{-1}f_{13}& \cdots & -f_{s1}f_{11}^{-1}f_{1s}+1\end{bmatrix}.\]
In fact, see that $GG^{-1}=I_{s}$:
\begin{itemize}
\item[] $f_{11}+f_{11}^{-1}f_{12}f_{21}+\cdots+f_{11}^{-1}f_{1s}f_{s1}=1$ because $f_{11}^{2}+f_{12}f_{21}+\cdots+f_{1s}f_{s1}=f_{11}$;
\item[] $-f_{12}-f_{11}^{-1}f_{12}f_{21}f_{11}^{-1}f_{12}+f_{11}^{-1}f_{12}-f_{11}^{-1}f_{13}f_{31}f_{11}^{-1}f_{12}-\cdots-$ \linebreak $f_{11}^{-1}f_{1s}f_{s1}f_{11}^{-1}f_{12}
$ \linebreak  $= -f_{12}+
(1-f_{11}^{-1}\sum_{i=2}^{s}f_{1i}f_{i1})f_{11}^{-1}f_{12}=-f_{12}+f_{11}f_{11}^{-1}f_{12}=0$;
\item[]$\vdots$
\item[]$-f_{1s}-f_{11}^{-1}f_{12}f_{21}f_{11}^{-1}f_{1s}-f_{11}^{-1}f_{13}f_{31}f_{11}^{-1}f_{1s}- \cdots - f_{11}^{-1}f_{1s}f_{s1}f_{11}^{-1}f_{1s}+f_{11}^{-1}f_{1s}= -f_{1s}+ (1-f_{11}^{-1}\sum_{i=2}^{s}f_{1i}f_{i1}) f_{11}^{-1}f_{1s}=-f_{1s}+f_{11}f_{11}^{-1}f_{1s}=0$;
\item[]$-f_{21}f_{11}^{-1}f_{11}+f_{21}=0$; $f_{21}f_{11}^{-1}f_{12}-f_{21}f_{11}^{-1}f_{12}+1=1$; $f_{21}f_{11}^{-1}f_{1i}-f_{21}f_{11}^{-1}f_{1i}=0$ for every $3\leq i\leq s$;
\item[]$\vdots$
\item[]$-f_{s1}f_{11}^{-1}f_{11}+f_{s1}=0$; $f_{s1}f_{11}^{-1}f_{1i}-f_{s1}f_{11}^{-1}f_{1i}=0$ for every $2\leq i\leq s-1$ and, finally, $f_{s1}f_{11}^{-1}f_{1s}-f_{s1}f_{11}^{-1}f_{1s}+1=1$.
\end{itemize}
Similarly, $G^{-1}G= I_{s}$. Moreover, $GFG^{-1}=\begin{bmatrix}1 & 0_{1,s-1}\\0_{s-1,1}&
F_{1}\end{bmatrix}$ where $F_{1}\in M_{s-1}(S)$ is an idempotent matrix. Only remains to apply the
induction hypothesis.

If $1-f_{11}$ is invertible, taking {\tiny
\[H=\begin{bmatrix}1 &-(1-f_{11})^{-1}f_{12}& -(1-f_{11})^{-1}f_{13}&\cdots &-(1-f_{11})^{-1}f_{1s}\\ f_{21}& -f_{21}(1-f_{11})^{-1}f_{12}+1 &-f_{21}(1-f_{11})^{-1}f_{13}& \cdots & -f_{21}(1-f_{11})^{-1}f_{1s} \\f_{31}& -f_{31}(1-f_{11})^{-1}f_{12} &-f_{31}(1-f_{11})^{-1}f_{13}+1& \cdots & -f_{31}(1-f_{11})^{-1}f_{1s} \\\vdots &  & & \cdots &  \\ f_{s1}& -f_{s1}(1-f_{11})^{-1}f_{12} &-f_{s1}(1-f_{11})^{-1}f_{13}& \cdots &
-f_{s1}(1-f_{11})^{-1}f_{1s}+1\end{bmatrix}\]} we have that $H\in GL_{s}(S)$ with inverse given by:
\[H^{-1}=\begin{bmatrix}1-f_{11} &(1-f_{11})^{-1}f_{12}& (1-f_{11})^{-1}f_{13}&\cdots &(1-f_{11})^{-1}f_{1s}\\ -f_{21}& 1 &0& \cdots & 0\\-f_{31}& 0 &1& \cdots & 0 \\\vdots &  & & \cdots &  \\ -f_{s1}& 0 &0& \cdots & 1\end{bmatrix}.\]
In fact, note that  $HH^{-1}=I_s$:
\begin{itemize}
\item[] $1-f_{11}+(1-f_{11})^{-1}\sum_{i=2}^{s}f_{1i}f_{i1}=1-f_{11}+f_{11}=1$ because $\sum_{i=2}^{s}f_{1i}f_{i1}=(1-f_{11})f_{11}$ and $(1-f_{11})$ is invertible; also $(1-f_{11})^{-1}f_{1i}-(1-f_{11})^{-1}f_{1i}$ for $2\leq i\leq s$;
\item[] $f_{21}(1-f_{11})+f_{21}\sum_{i=1}^{s}(1-f_{11})^{-1}f_{1i}f_{i1}-f_{21}=-f_{21}f_{11}+f_{21}f_{11}=0$; $f_{21}(1-f_{11})^{-1}f_{12}- f_{21}(1-f_{11})^{-1}f_{12}+1=1$; and $f_{21}(1-f_{11})^{-1}f_{1i}-f_{21}(1-f_{11})^{-1}f_{1i}=0$ for $3\leq i\leq s$.
\item[] $\vdots$
\item[]$f_{s1}(1-f_{11})+f_{s1}\sum_{i=1}^{s}(1-f_{11})^{-1}f_{1i}f_{i1}-f_{s1}=-f_{s1}f_{11}+f_{21}f_{11}=0$; $f_{s1}(1-f_{11})^{-1}f_{1i}-f_{s1}(1-f_{11})^{-1}f_{1i}=0$ for $3\leq i\leq s-1$ and, finally, $f_{s1}(1-f_{11})^{-1}f_{1s}- f_{s1}(1-f_{11})^{-1}f_{1s}+1=1$.
\end{itemize}
Similarly, we can to show that $H^{-1}H=I_s$. Furthermore, we have also
\begin{center}
$HFH^{-1}=\begin{bmatrix}0&
0_{1,s-1}\\0_{s-1,1}& F_{2}\end{bmatrix}$
\end{center}
 with $F_{2}\in M_{s-1}(S)$ an idempotent matrix. One more
time we apply the induction hypothesis.
\end{proof}


\section{$d$-Hermite rings and skew $PBW$ extensions}

Under suitable conditions on the ring $R$ of coefficients, most of bijective skew $PBW$ extensions
are $\mathcal{PSF}$ (see Theorem \ref{3.6}). A different situation occurs for the $\mathcal{H}$
property. In fact, as we observed before, if $K$ is a division ring, then $S:=K[x,y]$ has a module
$M$ such that $M\oplus S\cong S^2$, but $M$ is not free, i.e., $S$ is not $\mathcal{H}$. Another
example occurs in Weyl algebras: let $K$ be a field, with ${\rm char}(K)=0$, the Weyl algebra
$A_1(K)=K[t][x;\frac{d}{dt}]$ is not $\mathcal{H}$ since there exist stably free modules of rank
$1$ over $A_n(K)$ that are not free (\cite{Cohn1}, Corollary 1.5.3; see also \cite{McConnell},
Example 11.1.4). In this section we will study a weaker condition than the $\mathcal{H}$ property
for skew $PBW$ extensions: the $d$-Hermite condition. Recall that we always assume that all rings
are $\mathcal{RC}$.

\subsection{$d$-Hermite rings}

The following proposition induces the definition of $d$-Hermite rings.
\begin{proposition}\label{7.1.1}
Let $S$ be a ring. For any integer $d\geq 0$, the following statements are equivalent:
\begin{enumerate}
\item[{\rm (i)}]Any stably free module of rank $\geq d$ is free.
\item[{\rm (ii)}]Any unimodular row matrix over $S$ of length $\geq d+1$ can be
completed to an invertible matrix over $S$.
\item[{\rm (iii)}]For every $r\geq d+1$, if $\textbf{u}$ is an unimodular row matrix of size $1\times
r$, then there exists a matrix $U\in GL_{r}(S)$ such that $\textbf{u}U=(1,0,\dots, 0)$, i.e.,
$GL_r(S)$ acts transitively on $Um_r(r,S)$.
\item[$\rm (iv)$]For every $r\geq d+1$, given an unimodular matrix $F$ of size $s\times r$,
$r\geq s$, there exists $U\in GL_r(S)$ such that
\begin{center}
$FU= \begin{bmatrix}I_s & | & 0\end{bmatrix}$.
\end{center}
\end{enumerate}
\end{proposition}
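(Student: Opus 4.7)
The plan is to establish the cycle of implications (i) $\Rightarrow$ (ii) $\Rightarrow$ (iii) $\Rightarrow$ (iv) $\Rightarrow$ (i), generalizing the matrix description of Hermite rings from the $d=0$ case (Theorem \ref{6.2.1}). The dictionary between stably free modules and unimodular matrices developed in Section 5 lets each matrix condition be read as a module-theoretic statement, with the stably-free rank threshold $d$ tracked via the dimensions $r$ and $s$.

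For (i) $\Rightarrow$ (ii): a unimodular row $\mathbf{u}$ of length $r \geq d+1$ is the matrix of a split epimorphism $S^r \twoheadrightarrow S$ whose kernel $M$ is stably free of rank $r-1 \geq d$, hence free by hypothesis; combining a basis of $M$ with a section of $\mathbf{u}$ produces a basis of $S^r$ and thus an invertible matrix extending $\mathbf{u}$. The step (ii) $\Rightarrow$ (iii) is immediate: if $V \in GL_r(S)$ has $\mathbf{u}$ as first row, then $U := V^{-1}$ satisfies $\mathbf{u}U = (1, 0, \ldots, 0)$, this being the first row of $VU = I_r$. For (iv) $\Rightarrow$ (i): given $M$ stably free of rank $t \geq d$ with $S^r \cong S^s \oplus M$ and $r = s+t$, the case $s = 0$ gives $M \cong S^t$ free, and otherwise $r \geq d+1$, so the projection matrix $F$ of size $s \times r$ is unimodular; applying (iv) produces $U \in GL_r(S)$ with $FU = [I_s \mid 0]$, whose last $t$ columns are a basis of $\ker(F) \cong M$.

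The main step is (iii) $\Rightarrow$ (iv), which I would do by induction on $s$ with $r \geq d+1$ fixed. The base $s = 1$ is exactly (iii). For $s \geq 2$, the first row $\mathbf{u}_1$ of the unimodular $F$ is itself unimodular of length $r \geq d+1$ (if $G$ is a right inverse of $F$, then $\mathbf{u}_1 g_1 = 1$ for $g_1$ the first column of $G$), so (iii) furnishes $U_1 \in GL_r(S)$ with $\mathbf{u}_1 U_1 = (1, 0, \ldots, 0)$, putting $F U_1$ into block form $\begin{bmatrix} 1 & 0 \\ \mathbf{v} & F' \end{bmatrix}$ with $F'$ of size $(s-1) \times (r-1)$. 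A direct calculation with the right inverse of $FU_1$ shows that $F'$ is itself unimodular. Applying the inductive hypothesis to $F'$ yields $U' \in GL_{r-1}(S)$ with $F' U' = [I_{s-1} \mid 0]$, and then an elementary right multiplication clearing the column $\mathbf{v}$ assembles everything into a single $U \in GL_r(S)$ with $FU = [I_s \mid 0]$.

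The main technical obstacle lies in the bookkeeping of (iii) $\Rightarrow$ (iv): one must verify unimodularity of the block $F'$, maintain the hypothesis $r \geq d+1$ under the reduction (handling the boundary case $r = d+1$ separately, typically via the observation that a square unimodular matrix is already invertible), and track the composition of right-multiplications to stay in $GL_r(S)$. The constant switching between the left-column convention of the paper (Remark \ref{5.1.2}) and the natural row-action on unimodular rows should also be handled carefully; once this alignment is fixed, each individual step reduces to a computation familiar from the $d = 0$ case.
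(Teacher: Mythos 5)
Your overall strategy --- the cycle (i)$\Rightarrow$(ii)$\Rightarrow$(iii)$\Rightarrow$(iv)$\Rightarrow$(i), run as the $d=0$ proof of Theorem~\ref{6.2.1} with the length bound $r\geq d+1$ inserted --- is exactly what the paper does (its proof is the single line ``repeat the proof of Theorem 2 in \cite{Gallego3} taking $r\geq d+1$''), and your arguments for (i)$\Rightarrow$(ii), (ii)$\Rightarrow$(iii) and (iv)$\Rightarrow$(i) are correct.

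The gap is in (iii)$\Rightarrow$(iv). Your induction replaces the $s\times r$ matrix by an $(s-1)\times(r-1)$ unimodular block $F'$, so at the $j$-th step you must apply (iii) to a unimodular row of length $r-j$; since (iii) is only available for lengths $\geq d+1$, the induction runs to completion only when $r-s+1\geq d+1$, i.e.\ $r-s\geq d$. The problematic range is therefore all of $r-s<d$, not just the square case $r=s$ you single out (and even there, ``square unimodular $\Rightarrow$ invertible'' requires weak finiteness, which the standing $\mathcal{RC}$ hypothesis does not supply). This is not a bookkeeping issue: for $d\geq 2$ the implication fails as literally stated. Take $S=A_1(K)$ with ${\rm char}(K)=0$ and $P$ stably free of rank $1$, non-free, with $P\oplus S^2\cong S^3$; the associated split epimorphism $S^3\to S^2$ has a unimodular $2\times 3$ matrix $F$ with $r=3\geq d+1$, and any $U\in GL_3(S)$ with $FU=[I_2\,|\,0]$ would carry $\ker(F)\cong P$ isomorphically onto the free kernel of $[I_2\,|\,0]$, contradicting non-freeness of $P$ --- while (i) with $d=2$ does hold for $A_1(K)$ by Stafford's theorem (Corollary~\ref{5.3.7}). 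The repair is to impose $r-s\geq d$ in (iv), which is the hypothesis your own (iv)$\Rightarrow$(i) argument actually uses (the module being freed is stably free of rank $r-s$, and (i) only speaks about ranks $\geq d$); with that restriction every row your induction meets has length $\geq d+1$ and the argument closes.
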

\begin{proof}
We can repeat the proof of Theorem 2 in \cite{Gallego3} taking $r\geq d+1$.
\end{proof}
\begin{definition}\label{7.1.2}
Let $S$ be a ring and $d\geq 0$ an integer. $S$ is $d$-Hermite, property denoted by
$d$-$\mathcal{H}$, if $S$ satisfies any of conditions in Proposition \ref{7.1.1}.
\end{definition}
The next result extends Proposition \ref{4.7}.
\begin{proposition}
The $d$-Hermite condition is left-right symmetric.
\end{proposition}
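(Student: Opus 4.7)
The plan is to use the duality functor $M\mapsto M^{*}:=\mathrm{Hom}_{S}(M,S)$, which sends finitely generated projective left $S$-modules to finitely generated projective right $S$-modules and vice versa. Three standard facts underlie the argument: (a) $\mathrm{Hom}_{S}(-,S)$ commutes with finite direct sums; (b) $\mathrm{Hom}_{S}(S^{n},S)\cong S^{n}$ as a module on the opposite side; and (c) the canonical evaluation map $M\to M^{**}$ is an isomorphism whenever $M$ is stably free, since it is an isomorphism on free modules of finite rank, and the stably free case follows by adding a free complement and restricting the natural isomorphism on the free ambient module.

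First, given a stably free left $S$-module $M$ of rank $k$, fix $s\ge 0$ with $S^{s+k}\cong S^{s}\oplus M$, apply $*$, and invoke (a) and (b) to obtain $S^{s+k}\cong S^{s}\oplus M^{*}$ as right $S$-modules. By Proposition \ref{stablyfree1} (which relies only on the $\mathcal{IBN}$ property, assumed throughout), $M^{*}$ is stably free of rank $k$. The same construction applied to a stably free right $S$-module $N$ produces a stably free left module $N^{*}$ of the same rank, and fact (c) shows that these two assignments are mutually inverse up to canonical isomorphism. Moreover, (b) together with (c) shows that $M\cong S^{k}$ as a left module iff $M^{*}\cong S^{k}$ as a right module.

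Combining the two observations, the duality $*$ induces a bijection between stably free left $S$-modules of rank $k$ and stably free right $S$-modules of rank $k$ that preserves freeness. Hence ``every stably free left $S$-module of rank $\geq d$ is free'' is equivalent to ``every stably free right $S$-module of rank $\geq d$ is free'', which is the claimed left-right symmetry.

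The main technical point is the double-dual isomorphism (c); once that is established, the remainder is routine bookkeeping via Proposition \ref{stablyfree1} to ensure that rank is well-defined on both sides and via the naturality of the evaluation map to match the direct summand decompositions.
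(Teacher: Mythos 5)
Your argument is correct. The duality $M\mapsto M^{*}=\mathrm{Hom}_{S}(M,S)$ does carry a stably free left module of rank $k$ to a stably free right module of rank $k$ (apply $\mathrm{Hom}_{S}(-,S)$ to $S^{s+k}\cong S^{s}\oplus M$ and use additivity and $(S^{n})^{*}\cong S^{n}$), the double-dual evaluation is an isomorphism on f.g.\ projectives by naturality from the free ambient module, and freeness is transported in both directions; the well-definedness of rank on each side is covered by the standing $\mathcal{RC}$ (hence $\mathcal{IBN}$) hypothesis, which you correctly invoke. This is, however, a genuinely different route from the one the paper intends: the paper presents the proposition as an extension of its Proposition \ref{4.7}, whose proof (in the cited reference) characterizes the Hermite condition by a factorization property of idempotent matrices --- $F$ idempotent with $\begin{bmatrix}F & 0\\ 0 & 1\end{bmatrix}=XY$, $YX=I_r$ implies $F=X'Y'$, $Y'X'=I_{r-1}$ --- a condition that is manifestly left-right symmetric because it is stated purely in terms of matrix products, and which generalizes to the $d$-Hermite setting by iterating the reduction only down to size $d$. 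The matrix-factorization approach is constructive and consistent with the paper's matrix-theoretic program; your duality argument is shorter and more conceptual, at the price of routing through $\mathrm{Hom}$ functors and the double-dual isomorphism rather than explicit matrices. Both are complete proofs of the stated symmetry.
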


\begin{corollary}\label{7.1.4}
Let $S$ be a ring. Then, $S$ is ${\rm sr}(S)$-$\mathcal{H}$.
\end{corollary}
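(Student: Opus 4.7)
The plan is to observe that this corollary is an almost immediate reformulation of Theorem \ref{7.3.6} in the language just introduced. Setting $d := {\rm sr}(S)$, by Definition \ref{7.1.2} the ring $S$ is $d$-$\mathcal{H}$ precisely when it satisfies any one of the four equivalent conditions of Proposition \ref{7.1.1}. Condition (i) there asks that every stably free $S$-module of rank $\geq d$ be free, and this is exactly the conclusion of Theorem \ref{7.3.6}.

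So the proof is essentially a single appeal to Theorem \ref{7.3.6}: let $M$ be any stably free $S$-module with ${\rm rank}(M) \geq {\rm sr}(S) = d$; by Theorem \ref{7.3.6}, $M$ is free of dimension ${\rm rank}(M)$, which verifies condition (i) of Proposition \ref{7.1.1} for the integer $d = {\rm sr}(S)$. By Definition \ref{7.1.2}, $S$ is ${\rm sr}(S)$-$\mathcal{H}$.

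There is no real obstacle here: the entire work has already been done in Theorem \ref{7.3.6} (which in turn rested on Proposition \ref{6.2.29} and Lemma \ref{matricialhermite2}), and the corollary merely rebrands that result under the $d$-Hermite terminology. The only thing worth noting for the reader is that the hypothesis ${\rm rank}(M) \geq {\rm sr}(S)$ in Theorem \ref{7.3.6} matches the threshold $d+1 \leq {\rm rank}(M)+1$ appearing in conditions (ii)--(iv) of Proposition \ref{7.1.1} via the identification $d = {\rm sr}(S)$, so no re-indexing is needed.
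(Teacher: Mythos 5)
Your proposal is correct and follows exactly the paper's own route: the paper's proof is the one-line remark that the corollary "follows from Definition \ref{7.1.2} and Theorem \ref{7.3.6}," which is precisely the appeal you make, only spelled out in more detail. Nothing further is needed.
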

\begin{proof}
This follows from Definition \ref{7.1.2} and Theorem \ref{7.3.6}.
\end{proof}
\begin{corollary}\label{7.1.4b}
Let $S$ be a ring. If ${\rm sr}(S)=1$, then $S$ is $\mathcal{H}$.
\end{corollary}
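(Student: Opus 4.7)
The plan is to reduce everything to the $1$-Hermite property already at our disposal and then close the remaining gap by a direct argument. First I would invoke Corollary \ref{7.1.4}, which gives immediately that $S$ is $1$-$\mathcal{H}$; equivalently, by Proposition \ref{7.1.1}(ii) and Theorem \ref{6.2.1}(ii), every unimodular row over $S$ of length at least $2$ can already be completed to a matrix in $GL_r(S)$, so the only obstruction to $\mathcal{H}$ is whether unimodular rows of length $1$ admit such a completion in $GL_1(S)$. This reduces the statement to a purely algebraic question: is every right-invertible element of $S$ actually a two-sided unit (i.e., is $S$ Dedekind finite)?

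I would answer this question affirmatively by feeding a carefully chosen unimodular column into the stable-rank hypothesis. Given $u,v\in S$ with $uv=1$, the identity $v\cdot u+1\cdot(1-vu)=1$ shows that $(u,\,1-vu)^T$ is a unimodular column of length $2$, so ${\rm sr}(S)=1$ provides $a\in S$ making $w:=u+a(1-vu)$ left invertible, say $bw=1$. The key point is that this particular $w$ automatically satisfies $wv=1$ as well, by a routine cancellation using $uv=1$; then $bw=wv=1$ forces $b=v$, so $v$ becomes a two-sided unit and hence $u=v^{-1}$ is a unit. Once this is in hand, every unimodular row $(u)$ already lies in $GL_1(S)$, and Theorem \ref{6.2.1}(ii) concludes that $S\in\mathcal{H}$.

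The main obstacle is recognizing which unimodular column to feed into ${\rm sr}(S)=1$: the column $(u,1-vu)^T$ is tuned precisely so that the element $w$ it produces is simultaneously left invertible (as guaranteed by the stable rank hypothesis) and right invertible in such a way that the two one-sided inverses collapse into a single two-sided one. Once this observation is isolated, the passage from $1$-$\mathcal{H}$ to $\mathcal{H}$ reduces to the formal matrix completion supplied by Theorem \ref{6.2.1}(ii).
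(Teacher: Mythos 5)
Your argument is correct, but it closes the gap between $1$-$\mathcal{H}$ and $\mathcal{H}$ by a genuinely different route than the paper. Both proofs begin by invoking Corollary \ref{7.1.4} to obtain $S\in 1$-$\mathcal{H}$. The paper then quotes the classical fact that rings of stable rank $1$ are cancellable (citing Evans) and finishes by appealing to Proposition 12 of \cite{Gallego3}, which converts $1$-$\mathcal{H}$ plus cancellation into $\mathcal{H}$; its proof is thus a chain of two external citations. You instead isolate the precise residual obstruction --- by Theorem \ref{6.2.1}(ii) together with Proposition \ref{7.1.1}(ii) it is exactly the completability of length-one unimodular rows, i.e.\ the statement that every one-sided invertible element of $S$ is a unit --- and then kill that obstruction by an explicit computation: from $uv=1$ you form the unimodular column $\begin{bmatrix}u & 1-vu\end{bmatrix}^T$ (unimodular since $vu+(1-vu)=1$), apply ${\rm sr}(S)=1$ to get a left-invertible $w=u+a(1-vu)$, observe that $(1-vu)v=v-v(uv)=0$ gives $wv=1$, and conclude $b=b(wv)=(bw)v=v$ and $u=(uv)w=w$, so $u$ is a unit. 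This is the standard proof that stable rank one implies Dedekind finiteness, and your reduction of the rank-zero case of the Hermite property to Dedekind finiteness is legitimate under the paper's standing $\mathcal{RC}$ (hence $\mathcal{IBN}$) hypothesis, since any stably free module of rank $0$ peels down, using $1$-$\mathcal{H}$, to one satisfying $P\oplus S\cong S$. What your approach buys is self-containedness and constructiveness: everything happens inside the paper's own matrix framework with no appeal to cancellation theory. What the paper's approach buys is brevity and a stronger intermediate fact (full cancellation rather than mere Dedekind finiteness), which is reusable elsewhere.
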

\begin{proof}
According to Corollary \ref{7.1.4} $S$ is $1$-$\mathcal{H}$, however, it is well known that rings
with stable rank $1$ are cancellable (see \cite{Evans}), so by Proposition 12 in \cite{Gallego3},
$S$ is $\mathcal{H}$.
\end{proof}
\begin{remark}\label{7.1.5}
(i) Observe that $0$-Hermite rings coincide with $\mathcal{H}$ rings, and for commutative rings,
$1$-Hermite coincides also with $\mathcal{H}$ (see \cite{Lam}, Theorem I.4.11). If $K$ is a field
with ${\rm char}(K)=0$, by Corollary \ref{5.3.7}, $A_1(K)$ is $2$-$\mathcal{H}$ but, as we observed
before, $A_1(K)$ is not $1$-$\mathcal{H}$. In general, $\mathcal{H}\subsetneq
1$-$\mathcal{H}\subsetneq 2$-$\mathcal{H}\subsetneq \cdots$ (see \cite{Cohn1}).

(ii) Note that $\mathcal{H}=1$-$\mathcal{H}$$\cap \mathcal{WF}$ (a ring $S$ is $\mathcal{WF}$,
\textit{weakly finite}, if for all $n\geq 0$, $P\oplus S^n\cong S^n$ if and only if $P=0$).

(iii) Any left Artinian ring $S$ is $\mathcal{H}$ since ${\rm sr}(S)=1$. In particular, semisimple
and semilocal rings are $\mathcal{H}$.

(iv) Rings with big stable rank can be Hermite, for example \linebreak ${\rm
sr}(\mathbb{R}[x_1,\dots,x_n])=n+1$ (\cite{McConnell}, Theorem 11.5.9), but by Quillen-Suslin
Theorem, $\mathbb{R}[x_1,\dots,x_n]$ is $\mathcal{H}$.
\end{remark}

\subsection{Stable rank}

Corollaries \ref{5.3.7} and \ref{7.1.4} motivate the task of computing the stable rank of bijective
skew $PBW$ extensions. For this purpose we need to recall the famous stable range theorem. This
theorem relates the stable rank and the Krull dimension of a ring. The original version of this
classical result is due a Bass (1968, \cite{Bass2}) and states that if $S$ is a commutative
Noetherian ring and ${\rm Kdim}(S)=d$ then ${\rm sr}(S)\leq d+1$. Heitmann extends the theorem for
arbitrary commutative rings (1984, \cite{Heitmann}). Lombardi et. al. in 2004 (\cite{Lombardi},
Theorem 2.4; see also \cite{Lombardi3}) proved again the theorem for arbitrary commutative rings
using the Zariski lattice of a ring and the boundary ideal of an element. This proof is elementary
and constructive. Stafford in 1981 (\cite{Stafford3}) proved a non-commutative version of the
theorem for left Noetherian rings.

\begin{proposition}[Stable range theorem]\label{821}
Let $S$ be  a left Noetherian ring and ${\rm lKdim}(S)=d$, then ${\rm sr}(S)\leq d+1$.
\end{proposition}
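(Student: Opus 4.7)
The plan is to prove the equivalent reformulation: every unimodular column vector of length $r\geq d+2$ is stable in the sense of the definition given before Proposition \ref{6.2.29}. By the definition of ${\rm sr}(S)$, establishing this for $r=d+2$ is enough. So I must show: given $\mathbf{v}=(v_1,\dots,v_{d+2})^T$ with $Sv_1+\cdots+Sv_{d+2}=S$, there exist $a_1,\dots,a_{d+1}\in S$ such that $\sum_{i=1}^{d+1}S(v_i+a_iv_{d+2})=S$. Reformulating in terms of left ideals, if $L:=\sum_{i=1}^{d+1}Sv_i$, then $L+Sv_{d+2}=S$, and I want to modify the generators of $L$ by multiples of $v_{d+2}$ so that the new left ideal $L'$ equals $S$ outright.

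The main technical engine is Stafford's basic-element lemma: if $S$ is left Noetherian with ${\rm lKdim}(S)=d$, $M$ is a finitely generated left $S$-module, $K\le M$ is a submodule, and $m_1,\dots,m_n\in M$ satisfy $Sm_1+\cdots+Sm_n+K=M$ with $n\ge{\rm lKdim}(M/K)+1$, then there exist $s_1,\dots,s_{n-1}\in S$ such that $\sum_{i=1}^{n-1}S(m_i+s_im_n)+K=M$. Applied with $M=S$ and $K=0$, and taking $m_i=v_i$, $m_n=v_{d+2}$, $n=d+1$, this produces the required $a_1,\dots,a_{d+1}$ provided the dimension inequality $n\ge{\rm lKdim}(S)+1$ holds, which is exactly $d+1\ge d+1$. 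Thus the entire Proposition reduces to this lemma.

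To prove the basic-element lemma, I would proceed by induction on ${\rm lKdim}(M/K)$. Let $e:={\rm lKdim}(M/K)$. Let $\mathcal{P}$ be the (finite) set of prime ideals $P$ of $S$ that are annihilators of critical composition factors of $M/K$ of Krull dimension exactly $e$ (these are finite in number by Noetherian hypothesis). The goal is to find $s\in S$ such that $m_1+sm_n$ has image in $M/K$ avoiding the submodules $\{x\in M/K:P_j x=0\}$ in a suitable sense; equivalently, so that ${\rm lKdim}\bigl(M/(S(m_1+sm_n)+\sum_{i\ge 2}Sm_i+K)\bigr)<e$. Once this is achieved for one generator, applying the induction hypothesis to the new, strictly smaller-dimension quotient produces the remaining $s_2,\dots,s_{n-1}$, and a standard re-indexing yields the final statement.

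The main obstacle is producing $s$ in the previous paragraph: this is the genuinely non-commutative piece, since classical prime avoidance fails. Stafford's device is to express the problem in terms of the finitely many affiliated primes $P_1,\dots,P_k$ of critical factors of dimension $e$, note that $Sm_n$ together with $K$ maps onto $M/K$ modulo each $P_j$ (because the hypothesis $Sm_1+\cdots+Sm_n+K=M$ guarantees $m_n$ is not swallowed by each $P_j$ simultaneously with the other $m_i$), and then use a careful choice of $s$ constructed recursively over the $P_j$, exploiting the hypothesis $n\ge e+1$ to guarantee enough room. I expect this combinatorial-avoidance step, together with the verification that the Krull dimension genuinely drops after the modification, to absorb most of the work; the rest is bookkeeping via the induction on $e$.
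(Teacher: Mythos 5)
The paper gives no proof of this proposition at all --- it simply cites Stafford's 1981 paper --- so your attempt to reconstruct the argument is the right instinct, and your overall strategy (reduce to showing that unimodular columns of length $d+2$ are stable, then drive the reduction with a Stafford-type basic-element lemma proved by induction on Krull dimension plus a noncommutative prime-avoidance step) is indeed the route Stafford takes. However, there is a genuine problem with your key lemma: with the hypothesis $n\ge {\rm lKdim}(M/K)+1$ it is false. Applied to $M=S$, $K=0$ and $n=d+1$ (which satisfies $d+1\ge d+1$), it would shorten any unimodular column of length $d+1$ to one of length $d$, i.e.\ it would prove ${\rm sr}(S)\le d$; but ${\rm sr}(\mathbb{R}[x_1,\dots,x_n])=n+1$ while ${\rm lKdim}(\mathbb{R}[x_1,\dots,x_n])=n$ (see Remark \ref{7.1.5}, part (iv), of this very paper), a contradiction already for $n=1$. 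The correct threshold is $n\ge {\rm lKdim}(M/K)+2$, so that the lemma first bites at length $d+2$, and your application is internally inconsistent on exactly this point: you set $m_n=v_{d+2}$, which forces $n=d+2$, yet you write $n=d+1$ and verify $d+1\ge d+1$. The statement you invoke and the statement you need differ by one, and that difference is precisely the content of the theorem.

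Second, the heart of the matter --- producing the element $s$ so that replacing $m_1$ by $m_1+sm_n$ strictly drops the Krull dimension of the relevant quotient --- is not actually carried out. You correctly identify that classical prime avoidance fails and that one must work with the finitely many affiliated primes of the top-dimensional critical composition factors, but the step you describe as a careful recursive choice of $s$ over the $P_j$ that you expect to absorb most of the work is where all of the work lives: one needs the notion of an element basic at a prime (via normalized rank), the fact that when enough generators are available some $m_1+sm_n$ is simultaneously basic at all of the finitely many relevant primes, and a genuine verification that basicness at every affiliated prime of dimension $e$ forces the Krull dimension of the quotient below $e$. Without this the induction has no engine. As it stands the proposal is a correct road map with an off-by-one error in its main signpost and the central bridge missing.
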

\begin{proof}
See \cite{Stafford3}.
\end{proof}

From this we get the following modest result.

\begin{proposition}\label{7.2.1}
Let $R$ be a left Noetherian ring with finite left Krull dimension and $A=\sigma(R)\langle
x_1,\dots,x_n\rangle$ a bijective skew $PBW$ extension of $R$, then
\begin{center}
$1\leq {\rm sr}(A)\leq {\rm lKdim}(R)+n+1$,
\end{center}
and $A$ is $d$-$\mathcal{H}$, with $d:=({\rm lKdim}(R)+n+1)$.
\end{proposition}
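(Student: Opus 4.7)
The plan is to assemble the proposition from earlier results in a direct chain, with essentially no new argument needed beyond matching hypotheses.

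First I would verify that $A$ is left Noetherian: since $R$ is left Noetherian and $A$ is a bijective skew $PBW$ extension, Theorem \ref{1.3.4} (Hilbert Basis Theorem) applies and gives that $A$ is left Noetherian. This is the crucial precondition needed to invoke the non-commutative Stafford version of the stable range theorem.

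Next I would control the Krull dimension. By Theorem \ref{3.3.4a}, the hypothesis that $R$ is left Noetherian with ${\rm lKdim}(R)<\infty$ gives
\begin{equation*}
{\rm lKdim}(A)\leq {\rm lKdim}(R)+n<\infty.
\end{equation*}
In particular ${\rm lKdim}(A)$ is finite, so Proposition \ref{821} (the stable range theorem of Stafford) applies to the left Noetherian ring $A$ and yields
\begin{equation*}
{\rm sr}(A)\leq {\rm lKdim}(A)+1\leq {\rm lKdim}(R)+n+1.
\end{equation*}
The lower bound ${\rm sr}(A)\geq 1$ is immediate from the definition (since $A\neq 0$, any length-$1$ unimodular vector needs the trivial reducibility convention, but in any case one checks $\text{sr}(A)\geq 1$ automatically from Definition).

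Finally, to obtain the $d$-Hermite conclusion I would apply Corollary \ref{7.1.4}, which states that every ring $S$ is ${\rm sr}(S)$-$\mathcal{H}$. Hence $A$ is ${\rm sr}(A)$-$\mathcal{H}$. Since the $d$-$\mathcal{H}$ property is clearly monotone in $d$ (every stably free module of rank $\geq d+1$ has rank $\geq d$, so if all stably free modules of rank $\geq d$ are free then so are all of rank $\geq d+1$), the bound ${\rm sr}(A)\leq {\rm lKdim}(R)+n+1$ upgrades this to: $A$ is $d$-$\mathcal{H}$ with $d={\rm lKdim}(R)+n+1$, finishing the proof.

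There is no serious obstacle here; the argument is a short assembly of Theorems \ref{1.3.4} and \ref{3.3.4a}, Proposition \ref{821}, and Corollary \ref{7.1.4}. The only mild subtlety worth mentioning explicitly is the monotonicity of the $d$-$\mathcal{H}$ property in $d$, which is used implicitly in passing from ${\rm sr}(A)$-$\mathcal{H}$ to $({\rm lKdim}(R)+n+1)$-$\mathcal{H}$.
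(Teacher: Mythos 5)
Your proposal is correct and follows essentially the same route as the paper: the inequality comes from combining the Krull dimension estimate (Theorem \ref{3.3.4a}) with Stafford's stable range theorem (Proposition \ref{821}), and the $d$-$\mathcal{H}$ conclusion from Corollary \ref{7.1.4}. Your explicit remarks that $A$ is left Noetherian (needed to invoke Proposition \ref{821}) and that the $d$-$\mathcal{H}$ property is monotone in $d$ merely make precise steps the paper leaves implicit.
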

\begin{proof}
The inequalities follow from Proposition \ref{821} and Theorem 4.2 in \cite{lezamareyes1}. The
second statement follows from Corollary \ref{7.1.4}.
\end{proof}

\begin{example}
The results in \cite{lezamareyes1} for the Krull dimension of many interesting examples of
bijective skew $PBW$ extensions can be combined with Proposition \ref{7.2.1} in order to get an
upper bound for the stable rank. With this we can estimate also the $d$-Hermite condition. The next
table gives such estimations:

\newpage

\begin{table}[htb]\label{table8.1}
\centering \tiny{
\begin{tabular}{|l|l|}\hline 
\textbf{Ring} & \textbf{U. B.}\\
\hline
\cline{1-2} Habitual polynomial ring $R[x_1,\dotsc,x_n]$ & ${\rm dim}(R)+n+1$ \\
\cline{1-2} Ore extension of bijective type $R[x_1;\sigma_1 ,\delta_1]\cdots [x_n;\sigma_n,\delta_n]$  & ${\rm dim}(R)+n+1$ \\
\cline{1-2} Weyl algebra $A_n(K)$ & $2n+1$ \\
\cline{1-2} Extended Weyl algebra $B_n(K)$ & $n+1$ \\
\cline{1-2} Universal enveloping algebra of a Lie algebra $\mathfrak{g}$, $\cU(\mathfrak{g})$, $K$ commutative ring & ${\rm dim}(K)+n+1$ \\
\cline{1-2} Tensor product $R\otimes_K \cU(\cG)$ & ${\rm dim}(R)+n+1$ \\
\cline{1-2} Crossed product $R*\cU(\cG)$ & ${\rm dim}(R)+n+1$ \\
\cline{1-2} Algebra of q-differential operators $D_{q,h}[x,y]$ & $3$ \\
\cline{1-2} Algebra of shift operators $S_h$ & $3$ \\
\cline{1-2} Mixed algebra $D_h$ & $4$ \\
\cline{1-2} Discrete linear systems $K[t_1,\dotsc,t_n][x_1,\sigma_1]\dotsb[x_n;\sigma_n]$ & $2n+1$ \\
\cline{1-2} Linear partial shift operators $K[t_1,\dotsc,t_n][E_1,\dotsc,E_n]$ & $2n+1$ \\
\cline{1-2} Linear partial shift operators $K(t_1,\dotsc,t_n)[E_1,\dotsc,E_n]$ & $n+1$ \\
\cline{1-2} L. P. Differential operators $K[t_1,\dotsc,t_n][\partial_1,\dotsc,\partial_n]$ & $2n+1$ \\
\cline{1-2} L. P. Differential operators $K(t_1,\dotsc,t_n)[\partial_1,\dotsc,\partial_n]$ & $n+1$ \\
\cline{1-2} L. P. Difference operators $K[t_1,\dotsc,t_n][\Delta_1,\dotsc,\Delta_n]$ & $2n+1$ \\
\cline{1-2} L. P. Difference operators $K(t_1,\dotsc,t_n)[\Delta_1,\dotsc,\Delta_n]$ & $n+1$ \\
\cline{1-2} L. P. $q$-dilation operators $K[t_1,\dotsc,t_n][H_1^{(q)},\dotsc,H_m^{(q)}]$ & $n+m+1$ \\
\cline{1-2} L. P. $q$-dilation operators $K(t_1,\dotsc,t_n)[H_1^{(q)},\dotsc,H_m^{(q)}]$ & $m+1$ \\
\cline{1-2} L. P. $q$-differential operators $K[t_1,\dotsc,t_n][D_1^{(q)},\dotsc,D_m^{(q)}]$ & $n+m+1$ \\
\cline{1-2} L. P. $q$-differential operators $K(t_1,\dotsc,t_n)[D_1^{(q)},\dotsc,D_m^{(q)}]$ & $m+1$ \\
\cline{1-2} Diffusion algebras & $2n+1$ \\
\cline{1-2} Additive analogue of the Weyl algebra $A_n(q_1,\dotsc,q_n)$ & $2n+1$ \\
\cline{1-2} Multiplicative analogue of the Weyl algebra $\cO_n(\lambda_{ji})$ & $n+1$ \\
\cline{1-2} Quantum algebra $\cU'(\mathfrak{so}(3,K))$ & $4$ \\
\cline{1-2} 3-dimensional skew polynomial algebras & $4$ \\
\cline{1-2} Dispin algebra $\cU(osp(1,2))$ & $4$ \\
\cline{1-2} Woronowicz algebra $\cW_{\nu}(\mathfrak{sl}(2,K))$ & $4$ \\
\cline{1-2} Complex algebra $V_q(\mathfrak{sl}_3(\mathbb{C}))$ & $11$ \\
\cline{1-2} Algebra \textbf{U} & $3n+1$ \\
\cline{1-2} Manin algebra $\cO_q(M_2(K))$ & $5$ \\
\cline{1-2} Coordinate algebra of the quantum group $SL_q(2)$ & $5$ \\
\cline{1-2} $q$-Heisenberg algebra \textbf{H}$_n(q)$ & $3n+1$ \\
\cline{1-2} Quantum enveloping algebra of $\mathfrak{sl}(2,K)$, $\cU_q(\mathfrak{sl}(2,K))$ & $4$ \\
\cline{1-2} Hayashi algebra $W_q(J)$ & $3n+1$\\
\cline{1-2} Differential operators on a quantum space $S_{\textbf{q}}$,
$D_{\textbf{q}}(S_{\textbf{q}})$ & $2n+1$ \\
\cline{1-2} Witten's Deformation of $\cU(\mathfrak{sl}(2,K)$ & $4$ \\
\cline{1-2} Quantum Weyl algebra of Maltsiniotis $A_n^{\textbf{q},\lambda}$, $K$ commutative ring & ${\rm dim}(K)+2n+1$\\
\cline{1-2} Quantum Weyl algebra $A_n(q,p_{i,j})$ & $2n+1$\\
\cline{1-2} Multiparameter Weyl algebra $A_n^{Q,\Gamma}(K)$ & $2n+1$\\
\cline{1-2} Quantum symplectic space $\cO_q(\mathfrak{sp}(K^{2n}))$ & $2n+1$ \\
\cline{1-2} Quadratic algebras in $3$ variables & $4$ \\
\hline
\end{tabular}}\label{table8.1}
\caption{Stable rank for some examples of bijective skew $PBW$ extensions.}\label{table8.1}
\end{table}
\end{example}

\begin{remark}
The values presented in Table \ref{table8.1} can be improved for some particular classes of skew
$PBW$ extensions. For example, it is well known that ${\rm sr}(A_n(K))=2$ if ${\rm char}(K)=0$ (see
Remark \ref{7.3.2}). A challenging problem is to give exactly values for the stable rank of all
examples of bijective $PBW$ extensions presented in \cite{lezamareyes1}.
\end{remark}

\subsection{Kronecker's theorem}

Close related to the stable range theorem is the Kronecker's theorem staying that if $S$ is a
commutative ring with ${\rm Kdim} (S)<d$, then every finitely generated ideal $I$ of $S$ has the
same radical as an ideal generated by $d$ elements. In this subsection we want to investigate this
theorem for non-commutative rings using the Zariski lattice and the boundary ideal, but
generalizing these tools and its properties to non-commutative rings. The main result will be
applied to skew $PBW$ extensions.

\begin{definition}\label{7.2.1e}
Let $S$ be a ring and $Spec(S)$ the set of all prime ideals of $S$. The Zariski lattice of $S$ is
defined by
\[
Zar(S):=\{D(X)|X\subseteq S\}, \ \text{with}\ D(X):=\bigcap_{X\subseteq P\in Spec(S)}P.
\]
\end{definition}
\noindent $Zar(S)$ is ordered with respect the inclusion. The description of the Zariski lattice is
presented in the next proposition, $\langle X\}, \langle X\rangle, \{X\rangle$ will represent the
left, two-sided, and right ideal of $S$ generated by $X$, respectively. $\vee$ denotes the $\sup$
and $\wedge$ the $\inf$.
\begin{proposition}\label{7.2.1d}
Let $S$ be a ring, $I,I_1,I_2, I_3$ two-sided ideals of $S$, $X\subseteq S$, and
$x_1,\dots,x_n,x,y\in S$. Then,
\begin{enumerate}
\item[\rm (i)]$D(X)=D(\langle X\})=D(\langle
X\rangle)=D(\{X \rangle)$.
\item[\rm (ii)]$D(I)=rad(S)$ if and only if $I\subseteq rad(S)$. In particular, $D(0)=rad(S)$.
\item[\rm (iii)]$D(I)=S$ if and only if $I=S$.
\item[\rm (iv)]$I\subseteq D(I)$ and $D(D(I))=D(I)$. Moreover, if $I_1\subseteq I_2$, then $D(I_1)\subseteq D(I_2)$.
\item[\rm (v)]Let $\{I_j\}_{j\in \mathcal{J}}$ a family of two-sided ideals of $S$. Then, $D(\sum_{j\in \mathcal{J}}I_j)=\vee_{j\in
\mathcal{J}}D(I_j)$. In particular, $D(x_1,\dots,x_n)=D(x_1)\vee\cdots \vee D(x_n)$.
\item[\rm (vi)]$D(I_1I_2)=D(I_1)\wedge D(I_2)$. In particular, $D(\langle x\rangle \langle y\rangle)=D(x)\wedge D(y)$.
\item[\rm (vii)]$D(x+y)\subseteq D(x,y)$.
\item[\rm (viii)]If $\langle x\rangle\langle y\rangle\subseteq D(0)$, then $D(x,y)=D(x+y)$.
\item[\rm (ix)]If $x\in D(I)$, then $D(I)=D(I,x)$.
\item[\rm (x)]If $\overline{S}:=S/I$, then $D(\overline{J})=\overline{D(J)}$, for any two-sided ideal $J$ of $S$ containing
$I$.
\item[\rm (xi)]$u\in D(I)$ if and only if $\overline{u}\in rad(S/I)$. In such case, if $u\in D(I)$, there exists $k\geq 1$ such that $u^k\in I$.
\item[\rm (xii)]$Zar(S)$ is distributive:
\begin{center}
$D(I_1)\wedge [D(I_2)\vee D(I_3)]=[D(I_1)\wedge D(I_2)]\vee [D(I_1)\wedge D(I_3)]$,

$D(I_1)\vee [D(I_2)\wedge D(I_3)]=[D(I_1)\vee D(I_2)]\wedge [D(_1)\vee D(I_3)]$.
\end{center}
\end{enumerate}
\end{proposition}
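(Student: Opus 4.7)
The plan is to derive all twelve items directly from the defining formula $D(X) = \bigcap\{P \in \mathrm{Spec}(S) : X \subseteq P\}$ together with the standard fact that a two-sided ideal $P$ is prime iff for any two-sided ideals $I_1, I_2$ the containment $I_1 I_2 \subseteq P$ implies $I_1 \subseteq P$ or $I_2 \subseteq P$. Throughout, the driving observation is that a prime $P$ contains a subset $X$ iff it contains any (hence all) of the left, two-sided, or right ideals generated by $X$; this immediately gives (i) and also lets us always replace $D(X)$ by $D$ of a two-sided ideal.

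With (i) in hand, items (ii), (iii), (iv), (vii), and (ix) are order-theoretic consequences of the fact that $I \subseteq J$ forces the set of primes containing $J$ to be a subset of those containing $I$, combined with the observation that $D(I)$ itself contains $I$. For (iii) I would additionally invoke that every proper two-sided ideal is contained in a maximal two-sided ideal, which is prime. For (v), $\sum_j I_j \subseteq P$ is equivalent to $I_j \subseteq P$ for every $j$, so $D(\sum_j I_j)$ is simultaneously above each $D(I_j)$ and below any other upper bound in $\mathrm{Zar}(S)$. For (vi), primality enters more strongly: $I_1 I_2 \subseteq P$ is equivalent to $I_1 \subseteq P$ or $I_2 \subseteq P$, so the family of primes containing $I_1 I_2$ is exactly the union of the two families, and intersecting gives $D(I_1) \cap D(I_2)$.

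Item (viii) is the first item that requires a genuine argument beyond bookkeeping. Given $\langle x\rangle\langle y\rangle \subseteq D(0)$, any prime $P$ already contains $D(0)$ and thus $\langle x\rangle\langle y\rangle$, so by primality $x \in P$ or $y \in P$; if also $x+y \in P$, subtracting produces the other generator, so $\{x,y\} \subseteq P$. This gives $D(x+y) \supseteq D(x,y)$, while the reverse inclusion is (vii). Item (x) is the standard correspondence of primes under a two-sided quotient. Item (xi) splits in two: the equivalence $u \in D(I) \Leftrightarrow \bar u \in \mathrm{rad}(S/I)$ is immediate from (x) applied with $J = I$, since $\mathrm{rad}(S/I)$ is by definition $D$ of the zero ideal in $S/I$. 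The nilpotence assertion I would obtain by passing to $S/I$ and invoking the characterization of the prime (lower nil) radical via m-systems: an element is in the prime radical iff it is \emph{strongly nilpotent}; specializing a strong nilpotence sequence by $u_0 = u$, $u_{k+1} = u_k \cdot 1 \cdot u_k$ then yields $u^{2^k} \in I$ for some $k$. Finally, (xii) reduces to a short lattice computation using (v) and (vi): $D(I_1)\wedge(D(I_2)\vee D(I_3)) = D(I_1(I_2+I_3)) = D(I_1 I_2 + I_1 I_3) = (D(I_1)\wedge D(I_2))\vee(D(I_1)\wedge D(I_3))$, and the dual law follows symmetrically, or by the general fact that in a bounded lattice one distributive law implies the other.

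The main potential obstacle is the power statement in (xi): it rests on the non-commutative characterization of the prime radical via m-systems / strong nilpotence, which is strictly stronger than plain membership in the intersection of primes and is more subtle than the commutative analogue (where primary decomposition or Krull's theorem makes the same conclusion routine). Everything else is a direct translation of the prime-ideal definition and of how $D(-)$ interacts with inclusion, sums, and products of two-sided ideals.
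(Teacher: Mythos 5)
Your proposal is correct. Note, however, that the paper does not actually contain a proof of this proposition: it only cites the external reference \cite{Gallego4}, so there is no internal argument to compare yours against. Your route is the natural one and all twelve items go through as you describe. In particular you correctly isolate the only genuinely delicate point, the power statement in (xi): mere membership in $\bigcap_{P\in Spec(S/I)}P$ does not by itself give nilpotence in a non-commutative ring, and the passage through strong nilpotence (taking $u_{k+1}=u_k\cdot 1\cdot u_k$ in a strong-nilpotence sequence to get $u^{2^k}\in I$) is exactly what is needed; the remaining items are, as you say, direct translations of primeness of two-sided ideals and the monotonicity of $D(-)$, with (xii) reducing to $D(I_1(I_2+I_3))=D(I_1I_2+I_1I_3)$ via (v) and (vi).
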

\begin{proof}
See \cite{Gallego4}.
\end{proof}

\begin{definition}
Let $S$ be a ring and $v\in S$, the boundary ideal of $v$ is defined by $I_v:=\langle v
\rangle+(D(0):\langle v\rangle)$, where $(D(0):\langle v\rangle):=\{x\in S|\langle v\rangle
x\subseteq D(0)\}$.
\end{definition}
Note that $I_v\neq 0$ for every $v\in S$. On the other hand, if $v$ is invertible or if $v=0$, then
$I_v=S$. If $S$ a domain and $v\neq 0$, then $I_v=\langle v\rangle$.

\begin{definition}\label{7.2.1c}
Let $S$ be a ring such that ${\rm lKdim}(S)$ exists. We say the $S$ satisfies the boundary
condition if for any $d\geq 0$ and every $v\in S$,
\begin{center}
${\rm lKdim}(S)\leq d\Rightarrow{\rm lKdim}(S/I_v)\leq d-1$.
\end{center}
\end{definition}

\begin{example}\label{846}
(i) Any commutative Noetherian ring satisfies the boundary condition: indeed, for commutative
Noetherian rings, the classical Krull dimension and the Krull dimension coincide, so we can apply
Theorem 13.2 in \cite{Lombardi3}.

(ii) Any prime ring $S$ with left Krull dimension satisfies the boundary condition: in fact, for
prime rings, any non-zero two sided ideal is essential, so ${\rm lKdim}(S/I_v)< {\rm lKdim}(S)$
(see \cite{McConnell}, Proposition 6.3.10).

(iii) Any domain with left Krull dimension satisfies the satisfies the boundary condition: indeed,
any domain is a prime ring.
\end{example}

\begin{theorem}[Kronecker]\label{7.2.1b}
Let $S$ be a domain such that ${\rm lKdim}(S)$ exists. If ${\rm lKdim}(S)<d$ and
$u_1,\dots,u_{d},u\in S$, then there exist $x_1,\dots,x_d\in S$ such that
\begin{center}
$D(u_1,\dots,u_{d},u)=D(u_1+x_1u,\dots, u_d+x_du)$.
\end{center}
\end{theorem}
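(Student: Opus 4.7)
The plan is to argue by induction on $d$, using the boundary ideal $I_u$ (Definition \ref{7.2.1c}) as the main technical device. The base case $d=0$ is vacuous, since ${\rm lKdim}(S)<0$ forces $S=\{0\}$. The case $u=0$ is also immediate for any $d$: one takes $x_1=\cdots=x_d=0$, so we may assume $u\neq 0$. Because $S$ is a domain, $D(0)=0$ and the left annihilator $(D(0):\langle u\rangle)=0$, so the boundary ideal collapses to $I_u=\langle u\rangle$. By Example \ref{846}(iii) (domains satisfy the boundary condition) together with Definition \ref{7.2.1c}, we obtain
\[
{\rm lKdim}(\overline{S})<d-1,\qquad\overline{S}:=S/\langle u\rangle.
\]

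Next I would apply the induction hypothesis inside $\overline{S}$ to the $d$ elements $\overline{u_1},\dots,\overline{u_{d-1}},\overline{u_d}$, treating $\overline{u_d}$ as the ``extra'' element, yielding $\overline{z_1},\dots,\overline{z_{d-1}}\in\overline{S}$ with
\[
D\bigl(\overline{u_1}+\overline{z_1}\overline{u_d},\dots,\overline{u_{d-1}}+\overline{z_{d-1}}\overline{u_d}\bigr)=D(\overline{u_1},\dots,\overline{u_d})\quad\text{in }\overline{S}.
\]
Lifting $\overline{z_i}$ to $z_i\in S$ and setting $v_i:=u_i+z_iu_d$ for $1\leq i\leq d-1$, I pass back to $S$ via Proposition \ref{7.2.1d}: part (x), applied to the two-sided ideals $\langle v_1,\dots,v_{d-1}\rangle+I_u$ and $\langle u_1,\dots,u_d\rangle+I_u$ (both containing $I_u$), combined with part (v) and the identity $D(I_u)=D(u)$, gives
\[
D(v_1,\dots,v_{d-1},u)=D(u_1,\dots,u_d,u)\quad\text{in }S.\qquad(\ast)
\]

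To finish, I need to upgrade $(\ast)$ to the target identity $D(w_1,\dots,w_d)=D(u_1,\dots,u_d,u)$ with each $w_i$ of the form $u_i+x_iu$. The key observation is that $u_d$ belongs to the radical $D(v_1,\dots,v_{d-1},u)$ by $(\ast)$, so there exist $k\geq 1$ and a decomposition $u_d^k=\alpha+\beta$ with $\alpha\in\langle v_1,\dots,v_{d-1}\rangle$ and $\beta\in\langle u\rangle$. A suitable element $x_d$, built from the multipliers appearing in $\beta$ and from powers of $u_d$, should place $u\in D(v_1,\dots,v_{d-1},u_d+x_du)$; simultaneously one rewrites each $v_i=u_i+z_iu_d$ modulo $\langle v_1,\dots,v_{d-1},u\rangle$ into a combination of the form $u_i+x_iu$. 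Proposition \ref{7.2.1d}, parts (v), (vi), and (ix), then glues these pieces together into the required equality of Zariski classes.

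Two obstacles strike me as most serious. The first is that the quotient $\overline{S}=S/\langle u\rangle$ of a domain is not in general a domain, so the inductive hypothesis as stated for Theorem \ref{7.2.1b} does not apply to $\overline{S}$ verbatim; one must either broaden the statement to include all rings hereditarily satisfying the boundary condition of Definition \ref{7.2.1c}, or recast the outer induction on the left Krull dimension rather than on the integer $d$. The second is genuinely non-commutative: the decomposition $\beta=\sum_j p_j u q_j\in\langle u\rangle$ is not simply $\gamma u$, so extracting a single element $x_d$ as in the commutative Kronecker trick is more delicate and requires essential use of the two-sided ideal structure and of the boundary identities collected in Proposition \ref{7.2.1d}.
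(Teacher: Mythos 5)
The first thing to note is that the paper does not actually prove Theorem \ref{7.2.1b}: it only cites \cite{Gallego4}, so there is no in-paper argument to compare yours against step by step. Your proposal reconstructs the expected engine of that reference (in the spirit of Coquand--Lombardi): an induction on $d$ driven by the boundary ideal, using that for a domain $I_u=\langle u\rangle$ when $u\neq 0$, and that the boundary condition of Definition \ref{7.2.1c} drops the left Krull dimension of the quotient below $d-1$. That much is sound, and your reductions to the cases $d=0$ and $u=0$ are correct. But the two obstacles you flag at the end are not cosmetic caveats; as written the argument does not close, and both gaps sit exactly where the content of the theorem lives.

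Concretely: (a) you invoke the inductive hypothesis inside $\overline{S}=S/\langle u\rangle$, but the statement being proved is hypothesized only for domains, and $\overline{S}$ is essentially never a domain; nor does anything in the paper guarantee that $\overline{S}$ satisfies the boundary condition (Example \ref{846} covers domains and prime rings, and $\overline{S}$ need be neither). So the induction, as set up, literally cannot be applied. The repair is to prove a stronger statement for a class of rings stable under the relevant quotients --- e.g.\ rings all of whose quotients satisfy the boundary condition, working with the full boundary ideal $I_v=\langle v\rangle+(D(0):\langle v\rangle)$ --- and only specialize to domains at the top level. (b) Your inductive step is also aimed the wrong way: taking $\overline{u_d}$ as the pivot returns generators $v_i=u_i+z_iu_d$, which are not of the required shape $u_i+x_iu$, and the passage from your identity $D(v_1,\dots,v_{d-1},u)=D(u_1,\dots,u_d,u)$ to the target is precisely the Kronecker trick, which you leave at the level of ``a suitable $x_d$ \ldots should place $u$ in $D(\cdots)$.'' Keeping $u$ as the pivot throughout and quotienting instead by the boundary ideal of $u_d$ makes the inductive hypothesis return elements already of the form $u_i+x_iu$ for $i<d$, leaving only the absorption of the annihilator part of $I_{u_d}$ into a single correction $x_du$ via Proposition \ref{7.2.1d}(viii) and (ix); in the non-commutative setting even that step needs the care with two-sided ideals that you correctly anticipate. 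Until (a) is repaired and the construction of $x_d$ is actually carried out, what you have is a plausible plan rather than a proof.
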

\begin{proof}
See \cite{Gallego4}.

\end{proof}

\begin{corollary}\label{7.2.1a}
Let $S$ be a domain such that ${\rm lKdim}(S)$ exists. If ${\rm lKdim}(S)<d$ and $u_1,\dots,
u_{d+1}\in S$ are such that $\langle u_1,\dots,u_{d+1}\rangle =S$, then there exist elements
$x_1,\dots, x_d\in S $ such that $\langle u_1+x_1u_{d+1}, \dots, u_d+x_du_{d+1}\rangle =S$.
\end{corollary}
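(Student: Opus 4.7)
The plan is to derive this corollary as an essentially immediate consequence of the Kronecker theorem just proved (Theorem \ref{7.2.1b}), together with the basic dictionary between the Zariski lattice and two-sided ideals provided by Proposition \ref{7.2.1d}.

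First I would apply Theorem \ref{7.2.1b} directly, taking $u:=u_{d+1}$ and keeping $u_1,\dots,u_d$ as given. Since $S$ is a domain whose left Krull dimension satisfies ${\rm lKdim}(S)<d$, the hypotheses of Kronecker's theorem are met, so I obtain elements $x_1,\dots,x_d\in S$ such that
\begin{equation*}
D(u_1,\dots,u_d,u_{d+1})=D(u_1+x_1u_{d+1},\dots,u_d+x_du_{d+1}).
\end{equation*}

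Next I would translate the hypothesis $\langle u_1,\dots,u_{d+1}\rangle=S$ into a statement about $D$. By Proposition \ref{7.2.1d}(i), $D(u_1,\dots,u_{d+1})=D(\langle u_1,\dots,u_{d+1}\rangle)=D(S)$, and by part (iii) of the same proposition this equals $S$. Combining with the previous display, $D(u_1+x_1u_{d+1},\dots,u_d+x_du_{d+1})=S$.

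Finally, I would invoke the converse direction of Proposition \ref{7.2.1d}(iii): if $D(I)=S$ then $I=S$. Applying this to the two-sided ideal $I:=\langle u_1+x_1u_{d+1},\dots,u_d+x_du_{d+1}\rangle$ yields $\langle u_1+x_1u_{d+1},\dots,u_d+x_du_{d+1}\rangle=S$, as required. There is no real obstacle here since the heavy lifting (the boundary ideal machinery and the inductive reduction of the number of generators modulo the radical) has already been absorbed into Theorem \ref{7.2.1b}; the corollary is just the translation from ``same radical/same $D$'' to ``same ideal'' in the special case where the common value is the whole ring.
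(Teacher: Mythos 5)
Your proof is correct and follows exactly the paper's own route: the paper likewise deduces the corollary directly from Theorem \ref{7.2.1b} applied with $u=u_{d+1}$ together with Proposition \ref{7.2.1d}, part (iii), which converts $D(I)=S$ into $I=S$. No gaps; your spelling out of the translation via part (i) is just a slightly more explicit version of the same argument.
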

\begin{proof}
The statement follows directly from Proposition \ref{7.2.1d}, part (iii), and Theorem \ref{7.2.1b}.
\end{proof}

\begin{corollary}
Let $A=\sigma(R)\langle x_1,\dots,x_n\rangle$ be a bijective skew $PBW$ extension of a left
Noetherian domain $R$. If ${\rm lKdim}(R)<d$ and $u_1,\dots,u_{d+n},u\in A$, then there exist
$y_1,\dots,y_{d+n}\in A$ such that
\begin{center}
$D(u_1,\dots,u_{d+n},u)=D(u_1+y_1u,\dots, u_{d+n}+y_{d+n}u)$.
\end{center}
\end{corollary}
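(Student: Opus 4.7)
The plan is to apply the non-commutative Kronecker theorem (Theorem \ref{7.2.1b}) directly to the extension $A$ itself, after verifying that $A$ inherits the two structural hypotheses needed (being a domain and having a controlled left Krull dimension).

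First I would check that $A$ is a domain: since $R$ is a domain and $A$ is a bijective skew $PBW$ extension of $R$, Proposition \ref{1.1.10a}(ii) gives exactly this. Next I would bound the left Krull dimension of $A$: because $R$ is left Noetherian, Theorem \ref{3.3.4a} yields ${\rm lKdim}(A)\leq {\rm lKdim}(R)+n$. Combining this with the hypothesis ${\rm lKdim}(R)<d$ produces the key inequality ${\rm lKdim}(A)<d+n$. In particular, ${\rm lKdim}(A)$ exists, so $A$ satisfies the hypotheses of Theorem \ref{7.2.1b} with the parameter $d+n$ playing the role of the ``$d$'' in that theorem.

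Now I would apply Theorem \ref{7.2.1b} to the domain $A$, with the $d+n$ elements $u_1,\dots,u_{d+n}$ and the extra element $u$ of $A$. The theorem delivers elements $y_1,\dots,y_{d+n}\in A$ with
\[
D(u_1,\dots,u_{d+n},u)=D(u_1+y_1u,\dots, u_{d+n}+y_{d+n}u),
\]
which is exactly the conclusion sought. No further calculation is required, since every element in the statement of the corollary lies in $A$ and the Zariski lattice $Zar(A)$ is the one being used.

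There is essentially no obstacle here; the only subtle point is matching the indexing correctly: Theorem \ref{7.2.1b} needs strictly more generators than the Krull dimension bound, and one must confirm that ${\rm lKdim}(A)<d+n$ with exactly $d+n$ elements $u_1,\dots,u_{d+n}$ (plus the ``auxiliary'' $u$) is the right instance of the theorem. If one wanted a self-contained proof bypassing Theorem \ref{7.2.1b}, one would have to redo the boundary-ideal argument (Definition \ref{7.2.1c} and Example \ref{846}(iii)) inside $A$, but since $A$ is already a domain with left Krull dimension, Example \ref{846}(iii) guarantees the boundary condition and the direct appeal to Theorem \ref{7.2.1b} is clean.
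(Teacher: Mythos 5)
Your argument is correct and is essentially the paper's own proof: the paper likewise combines Proposition \ref{1.1.10a} (so $A$ is a domain), the Noetherian/Krull dimension results giving ${\rm lKdim}(A)\leq {\rm lKdim}(R)+n<d+n$, and then applies Theorem \ref{7.2.1b} with $d+n$ in place of $d$. Your indexing check is exactly the right instance of the theorem, so nothing further is needed.
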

\begin{proof}
This follows directly from Proposition \ref{1.1.10a}, Theorem \ref{1.3.4}, Theorem 4.2 in
\cite{lezamareyes1}, and Theorem \ref{7.2.1b}.
\end{proof}


\section{Gröbner bases for skew $PBW$ extensions}\label{chapter5}

\noindent In order to make constructive the theory of projective modules, stably free modules and
Hermite rings studied in the previous sections, now we will study the theory of Gröbner bases of
left ideals and modules for bijective skew $PBW$ extensions. This theory was initially investigated
in \cite{Gallego2}, \cite{Jimenez} and \cite{Jimenez2} for the particular case of quasi-commutative
bijective skew $PBW$ extensions. We will extend the theory to arbitrary bijective skew $PBW$
extensions, in particular, Buchberger's algorithm will be established for general bijective case.
\textbf{Note that all examples listed in Table \ref{table8.1} are covered with our theory} (compare
with Section 1.4. in \cite{Rogalski}).

We start recalling the basic facts of Gröbner theory for arbitrary skew $PBW$ extensions; we will
use the notation given in Definition \ref{1.1.6}.

\subsection{Monomial orders in skew $PBW$ extensions}\label{sec3} Let $A=\sigma(R)\langle x_1, \dots
, x_n\rangle$ be an arbitrary skew $PBW$ extension of $R$ and let $\succeq$ be a total order
defined on $Mon(A)$. If $x^{\alpha}\succeq x^{\beta}$ but $x^{\alpha}\neq x^{\beta}$ we will write
$x^{\alpha}\succ x^{\beta}$. $x^{\beta}\preceq x^{\alpha}$ means that $x^{\alpha}\succeq
x^{\beta}$. Let $f\neq 0$ be a polynomial of $A$, if
\begin{center}
$f=c_1X_1+\cdots +c_tX_t$,
\end{center}
with $c_i\in R-\{0\}$ and $X_1\succ \cdots \succ X_t$ are the monomials of $f$, then $lm(f):=X_1$
is the \textit{leading monomial} of $f$, $lc(f):=c_1$ is the \textit{leading coefficient} of $f$
and $lt(f):=c_1X_1$ is the \textit{leading term} of $f$. If $f=0$, we define
$lm(0):=0,lc(0):=0,lt(0):=0$, and we set $X\succ 0$ for any $X\in Mon(A)$. Thus, we extend
$\succeq$ to $Mon(A)\cup\{0\}$.
\begin{definition}\label{monomialorder}
Let $\succeq$ be a total order on $Mon(A)$, it says that $\succeq$ is a monomial order on $Mon(A)$
if the following conditions hold:
\begin{enumerate}
\item[\rm (i)]For every $x^{\beta},x^{\alpha},x^{\gamma},x^{\lambda}\in Mon(A)$
\begin{center}
$x^{\beta}\succeq x^{\alpha}$ $\Rightarrow$ $lm(x^{\gamma}x^{\beta}x^{\lambda})\succeq
lm(x^{\gamma}x^{\alpha}x^{\lambda})$.
\end{center}

\item[\rm (ii)]$x^{\alpha}\succeq 1$, for every $x^{\alpha}\in
Mon(A)$.
\item[\rm (iii)]$\succeq$ is degree compatible, i.e., $|\beta|\geq |\alpha|\Rightarrow x^{\beta}\succeq
x^{\alpha}$.
\end{enumerate}
\end{definition}
Monomial orders are also called \textit{admissible orders}. The condition (iii) of the previous
definition is needed in the proof of the following proposition, and this one will be used in the
division algorithm (Theorem \ref{algdivforPBW}).
\begin{proposition}\label{132}
Every monomial order on $Mon(A)$ is a well order. Thus, there are not infinite decreasing chains in
$Mon(A)$.
\end{proposition}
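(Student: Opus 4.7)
The plan is to derive the well-ordering directly from the degree-compatibility axiom (iii), using the well-ordering of $\mathbb{N}$ as the underlying engine.

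First I would argue by contradiction: suppose there exists an infinite strictly decreasing chain
\[
x^{\alpha_1}\succ x^{\alpha_2}\succ x^{\alpha_3}\succ\cdots
\]
in $Mon(A)$. The goal is to transfer this chain to $\mathbb{N}$ via the total degree map $x^{\alpha}\mapsto |\alpha|$ and obtain an impossible infinite descent in $\mathbb{N}$.

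Next I would use the contrapositive of condition (iii) in Definition \ref{monomialorder}. Since $\succeq$ is a total order, the negation of $x^{\alpha_{i+1}}\succeq x^{\alpha_i}$ is precisely $x^{\alpha_i}\succ x^{\alpha_{i+1}}$. Thus (iii) reads, contrapositively, as
\[
x^{\alpha_i}\succ x^{\alpha_{i+1}}\;\Longrightarrow\;|\alpha_{i+1}|<|\alpha_i|.
\]
Applying this to every consecutive pair in the supposed chain yields an infinite strictly decreasing sequence $|\alpha_1|>|\alpha_2|>|\alpha_3|>\cdots$ of non-negative integers, contradicting the well-ordering of $\mathbb{N}$. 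Hence no such chain exists and $\succeq$ is a well order.

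There is no real obstacle here: conditions (i) and (ii) play no role, and the whole argument is the usual observation that a degree-compatible total order on monomials is automatically Noetherian because $\mathbb{N}$ is. The only point that deserves a sentence of care in the write-up is the use of totality to pass from the implication in (iii) to its contrapositive phrased with $\succ$; everything else is immediate.
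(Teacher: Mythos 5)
Your overall strategy -- reduce well-ordering to the descending chain condition and kill an infinite descending chain by pushing it down to $\mathbb{N}$ via the total degree -- is the intended one; the paper itself only cites Proposition 12 of \cite{Gallego2} for the proof, and the remark preceding the proposition confirms that condition (iii) is exactly what the argument is supposed to hinge on. The equivalence between ``well order'' and ``no infinite strictly decreasing chain'' for a total order is standard and fine to use.

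There is, however, one genuine gap in the step where you pass to strict inequalities of degrees. You read condition (iii) literally as $|\beta|\geq|\alpha|\Rightarrow x^{\beta}\succeq x^{\alpha}$ and take its contrapositive to get $x^{\alpha_i}\succ x^{\alpha_{i+1}}\Rightarrow|\alpha_{i+1}|<|\alpha_i|$. But the literal reading of (iii) cannot be what is meant: applied to two monomials of equal degree it gives $x^{\beta}\succeq x^{\alpha}$ and $x^{\alpha}\succeq x^{\beta}$ simultaneously, hence $x^{\alpha}=x^{\beta}$ by antisymmetry, so no total order on $Mon(A)$ with $n\geq 2$ satisfies it -- in particular deglex does not (one has $x_1\succ x_2$ with $|{(1,0)}|=|{(0,1)}|$). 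The intended meaning of degree compatibility is $|\beta|>|\alpha|\Rightarrow x^{\beta}\succ x^{\alpha}$, whose contrapositive only yields $x^{\alpha_i}\succ x^{\alpha_{i+1}}\Rightarrow|\alpha_i|\geq|\alpha_{i+1}|$, i.e.\ the degrees along your chain are merely non-increasing, not strictly decreasing. The argument is easily repaired: a non-increasing sequence of non-negative integers is eventually constant, so infinitely many pairwise distinct monomials of the chain would share a common degree $d$; since there are only finitely many $\alpha\in\mathbb{N}^n$ with $|\alpha|=d$, this is impossible. You should add this finiteness observation (or, equivalently, prove well-ordering directly by taking, in a nonempty subset, the finitely many monomials of minimal degree and then their $\succeq$-minimum). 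As written, the strict descent of degrees is false for the paper's own canonical example.
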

\begin{proof}
See Proposition 12 in \cite{Gallego2}.
\end{proof}
From now on we will assume that $Mon(A)$ is endowed with some monomial order.
\begin{definition}
Let $x^{\alpha},x^{\beta}\in Mon(A)$, we say that $x^{\alpha}$ divides $x^{\beta}$, denoted by
$x^{\alpha}|x^{\beta}$, if there exists $x^{\gamma},x^{\lambda}\in Mon(A)$ such that
$x^{\beta}=lm(x^{\gamma}x^{\alpha}x^{\lambda})$. We will say also that any monomial $x^{\alpha}\in
Mon(A)$ divides the polynomial zero.
\end{definition}
\begin{proposition}
Let $x^{\alpha},x^{\beta}\in Mon(A)$ and $f,g\in A-\{0\}$. Then,
\begin{enumerate}
\item[\rm (a)]
$lm(x^{\alpha}g)=lm(x^{\alpha}lm(g))=x^{\alpha+\exp(lm(g))}$, i.e.,
\begin{center}
$\exp(lm(x^{\alpha}g))=\alpha+\exp(lm(g)$.
\end{center}
In particular,
\begin{center}
$lm(lm(f)lm(g))=x^{\exp(lm(f))+\exp(lm(g))}$, i.e.,

$\exp(lm(lm(f)lm(g)))=\exp(lm(f))+\exp(lm(g))$
\end{center}
and
\begin{equation}
lm(x^{\alpha}x^{\beta})=x^{\alpha+\beta}, \ i.e.,\ \exp(lm(x^{\alpha}x^{\beta}))=\alpha+\beta
.\label{divrelation}
\end{equation}
\item[\rm (b)]The
following conditions are equivalent:
\begin{enumerate}
\item[\rm (i)]$x^{\alpha}|x^{\beta}$.
\item[\rm (ii)]There exists a unique $x^{\theta}\in Mon(A)$ such that
$x^{\beta}=lm(x^{\theta}x^{\alpha})=x^{\theta+\alpha}$ and hence $\beta=\theta+\alpha$.
\item[\rm (iii)]There exists a unique $x^{\theta}\in Mon(A)$ such that
$x^{\beta}=lm(x^{\alpha}x^{\theta})=x^{\alpha+\theta}$ and hence $\beta=\alpha+\theta$.
\item[\rm (iv)]$\beta_i\geq \alpha_i$ for $1\leq i\leq n$, with
$\beta:=(\beta_1,\dots,\beta_n)$ and $\alpha:=(\alpha_1,\dots,\alpha_n)$.
\end{enumerate}
\end{enumerate}
\end{proposition}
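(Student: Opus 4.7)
The plan is to extract part (a) directly from Theorem \ref{coefficientes}, which supplies the two rewriting rules $x^\alpha r = \sigma^\alpha(r)x^\alpha + p_{\alpha,r}$ and $x^\alpha x^\beta = c_{\alpha,\beta}x^{\alpha+\beta} + p_{\alpha,\beta}$ with bounded-degree remainders, and then to combine them with the degree-compatibility axiom of the monomial order (Definition \ref{monomialorder}(iii)) together with the $R$-freeness of $Mon(A)$. Part (b) is then a clean exponent-vector translation of (a).

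I would first treat the single-monomial case $lm(x^\alpha x^\beta) = x^{\alpha+\beta}$: Theorem \ref{coefficientes}(b) gives $c_{\alpha,\beta}$ left-invertible (in particular nonzero) and $\deg p_{\alpha,\beta} < |\alpha+\beta|$, while degree-compatibility forces every monomial in the basis expansion of $p_{\alpha,\beta}$ to sit strictly below $x^{\alpha+\beta}$ in $\succeq$. For general $g$, expand $g = c_1x^{\beta_1} + \cdots + c_tx^{\beta_t}$ with $x^{\beta_1} = lm(g)$ and $x^{\beta_1} \succ \cdots \succ x^{\beta_t}$. Applying the two rewriting rules in succession to each term yields
\[
x^\alpha(c_i x^{\beta_i}) = \sigma^\alpha(c_i)\,c_{\alpha,\beta_i}\,x^{\alpha+\beta_i} + r_i,
\]
with each $r_i$ a sum of monomials of total degree strictly less than $|\alpha+\beta_i|$. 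Summing in the left $R$-basis $Mon(A)$, the coefficient of $x^{\alpha+\beta_1}$ is exactly $\sigma^\alpha(c_1)\,c_{\alpha,\beta_1}$, since for $i\geq 2$ the top monomial is $x^{\alpha+\beta_i}\neq x^{\alpha+\beta_1}$ and all remainders live in strictly smaller total degree. Axiom (i) of the monomial order, applied with $\lambda = 1$ and the already-established single-monomial case, gives $x^{\alpha+\beta_1}\succ x^{\alpha+\beta_i}$ for every $i\ge 2$, and degree-compatibility confines every $r_i$ strictly below $x^{\alpha+\beta_1}$; hence $lm(x^\alpha g) = x^{\alpha+\beta_1}$. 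The remaining in-particular statements follow by specialization: taking $g = lm(g)$ collapses the sum, and then replacing $x^\alpha$ by $lm(f)$ recovers the identity for $lm(lm(f)\,lm(g))$.

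For part (b), the implications (ii) $\Rightarrow$ (i) and (iii) $\Rightarrow$ (i) are nothing more than the definition of $x^\alpha \mid x^\beta$. For (i) $\Rightarrow$ (iv), write $x^\beta = lm(x^\gamma x^\alpha x^\lambda)$ and apply part (a) twice to conclude $\beta = \gamma + \alpha + \lambda$, so that $\beta_i \geq \alpha_i$ for every $i$. For (iv) $\Rightarrow$ (ii) and (iv) $\Rightarrow$ (iii), set $\theta := \beta - \alpha \in \mathbb{N}^n$; part (a) immediately gives $lm(x^\theta x^\alpha) = x^{\theta+\alpha} = x^\beta = x^{\alpha+\theta} = lm(x^\alpha x^\theta)$, and the uniqueness of $\theta$ comes from the linear independence of $Mon(A)$ over $R$.

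The main delicate point is the leading-coefficient analysis in (a): one must ensure that $\sigma^\alpha(c_1)\,c_{\alpha,\beta_1}$ is genuinely nonzero and that no accumulation of lower-degree remainders contributes anything to the coefficient of $x^{\alpha+\beta_1}$. The injectivity of $\sigma^\alpha$ supplied by Proposition \ref{sigmadefinition} (so that $\sigma^\alpha(c_1)\neq 0$), combined with the left-invertibility of $c_{\alpha,\beta_1}$ coming from Theorem \ref{coefficientes}(b), is the expected way to secure the former; the degree-compatibility axiom then handles the latter by keeping all remainders within monomials of total degree strictly less than $|\alpha+\beta_1|$, so the coefficient of $x^{\alpha+\beta_1}$ is unambiguously computed.
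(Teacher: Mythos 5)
Your overall architecture is the standard one (the paper itself only cites Proposition 14 of \cite{Gallego2} rather than giving a proof): expand $g$ in the $R$-basis $Mon(A)$, push $x^{\alpha}$ past each term using the two rewriting rules of Theorem \ref{coefficientes}, isolate the coefficient of $x^{\alpha+\exp(lm(g))}$, and then read part (b) off as arithmetic of exponent vectors. The bookkeeping of the lower-order terms is handled correctly: the remainders all sit in total degree strictly below $|\alpha+\beta_1|$, the competing top terms sit on distinct basis monomials $x^{\alpha+\beta_i}$, and degree compatibility plus axiom (i) of Definition \ref{monomialorder} place all of them strictly below $x^{\alpha+\beta_1}$. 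Part (b) is also fine.

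The one genuine gap is exactly at the point you flag as delicate, and your proposed fix does not close it. You argue that $\sigma^{\alpha}(c_1)\,c_{\alpha,\beta_1}\neq 0$ because $\sigma^{\alpha}(c_1)\neq 0$ and $c_{\alpha,\beta_1}$ is left invertible. But left invertibility of an element $c$ (say $c'c=1$) only shows that $c$ has no nonzero \emph{right} annihilator (from $cs=0$ one gets $s=c'cs=0$); it does not rule out a nonzero \emph{left} annihilator, i.e.\ it does not give the implication $ac=0\Rightarrow a=0$ that you need, and left-invertible-but-not-invertible elements with nonzero left annihilators do exist in general rings. So for an arbitrary (non-bijective) skew $PBW$ extension the nonvanishing of the leading coefficient is not yet justified. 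Two ways to repair it: (1) if $A$ is bijective --- which is the standing hypothesis under which the paper actually runs its Gr\"obner machinery, cf.\ Remark \ref{identities}(iv) and Proposition \ref{xtetaf} --- then $c_{\alpha,\beta_1}$ is invertible and the product is nonzero by right-multiplying by $c_{\alpha,\beta_1}^{-1}$; (2) more generally, whenever $c_1=\sigma^{\beta_1}(r)$ for some $r\in R$ (e.g.\ when the $\sigma_i$ are surjective), the second identity of Remark \ref{identities}(ii) gives
\begin{equation*}
\sigma^{\alpha}(c_1)\,c_{\alpha,\beta_1}=\sigma^{\alpha}(\sigma^{\beta_1}(r))\,c_{\alpha,\beta_1}=c_{\alpha,\beta_1}\,\sigma^{\alpha+\beta_1}(r)\neq 0,
\end{equation*}
since $\sigma^{\alpha+\beta_1}(r)\neq 0$ and left invertibility \emph{does} kill right annihilators. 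You should either add the bijectivity hypothesis or supply an argument of this kind; as written, the key inequality is asserted on the strength of a false general principle.
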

\begin{proof}
See Proposition 14 in \cite{Gallego2}.
\end{proof}
\begin{remark}
We note that a least common multiple of monomials of $Mon(A)$ there exists: in fact, let
$x^{\alpha},x^{\beta}\in Mon(A)$, then $lcm(x^{\alpha},x^{\beta})=x^{\gamma}\in Mon(A)$, where
$\gamma=(\gamma_1,\dots,\gamma_n)$ with $\gamma_i:=\max\{\alpha_i,\beta_i\}$ for each $1\leq i\leq
n$.
\end{remark}

\subsection{Reduction in skew $PBW$ extensions}

Some natural computational conditions on $R$ will be assumed in the rest of this work (see
\cite{Lezama2}).
\begin{definition}\label{LGSring}
A ring $R$ is left Gröbner soluble {\rm(}$LGS${\rm)} if the following conditions hold:
\begin{enumerate}
\item[\rm (i)]$R$ is left Noetherian.
\item[\rm (ii)]Given $a,r_1,\dots,r_m\in R$ there exists an
algorithm which decides whether $a$ is in the left ideal $Rr_1+\cdots+Rr_m$, and if so, find
$b_1,\dots,b_m\in R$ such that $a=b_1r_1+\cdots+b_mr_m$.
\item[\rm (iii)]Given $r_1,\dots,r_m\in R$ there exists an
algorithm which finds a finite set of generators of the left $R$-module
\begin{center}
$Syz_R[r_1\ \cdots \ r_m]:=\{(b_1,\dots,b_m)\in R^m|b_1r_1+\cdots+b_mr_m=0\}$.
\end{center}
\end{enumerate}
\end{definition}
\begin{remark}\label{LGS}
The three above conditions imposed to $R$ are needed in order to guarantee a Gröbner theory in the
rings of coefficients, in particular, to have an effective solution of the membership problem in
$R$ (see (ii) in Definition \ref{reductionsigmapbw} below). F\textbf{rom now on we will assume that
$A=\sigma(R)\langle x_1,\dots,x_n\rangle$ is a skew $PBW$ extension of $R$, where $R$ is a $LGS$
ring and $Mon(A)$ is endowed with some monomial order.}
\end{remark}
\begin{definition}\label{reductionsigmapbw}
Let $F$ be a finite set of non-zero elements of $A$, and let $f,h\in A$, we say that $f$ reduces to
$h$ by $F$ in one step, denoted $f\xrightarrow{\,\, F\,\, } h$, if there exist elements
$f_1,\dots,f_t\in F$ and $r_1,\dots,r_t\in R$ such that
\begin{enumerate}
\item[\rm (i)]$lm(f_i)|lm(f)$, $1\leq i\leq t$, i.e., there exists
$x^{\alpha_i}\in Mon(A)$ such that $lm(f)=lm(x^{\alpha_i}lm(f_i))$, i.e.,
$\alpha_i+\exp(lm(f_i))=\exp(lm(f))$.
\item[\rm
(ii)]$lc(f)=r_1\sigma^{\alpha_1}(lc(f_1))c_{\alpha_1,f_1}+\cdots+r_t\sigma^{\alpha_t}(lc(f_t))c_{\alpha_t,f_t}$,
where $c_{\alpha_i,f_i}$ are defined as in Theorem \ref{coefficientes}, i.e.,
$c_{\alpha_i,f_i}:=c_{\alpha_i,\exp(lm(f_i))}$.
\item[\rm (iii)]$h=f-\sum_{i=1}^tr_ix^{\alpha_i}f_i$.
\end{enumerate}
We say that $f$ reduces to $h$ by $F$, denoted $f\xrightarrow{\,\, F\,\, }_{+}h$, if there exist
$h_1,\dots ,h_{t-1}\in A$ such that
\begin{center}
$\begin{CD} f @>{F}>> h_1 @>{F}>> h_2 @>{F}>>\cdots @>{F}>>h_{t-1} @>{F}>>h.
\end{CD}$
\end{center}
$f$ is reduced {\rm(}also called minimal{\rm)} w.r.t.. $F$ if $f = 0$ or there is no one step
reduction  of $f$ by $F$, i.e., one of the first two conditions of Definition
\ref{reductionsigmapbw} fails. Otherwise, we will say that $f$ is reducible w.r.t. $F$. If
$f\xrightarrow{\,\, F\,\, }_{+} h$ and $h$ is reduced w.r.t. $F$, then we say that $h$ is a
remainder for $f$ w.r.t. $F$.
\end{definition}
\begin{remark}
(i) By Theorem \ref{coefficientes}, the coefficients $c_{\alpha_i,f_i}$ in the previous definition
are unique and satisfy
\begin{center}
$x^{\alpha_i}lm(f_i)=c_{\alpha_i,f_i}x^{\alpha_i+\exp(lm(f_i))}+p_{\alpha_i,f_i}$,
\end{center}
where $p_{\alpha_i,f_i}=0$ or $\deg(p_{\alpha_i,f_i})<|\alpha_i+\exp(lm(f_i))|$, $1\leq i\leq t$.

(ii) $lm(f)\succ lm(h)$ and $f-h\in \langle F\}$, where $\langle F\}$ is the left ideal of $A$
generated by $F$.

(iii) The remainder of $f$ is not unique.

(iv) By definition we will assume that $0\xrightarrow {F} 0$.
\end{remark}
From the reduction relation we get the following interesting properties.
\begin{proposition}\label{xtetaf}
Let $A$ be a skew $PBW$ extension such that $c_{\alpha,\beta}$ is invertible for each $\alpha,\beta
\in \mathbb{N}^n$. Let $f,h\in A$, $\theta \in \mathbb{N}^n$ and $F=\{f_1,\dots ,f_t\}$ be a finite
set of non-zero polynomials of $A$. Then,
\begin{enumerate}
\item[\rm{(i)}]If $f\xrightarrow{\,\, F\,\, } h$, then there exists $p\in
A$ with $p=0$ or $lm(x^{\theta}f)\succ lm(p)$ such that $x^{\theta}f+p\xrightarrow{\,\, F\,\, }
x^{\theta}h$. In particular, if $A$ is quasi-commutative, then $p=0$.
\item[\rm{(ii)}]If $f\xrightarrow{\,\, F\,\, }_+
h$ and $p\in A$ is such that $p=0$ or $lm(h)\succ lm(p)$, then $f+p\xrightarrow{\,\, F\,\, }_+
h+p$.
\item[\rm{(iii)}]If $f\xrightarrow{\,\, F\,\, }_+ h$, then there
exists $p\in A$ with $p=0$ or $lm(x^{\theta}f)\succ lm(p)$ such that
$x^{\theta}f+p\xrightarrow{\,\, F\,\, }_+ x^{\theta}h$. If $A$ is quasi-commutative, then $p=0$.
\item[\rm{(iv)}]If $f\xrightarrow{\,\, F\,\, }_+ 0$, then there
exists $p\in A$ with $p=0$ or $lm(x^{\theta}f)\succ lm(p)$ such that
$x^{\theta}f+p\xrightarrow{\,\, F\,\, }_+ 0$. If $A$ is quasi-commutative, then $p=0$.
\end{enumerate}
\end{proposition}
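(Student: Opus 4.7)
The plan is to establish part (i) by explicitly producing the correction polynomial $p$ and the multipliers needed to reduce $x^\theta f+p$ to $x^\theta h$; the remaining parts then follow by telescoping arguments on the length of a reduction chain. The degree-compatibility of the monomial order will be essential throughout to control the leading monomial of the error term $p$.

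For (i), write the given one-step reduction as
\[
h=f-\sum_{i=1}^{t}r_i\,x^{\alpha_i}f_i,\qquad \alpha_i+\exp(lm(f_i))=\exp(lm(f)),
\]
together with the coefficient condition (ii) of Definition \ref{reductionsigmapbw}. Multiplying on the left by $x^\theta$ and expanding with (\ref{611})--(\ref{612}), one gets
\[
x^\theta r_i x^{\alpha_i} f_i=\sigma^\theta(r_i)\,c_{\theta,\alpha_i}\,x^{\theta+\alpha_i}f_i+q_i,
\]
where $q_i:=\sigma^\theta(r_i)\,p_{\theta,\alpha_i}f_i+p_{\theta,r_i}\,x^{\alpha_i}f_i$. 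Since $\deg p_{\theta,r_i}<|\theta|$ and $\deg p_{\theta,\alpha_i}<|\theta+\alpha_i|$, and since degree-compatibility forces $\deg f_i=\deg lm(f_i)$, a short estimate gives $\deg q_i<|\theta+\alpha_i|+\deg f_i=\deg(x^\theta f)$. Setting $r_i':=\sigma^\theta(r_i)\,c_{\theta,\alpha_i}$ and $p:=-\sum_i q_i$, one finds
\[
x^\theta h=(x^\theta f+p)-\sum_{i=1}^{t}r_i'\,x^{\theta+\alpha_i}f_i,
\]
which is the third requirement of a one-step reduction of $x^\theta f+p$ by $F$. The degree estimate for $p$ gives $p=0$ or $lm(x^\theta f)\succ lm(p)$, so $lm(x^\theta f+p)=lm(x^\theta f)$ and the divisibility condition (i) of Definition \ref{reductionsigmapbw} holds with multipliers $x^{\theta+\alpha_i}$.

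The principal obstacle is verifying the coefficient identity
\[
lc(x^\theta f+p)=\sum_{i=1}^{t}r_i'\,\sigma^{\theta+\alpha_i}(lc(f_i))\,c_{\theta+\alpha_i,f_i}.
\]
To do this I will invoke both identities of Remark \ref{identities}: the second identity commutes $\sigma^{\theta+\alpha_i}(lc(f_i))$ past $c_{\theta,\alpha_i}$, and the first identity collapses $c_{\theta,\alpha_i}c_{\theta+\alpha_i,f_i}$ into $\sigma^\theta(c_{\alpha_i,f_i})c_{\theta,\exp(lm(f))}$. Pulling $\sigma^\theta$ out of the sum and using the hypothesis $lc(f)=\sum r_i\sigma^{\alpha_i}(lc(f_i))c_{\alpha_i,f_i}$ then collapses the right-hand side to $\sigma^\theta(lc(f))\,c_{\theta,\exp(lm(f))}=lc(x^\theta f)=lc(x^\theta f+p)$. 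In the quasi-commutative case, Remark \ref{identities}(iii) forces $p_{\theta,r_i}=0=p_{\theta,\alpha_i}$, so every $q_i=0$ and hence $p=0$, giving $x^\theta f\xrightarrow{F}x^\theta h$ directly.

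For (ii), along any chain $f=g_0\xrightarrow{F}g_1\xrightarrow{F}\cdots\xrightarrow{F}g_s=h$ the leader monomials strictly decrease, so $lm(p)\prec lm(h)\preceq lm(g_i)$ for $1\le i\le s$; therefore $lm(g_i+p)=lm(g_i)$ and $lc(g_i+p)=lc(g_i)$, and the very same reduction data that realizes $g_i\xrightarrow{F}g_{i+1}$ also realizes $g_i+p\xrightarrow{F}g_{i+1}+p$. Part (iii) follows by induction on the length of $f\xrightarrow{F}_+ h$: the base case is (i), and for the inductive step, split as $f\xrightarrow{F}g\xrightarrow{F}_+ h$, apply (i) to produce $p_1$ with $lm(p_1)\prec lm(x^\theta f)$ and the inductive hypothesis to produce $p_2$ with $lm(p_2)\prec lm(x^\theta g)\prec lm(x^\theta f)$; part (ii) then lets one insert $p_2$ into the initial step, and $p:=p_1+p_2$ has the required property. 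Finally, (iv) is (iii) specialized to $h=0$, since $x^\theta\cdot 0=0$.
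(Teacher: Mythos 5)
Your proof is correct: the expansion $x^{\theta}r_ix^{\alpha_i}f_i=\sigma^{\theta}(r_i)c_{\theta,\alpha_i}x^{\theta+\alpha_i}f_i+q_i$ with $\deg(q_i)<|\theta|+|\exp(lm(f))|$, the verification of the leading-coefficient identity via the two identities of Remark \ref{identities}, and the telescoping/induction for (ii)--(iv) constitute a complete argument. The paper itself only cites Proposition 20 of \cite{Gallego2} here, and your proof follows essentially that same standard route, so there is nothing to flag.
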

\begin{proof}
See Proposition 20 in \cite{Gallego2}.
\end{proof}

The next theorem is the theoretical support of the division algorithm for skew $PBW$ extensions.
\begin{theorem}\label{algdivforPBW}
Let $F=\{f_1,\dots ,f_t\}$ be a finite set of non-zero polynomials of $A$ and $f\in A$, then the
division algorithm below produces polynomials $q_1,\dots ,q_t,h\in A$, with $h$ reduced w.r.t. $F$,
such that $f\xrightarrow{\,\, F\,\, }_{+} h$ and
\begin{center}
$f=q_1f_1+\cdots +q_tf_t+h$,
\end{center}
with
\begin{center}
$lm(f)=\max\{lm(lm(q_1)lm(f_1)),\dots ,lm(lm(q_t)lm(f_t)),lm(h)\}$.
\end{center}

\begin{center}
\fbox{\parbox[c]{11cm}{
\begin{center}
{\rm \textbf{Division algorithm in}} $A$
\end{center}
\begin{description}
\item[]{\rm \textbf{INPUT}:} $f,f_1,\dots ,f_t\in A \, \, \text{with}\,\, f_j\neq 0\, (1\leq j\leq t)$
\item[]{\rm \textbf{OUTPUT}:} $q_1,\dots ,q_t,h\in A\,\,\text{with}\,\, f=q_1f_1+\cdots
+q_tf_t+h$, $h$ reduced w.r.t. $\{f_1,\dots ,f_t\}$ and\\
$lm(f)=\max\{lm(lm(q_1)lm(f_1)),\dots ,lm(lm(q_t)lm(f_t)),lm(h)\}$
\item[]{\rm \textbf{INITIALIZATION}:} $q_1:=0,q_2:=0,\dots ,q_t:=0,h:=f$
\item[]{\rm \textbf{WHILE}} $h\neq 0$
and there exists $j$ such that $lm(f_j)$ divides $lm(h)$ \textbf{\emph{DO}}
\begin{quote}Calculate $J:=\{j\,|\,lm(f_j)$ divides $lm(h)\}$

\smallskip
{\rm \textbf{FOR}} $j\in J$ {\rm \textbf{DO}}

\smallskip
\begin{quote}
Calculate $\alpha_j\in \mathbb{N}^n$ such that $\alpha_j+\exp(lm(f_j))=\exp(lm(h))$
\end{quote}

\smallskip
{\rm \textbf{IF}} the equation $lc(h)=\sum_{j\in J}r_j\sigma^{\alpha_j}(lc(f_j))c_{\alpha_j,f_j}$
is soluble, where $c_{\alpha_j,f_j}$ are defined as in the Theorem \ref{coefficientes} {\rm
\textbf{THEN}}
\begin{quote}Calculate one solution $(r_j)_{j\in J}$

\smallskip
$h:=h-\sum_{j\in J}r_jx^{\alpha_j}f_j$

\smallskip
{\rm \textbf{FOR}} $j\in J$ {\rm \textbf{DO}}
\begin{quote}$q_j:=q_j+r_jx^{\alpha_j}$\end{quote}
\end{quote}
\emph{\textbf{ELSE}}
\begin{quote}
Stop
\end{quote}
\end{quote}
\end{description}}}
\end{center}
\end{theorem}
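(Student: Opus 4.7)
The plan is to verify in turn: (a) the loop invariant $f=q_1f_1+\cdots+q_tf_t+h$ is preserved at every stage, (b) $lm(h)$ strictly decreases at each successful iteration, from which termination and reducedness of the terminal $h$ both follow, and (c) the leading monomial identity $lm(f)=\max\{lm(lm(q_i)lm(f_i)),\,lm(h)\}$ holds on exit. Assertion (a) is immediate by induction on the iteration count: initially $q_i=0$ and $h=f$, and a successful iteration replaces $h$ by $h-\sum_{j\in J}r_jx^{\alpha_j}f_j$ and $q_j$ by $q_j+r_jx^{\alpha_j}$ for $j\in J$, so $\sum_i q_if_i+h$ is unchanged.

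The heart of the argument is (b). Fix an iteration with current polynomial $h$; for each $j\in J$, Theorem~\ref{coefficientes} yields
\[
x^{\alpha_j}f_j \;=\; \sigma^{\alpha_j}(lc(f_j))\,c_{\alpha_j,f_j}\,x^{\alpha_j+\exp(lm(f_j))}+e_j,
\]
where $e_j=0$ or $\deg(e_j)<|\alpha_j+\exp(lm(f_j))|$; the term $e_j$ absorbs the lower-order remainders $p_{\alpha_j,lc(f_j)}$ of (\ref{611}) and $p_{\alpha_j,\exp(lm(f_j))}$ of (\ref{612}). By the choice of $\alpha_j$ inside the loop, $\alpha_j+\exp(lm(f_j))=\exp(lm(h))$, so every such summand has leading monomial exactly $lm(h)$. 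Left-multiplying by the scalar $r_j\in R$ and summing, the coefficient of $lm(h)$ in $\sum_{j\in J}r_jx^{\alpha_j}f_j$ is $\sum_{j\in J}r_j\sigma^{\alpha_j}(lc(f_j))c_{\alpha_j,f_j}$, which equals $lc(h)$ by the solubility condition checked in step (ii). Hence the leading terms cancel in $h-\sum_{j\in J}r_jx^{\alpha_j}f_j$, and the new value is zero or has $lm(h_{\mathrm{new}})\prec lm(h)$. Proposition~\ref{132} rules out infinite strictly decreasing chains in $Mon(A)\cup\{0\}$, so the WHILE loop terminates, and the terminal $h$ is reduced with respect to $F$: either $h=0$, or no $lm(f_j)$ divides $lm(h)$, or the relevant equation fails to be soluble, so that in every case one of the two conditions of Definition~\ref{reductionsigmapbw} fails.

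For (c), note that at iteration $k$ the exponent $\alpha_j^{(k)}$ added to $q_j$ satisfies $lm(x^{\alpha_j^{(k)}}lm(f_j))=lm(h^{(k)})$, so if $k<k'$ and $j$ is updated at both iterations, $lm(h^{(k)})\succ lm(h^{(k')})$ forces $x^{\alpha_j^{(k)}}\succ x^{\alpha_j^{(k')}}$ by the compatibility property at the end of Remark~\ref{identities}(v). Consequently $lm(q_j)=x^{\alpha_j^{(k_0)}}$ for the smallest $k_0$ at which $r_j^{(k_0)}\neq0$, giving $lm(lm(q_j)lm(f_j))=lm(h^{(k_0)})\preceq lm(f)$. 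If the loop executes at least once, the solubility condition at $k=0$ forces some $r_j^{(0)}\neq0$ (else $lc(f)=0$), and for that $j$ we obtain $lm(lm(q_j)lm(f_j))=lm(f)$; combined with $lm(h)\preceq lm(f)$, this yields the required maximum. If the loop never executes, then $h=f$, all $q_i=0$, and the identity is trivial. The main technical obstacle throughout is ensuring that, in the non-quasi-commutative setting, the perturbations $p_{\alpha,r}$ and $p_{\alpha,\beta}$ of Theorem~\ref{coefficientes} are strictly lower in degree and so cannot pollute the leading-term cancellation in step (b); this is exactly the content of the degree clauses in (\ref{611})--(\ref{612}), made effective by the degree-compatibility built into Definition~\ref{monomialorder}.
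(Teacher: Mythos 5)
Your argument is correct and is essentially the proof the paper defers to (\cite{Gallego2}, Theorem 21): preservation of the representation invariant, cancellation of the leading term of $h$ via the solubility of $lc(h)=\sum_{j\in J}r_j\sigma^{\alpha_j}(lc(f_j))c_{\alpha_j,f_j}$, termination by the well-ordering of Proposition \ref{132}, and recovery of $lm(f)$ as the maximum by tracking that the monomials $x^{\alpha_j^{(k)}}$ contributed to each $q_j$ strictly decrease with the iteration index. The only imprecision is your description of $e_j$: it must also absorb $x^{\alpha_j}\bigl(f_j-lt(f_j)\bigr)$, whose terms need not satisfy $\deg(e_j)<|\alpha_j+\exp(lm(f_j))|$ (under deglex the tail of $f_j$ can have the same total degree as $lm(f_j)$); what you actually need, and what holds by (\ref{divrelation}) together with the strict monotonicity observed in Remark \ref{identities}(v), is that every monomial of $x^{\alpha_j}f_j$ other than $x^{\alpha_j+\exp(lm(f_j))}$ is $\prec lm(h)$ in the monomial order, which is all the cancellation step requires.
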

\begin{proof}
See \cite{Gallego2}, Theorem 21.
\end{proof}

The following example illustrates the above procedure.
\begin{example}
We consider the \textit{diffusion algebra} $\mathcal{A}$ in Example \ref{1.1.18} with  $n=2$,
$K=\mathbb{Q}$, $c_{12}=-2$ and $c_{21}=-1$. In this bijective skew $PBW$ extension,
$D_2D_1=2D_1D_2+x_2D_1-x_1D_2$ and the automorphisms $\sigma_1$ and $\sigma_2$ are the identity. We
consider the deglex order with $D_1\succ D_2$ and the polynomials $f_1:=x_1x_2D_1D_2$,
$f_2:=x_2D_1$, $f_3=x_1D_2$, $f=x_1x_2^2D_1^2D_2+x_1^2x_2D_2$ in $\mathcal{A}$. We want to divide
$f$ by the polynomials $f_1$, $f_2$ and $f_3$.

\textit{Step 1}. We start with $h:=f$, $q_1:=0$, $q_2:=0$, $q_3:=0$. Since  $lm(f_j)\mid lm(f)$ for
$j=1,2,3$, we compute $\alpha_j=(\alpha_{j1},\alpha_{j2})\in \mathbb{N}^{2}$ such that
$\alpha_j+\exp(lm(f_j))=\exp(lm(h))$ and the corresponding value of
$\sigma^{\alpha_j}(lc(f_j))c_{\alpha_j,\beta_j}$, where \linebreak $\beta_{j}=\exp(lm(f_j))$:
\begin{align*}
&(\alpha_{11},\alpha_{12})+(1,1)=(2,1)\Rightarrow
\alpha_{11}=1,\alpha_{12}=0,\\
&\sigma^{\alpha_1}(lc(f_1))c_{\alpha_1,\beta_1}=x_1x_2,\\
&(\alpha_{21},\alpha_{22})+(1,1)=(2,1)\Rightarrow
\alpha_{21}=1,\alpha_{22}=1,\\
&\sigma^{\alpha_1}(lc(f_2))c_{\alpha_2,\beta_2}=2x_2,\\
&(\alpha_{31},\alpha_{32})+(1,1)=(2,1)\Rightarrow
\alpha_{31}=2,\alpha_{32}=0,\\
&\sigma^{\alpha_1}(lc(f_3))c_{\alpha_3,\beta_3}=x_1.
\end{align*}
Now, we solve the equation
\begin{center}
$lc(h)=x_1x_2^2=r_1(x_1x_2)+r^2(2x_2)+r_3(x_1)\Rightarrow r_1=3x_2,\,r_2=-\frac{1}{2}x_1x_2,\,
r_3=-x_2^2,$
\end{center}
and with the relations defining $\mathcal{A}$, we compute
\begin{align*}
h=&h-(r_1x^{\alpha_1}f_1+r_2x^{\alpha_2}f_2+r_3x^{\alpha_3}f_3)\\
=&h-3x_1x_2^2D_1^2D_2+\frac{1}{2}x_1x_2^2(2D_1^2D_2+x_2D_1^2-x_1D_1D_2)\\
=&\frac{1}{2}x_1x_2^3D_1^2-\frac{1}{2}x_1^2x_2^2D_1D_2+x_1^2x_2D_2.
\end{align*}
We compute also
\begin{center}
$q_1:=3x_2D_1$, $q_2:=-\frac{1}{2}x_1x_2D_1D_2$, $q_3:=-x_2^2D_1^2$.
\end{center}

\textit{Step 2.} $lm(h)=D_1^2$, $lc(h)=\frac{1}{2}x_1x_2^3$. In this case, $lm(f_j)\mid lm(f)$ only
for $j=2$ and we have that $\alpha_2=(\alpha_{21},\alpha_{22})\in \mathbb{N}^{3}$ such that
$\alpha_j+\exp(lm(f_j))=\exp(lm(h))$ is $\alpha=(1,0)$; moreover,
$\sigma^{\alpha}(lc(f_2))c_{\alpha,\beta}=x_2$ and $r=\frac{1}{2}x_1x_2^2$ is such that
$lc(h)=rx_2$. Thus we have:
\begin{align*}
h=&h-rx^{\alpha_2}f_2\\
=&-\frac{1}{2}x_1^2x_2^2D_1D_2+x_1^2x_2D_2.
\end{align*}
and
\begin{center}
$q_1:=3x_2D_1$, $q_2:=-\frac{1}{2}x_1x_2D_1D_2+\frac{1}{2}x_1x_2^2D_1$, $q_3:=-x_2^2D_1^2$.
\end{center}

\textit{Step 3.} Note that $lm(h)=D_1D_2$ and $lm(f_j)\mid lm(h)$ for $j=1,2,3$. In this case we
have:
\begin{align*}
&(\alpha_{11},\alpha_{12})+(1,1)=(1,1)\Rightarrow
\alpha_{11}=0,\alpha_{12}=0,\\
&\sigma^{\alpha_1}(lc(f_1))c_{\alpha_1,\beta_1}=x_1x_2,\\
&(\alpha_{21},\alpha_{22})+(1,0)=(1,1)\Rightarrow
\alpha_{21}=0,\alpha_{22}=1,\\
&\sigma^{\alpha_2}(lc(f_2))c_{\alpha_2,\beta_2}=2x_2,\\
&(\alpha_{31},\alpha_{32})+(0,1)=(1,1,1)\Rightarrow
\alpha_{31}=1,\alpha_{32}=0,\\
&\sigma^{\alpha_3}(lc(f_3))c_{\alpha_3,\beta_3}=x_1.
\end{align*}
We solve
\begin{center}
$-\frac{1}{2}x_1^2x_2^2=r_1x_1x_2+r_2(2x_2)+r_3x_1\Rightarrow r_1=3x_1x_2,\,r_2=-x_1^2x_2,\,
r_3=-\frac{3}{2}x_1x_2^2;$
\end{center}
thus
\begin{align*}
h=&h-(r_1x^{\alpha_1}f_1+r_2x^{\alpha_2}f_2+r_3x^{\alpha_3}f_3)\\
=&h-(3x_1^2x_2^2D_1D_2-x_1^2x_2^2(2D_1D_2+x_2D_1-x_1D_2)-\frac{3}{2}x_1^2D_1D_2)\\
=&x_1^2x_2^3D_1+(x_1^2x_2-x_1^3x_2^2)D_2
\end{align*}
and also
\begin{center}
$q_1:=3x_2D_1-3x_1x_2$, $q_2:=-\frac{1}{2}x_1x_2D_1D_2+\frac{1}{2}x_1x_2^2D_1-x_1^2x_2D_2$,
$q_3:=-x_2^2D_1^2-\frac{3}{2}x_1x_2^2D_1$.
\end{center}

\textit{Step 4}. Finally, note that
$h=x_1^2x_2^3D_1+(x_1^2x_2-x_1^3x_2^2)D_2=x_1^2x_2^2f_1+(x_1x_2-x_1^2x_2^2)f_3$, thus
\begin{center}
$f=q_1f_1+q_2f_2+q_3f_3$
\end{center}
where
\begin{center}
$q_1:=3x_2D_1-3x_1x_2$,
$q_2:=-\frac{1}{2}x_1x_2D_1D_2+\frac{1}{2}x_1x_2^2D_1-x_1^2x_2D_2+x_1^2x_2^2$,
$q_3:=-x_2^2D_1^2-\frac{3}{2}x_1x_2^2D_1+x_1x_2-x_1^2x_2^2$.
\end{center}
Moreover,
\begin{center}
$\max\{lm(lm(q_1)lm(f_1)), lm(lm(q_2)lm(f_2)), lm(lm(q_3)lm(f_3))\}$\\
$=\max\{D_1^2D_2, D_1^2D_2,D_1^2D_2\}=lm(f)$.
\end{center}

\end{example}

\subsection{Gröbner bases of left ideals}

Our next purpose is to recall the definition of a Gröbner bases for the left ideals of the skew
$PBW$ extension $A=\sigma(R)\langle x_1,\dots,x_n\rangle$.
\begin{definition}
Let $I\neq 0$ be a left ideal of $A$ and let $G$ be a non empty finite subset of non-zero
polynomials of $I$, we say that $G$ is a Gröbner basis for $I$ if each element $0\neq f\in I$ is
reducible w.r.t. $G$.
\end{definition}
We will say that $\{0\}$ is a Gröbner basis for $I=0$.
\begin{theorem}\label{teogrobnersigmapbw}
Let $I\neq 0$ be a left ideal of $A$ and let $G$ be a finite subset of non-zero polynomials of $I$.
Then the following conditions are equivalent:
\begin{enumerate}
\item[\rm(i)]$G$ is a Gröbner basis for $I$.
\item[\rm(ii)]For any polynomial $f\in A$,
\begin{center}
$f\in I$ if and only if $f\xrightarrow{\,\, G\,\, }_{+} 0$.
\end{center}
\item[\rm(iii)]For any $0\neq f\in I$ there exist $g_1,\dots ,g_t\in
G$ such that $lm(g_j)|lm(f)$, $1\leq j\leq t$, {\rm(}i.e., there exist $\alpha_j\in \mathbb{N}^n$
such that $\alpha_j+\exp(lm(g_j))=\exp(lm(f))${\rm)} and
\begin{center}
$lc(f)\in \langle \sigma^{\alpha_1}(lc(g_1))c_{\alpha_1,g_1},\dots
,\sigma^{\alpha_t}(lc(g_t))c_{\alpha_t,g_t}\}$.
\end{center}
\item[\rm(iv)]For $\alpha \in \mathbb{N}^n$, let $\langle \alpha ,I\}$ be the
left ideal of $R$ defined by
\begin{center}
$\langle \alpha ,I\}:=\langle lc(f)|f\in I,\exp(lm(f))=\alpha\}$.
\end{center}
Then, $\langle \alpha ,I\}=J$, with
\begin{center}
$J:=\langle \sigma^{\beta}(lc(g))c_{\beta,g}|g\in G, \ \text{with} \ \beta + \exp(lm(g))=\alpha\}$.
\end{center}
\end{enumerate}
\end{theorem}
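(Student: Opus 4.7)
The plan is to establish the cycle $(\mathrm{i})\Rightarrow(\mathrm{iii})\Rightarrow(\mathrm{iv})\Rightarrow(\mathrm{ii})\Rightarrow(\mathrm{i})$, treating the equivalence $(\mathrm{iii})\Leftrightarrow(\mathrm{iv})$ as the key bridge between the ``reducibility'' formulation and the ``ideal of leading coefficients'' formulation.

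For $(\mathrm{i})\Rightarrow(\mathrm{iii})$, I would simply unwind the definitions: if $0\neq f\in I$, then $G$ being a Gr\"obner basis means $f$ is reducible w.r.t.\ $G$ in one step. By Definition \ref{reductionsigmapbw}, this gives $g_1,\dots,g_t\in G$ with $lm(g_j)\mid lm(f)$ (so $\alpha_j+\exp(lm(g_j))=\exp(lm(f))$) together with coefficients $r_j\in R$ such that $lc(f)=\sum_j r_j\sigma^{\alpha_j}(lc(g_j))c_{\alpha_j,g_j}$, which is exactly membership in the left ideal $\langle \sigma^{\alpha_1}(lc(g_1))c_{\alpha_1,g_1},\dots,\sigma^{\alpha_t}(lc(g_t))c_{\alpha_t,g_t}\}$.

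For $(\mathrm{iii})\Leftrightarrow(\mathrm{iv})$, the inclusion $J\subseteq\langle\alpha,I\}$ is the easy direction: given $g\in G\subseteq I$ with $\beta+\exp(lm(g))=\alpha$, the polynomial $x^{\beta}g$ lies in $I$ and by Theorem \ref{coefficientes} has leading monomial $x^{\alpha}$ with leading coefficient $\sigma^{\beta}(lc(g))c_{\beta,g}$, hence this element belongs to $\langle\alpha,I\}$. Conversely, given a generator $lc(f)$ of $\langle\alpha,I\}$ with $\exp(lm(f))=\alpha$, applying $(\mathrm{iii})$ to $f$ writes $lc(f)$ as a left $R$-linear combination of elements of $J$, so $\langle\alpha,I\}\subseteq J$. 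This argument is symmetric enough that I would also use it to derive $(\mathrm{iv})\Rightarrow(\mathrm{iii})$: membership $lc(f)\in\langle\alpha,I\}=J$ immediately yields the required expression.

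The substantive step is $(\mathrm{iv})\Rightarrow(\mathrm{ii})$ (equivalently, promoting one-step reducibility into reduction to zero), and $(\mathrm{ii})\Rightarrow(\mathrm{i})$ is then immediate since any $0\neq f\in I$ admits at least a first reduction step. Given $0\neq f\in I$, I would apply the Division Algorithm (Theorem \ref{algdivforPBW}) to obtain $f=q_1f_1+\cdots+q_tf_t+h$ with $g_i\in G$, $h$ reduced w.r.t.\ $G$, and $f\xrightarrow{G}_+ h$. Since $G\subseteq I$ and $I$ is a left ideal, $h=f-\sum q_if_i\in I$. If $h\neq 0$, then setting $\alpha:=\exp(lm(h))$, condition $(\mathrm{iv})$ gives $lc(h)\in\langle\alpha,I\}=J$, so there exist $g_{j_1},\dots,g_{j_s}\in G$ and $\beta_k\in\mathbb{N}^n$ with $\beta_k+\exp(lm(g_{j_k}))=\alpha$ and $lc(h)$ expressible as a left $R$-combination of the $\sigma^{\beta_k}(lc(g_{j_k}))c_{\beta_k,g_{j_k}}$. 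But this is precisely the condition of Definition \ref{reductionsigmapbw} for $h$ to be reducible w.r.t.\ $G$, contradicting that $h$ is reduced. Hence $h=0$, i.e.\ $f\xrightarrow{G}_+ 0$. The converse direction of $(\mathrm{ii})$ is trivial since the intermediate polynomials in $f\xrightarrow{G}_+ 0$ all differ from $f$ by an element of the left ideal generated by $G\subseteq I$. The main obstacle is ensuring the termination/consistency of this argument, which is guaranteed by the well-order property (Proposition \ref{132}) embedded in Theorem \ref{algdivforPBW}; without that, one would have to run the reduction relation directly and separately argue it stabilizes.
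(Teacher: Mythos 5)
Your proof is correct and follows essentially the same route as the argument the paper delegates to \cite{Gallego2} (Theorem 24): unwinding Definition \ref{reductionsigmapbw} for (i)$\Leftrightarrow$(iii), the observation that $x^{\beta}g\in I$ has leading term $\sigma^{\beta}(lc(g))c_{\beta,g}x^{\alpha}$ for (iii)$\Leftrightarrow$(iv), and the division algorithm (Theorem \ref{algdivforPBW}) together with the reducedness of the remainder for the substantive implication yielding (ii). I see no gaps; the only cosmetic slip is writing $f_i$ for the elements $g_i$ of $G$ in the identity $h=f-\sum q_ig_i$.
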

\begin{proof}
See Theorem 24 in \cite{Gallego2}.
\end{proof}
From this theorem we get the following consequences.
\begin{corollary}\label{153}
Let $I\neq 0$ be a left ideal of $A$. Then,
\begin{enumerate}
\item[\rm(i)]If $G$ is a Gröbner basis for $I$, then $I=\langle G\}$.
\item[\rm(ii)]Let $G$ be a Gröbner basis for $I$, if $f\in I$ and
$f\xrightarrow{\,\, G\,\, }_{+} h$, with $h$ reduced, then $h=0$.
\item[\rm(iii)]Let $G=\{g_1,\dots,g_t\}$ be a set of non-zero polynomials of $I$ with $lc(g_i)\in R^{*}$ for each $1\leq i\leq t$. Then,
$G$ is a Gröbner basis of $I$ if and only if given $0\neq r\in I$ there exists $i$ such that
$lm(g_i)$ divides $lm(r)$.
\end{enumerate}
\end{corollary}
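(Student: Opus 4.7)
The plan is to apply Theorem \ref{teogrobnersigmapbw} systematically to each of the three parts, essentially reading off consequences from the chain of equivalences ${\rm (i)}\Leftrightarrow{\rm (ii)}\Leftrightarrow{\rm (iii)}$ there.

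For part (i), one inclusion is immediate: since $G\subseteq I$ and $I$ is a left ideal, $\langle G\}\subseteq I$. For the reverse inclusion, I would take $f\in I$ and invoke the equivalence (i)$\Leftrightarrow$(ii) of Theorem \ref{teogrobnersigmapbw} to conclude $f\xrightarrow{G}_+ 0$. By unwinding the one-step reductions in Definition \ref{reductionsigmapbw}(iii) and composing them, this yields an explicit expression $f=\sum_{k} r_k x^{\alpha_k}g_{i_k}$ with $g_{i_k}\in G$ and $r_k\in R$, which places $f$ in $\langle G\}$.

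For part (ii), I would observe that the very same unwinding of $f\xrightarrow{G}_+ h$ produces $f-h\in\langle G\}$, hence $f-h\in I$ by (i). Since $f\in I$, this forces $h\in I$. If $h\neq 0$, the defining property of a Gröbner basis would make $h$ reducible by $G$, contradicting that $h$ is reduced. Therefore $h=0$.

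Part (iii) is the only one that uses the extra hypothesis $lc(g_i)\in R^{\ast}$. The forward direction is a direct specialization of Theorem \ref{teogrobnersigmapbw}(iii): the existence of at least one $g_i$ with $lm(g_i)\mid lm(r)$ is weaker than what the theorem already guarantees. For the converse, given $0\neq r\in I$ with $lm(g_i)\mid lm(r)$, it suffices to check that $r$ is reducible by $G$, i.e., to verify conditions (i)--(ii) of Definition \ref{reductionsigmapbw}: (i) holds by assumption, with $\alpha_i$ satisfying $\alpha_i+\exp(lm(g_i))=\exp(lm(r))$, so only condition (ii), the solvability of $lc(r)=r_i\sigma^{\alpha_i}(lc(g_i))c_{\alpha_i,g_i}$, must be established.

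The main technical point, and the one I expect to carry the argument, is showing that $\sigma^{\alpha_i}(lc(g_i))c_{\alpha_i,g_i}$ is left invertible in $R$. Since $lc(g_i)\in R^{\ast}$ and each $\sigma_j$ is a ring endomorphism (so that $\sigma_j(u)\sigma_j(u^{-1})=1=\sigma_j(u^{-1})\sigma_j(u)$ for any unit $u$), $\sigma^{\alpha_i}(lc(g_i))$ is a unit of $R$. By Theorem \ref{coefficientes}(b), $c_{\alpha_i,g_i}=c_{\alpha_i,\exp(lm(g_i))}$ is left invertible, with a left inverse $c_{\alpha_i,g_i}'$. Thus $c_{\alpha_i,g_i}'\sigma^{\alpha_i}(lc(g_i))^{-1}$ is a left inverse of $\sigma^{\alpha_i}(lc(g_i))c_{\alpha_i,g_i}$, and taking $r_i:=lc(r)\,c_{\alpha_i,g_i}'\sigma^{\alpha_i}(lc(g_i))^{-1}$ (with $r_j:=0$ for $j\neq i$) solves the required equation, proving reducibility of $r$ and hence the Gröbner basis property.
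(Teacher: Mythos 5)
Your proposal is correct and follows essentially the same route as the paper: part (i) and (ii) are read off from Theorem \ref{teogrobnersigmapbw} together with the observation that $f-h\in\langle G\}$ under reduction, and part (iii) is settled by exhibiting the explicit solution $r_i=lc(r)\,c_{\alpha_i,g_i}'\,(\sigma^{\alpha_i}(lc(g_i)))^{-1}$ of the leading-coefficient equation, which is exactly the paper's formula. The only difference is that you spell out a few details the paper leaves implicit (unwinding the chain of one-step reductions in (i), and checking that $\sigma^{\alpha_i}(lc(g_i))$ is a unit), which is harmless.
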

\begin{proof}
(i) This is a direct consequence of Theorem \ref{teogrobnersigmapbw}.

(ii) Let $f\in I$ and $f\xrightarrow{\,\, G\,\, }_{+} h$, with $h$ reduced; since $f-h\in \langle
G\}=I$, then $h\in I$; if $h\neq 0$ then $h$ can be reduced by $G$, but this is not possible since
$h$ is reduced.

(iii) If $G$ is a Gröbner basis of $I$, then given $0\neq r\in I$, $r$ is reducible w.r.t. $G$,
hence there exists $i$ such that $lm(g_i)$ divides $lm(r)$. Conversely, if this condition holds for
some $i$, then $r$ is reducible w.r.t. $G$ since the equation
$lc(r)=r_1\sigma^{\alpha_i}(lc(g_i)c_{\alpha_i,g_i}$, with $\alpha_i+\exp(lm(g_i))=\exp(lm(r))$, is
soluble with solution $r_1=lc(r)c_{\alpha_i,g_i}'(\sigma^{\alpha_i}(lc(g_i)))^{-1}$, where
$c_{\alpha_i,g_i}'$ is a left inverse of $c_{\alpha_i,g_i}$.
\end{proof}

\subsection{Buchberger's algorithm for left ideals}\label{5.4}

In \cite{Gallego2} was constructed the Buchberger's algorithm for computing Gröbner bases of left
ideals for the particular case of quasi-commutative bijective skew $PBW$ extensions. In this
subsection we extend the Buchberger's procedure to the general case of bijective skew $PBW$
extensions without assuming that they are quasi-commutative. Complementing Remark \ref{LGS},
\textbf{from now on we will assume that $A=\sigma(R)\langle x_1,\dots,x_n\rangle$ is bijective}.

We start fixing some notation and proving a preliminary key lemma.

\begin{definition}\label{BF}
Let $F := \{g_1,\dots,g_s\}\subseteq A$, $X_F$ the least common multiple of $\{lm(g_1),\dots
,lm(g_s)\}$, $\theta\in \mathbb{N}^n$, $\beta_i:=\exp(lm(g_i))$ and $\gamma_i\in \mathbb{N}^n$ such
that $\gamma_i+\beta_i=\exp(X_F)$, $1\leq i\leq s$. $B_{F,\theta}$ will denote a finite set of
generators of
\begin{center}
$S_{F,\theta}:=Syz_R[\sigma^{\gamma_1+\theta}(lc(g_1))c_{\gamma_1+\theta,\beta_1} \ \cdots \
\sigma^{\gamma_s+\theta}(lc(g_s))c_{\gamma_s+\theta,\beta_s})]$.
\end{center}
For $\theta=\textbf{\emph{0}}:=(0,\dots,0)$, $S_{F,\theta}$ will be denoted by $S_F$ and
$B_{F,\theta}$ by $B_F$.
\end{definition}
\begin{remark}\label{RemarkSyzygy}
Let $(b_1,\ldots,b_{s})\in S_{F,\theta}$. Since $A$ is bijective, then there exists an unique
$(b_1',\ldots,b_s')\in S_{F}$ such that
 $b_i=\sigma^{\theta}(b_{i}')c_{\theta,\gamma_i}$ for $1\leq i\leq s$: in fact,
the existence and uniqueness of $(b_1',\ldots,b_s')$ it follows of the bijectivity of $A$. Now,
since $(b_1,\ldots,b_{s}) \in S_{F,\theta}$, then $\sum_{i=1}^sb_i\sigma^{\theta+\gamma_i}
(lc(g_i))c_{\theta+\gamma_i,\beta_i}$ $=0$. Replacing $b_i$ by
$\sigma^{\theta}(b_{i}')c_{\theta,\gamma_i}$ in the last equation, we obtain
\begin{center}
$\sum_{i=1}^s\sigma^{\theta}(b_i')c_{\theta,\gamma_i}\sigma^{\theta+\gamma_i}(lc(g_i))c_{\theta,\gamma_i}^{-1}c_{\theta,\gamma_i}
c_{\theta+\gamma_i,\beta_i}=0$;
\end{center}
multiplying by $c_{\theta,\gamma_i+\beta_i}^{-1}$ we get
\begin{center}
$\sum_{i=1}^s\sigma^{\theta}(b_i')c_{\theta,\gamma_i}\sigma^{\theta+\gamma_i}(lc(g_i))c_{\theta,\gamma_i}^{-1}c_{\theta,\gamma_i}
c_{\theta+\gamma_i,\beta_i}c_{\theta,\gamma_i+\beta_i}^{-1}=0$;
\end{center}
now we can use the identities of
Remark \ref{identities}, so
\begin{center}
$\sum_{i=1}^s\sigma^{\theta}(b_i')\sigma^{\theta}(\sigma^{\gamma_i}(lc(g_i)))\sigma^{\theta}(c_{\gamma_i,\beta_i})
=0$,
\end{center}
and since $\sigma^{\theta}$ is injective then
$\sum_{i=1}^sb_i'\sigma^{\gamma_i}(lc(g_i))c_{\gamma_i,\beta_i} =0$, i.e., $(b_1',\ldots,b_s')\in
S_F$.
\end{remark}
\begin{lemma}\label{Sumofsicigies}
Let $g_1,\ldots, g_s \in A$ , $c_1,\ldots, c_s \in R-\{0\}$ and $\alpha_1,\ldots,\alpha_{s}\in
\mathbb{N}^{n}$ such that $\alpha_{1}+\exp(g_1)=\cdots=\alpha_{s}+\exp(g_s):=\delta$. If
$lm(\sum_{i=1}^{s}c_{i}x^{\alpha_{i}}g_{i})\prec x^{\delta}$, then there exist
$r_{1},\ldots,r_{k}\in R$ and $l_{1},\ldots,l_{s}\in A$ such that
\[\sum_{i=1}^{s}c_{i}x^{\alpha_{i}}g_{i}=\sum_{j=1}^{k}r_{j}x^{\delta-\exp(X_{F})}\biggl(\sum_{i=1}^{s}b_{ji}x^{\gamma_{i}}g_{i}\biggr)+
\sum_{i=1}^{s}l_{i}g_{i},\] where $X_{F}$ is the least common multiple of $lm(g_{1}),\ldots,
lm(g_{s})$, $\gamma_{i}\in \mathbb{N}^{n}$ is such that $\gamma_{i}+\exp(g_{i}) =\exp(X_{F})$,
$1\leq i\leq s$, and
\begin{center}
$B_{F}:=\{\textbf{b}_1,\dots,\textbf{b}_k\}:=\{(b_{11},\dots,b_{1s}),\dots,
(b_{k1},\dots,b_{ks})\}$.
\end{center}
Moreover, $lm(x^{\delta-\exp(X_{F})}\sum_{i=1}^{s} b_{ji}x^{\gamma_{i}}g_{i}) \prec x^{\delta}$ for
every $1\leq j\leq k$, and \linebreak $lm(l_{i}g_{i})\prec x^{\delta}$ for every $1\leq i\leq s$.
\begin{proof}
Let $x^{\beta_i}:=lm(g_{i})$ for $1\leq i\leq s$; since $x^{\delta}=lm(x^\alpha_i lm(g_i))$, then
$lm(g_i)\mid x^{\delta}$ and hence $X_F\mid x^{\delta}$, so there exists $\theta \in \mathbb{N}^n$
such that $\exp(X_F)+\theta=\delta$. On the other hand, $\gamma_i+\beta_i=\exp(X_F)$ and
$\alpha_i+\beta_i=\delta$, so $\alpha_i=\gamma_i+\theta$ for every $1\leq i\leq s$. Now,
$lm(\sum_{i=1}^{s}c_{i}x^{\alpha_{i}}g_{i})\prec x^{\delta}$ implies that
$\sum_{i=1}^{s}c_{i}\sigma^{\alpha_{i}}(lc(g_{i}))c_{\alpha_{i},\beta_{i}}=0$. So we have
$\sum_{i=1}^{s}c_{i}\sigma^{\theta+\gamma_{i}}(lc(g_{i}))c_{\theta+\gamma_{i},\beta_{i}}=0$. This
implies that $(c_{1}, \ldots, c_{s})\in S_{F,\theta}$;
 from Remark \ref{RemarkSyzygy} we know that exists an unique $(c'_1,\ldots,c'_s)\in S_{F}$ such that $c_{i}=\sigma^{\theta}(c'_{i})
 c_{\theta,\gamma_{i}}$. Then,
 \begin{center}
$\sum_{i=1}^{s}c_{i}x^{\alpha_{i}}g_{i}=\sum_{i=1}^{s}\sigma^{\theta}(c'_{i})c_{\theta,\gamma_{i}}x^{\alpha_{i}}g_{i}.$
\end{center}
Now,
\begin{center}
$x^{\theta}c'_{i}x^{\gamma_{i}}=(\sigma^{\theta}(c'_{i})x^{\theta}+p_{c'_{i},\theta})x^{\gamma_{i}}
=\sigma^{\theta}(c'_{i})x^{\theta}x^{\gamma_{i}}+p_{c'_{i},\theta}x^{\gamma_{i}}
=\sigma^{\theta}(c'_{i})c_{\theta,\gamma_{i}}x^{\theta+\gamma_{i}}+\sigma^{\theta}(c'_{i})p_{\theta,\gamma_{i}}+
p_{c'_{i},\theta}x^{\gamma_{i}}
=\sigma^{\theta}(c'_{i})c_{\theta,\gamma_{i}}x^{\theta+\gamma_{i}}+p'_{i}$
\end{center}
where $p'_{i}:= \sigma^{\theta}(c'_{i})p_{\theta,\gamma_{i}}+p_{c'_{i},\theta}x^{\gamma_{i}}$; note
that $p'_{i}=0$ or $lm(p'_{i})\prec x^{\theta+\gamma_{i}}$ for each $i$. Thus,
$\sigma^{\theta}(c'_{i})c_{\theta,\gamma_{i}}x^{\theta+\gamma_{i}}=x^{\theta}c'_{i}x^{\gamma_{i}}+p_{i}$,
with $p_{i}=0$ or $lm(p_i)\prec x^{\theta+\gamma_{i}}$. Hence,
\begin{center}
$\sum_{i=1}^{s}c_{i}x^{\alpha_{i}}g_{i}=\sum_{i=1}^{s}\sigma^{\theta}(c'_{i})c_{\theta,\gamma_{i}}x^{\alpha_{i}}g_{i}
=\sum_{i=1}^{s} (x^{\theta}c'_{i}x^{\gamma_{i}}+p_{i})g_{i} =\sum_{i=1}^{s}
x^{\theta}c'_{i}x^{\gamma_{i}}g_{i}+\sum_{i=1}^{s}p_{i}g_{i},$
\end{center}
with $p_{i}g_{i}=0$ or $lm(p_{i}g_{i})\prec x^{\theta+\gamma_{i}+\beta_{i}}=x^{\delta}$. On the
other hand, since \linebreak $(c'_1,\ldots,c'_s)\in S_{F}$, then there exist
$r'_{1},\ldots,r'_{k}\in R$ such that
$(c'_1,\ldots,c'_s)=r'_{1}\textbf{\emph{b}}_{1}+\cdots+r'_{k}\textbf{\emph{b}}_{k}=r'_1(b_{11},\ldots,b_{1s})+
\cdots+r'_k(b_{k1},\ldots,b_{ks})$, thus $c'_{i}=\sum_{j=1}^{k}r'_{j}b_{ji}$. Using this, we have

\begin{align*}
\sum_{i=1}^{s} x^{\theta}c'_{i}x^{\gamma_{i}}g_{i}&=\sum_{i=1}^{s} x^{\theta}\bigl(\sum_{j=1}^{k}r'_{j}b_{ji}\bigr)x^{\gamma_{i}}g_{i}\\
&=\sum_{i=1}^{s}\bigl(\sum_{j=1}^{k}x^{\theta}r'_{j}b_{ji}\bigr)x^{\gamma_{i}}g_{i}\\
&=\sum_{i=1}^{s}\bigl(\sum_{j=1}^{k}(\sigma^{\theta}(r'_{j})x^{\theta}+p_{r'_{j},\theta})b_{ji}\bigr)x^{\gamma_{i}}g_{i}\\
&=\sum_{i=1}^{s}\bigl(\sum_{j=1}^{k}\sigma^{\theta}(r'_{j})x^{\theta}b_{ji}x^{\gamma_{i}}g_{i}+\sum_{j=1}^{k}p_{r'_{j},\theta}
b_{ji}x^{\gamma_{i}}g_{i}\bigr)\\
&=\sum_{j=1}^{k}\sum_{i=1}^{s}\sigma^{\theta}(r'_{j})x^{\theta}b_{ji}x^{\gamma_{i}}g_{i}+
\sum_{i=1}^{s}\sum_{j=1}^{k}p_{r'_{j},\theta}b_{ji}x^{\gamma_{i}}g_{i}\\
&=\sum_{j=1}^{k}\sigma^{\theta}(r'_{j})x^{\theta}\sum_{i=1}^{s}b_{ji}x^{\gamma_{i}}g_{i}+\sum_{i=1}^{s}q_{i}g_{i},
\end{align*}
where $q_{i}:=\sum_{j=1}^{k}p_{r'_{j},\theta}b_{ji}x^{\gamma_{i}}=0$ or $lm(q_{i})\prec
x^{\theta+\gamma_{i}}$. Therefore,
\begin{align*}
\sum_{i=1}^{s}c_{i}x^{\alpha_{i}}g_{i}&=\sum_{j=1}^{k}r_{j}x^{\theta}\sum_{i=1}^{s}b_{ji}x^{\gamma_{i}}g_{i}+\sum_{i=1}^{s}l_ig_{i},
\end{align*}
with $l_{i}:=p_{i}+q_{i}$ for $1\leq i\leq s$ and $r_{j}:=\sigma^{\theta}(r'_{j})$ for $1\leq j\leq
k$. Finally, it is easy to see that
$lm(x^{\theta}\bigl(\sum_{i=1}^{s}b_{ji}x^{\gamma_{i}}g_{i}))\prec x^{\delta}$ since that
$lm(\sum_{i=1}^{s}b_{ji}x^{\gamma_{i}}g_{i}) \prec x^{\gamma_{i}+\beta_{i}}$, and
$lm(l_ig_i)=lm(p_ig_i+q_ig_i)\prec x^{\delta}$.
\end{proof}
\end{lemma}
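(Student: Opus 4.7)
The plan is to extract a syzygy from the leading-coefficient cancellation, lift it from $S_{F,\theta}$ to $S_F$ via the uniqueness established in Remark \ref{RemarkSyzygy}, and then rearrange the resulting sum using the commutation rules of the skew $PBW$ extension, collecting all lower-order corrections into the $\sum l_i g_i$ tail.

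First I would fix the combinatorial setup. Writing $\beta_i := \exp(lm(g_i))$, the divisibility $lm(g_i) \mid x^\delta$ forces $X_F \mid x^\delta$, so there is a unique $\theta \in \mathbb{N}^n$ with $\exp(X_F) + \theta = \delta$, and therefore $\alpha_i = \theta + \gamma_i$ for every $i$. The hypothesis $lm(\sum c_i x^{\alpha_i} g_i) \prec x^\delta$ is then equivalent, via Theorem \ref{coefficientes}, to the vanishing of the coefficient of $x^\delta$, namely
\[ \sum_{i=1}^s c_i\, \sigma^{\theta+\gamma_i}(lc(g_i))\, c_{\theta+\gamma_i,\beta_i} = 0, \]
which says precisely that $(c_1,\ldots,c_s) \in S_{F,\theta}$. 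By Remark \ref{RemarkSyzygy} this vector lifts uniquely to $(c_1',\ldots,c_s') \in S_F$ with $c_i = \sigma^\theta(c_i')\,c_{\theta,\gamma_i}$, and then by definition of $B_F$ we may write $c_i' = \sum_{j=1}^k r_j' b_{ji}$ for suitable $r_j' \in R$.

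The second step is to pass from the scalar expression $c_i x^{\alpha_i}$ to the polynomial expression $x^\theta c_i' x^{\gamma_i}$. Using the defining identity $x^\theta a = \sigma^\theta(a)x^\theta + p_{a,\theta}$ of a skew $PBW$ extension together with (\ref{612}), one obtains
\[ \sigma^\theta(c_i')\,c_{\theta,\gamma_i}\, x^{\theta+\gamma_i} = x^\theta c_i' x^{\gamma_i} - p_i', \]
with $p_i' = 0$ or $lm(p_i') \prec x^{\theta+\gamma_i}$. Substituting and expanding $c_i' = \sum_j r_j' b_{ji}$, then commuting $x^\theta$ past each $r_j'$ a second time, yields the desired decomposition with $r_j := \sigma^\theta(r_j')$, up to a remainder that can be written as $\sum_i l_i g_i$.

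The main obstacle is bookkeeping: every time a variable $x^\theta$ is pushed past a coefficient, two error terms appear, one from the $\sigma$-derivation tail $p_{a,\theta}$ and one from the structure constant tail $p_{\theta,\gamma_i}$ in (\ref{612}). Both have leading monomial strictly below $x^{\theta+\gamma_i}$, so multiplying on the right by $g_i$ produces a contribution with leading monomial strictly below $x^\delta$, which is exactly what is needed to absorb it into some $l_i g_i$. For the main syzygy term, the bound $lm(x^\theta \sum_i b_{ji} x^{\gamma_i} g_i) \prec x^\delta$ is immediate from the fact that $(b_{j1},\ldots,b_{js}) \in S_F$ forces the coefficient of $x^{\exp(X_F)}$ inside the parenthesis to vanish. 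Setting $l_i$ equal to the sum of the two families of error scalars then yields the identity and the required degree estimates simultaneously.
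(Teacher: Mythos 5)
Your proposal is correct and follows essentially the same route as the paper's proof: the leading-coefficient cancellation gives $(c_1,\dots,c_s)\in S_{F,\theta}$, the lift to $S_F$ via Remark \ref{RemarkSyzygy} and the expansion $c_i'=\sum_j r_j' b_{ji}$ are exactly the paper's steps, and the two families of commutation error terms ($p_{a,\theta}$ and $p_{\theta,\gamma_i}$) are absorbed into $\sum_i l_i g_i$ just as in the published argument, with $r_j:=\sigma^{\theta}(r_j')$. The degree estimates at the end are also justified the same way, so there is nothing to add.
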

With the notation of Definition \ref{BF} and Lemma \ref{Sumofsicigies}, we can prove the main
result of the present section.
\begin{theorem}\label{1.5.19}
Let $I\neq 0$ be a left ideal of $A$ and let $G$ be a finite subset of non-zero generators of $I$.
Then the following conditions are equivalent:
\begin{enumerate}
\item[\rm (i)]$G$ is a Gröbner basis of $I$.
\item[\rm (ii)]For all $F:= \{g_1,\dots,g_s\}\subseteq G$, and for any $(b_1,\dots,b_s)\in B_{F}$,
\begin{center}
$\sum_{i=1}^sb_ix^{\gamma_i}g_i\xrightarrow{\,\, G\,\, }_+ 0$.
\end{center}
\end{enumerate}
\begin{proof}[Proof.]
(i) $\Rightarrow$ (ii): We observe that $f:=\sum_{i=1}^sb_ix^{\gamma_i}g_i\in I$, so by Theorem
\ref{teogrobnersigmapbw} $f\xrightarrow{\,\, G\,\, }_+ 0$.

(ii) $\Rightarrow$ (i): Let $0\neq f\in I$, we will prove that the condition (iii) of Theorem
\ref{teogrobnersigmapbw} holds. Let $G:=\{g_1,\dots,g_t\}$, then there exist $h_1,\dots,h_t\in A$
such that $f=h_1g_1+\cdots+h_tg_t$ and we can choose $\{h_i\}_{i=1}^t$ such that
\begin{center}
$x^{\delta}:=\max\{lm(lm(h_i)lm(g_i))\}_{i=1}^t$
\end{center}
is minimal. Let $lm(h_i):=x^{\alpha_i}$,
$c_{i}:=lc(h_{i})$, $lm(g_i)=x^{\beta_i}$ for $1\leq i\leq t$ and $F:=\{g_i\in G\mid
lm(lm(h_i)lm(g_i))=x^{\delta}\}$; renumbering the elements of $G$ we can assume that
$F=\{g_1,\dots,g_s\}$. We will consider two possible cases.

\textit{Case 1}: $lm(f)=x^{\delta}$. Then $lm(g_i)\mid lm(f)$ for $1\leq i\leq s$ and
\begin{center}
$lc(f)=c_{1}\sigma^{\alpha_1}(lc(g_1))c_{\alpha_1,\beta_1}+\cdots+c_s\sigma^{\alpha_s}(lc(g_s))c_{\alpha_s,\beta_s}$,
\end{center}
i.e., the condition (iii) of Theorem \ref{teogrobnersigmapbw} holds.

\textit{Case 2}: $lm(f)\prec x^{\delta}$. We will prove that this produces a contradiction. To
begin, note that $f$ can be written as
\begin{align}\label{writingf}
f=\sum_{i=1}^{s}c_{i}x^{\alpha_{i}}g_{i}+\sum_{i=1}^{s}(h_{i}-c_{i}x^{\alpha_{i}})g_{i}+\sum_{i=s+1}^{t}h_{i}g_{i};
\end{align}
we have $lm((h_{i}-c_{i}x^{\alpha_{i}})g_{i})\prec x^{\delta}$ for every $1\leq i\leq s$ and
$lm(h_{i}g_{i})\prec x^{\delta}$ for every $s+1\leq i\leq t$, so
\begin{center}
$lm(\sum_{i=1}^{s}(h_{i}-c_{i}x^{\alpha_{i}})g_{i})\prec x^{\delta}$ and
$lm(\sum_{i=s+1}^{t}h_{i}g_{i})\prec x^{\delta}$,
\end{center}
and hence $lm(\sum_{i=1}^{s}c_{i}x^{\alpha_{i}}g_{i})\prec x^{\delta}$. By Lemma
\ref{Sumofsicigies} (and its notation), we have
\begin{align}
\sum_{i=1}^{s}c_{i}x^{\alpha_{i}}g_{i}=\sum_{j=1}^{k}r_{j}x^{\delta-\exp(X_{F})}\bigl(\sum_{i=1}^{s}b_{ji}x^{\gamma_{i}}g_{i}\bigr)+
\sum_{i=1}^{s}l_{i}g_{i},
\end{align}
where $lm(x^{\delta-\exp(X_{F})}\sum_{i=1}^{s}b_{ji}x^{\gamma_{i}}g_{i})\prec x^{\delta}$ for every
$1\leq j\leq k$ and $lm(l_{i}g_{i})\prec x^{\delta}$ for $1\leq i\leq s$. By the hypothesis,
$\sum_{i=1}^sb_{ji}x^{\gamma_i}g_i\xrightarrow{\,\,G\,\,}_{+}0$, whence, by Theorem
\ref{algdivforPBW}, there exist $q_1,\dots,q_t\in A$ such that
$\sum_{i=1}^sb_{ji}x^{\gamma_i}g_i=\sum_{i=1}^tq_ig_i$, with
\begin{center}
$lm(\sum_{i=1}^sb_{ji}x^{\gamma_i}g_i)=\max\{lm(lm(q_i)lm(g_i))\}_{i=1}^t$,
\end{center}
but $(b_{j1},\dots,b_{js})$ $\in B_{F}$, so $lm(\sum_{i=1}^sb_{ji}x^{\gamma_i}g_i)\prec X_{F}$ and
hence \linebreak $lm(lm(q_i)lm(g_i))\prec X_{F}$ for every $1\leq i\leq t$. Thus,
\begin{center}
$\sum_{j=1}^{k}r_{j}x^{\delta-\exp(X_{F})}\bigl(\sum_{i=1}^{s}b_{ji}x^{\gamma_{i}}g_{i}\bigr)=
\sum_{j=1}^{k}r_{j}x^{\delta-\exp(X_{F})}\bigl(\sum_{i=1}^{t}q_{i}g_{i}\bigr)=\sum_{i=1}^{t}\sum_{j=1}^{k}r_{j}x^{\delta-\exp(X_{F})}q_{i}g_{i}=
\sum_{i=1}^{t}\widetilde{q}_{i}g_{i},$
\end{center}
with $\widetilde{q}_{i}:=\sum_{j=1}^{k}r_{j}x^{\delta-\exp(X_{F})}q_{i}$ and
$lm(\widetilde{q}_{i}g_{i})\prec x^{\delta}$ for every $1\leq i\leq t$. Substituting
$\sum_{i=1}^{s}c_{i}x^{\alpha_{i}}g_{i}=\sum_{i=1}^{t}
\widetilde{q}_{i}g_{i}+\sum_{i=1}^{s}l_{i}g_{i}$ into equation (\ref{writingf}), we obtain
\[f=\sum_{i=1}^{t} \widetilde{q}_{i}g_{i}+\sum_{i=1}^{s}(h_{i}-c_{i}x^{\alpha_{i}})g_{i}+\sum_{i=1}^{s}l_{i}g_{i}+\sum_{i=s+1}^{t}h_{i}g_{i},\]
and so we have expressed $f$ as a combination of polynomials $g_{1},\ldots,g_{t}$, where every term
has leading monomial $\prec x^{\delta}$. This contradicts the minimality of $x^{\delta}$ and we
finish the proof.
\end{proof}
\end{theorem}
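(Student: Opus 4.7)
The plan is to prove the two implications separately, with (i) $\Rightarrow$ (ii) being essentially immediate from the characterization of Gr\"obner bases already established in Theorem \ref{teogrobnersigmapbw}, and (ii) $\Rightarrow$ (i) proceeding by a minimality-of-representation argument that relies crucially on Lemma \ref{Sumofsicigies}.

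For (i) $\Rightarrow$ (ii), I would simply observe that for any $F \subseteq G$ and any $(b_1,\dots,b_s) \in B_F$, the element $\sum_{i=1}^s b_i x^{\gamma_i} g_i$ belongs to the left ideal $I = \langle G\}$, so by Theorem \ref{teogrobnersigmapbw} (ii) it must reduce to zero via $G$. For (ii) $\Rightarrow$ (i), the approach is to verify condition (iii) of Theorem \ref{teogrobnersigmapbw} for an arbitrary $0 \neq f \in I$. Writing $G = \{g_1,\dots,g_t\}$, one can express $f = \sum_{i=1}^t h_i g_i$ and, using the well-ordering of $Mon(A)$ (Proposition \ref{132}), choose such a representation so that $x^{\delta} := \max\{lm(lm(h_i)lm(g_i))\}_{i=1}^t$ is minimal. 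Let $F := \{g_i \in G \mid lm(lm(h_i)lm(g_i)) = x^{\delta}\}$, renumbered as $\{g_1,\dots,g_s\}$, and set $c_i := lc(h_i)$, $\alpha_i := \exp(lm(h_i))$, $\beta_i := \exp(lm(g_i))$. If $lm(f) = x^{\delta}$, then identifying the coefficient of $x^{\delta}$ on both sides of $f = \sum h_i g_i$ gives $lc(f) = \sum_{i=1}^s c_i \sigma^{\alpha_i}(lc(g_i)) c_{\alpha_i,\beta_i}$, which is precisely what condition (iii) requires.

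The main obstacle is the remaining case $lm(f) \prec x^{\delta}$, where the plan is to derive a contradiction with the minimality of $x^{\delta}$. Splitting $f$ as
\begin{equation*}
f = \sum_{i=1}^s c_i x^{\alpha_i} g_i + \sum_{i=1}^s (h_i - c_i x^{\alpha_i}) g_i + \sum_{i=s+1}^t h_i g_i,
\end{equation*}
the last two sums have leading monomial $\prec x^{\delta}$ by construction, which forces $lm(\sum_{i=1}^s c_i x^{\alpha_i} g_i) \prec x^{\delta}$ as well. Here I would invoke Lemma \ref{Sumofsicigies} (with $\theta := \delta - \exp(X_F)$ so that $\alpha_i = \gamma_i + \theta$) to rewrite
\begin{equation*}
\sum_{i=1}^s c_i x^{\alpha_i} g_i = \sum_{j=1}^k r_j x^{\theta} \Bigl(\sum_{i=1}^s b_{ji} x^{\gamma_i} g_i\Bigr) + \sum_{i=1}^s l_i g_i,
\end{equation*}
where each $(b_{j1},\dots,b_{js}) \in B_F$ and all $l_i g_i$ have leading monomial $\prec x^{\delta}$. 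By hypothesis (ii), each inner sum reduces to zero, so Theorem \ref{algdivforPBW} furnishes a representation $\sum_{i=1}^s b_{ji} x^{\gamma_i} g_i = \sum_{i=1}^t q_{ji} g_i$ in which $lm(lm(q_{ji})lm(g_i)) \prec X_F$ for every $i$. Multiplying by $x^{\theta}$ on the left raises these leading monomials by at most $\theta$, so all resulting terms have leading monomials $\prec x^{\delta}$. Substituting back yields a new representation of $f$ as a combination of the $g_i$ in which every summand has leading monomial strictly smaller than $x^{\delta}$, contradicting the choice of $x^{\delta}$.

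The delicate point, where I expect the real work to lie, is the bookkeeping in Lemma \ref{Sumofsicigies}: one must descend from the syzygy tuple $(c_1,\dots,c_s) \in S_{F,\theta}$ to a tuple $(c_1',\dots,c_s') \in S_F$, which is exactly where the bijectivity hypothesis on $A$ enters through Remark \ref{RemarkSyzygy}, and one must control the error terms $p_{\alpha,r}$, $p_{\alpha,\beta}$ produced when commuting $x^{\theta}$ past the coefficients $c_i'$ and the constants $c_{\theta,\gamma_i}$. Once the lemma is in hand, however, the induction on the well-ordered quantity $x^{\delta}$ closes the argument cleanly, since the new representation strictly decreases $x^{\delta}$ in the well-order.
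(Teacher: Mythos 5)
Your proposal is correct and follows essentially the same route as the paper's proof: the same immediate argument for (i)~$\Rightarrow$~(ii), and for (ii)~$\Rightarrow$~(i) the same choice of a representation minimizing $x^{\delta}$, the same three-term splitting of $f$, the same invocation of Lemma \ref{Sumofsicigies} together with Remark \ref{RemarkSyzygy}, and the same use of Theorem \ref{algdivforPBW} to contradict minimality in the case $lm(f)\prec x^{\delta}$. The only cosmetic difference is your extra index $q_{ji}$ on the quotients from the division algorithm, which is slightly more careful bookkeeping than the paper's reuse of $q_i$ for each $j$.
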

\begin{corollary}\label{algorithmforbijective}
Let $F=\{f_1,\dots ,f_s\}$ be a set of non-zero polynomials of $A$. The algorithm below produces a
Gröbner basis for the left ideal $\langle F\}$ of $A$ {\rm(}$P(X)$ denotes the set of subsets of
the set $X${\rm)}:
\begin{center}
\fbox{\parbox[c]{11cm}{
\begin{center}
{\rm \textbf{Buchberger's algorithm for \\ bijective skew $PBW$ extensions}}
\end{center}
\begin{description}
\item[]{\rm \textbf{INPUT}:} $F := \{f_1,\dots,f_s\}\subseteq A$,
$f_i\neq 0$, $1\leq i\leq s$
\item[]{\rm \textbf{OUTPUT}:} $G=\{g_1,\dots ,g_t\}$ a Gröbner basis for $\langle F\}$
\item[]{\rm \textbf{INITIALIZATION}:} $G:=\emptyset, G':=F$
\item[]{\rm \textbf{WHILE}} $G'\neq G$ {\rm \textbf{DO}}
\begin{quote}$D:=P(G')-P(G)$

\smallskip

$G:=G'$

\smallskip

{\rm \textbf{FOR}} each $S:=\{g_{i_1},\dots ,g_{i_k}\}\in D$ {\rm \textbf{DO}}

\smallskip

\begin{quote}
Compute $B_S$

\smallskip
{\rm \textbf{FOR}} each $\textbf{b}=(b_1,\dots ,b_k)\in B_S$ {\rm \textbf{DO}}
\begin{quote}Reduce $\sum_{j=1}^kb_jx^{\gamma_j}g_{i_j}\xrightarrow{\,\, G'\,\, }_+ r$,
with $r$ reduced with respect to $G'$ and $\gamma_j$ defined as in Definition \ref{BF}
\begin{quote}{\rm \textbf{IF}} $r\neq 0$ {\rm \textbf{THEN}}
\begin{quote}$G':=G'\cup \{r\}$
\end{quote}
\end{quote}
\end{quote}
\end{quote}
\end{quote}
\end{description}}}
\end{center}
\end{corollary}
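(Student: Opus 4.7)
The plan is to establish two things about the algorithm: that upon termination the returned set $G$ is a Gröbner basis for $\langle F\}$, and that termination actually occurs in finitely many iterations.

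For \textbf{correctness} (assuming termination), I would first observe by induction on the loop iterations that every element appended to $G'$ is a remainder of a polynomial lying in $\langle F\}$, so $F \subseteq G \subseteq \langle F\}$ throughout and hence $\langle G \} = \langle F \}$. When the WHILE loop exits we have $G' = G$; this means that for every subset $S = \{g_{i_1},\dots,g_{i_k}\} \in P(G)$ and every $\mathbf{b} = (b_1,\dots,b_k) \in B_S$, the reduction $\sum_{j=1}^k b_j x^{\gamma_j} g_{i_j} \xrightarrow{G}_+ r$ must have produced $r = 0$ on the last pass, for otherwise $r$ would have been adjoined and we would have $G' \supsetneq G$. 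Condition (ii) of Theorem \ref{1.5.19} is therefore fulfilled, so $G$ is a Gröbner basis for $\langle F\}$.

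For \textbf{termination}, which I expect to be the main obstacle, the subtlety is that in the bijective skew $PBW$ setting a polynomial can be reduced w.r.t.\ $G'$ for two different reasons: either no $lm(g)$ with $g \in G'$ divides the leading monomial, or such a divisibility holds but the coefficient equation over $R$ is unsolvable. I would handle both cases uniformly by exhibiting a strictly ascending chain. Concretely, whenever a non-zero reduced remainder $r$ is appended, set $\alpha := \exp(lm(r))$; by Definition \ref{reductionsigmapbw} and the fact that $r$ is reduced w.r.t.\ $G'$, the coefficient $lc(r)$ does not lie in the left ideal
\[
J_\alpha^{G'} := \langle \sigma^{\beta}(lc(g))\, c_{\beta,g} \mid g \in G',\ \beta + \exp(lm(g)) = \alpha \}
\]
of $R$. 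Hence either $\alpha$ is a genuinely new exponent (no $\exp(lm(g))$ with $g \in G'$ divides $\alpha$), or $J_\alpha^{G'}$ strictly enlarges when $r$ is added.

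Dickson's lemma bounds the number of occurrences of the first case, because the antichain of newly appearing minimal exponents in $\mathbb{N}^n$ must be finite. Once that antichain has stabilized, only the second case can occur, and then for some fixed $\alpha$ drawn from a finite set the left ideal $J_\alpha \subseteq R$ keeps strictly growing; this is impossible since $R$ is left Noetherian by the $LGS$ hypothesis (Definition \ref{LGSring}). One may alternatively argue directly via Theorem \ref{1.3.4}: $A$ itself is left Noetherian, so the ascending chain of left ideals of $A$ generated by $\{lt(g) \mid g \in G\}$ (thinking of these as elements of $A$) must stabilize, and stabilization forces the algorithm to terminate. In either formulation, termination combined with the Gröbner criterion from Theorem \ref{1.5.19} completes the proof.
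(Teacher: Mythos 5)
The paper offers no written proof of this corollary: it is stated as an immediate consequence of Theorem \ref{1.5.19}, with termination left implicit (resting on Theorem \ref{1.3.4}). Your correctness half is exactly the intended argument and is sound: every adjoined remainder lies in $\langle F\}$, so $\langle G\}=\langle F\}$ throughout; and when the loop halts, every subset $S\subseteq G$ was processed at some earlier stage with either a zero remainder or a nonzero remainder that was then adjoined, after which the corresponding sum reduces to $0$ with respect to the enlarged set and hence with respect to the final $G$. This is condition (ii) of Theorem \ref{1.5.19}.

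The termination half has a genuine gap. After Dickson's lemma stabilizes the antichain of minimal leading exponents, you assert that the later remainders have leading exponents $\alpha$ ``drawn from a finite set,'' so that some single left ideal $J_\alpha\subseteq R$ must grow infinitely often. Nothing bounds those exponents: the leading exponents $\alpha^{(1)},\alpha^{(2)},\dots$ of successively adjoined remainders can be pairwise distinct points of $\mathbb{N}^n$ (each divisible by an old one), in which case each $J_{\alpha^{(i)}}$ grows only once and no infinite ascending chain in $R$ is produced. The ideals $J_\alpha$ for different $\alpha$ are not comparable here; there is no evident saturation $J_\alpha\subseteq J_{\alpha+\gamma}$, because the generators are the twisted elements $\sigma^{\beta}(lc(g))c_{\beta,g}$ rather than $lc(g)$ alone. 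Your alternative --- a single ascending chain of left ideals generated by the leading terms --- is the right repair, but it must be run in the associated quasi-commutative extension $A^{\sigma}$ of Proposition \ref{associatedpbw} (equivalently in $Gr(A)$), not in $A$ itself: in $A$ a representation $lt(r)=\sum_i h_i\, lt(g_i)$ may have its top-degree components cancel, so membership of $lt(r)$ in the leading-term ideal does not force solvability of the coefficient equation, and one cannot conclude that the chain increases strictly when a reduced $r$ is adjoined. In $A^{\sigma}$ products of terms are terms, so such a membership yields exactly $lc(r)=\sum_i r_i\sigma^{\alpha_i}(lc(g_i))c_{\alpha_i,g_i}$ with $\alpha_i+\exp(lm(g_i))=\exp(lm(r))$, contradicting reducedness; and the chain stabilizes because $A^{\sigma}$ is left Noetherian by Theorem \ref{1.3.4} applied to the left Noetherian ($LGS$) ring $R$.
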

From Theorem \ref{1.3.4} and the previous corollary we get the following direct conclusion.
\begin{corollary}\label{existence}
Each left ideal of $A$ has a Gröbner basis.
\end{corollary}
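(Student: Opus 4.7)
The plan is to reduce the statement to the Hilbert Basis Theorem and Buchberger's algorithm. Recall that by the standing assumption (Remark \ref{LGS}), $R$ is an $LGS$ ring, and in particular $R$ is left Noetherian. Since $A=\sigma(R)\langle x_1,\dots,x_n\rangle$ is bijective, Theorem \ref{1.3.4} (Hilbert Basis Theorem for skew $PBW$ extensions) applies and tells us that $A$ itself is a left Noetherian ring.

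Given an arbitrary left ideal $I$ of $A$, the plan splits into two cases. If $I=0$, then by convention $\{0\}$ is a Gr\"obner basis for $I$, so there is nothing to prove. If $I\neq 0$, left Noetherianity of $A$ guarantees a finite generating set $F:=\{f_1,\dots,f_s\}\subseteq I-\{0\}$ with $I=\langle F\}$.

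Now I would invoke Corollary \ref{algorithmforbijective}: Buchberger's algorithm, fed with the input $F$, produces (upon termination) a finite set $G=\{g_1,\dots,g_t\}$ which is a Gr\"obner basis for $\langle F\}=I$. The output is a Gr\"obner basis by Theorem \ref{1.5.19}, since the algorithm stops exactly when every $S$-element $\sum_{j}b_jx^{\gamma_j}g_{i_j}$ arising from a subset $S\subseteq G$ and an element of $B_S$ reduces to zero modulo $G$, which is condition (ii) of that theorem.

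The only delicate point in this outline is that one must know the algorithm indeed terminates; this is where the Noetherian hypothesis is used a second time. Concretely, the chain of left ideals of $R$ (or equivalently of leading-term data) generated successively by the leading coefficients and leading monomials of the enlarged sets $G'$ must stabilize, because $A$ (equivalently, its associated graded structure controlled by $R$) is left Noetherian; this forces the \textbf{WHILE} loop to exit after finitely many iterations. Once termination is granted, correctness is immediate from Theorem \ref{1.5.19}, and the corollary follows. The main (in fact, the only nontrivial) obstacle is thus the termination argument hidden inside Corollary \ref{algorithmforbijective}, which rests on the Noetherianity provided by Theorem \ref{1.3.4}.
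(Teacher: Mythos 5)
Your proposal is correct and follows exactly the paper's route: the paper also deduces the corollary directly from Theorem \ref{1.3.4} (Noetherianity of $A$, giving a finite generating set for any left ideal) together with Corollary \ref{algorithmforbijective} (Buchberger's algorithm producing a Gr\"obner basis from that generating set). Your additional remarks on the $I=0$ convention and on termination are consistent with what the paper leaves implicit.
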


\subsection{Gröbner bases of modules}

\noindent In this subsection we present the general theory of Gröbner bases for submodules of
$A^m$, $m\geq 1$, where $A=\sigma(R)\langle x_1,\dots,x_n\rangle$ is a bijective skew $PBW$
extension of $R$, with $R$ a $LGS$ ring (see Definition \ref{LGSring}) and $Mon(A)$ endowed with
some monomial order (see Definition \ref{monomialorder}). $A^m$ is the left free $A$-module of
column vectors of length $m\geq 1$; since $A$ is a left Noetherian ring (Theorem \ref{1.3.4}), then
$A$ is an $IBN$ ring (Invariant Basis Number, see \cite{Lezama6}), and hence, all bases of the free
module $A^m$ have $m$ elements. Note moreover that $A^m$ is a left Noetherian, and hence, any
submodule of $A^m$ is finitely generated. This theory was studied in \cite{Jimenez} and
\cite{Jimenez2}, but now we will extend Buchberger's algorithm to the general bijective case
without assuming that $A$ is quasi-commutative. The results presented in this section are an easy
generalization of those of the previous sections, i.e., taking $m=1$ we get the theory of Gröbner
bases for the left ideals of $A$ developed before. We will omit the proofs since most of them can
be consulted in \cite{Jimenez} and \cite{Jimenez2} or they are an easy adaptation of those of the
previous sections. The theory presented in this section has been also studied by Gómez-Torrecillas
et al. (see \cite{Gomez-Torrecillas} , \cite{Gomez-Torrecillas2}) for left $PBW$ algebras over
division rings and assuming some special commutative conditions.

\subsubsection{Monomial orders on $Mon(A^m)$} \noindent In the rest of this section we will
represent the elements of $A^m$ as row vectors, if this not represent confusion. We recall that the
canonical basis of $A^m$ is
\begin{center}
$\textbf{\emph{e}}_1=(1,0,\dots ,0),\textbf{\emph{e}}_2=(0,1,0,\dots, 0),\dots
,\textbf{\emph{e}}_m=(0,0,\dots ,1)$.
\end{center}
\begin{definition}
A monomial in $A^m$ is a vector $\textbf{X}=X\textbf{e}_i$, where $X=x^{\alpha}\in Mon(A)$ and
$1\leq i\leq m$, i.e.,
\begin{center}
$\textbf{X}=X\textbf{e}_i=(0,\dots ,X,\dots ,0)$,
\end{center}
where $X$ is in the $i$-th position, named the index of $\textbf{X}$, $ind(\textbf{X}):=i$. A term
is a vector $c\textbf{X}$, where $c\in R$. The set of monomials of $A^m$ will be denoted by
$\textrm{Mon}(A^m)$. Let $\textbf{Y}=Y\textbf{e}_j\in Mon(A^m)$, we say that $\textbf{X}$ divides
$\textbf{Y}$ if $i=j$ and $X$ divides $Y$. We will say that any monomial $\textbf{X}\in Mon(A^m)$
divides the null vector $\textbf{\emph{0}}$. The least common multiple of $\textbf{X}$ and
$\textbf{Y}$, denoted by $lcm(\textbf{X},\textbf{Y})$, is $\textbf{\emph{0}}$ if $i\neq j$, and
$U\textbf{e}_i$, where $U=lcm(X,Y)$, if $i=j$. Finally, we define
$\exp(\textbf{X}):=\exp(X)=\alpha$ and $\deg(\textbf{X}):=\deg(X)=|\alpha|$.
\end{definition}
We now define monomials orders on $Mon(A^m)$.
\begin{definition}
A monomial order on $Mon(A^m)$ is a total order $\succeq$ satisfying the following three
conditions:
\begin{enumerate}
\item[\rm(i)] $lm(x^{\beta}x^{\alpha})\textbf{e}_{i}\succeq x^{\alpha}\textbf{e}_{i}$, for every
monomial $\textbf{X}=x^{\alpha}\textbf{e}_{i}\in Mon(A^{m})$ and any monomial $x^{\beta}$ in
$Mon(A)$.
\item[\rm(ii)] If $\textbf{Y}=x^{\beta}\textbf{e}_{j}\succeq \textbf{X}=x^{\alpha}\textbf{e}_{i}$, then
$lm(x^{\gamma}x^{\beta})\textbf{e}_{j}\succeq lm(x^{\gamma}x^{\alpha})\textbf{e}_{i}$ for every
monomial $x^{\gamma}\in Mon(A)$.
\item[\rm (iii)]$\succeq$ is degree compatible, i.e., $\deg(\textbf{X})\geq \deg(\textbf{Y})\Rightarrow \textbf{X}\succeq
\textbf{Y}$.
\end{enumerate}
If $\textbf{X}\succeq \textbf{Y}$ but $\textbf{X}\neq \textbf{Y}$ we will write $\textbf{X}\succ
\textbf{Y}$. $\textbf{Y}\preceq\textbf{X}$ means that $\textbf{X}\succeq\textbf{Y}$.
\end{definition}
\begin{proposition}\label{wellorder}
Every monomial order on $Mon(A^m)$ is a well order.
\end{proposition}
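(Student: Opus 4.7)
The plan is to adapt, essentially verbatim, the proof of Proposition \ref{132} to the module setting. A total order is a well order precisely when there is no infinite strictly descending chain, so I will assume for contradiction that there is one and derive a contradiction from degree compatibility together with the finiteness of the sets of monomials of bounded degree.

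Suppose we have an infinite strictly descending chain
\begin{equation*}
\textbf{X}_1 \succ \textbf{X}_2 \succ \textbf{X}_3 \succ \cdots
\end{equation*}
in $Mon(A^m)$, with $\textbf{X}_k = x^{\alpha_k}\textbf{e}_{i_k}$, $1\le i_k\le m$. I would first invoke degree compatibility (condition (iii) of the definition of monomial order on $Mon(A^m)$) read in the strict form: if $\deg(\textbf{X}_{k+1})>\deg(\textbf{X}_k)$ then $\textbf{X}_{k+1}\succ \textbf{X}_k$, contradicting $\textbf{X}_k\succ \textbf{X}_{k+1}$. Hence $\deg(\textbf{X}_{k+1})\le \deg(\textbf{X}_k)$ for every $k$, so the sequence of non-negative integers $\{\deg(\textbf{X}_k)\}_{k\ge 1}$ is non-increasing and therefore eventually constant, say $\deg(\textbf{X}_k)=d$ for all $k\ge K$.

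Next I would use that the set $\{\,\textbf{X}\in Mon(A^m)\mid \deg(\textbf{X})=d\,\}$ is finite. Indeed, $Mon(A^m)=\bigcup_{i=1}^m Mon(A)\textbf{e}_i$, and for each fixed $i$ the slice of degree $d$ is in bijection with $\{x^{\alpha}\in Mon(A)\mid |\alpha|=d\}$, which has exactly $\binom{d+n-1}{n-1}$ elements since $A$ has $n$ generators $x_1,\dots,x_n$; hence the degree-$d$ part of $Mon(A^m)$ has cardinality $m\binom{d+n-1}{n-1}<\infty$. But the infinite tail $\textbf{X}_K\succ\textbf{X}_{K+1}\succ\cdots$ consists of infinitely many pairwise distinct elements of this finite set, a contradiction.

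There is no real obstacle here: the argument is a direct transcription of the scalar case, the only point worth making explicit being the decomposition $Mon(A^m)=\bigsqcup_i Mon(A)\textbf{e}_i$ used to reduce the counting of degree-$d$ monomials to the known finite count in $Mon(A)$. Alternatively, one could reduce to Proposition \ref{132} by pigeonholing an infinite subsequence of $\{\textbf{X}_k\}$ with a constant index $i_k=i_0$ and reading off an infinite descending chain in $Mon(A)$ for the induced order $x^\alpha\succeq_{i_0} x^\beta:\Longleftrightarrow x^\alpha\textbf{e}_{i_0}\succeq x^\beta\textbf{e}_{i_0}$; however the direct degree argument above avoids having to check that $\succeq_{i_0}$ satisfies all clauses of Definition \ref{monomialorder}, so I would prefer it.
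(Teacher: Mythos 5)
Your proof is correct and follows exactly the route the paper intends: it leaves Proposition \ref{wellorder} without an explicit argument precisely because it is the same degree-compatibility-plus-finiteness proof as Proposition \ref{132}, transported to $Mon(A^m)$ via the decomposition $Mon(A^m)=\bigcup_{i=1}^m Mon(A)\textbf{\emph{e}}_i$. Nothing further is needed.
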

Given a monomial order $\succeq$ on $Mon(A)$, we can define two natural orders on $Mon(A^m)$.
\begin{definition}
Let $\textbf{X}=X\textbf{e}_i$ and $\textbf{Y}=Y\textbf{e}_j\in Mon(A^m)$.
\begin{enumerate}
\item [\rm(i)]The TOP {\rm(}term over position{\rm)} order is defined by
\begin{center}
$\textbf{X}\succeq\textbf{Y}\Longleftrightarrow
\begin{cases} X\succeq Y & \\
\text{or} & \\
X=Y \text{and}& i>j.
\end{cases}$
\end{center}
\item[\rm(ii)]The TOPREV order is defined by
\begin{center}
$\textbf{X}\succeq\textbf{Y}\Longleftrightarrow
\begin{cases} X\succeq Y & \\
\text{or} & \\
X=Y \text{and}& i<j.
\end{cases}$
\end{center}
\end{enumerate}
\end{definition}
\begin{remark}
(i) Note that with TOP we have
\begin{center}
$\textbf{\emph{e}}_m\succ \textbf{\emph{e}}_{m-1}\succ \cdots \succ\textbf{\emph{e}}_1$
\end{center}
and
\begin{center}
$\textbf{\emph{e}}_1\succ \textbf{\emph{e}}_{2}\succ \cdots \succ \textbf{\emph{e}}_m$
\end{center}
for TOPREV.

(ii) The POT (position over term) and POTREV  orders defined in \cite{Loustaunau} and
\cite{Lezama2} for modules over classical polynomial commutative rings are not degree compatible.

(iii) Other examples of monomial orders in $Mon(A^m)$ are considered in \cite{Gomez-Torrecillas2},
e.g, orders with weight.
\end{remark}
We fix a monomial order on $Mon(A)$, let $\textbf{\emph{f}}\neq \textbf{0}$ be a vector of $A^m$,
then we may write $\textbf{\emph{f}}$ as a sum of terms in the following way
\begin{center}
$\textbf{\emph{f}}=c_1\textbf{\emph{X}}_1+\cdots +c_t\textbf{\emph{X}}_t$,
\end{center}
where $c_1,\dots ,c_t\in R-0$ and $\textbf{\emph{X}}_1\succ\textbf{\emph{X}}_2\succ\cdots
\succ\textbf{\emph{X}}_t$ are monomials of $Mon(A^m)$.
\begin{definition}
With the above notation, we say that
\begin{enumerate}
\item[\rm(i)]$lt(\textbf{f}):=c_1\textbf{X}_1$ is the leading term of $\textbf{f}$.
\item[\rm(ii)]$lc(\textbf{f}):=c_1$ is the leading coefficient of $\textbf{f}$.
\item[\rm(iii)]$lm(\textbf{f}):=\textbf{X}_1$ is the leading monomial of $\textbf{f}$.
\end{enumerate}
\end{definition}
For $\textbf{\emph{f}}=\textbf{0}$ we define
$lm(\textbf{0})=\textbf{0},lc(\textbf{0})=0,lt(\textbf{0})=\textbf{0}$, and if $\succeq$ is a
monomial order on $Mon(A^m)$, then we define $\textbf{X}\succ\textbf{0}$ for any $\textbf{X}\in
Mon(A^m)$. So, we extend $\succeq$ to $Mon(A^m)\bigcup\{\textbf{0}\}$.

\subsubsection{Division algorithm and Gröbner bases for submodules of $A^m$}

The reduction process, Theorem \ref{algdivforPBW} and the Division Algorithm for left ideals can be
easy adapted for submodules of $A^m$.

\begin{definition}
Let $M\neq 0$ be a submodule of $A^m$ and let $G$ be a non empty finite subset of non-zero vectors
of $M$, we say that $G$ is a Gröbner basis for $M$ if each element $0\neq \textbf{f}\in M$ is
reducible w.r.t. $G$.
\end{definition}
We will say that $\{\textbf{0}\}$ is a Gröbner basis for $M=0$.
\begin{theorem}\label{teogrobnersigmapbwformodules}
Let $M\neq 0$ be a submodule of $A^m$ and let $G$ be a finite subset of non-zero vectors of $M$.
Then the following conditions are equivalent:
\begin{enumerate}
\item[\rm(i)]$G$ is a Gröbner basis for $M$.
\item[\rm(ii)]For any vector $\textbf{f}\in A^m$,
\begin{center}
$\textbf{f}\in M$ if and only if $\textbf{f}\xrightarrow{\,\, G\,\, }_{+} \textbf{\emph{0}}$.
\end{center}
\item[\rm(iii)]For any $\textbf{\emph{0}}\neq \textbf{f}\in M$ there exist $\textbf{g}_1,\dots ,\textbf{g}_t\in
G$ such that $lm(\textbf{g}_j)|lm(\textbf{f})$, $1\leq j\leq t$, {\rm(}i.e.,
$ind(lm(\textbf{g}_{j}))=ind(lm(\textbf{f}))$ and there exist $\alpha_j\in \mathbb{N}^n$ such that
$\alpha_j+\exp(lm(\textbf{g}_j))=\exp(lm(\textbf{f}))${\rm)} and
\begin{center}
$lc(\textbf{f})\in \langle \sigma^{\alpha_1}(lc(\textbf{g}_1))c_{\alpha_1,\textbf{g}_1},\dots
,\sigma^{\alpha_t}(lc(\textbf{g}_t))c_{\alpha_t,\textbf{g}_t}\}$.
\end{center}
\item[\rm(iv)]For $\alpha \in \mathbb{N}^n$ and $1\leq u\leq m$, let $\langle \alpha ,M\}_u$ be the
left ideal of $R$ defined by
\begin{center}
$\langle \alpha ,M\}_u:=\langle lc(\textbf{f})|\textbf{f}\in M,ind(lm(\textbf{f}))=u,
\exp(lm(\textbf{f}))=\alpha\}$.
\end{center}
Then, $\langle \alpha ,M\}_u=J_u$, with
\begin{center}
$J_u:=\langle \sigma^{\beta}(lc(\textbf{g}))c_{\beta,\textbf{g}}|\textbf{g}\in G,
ind(lm(\textbf{g}))=u\, \ \text{and} \ \beta + \exp(lm(\textbf{g}))=\alpha\}$.
\end{center}
\end{enumerate}
\end{theorem}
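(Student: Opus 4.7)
The plan is to adapt the proof of Theorem \ref{teogrobnersigmapbw} (the ideal version) to the module setting. The core observation is that divisibility between monomials of $Mon(A^m)$ requires matching indices: $X\textbf{\emph{e}}_i \mid Y\textbf{\emph{e}}_j$ forces $i=j$. Consequently, each position $u \in \{1,\dots,m\}$ gives an independent ``slice'' of leading-term data, which is precisely what condition (iv) encodes with the indexed ideals $\langle \alpha, M\}_u$. Thus the proof proceeds cyclically (i) $\Rightarrow$ (ii) $\Rightarrow$ (iii) $\Rightarrow$ (iv) $\Rightarrow$ (i), mirroring the scalar case but keeping careful track of $ind(\cdot)$.

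First, for (i) $\Rightarrow$ (ii): if $\textbf{\emph{f}} \in M$ is nonzero, apply one-step reduction iteratively; each step strictly decreases $lm$ with respect to $\succeq$, and since $\succeq$ is a well-order on $Mon(A^m)$ by Proposition \ref{wellorder}, the process terminates at some reduced $\textbf{\emph{h}} \in M$. Because $G$ is a Gröbner basis, any nonzero element of $M$ is reducible, so $\textbf{\emph{h}} = \textbf{0}$. The converse direction is clear since each reduction step subtracts an element of the submodule generated by $G$, which lies in $M$. Next, (ii) $\Rightarrow$ (iii) is immediate: given $\textbf{0}\neq \textbf{\emph{f}}\in M$, the first step of any reduction chain $\textbf{\emph{f}} \xrightarrow{G}_+ \textbf{0}$ exhibits elements $\textbf{\emph{g}}_1,\dots,\textbf{\emph{g}}_t \in G$ whose leading monomials divide $lm(\textbf{\emph{f}})$ and produces the required expression of $lc(\textbf{\emph{f}})$ as an $R$-linear combination of $\sigma^{\alpha_i}(lc(\textbf{\emph{g}}_i))c_{\alpha_i,\textbf{\emph{g}}_i}$ (here the index $u = ind(lm(\textbf{\emph{f}}))$ is forced to equal $ind(lm(\textbf{\emph{g}}_i))$ by the divisibility constraint).

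For (iii) $\Rightarrow$ (iv), the inclusion $J_u \subseteq \langle \alpha, M\}_u$ follows from Theorem \ref{coefficientes}: for $\textbf{\emph{g}} \in G$ with $ind(lm(\textbf{\emph{g}})) = u$ and $\beta + \exp(lm(\textbf{\emph{g}})) = \alpha$, the vector $x^\beta \textbf{\emph{g}} \in M$ has leading term $\sigma^\beta(lc(\textbf{\emph{g}}))c_{\beta,\textbf{\emph{g}}} x^\alpha \textbf{\emph{e}}_u$ up to terms of lower degree; taking $R$-combinations and cancelling lower-order terms (using that $R \cdot \langle\alpha, M\}_u \subseteq \langle\alpha, M\}_u$, which holds because $\langle \alpha, M\}_u$ is clearly a left ideal of $R$) yields any element of $J_u$ as a leading coefficient of some vector in $M$ with index $u$ and exponent $\alpha$. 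Conversely, given $\textbf{\emph{f}} \in M$ with $ind(lm(\textbf{\emph{f}})) = u$ and $\exp(lm(\textbf{\emph{f}})) = \alpha$, condition (iii) writes $lc(\textbf{\emph{f}})$ precisely as an element of $J_u$.

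Finally, (iv) $\Rightarrow$ (i): let $\textbf{0} \neq \textbf{\emph{f}} \in M$, set $u := ind(lm(\textbf{\emph{f}}))$ and $\alpha := \exp(lm(\textbf{\emph{f}}))$. Then $lc(\textbf{\emph{f}}) \in \langle \alpha, M\}_u = J_u$, so there exist $\textbf{\emph{g}}_1,\dots,\textbf{\emph{g}}_t \in G$ with $ind(lm(\textbf{\emph{g}}_i)) = u$, $\beta_i + \exp(lm(\textbf{\emph{g}}_i)) = \alpha$, and scalars $r_i \in R$ with $lc(\textbf{\emph{f}}) = \sum_i r_i \sigma^{\beta_i}(lc(\textbf{\emph{g}}_i))c_{\beta_i,\textbf{\emph{g}}_i}$. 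This is exactly the data required by the module analogue of Definition \ref{reductionsigmapbw}, so $\textbf{\emph{f}}$ is reducible with respect to $G$, making $G$ a Gröbner basis. The main obstacle, such as it is, amounts to bookkeeping: verifying that the index matching carries through every step and that the constants $c_{\alpha,\textbf{\emph{g}}}$ behave as in the ideal case (which they do, since reduction of a vector $X \textbf{\emph{e}}_i$ only involves multiplication on the left by monomials of $A$, leaving the basis vector $\textbf{\emph{e}}_i$ untouched). No genuinely new combinatorics appear beyond what already lives in Theorem \ref{teogrobnersigmapbw}.
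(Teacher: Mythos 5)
Your proof is correct and follows the expected route: the paper itself defers this proof to \cite{Jimenez2}, and the argument there is precisely the cyclic chain (i)$\Rightarrow$(ii)$\Rightarrow$(iii)$\Rightarrow$(iv)$\Rightarrow$(i) adapted from the ideal case (Theorem \ref{teogrobnersigmapbw}), with the index bookkeeping you carry out. The only (harmless) imprecision is in (iii)$\Rightarrow$(iv): you do not need to realize every element of $J_u$ as a leading coefficient of a vector in $M$ --- it suffices that each generator $\sigma^{\beta}(lc(\textbf{g}))c_{\beta,\textbf{g}}$ is literally $lc(x^{\beta}\textbf{g})$ for $x^{\beta}\textbf{g}\in M$ of index $u$ and exponent $\alpha$, and then to invoke that $\langle \alpha ,M\}_u$ is a left ideal.
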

\begin{proof}
See \cite{Jimenez2}.
\end{proof}
From this theorem we get the following consequences.
\begin{corollary}\label{1117}
Let $M\neq 0$ be a submodule of $A^m$. Then,
\begin{enumerate}
\item[\rm(i)]If $G$ is a Gröbner basis for $M$, then $M=\langle G\rangle$.
\item[\rm(ii)]Let $G$ be a Gröbner basis for $M$, if $\textbf{f}\in M$ and
$\textbf{f}\xrightarrow{\,\, G\,\, }_{+} \textbf{h}$, with $\textbf{h}$ reduced, then
$\textbf{h}=\textbf{\emph{0}}$.
\item[\rm(iii)]Let $G=\{\textbf{g}_1,\dots,\textbf{g}_t\}$ be a set of non-zero vectors of $M$ with
$lc(\textbf{g}_i)\in R^{*}$ for each $1\leq i\leq t$. Then, $G$ is a Gröbner basis of $M$ if and
only if given $0\neq \textbf{r}\in M$ there exists $i$ such that $lm(\textbf{g}_i)$ divides
$lm(\textbf{r})$.
\end{enumerate}
\end{corollary}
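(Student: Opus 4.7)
The plan is to imitate the proof of Corollary \ref{153} essentially verbatim, replacing left ideals by submodules and polynomials by vectors, using Theorem \ref{teogrobnersigmapbwformodules} in place of Theorem \ref{teogrobnersigmapbw}. All three parts are short once the correct part of Theorem \ref{teogrobnersigmapbwformodules} is invoked.

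For part (i), I would use the equivalence (i)$\Leftrightarrow$(ii) of Theorem \ref{teogrobnersigmapbwformodules}. The inclusion $\langle G\rangle\subseteq M$ is immediate since $G\subseteq M$. Conversely, given $\textbf{f}\in M$, the Gröbner basis property yields $\textbf{f}\xrightarrow{G}_{+}\textbf{0}$; unfolding the reduction steps according to Definition \ref{reductionsigmapbw}, each step subtracts an $A$-linear combination of elements of $G$, so $\textbf{f}$ is itself an $A$-linear combination of elements of $G$, i.e., $\textbf{f}\in\langle G\rangle$.

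For part (ii), I would argue exactly as in the ideal case: since reduction changes a vector by an element of $\langle G\rangle$, we have $\textbf{f}-\textbf{h}\in\langle G\rangle=M$ (using part (i)); thus $\textbf{h}\in M$. If $\textbf{h}\neq \textbf{0}$, the definition of Gröbner basis forces $\textbf{h}$ to be reducible by $G$, contradicting that $\textbf{h}$ is reduced.

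For part (iii), the forward direction is immediate: if $G$ is a Gröbner basis and $\textbf{0}\neq\textbf{r}\in M$, reducibility of $\textbf{r}$ by $G$ guarantees some $\textbf{g}_i\in G$ with $lm(\textbf{g}_i)\mid lm(\textbf{r})$. For the converse, given $\textbf{0}\neq\textbf{r}\in M$, I need to show $\textbf{r}$ is reducible. By hypothesis there is $\textbf{g}_i\in G$ with $lm(\textbf{g}_i)\mid lm(\textbf{r})$, so $ind(lm(\textbf{g}_i))=ind(lm(\textbf{r}))$ and there exists $\alpha_i\in\mathbb{N}^n$ with $\alpha_i+\exp(lm(\textbf{g}_i))=\exp(lm(\textbf{r}))$; condition (i) of Definition \ref{reductionsigmapbw} holds. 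The main (and only) point to check is that condition (ii) of that definition, namely the equation
\[
lc(\textbf{r})=r_1\,\sigma^{\alpha_i}(lc(\textbf{g}_i))\,c_{\alpha_i,\textbf{g}_i},
\]
is solvable in $R$. Here $lc(\textbf{g}_i)\in R^{*}$ by hypothesis, and since $\sigma^{\alpha_i}$ is a ring endomorphism it sends units to units, so $\sigma^{\alpha_i}(lc(\textbf{g}_i))\in R^{*}$; also, by Theorem \ref{coefficientes} the constant $c_{\alpha_i,\textbf{g}_i}$ admits a left inverse $c'_{\alpha_i,\textbf{g}_i}$. Hence a solution is
\[
r_1=lc(\textbf{r})\,c'_{\alpha_i,\textbf{g}_i}\,(\sigma^{\alpha_i}(lc(\textbf{g}_i)))^{-1},
\]
showing $\textbf{r}$ is reducible by $G$ and completing the proof. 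No serious obstacle arises; the only subtlety is remembering that in the module setting divisibility $lm(\textbf{g}_i)\mid lm(\textbf{r})$ already encodes the equality of indices, so the argument carries over with no change of substance.
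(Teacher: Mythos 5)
Your proposal is correct and is essentially the paper's own argument: the paper proves this corollary by declaring it an easy adaptation of the proof of Corollary \ref{153}, which is exactly the adaptation you carry out, including the same solution $r_1=lc(\textbf{r})c'_{\alpha_i,\textbf{g}_i}(\sigma^{\alpha_i}(lc(\textbf{g}_i)))^{-1}$ in part (iii). Nothing further is needed.
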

\begin{proof}
The proof is an easy adaptation of the proof of Corollary \ref{153}.
\end{proof}
Note that the remainder of $\textbf{\emph{f}}\in A^m$ with respect to a Grobner basis is not
unique. Moreover, changing the term order, a Gröbner basis could not be again a Gröbner basis. In
fact, a counterexample was given in \cite{Lezama2} for the trivial case when $A=R[x_1,\dots,x_n]$
is the commutative polynomial ring.

\subsubsection{Buchberger's algorithm for modules}\label{5.5.4}

\noindent Recall that we are assuming that $A$ is a bijective skew $PBW$ extension, we will observe
that every submodule $M$ of $A^m$ has a Gröbner basis, and also we will construct the Buchberger's
algorithm for computing such bases. The results obtained here improve those of \cite{Jimenez2} and
\cite{Jimenez} and generalize the results obtained in Section \ref{5.4} for left ideals.

We start fixing some notation and proving a preliminary general result.
\begin{definition}\label{BFformodules}
Let $F := \{\textbf{g}_1,\dots,\textbf{g}_s\}\subseteq A^m$ such that the least common multiple of
$\{lm(\textbf{g}_1),\dots ,lm(\textbf{g}_s)\}$, denoted by $\textbf{X}_F$, is non-zero. Let
$\theta\in \mathbb{N}^n$, $\beta_i:=\exp(lm(\textbf{g}_i))$ and $\gamma_i\in \mathbb{N}^n$ such
that $\gamma_i+\beta_i=\exp(\textbf{X}_F)$, $1\leq i\leq s$. $B_{F,\theta}$ will denote a finite
set of generators of
\begin{center}
$S_{F,\theta}:=Syz_R[\sigma^{\gamma_1+\theta}(lc(\textbf{g}_1))c_{\gamma_1+\theta,\beta_1} \ \cdots
\ \sigma^{\gamma_s+\theta}(lc(\textbf{g}_s))c_{\gamma_s+\theta,\beta_s})]$.
\end{center}
For $\theta=\textbf{\emph{0}}:=(0,\dots,0)$, $S_{F,\theta}$ will be denoted by $S_F$ and
$B_{F,\theta}$ by $B_F$.
\end{definition}
\begin{lemma}\label{SumForModule}
Let $\textbf{g}_1,\ldots, \textbf{g}_s \in A^m$ , $c_1,\ldots, c_s \in R-\{0\}$ and
$\alpha_1,\ldots,\alpha_{s}\in \mathbb{N}^{n}$ be such that
$lm(x^{\alpha_{1}}lm(\textbf{g}_1))=\cdots
=lm(x^{\alpha_{s}}lm(\textbf{g}_s))=:\textbf{X}_{\delta}$. If
$lm(\sum_{i=1}^{s}c_{i}x^{\alpha_{i}}\textbf{g}_{i}) \prec \textbf{X}_{\delta}$, then there exist
$r_{1},\ldots,r_{k}\in R$ and $l_{1},\ldots,l_{s}\in A$ such that
\[\sum_{i=1}^{s}c_{i}x^{\alpha_{i}}\textbf{g}_{i}=
\sum_{j=1}^{k}r_{j}x^{\delta-\exp(\textbf{X}_{F})}\biggl(\sum_{i=1}^{s}b_{ji}x^{\gamma_{i}}\textbf{g}_{i}\biggr)+
\sum_{i=1}^{s}l_{i}\textbf{g}_{i},\] where $\textbf{X}_{F}$ is the least common multiple of
$lm(\textbf{g}_{1}),\ldots, lm(\textbf{g}_{s})$, $\gamma_{i}\in \mathbb{N}^{n}$ is such that
$\gamma_{i}+\exp(\textbf{g}_{i})=\exp(\textbf{X}_{F})$, $1\leq i\leq s$, and
\begin{center}
$B_{F}:=\{\textbf{b}_1,\dots,\textbf{b}_k\}:=\{(b_{11},\dots,b_{1s}),\dots,
(b_{k1},\dots,b_{ks})\}$.
\end{center}
Moreover, $lm(x^{\delta-\exp(\textbf{X}_{F})}\sum_{i=1}^{s}b_{ji}x^{\gamma_{i}}\textbf{g}_{i})\prec
\textbf{X}_{\delta}$ for every $1\leq j\leq k$, and \linebreak $lm(l_{i}\textbf{g}_{i}) \prec
\textbf{X}_{\delta}$ for every $1\leq i\leq s$.
\begin{proof}
It is easy to adapt the proof of Lemma \ref{Sumofsicigies}.

\end{proof}
\end{lemma}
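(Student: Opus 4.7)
The plan is to adapt the proof of Lemma~\ref{Sumofsicigies} to the module setting with only cosmetic changes, exploiting the fact that left multiplication of vectors in $A^m$ by elements of $A$ is componentwise and so respects the identities of Remark~\ref{identities}.

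First I would observe that the hypothesis $\textbf{X}_F \neq \textbf{0}$ forces all $lm(\textbf{g}_i)$ to share a common index, say $u$; writing $lm(\textbf{g}_i) = x^{\beta_i}\textbf{e}_u$, we have $\textbf{X}_F = x^{\exp(\textbf{X}_F)}\textbf{e}_u$ and $\textbf{X}_\delta = x^\delta\textbf{e}_u$ with $\alpha_i + \beta_i = \delta$ for every $i$. Setting $\theta := \delta - \exp(\textbf{X}_F)$ yields $\alpha_i = \theta + \gamma_i$, exactly as in the scalar case. Since $lm(\sum_i c_i x^{\alpha_i}\textbf{g}_i) \prec \textbf{X}_\delta$, the coefficient of $x^\delta\textbf{e}_u$ in this sum vanishes, giving $\sum_i c_i\, \sigma^{\theta+\gamma_i}(lc(\textbf{g}_i))\, c_{\theta+\gamma_i,\beta_i} = 0$; in other words, $(c_1,\ldots,c_s) \in S_{F,\theta}$.

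Next, applying the module analogue of Remark~\ref{RemarkSyzygy} (whose proof is a verbatim copy since the identities involved live in $R$ and $A$, not in $A^m$), I obtain a unique $(c_1',\ldots,c_s') \in S_F$ with $c_i = \sigma^\theta(c_i')c_{\theta,\gamma_i}$, and I write $(c_1',\ldots,c_s') = \sum_{j=1}^k r_j'\,\textbf{b}_j$. Then I would mimic verbatim the two rewriting steps of the scalar proof: first use the identity $x^\theta c_i' x^{\gamma_i} = \sigma^\theta(c_i')c_{\theta,\gamma_i}x^{\theta+\gamma_i} + p_i'$ (with $p_i' = 0$ or $lm(p_i') \prec x^{\theta+\gamma_i}$) to replace each $c_i x^{\alpha_i}\textbf{g}_i$ by $x^\theta c_i' x^{\gamma_i}\textbf{g}_i$ modulo lower-order terms; then expand $c_i' = \sum_j r_j' b_{ji}$ and pull $\sigma^\theta(r_j')$ outside by the analogous identity, again incurring only lower-order corrections. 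Setting $r_j := \sigma^\theta(r_j')$ and collecting all error terms into $l_i \in A$ produces the required decomposition.

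The only subtlety, and the main (modest) obstacle, is translating each scalar bound $lm(\cdot) \prec x^\rho$ of the original proof into the corresponding bound $lm(\cdot) \prec x^\rho\textbf{e}_u$ in $Mon(A^m)$. This is automatic from condition (ii) of the definition of monomial order on $Mon(A^m)$: if $p \in A$ with $p = 0$ or $lm(p) \prec x^\mu$, then any term $c'x^\nu\textbf{e}_j$ of $\textbf{g}_i$ satisfies $x^\nu\textbf{e}_j \preceq x^{\beta_i}\textbf{e}_u$, hence $lm(px^\nu)\textbf{e}_j \preceq lm(x^\mu x^{\beta_i})\textbf{e}_u \prec x^{\mu+\beta_i}\textbf{e}_u$, giving $lm(p\textbf{g}_i) \prec x^{\mu+\beta_i}\textbf{e}_u$. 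With this one-line translation in hand, the two estimates $lm(x^{\delta-\exp(\textbf{X}_F)}\sum_i b_{ji}x^{\gamma_i}\textbf{g}_i) \prec \textbf{X}_\delta$ and $lm(l_i\textbf{g}_i) \prec \textbf{X}_\delta$ are immediate transcriptions of their scalar analogues, completing the proof.
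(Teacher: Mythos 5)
Your proposal is correct and follows exactly the route the paper intends: the paper's entire proof is the remark that one adapts Lemma \ref{Sumofsicigies}, and you carry out precisely that adaptation (common index $u$ forced by the equality of the $lm(x^{\alpha_i}lm(\textbf{g}_i))$, reduction to $S_{F,\theta}$ and Remark \ref{RemarkSyzygy}, the two rewriting steps, and the translation of the degree bounds to $Mon(A^m)$).
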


\begin{theorem}
Let $M\neq 0$ be a submodule of $A^m$ and let $G$ be a finite subset of non-zero generators of $M$.
Then the following conditions are equivalent:
\begin{enumerate}
\item[\rm (i)]$G$ is a Gröbner basis of $M$.
\item[\rm (ii)]For all $F:= \{\textbf{g}_1,\dots,\textbf{g}_s\}\subseteq G$, with $\textbf{X}_F\neq \textbf{\emph{0}}$, and for any
$(b_1,\dots,b_s)\in B_{F}$,
\begin{center}
$\sum_{i=1}^sb_ix^{\gamma_i}\textbf{g}_i\xrightarrow{\,\, G\,\, }_+ 0$.
\end{center}
\end{enumerate}
\end{theorem}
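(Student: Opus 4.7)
The plan is to adapt the proof of Theorem \ref{1.5.19} to the module setting, using Lemma \ref{SumForModule} in place of Lemma \ref{Sumofsicigies} and the module version of the Gr\"obner basis characterization (Theorem \ref{teogrobnersigmapbwformodules}) in place of Theorem \ref{teogrobnersigmapbw}. Both directions of the equivalence closely mirror the left-ideal case; the only real subtlety is keeping track of the index $ind(lm(\textbf{g}_i))$ when combining vectors, but this is automatic once one insists that $\textbf{X}_F\neq \textbf{\emph{0}}$, since then all the $lm(\textbf{g}_i)$ share the same index.

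For the implication (i)$\Rightarrow$(ii), the argument is immediate: each vector $\sum_{i=1}^s b_i x^{\gamma_i}\textbf{g}_i$ belongs to $M=\langle G\rangle$ (by Corollary \ref{1117}(i)), so by part (ii) of Theorem \ref{teogrobnersigmapbwformodules} it reduces to $\textbf{\emph{0}}$ via $G$.

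For (ii)$\Rightarrow$(i), I would verify condition (iii) of Theorem \ref{teogrobnersigmapbwformodules}. Given $\textbf{\emph{0}}\neq \textbf{f}\in M$ with $G=\{\textbf{g}_1,\dots,\textbf{g}_t\}$, I would write $\textbf{f}=\sum_{i=1}^t h_i\textbf{g}_i$ with $h_i\in A$ chosen so that $\textbf{X}_\delta:=\max\{lm(lm(h_i)lm(\textbf{g}_i))\}$ is minimal under the well order on $Mon(A^m)$ (Proposition \ref{wellorder}). Setting $lm(h_i)=x^{\alpha_i}$, $c_i=lc(h_i)$, and letting $F=\{\textbf{g}_i\in G\mid lm(lm(h_i)lm(\textbf{g}_i))=\textbf{X}_\delta\}$, renumbered as $F=\{\textbf{g}_1,\dots,\textbf{g}_s\}$, I split into two cases. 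If $lm(\textbf{f})=\textbf{X}_\delta$ then $lm(\textbf{g}_i)\mid lm(\textbf{f})$ for $1\leq i\leq s$ and $lc(\textbf{f})=\sum_{i=1}^s c_i\sigma^{\alpha_i}(lc(\textbf{g}_i))c_{\alpha_i,\beta_i}$, which is precisely condition (iii). Note that since $\textbf{X}_\delta\neq \textbf{\emph{0}}$, all the $lm(\textbf{g}_i)$ in $F$ share the same index, so $\textbf{X}_F\neq \textbf{\emph{0}}$ and Lemma \ref{SumForModule} applies in the remaining case.

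The core of the argument is the case $lm(\textbf{f})\prec \textbf{X}_\delta$, which I would show leads to a contradiction with the minimality of $\textbf{X}_\delta$. Decomposing
\begin{equation*}
\textbf{f}=\sum_{i=1}^s c_i x^{\alpha_i}\textbf{g}_i+\sum_{i=1}^s(h_i-c_ix^{\alpha_i})\textbf{g}_i+\sum_{i=s+1}^t h_i\textbf{g}_i,
\end{equation*}
the last two sums have leading monomials $\prec \textbf{X}_\delta$ by construction, so $lm(\sum_{i=1}^s c_i x^{\alpha_i}\textbf{g}_i)\prec \textbf{X}_\delta$ as well. Applying Lemma \ref{SumForModule} expresses this sum as $\sum_{j=1}^k r_j x^{\delta-\exp(\textbf{X}_F)}(\sum_{i=1}^s b_{ji}x^{\gamma_i}\textbf{g}_i)+\sum_{i=1}^s l_i\textbf{g}_i$ with each summand of leading monomial $\prec \textbf{X}_\delta$ and $(b_{j1},\dots,b_{js})\in B_F$. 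By hypothesis each $\sum_{i=1}^s b_{ji}x^{\gamma_i}\textbf{g}_i$ reduces to $\textbf{\emph{0}}$ via $G$, and the module analogue of Theorem \ref{algdivforPBW} expresses it as $\sum_{i=1}^t q_i^{(j)}\textbf{g}_i$ with $lm(lm(q_i^{(j)})lm(\textbf{g}_i))\preceq lm(\sum_{i=1}^s b_{ji}x^{\gamma_i}\textbf{g}_i)\prec \textbf{X}_F$. Substituting back and multiplying by $r_j x^{\delta-\exp(\textbf{X}_F)}$ preserves leading monomial $\prec \textbf{X}_\delta$, so we obtain an expression $\textbf{f}=\sum_{i=1}^t \widetilde{h}_i\textbf{g}_i$ in which every $lm(lm(\widetilde{h}_i)lm(\textbf{g}_i))\prec \textbf{X}_\delta$, contradicting the minimality and finishing the proof. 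The main obstacle is simply the bookkeeping of leading-monomial comparisons in $Mon(A^m)$, but the degree-compatibility axiom in the module monomial order makes all the needed inequalities pass through exactly as in the ideal case.
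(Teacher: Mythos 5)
Your proposal is correct and follows exactly the route the paper intends: the paper's proof of this theorem is simply a reference to the proof of Theorem \ref{1.5.19}, adapted to modules via Lemma \ref{SumForModule} and Theorem \ref{teogrobnersigmapbwformodules}, which is precisely what you carry out. Your added remark that $\textbf{X}_F\neq \textbf{0}$ forces all the $lm(\textbf{g}_i)$ in $F$ to share the same index is the one genuine point of care in the adaptation, and you handle it correctly.
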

\begin{proof}
See the proof of Theorem \ref{1.5.19}.
\end{proof}
\begin{corollary}\label{algorithmforbijectivemodules}
Let $F=\{\textbf{f}_1,\dots ,\textbf{f}_s\}$ be a set of non-zero vectors of $A^m$. The algorithm
below produces a Gröbner basis for the submodule $\langle \textbf{f}_1,\dots ,\textbf{f}_s\rangle$
{\rm(}$P(X)$ denotes the set of subsets of the set $X${\rm)}:
\begin{center}
\fbox{\parbox[c]{11cm}{
\begin{center}
{\rm \textbf{Buchberger's algorithm for modules\\
over bijective skew $PBW$ extensions}}
\end{center}
\begin{description}
\item[]{\rm \textbf{INPUT}:} $F := \{\textbf{f}_1,\dots,\textbf{f}_s\}\subseteq A^m$,
$\textbf{f}_i\neq \textbf{\emph{0}}$, $1\leq i\leq s$
\item[]{\rm \textbf{OUTPUT}:} $G=\{\textbf{g}_1,\dots ,\textbf{g}_t\}$ a Gröbner basis for $\langle F\rangle$
\item[]{\rm \textbf{INITIALIZATION}:} $G:=\emptyset, G':=F$
\item[]{\rm \textbf{WHILE}} $G'\neq G$ {\rm \textbf{DO}}
\begin{quote}$D:=P(G')-P(G)$

\smallskip

$G:=G'$

\smallskip

{\rm \textbf{FOR}} each $S:=\{\textbf{g}_{i_1},\dots ,\textbf{g}_{i_k}\}\in D$, with
$\textbf{X}_S\neq \textbf{\emph{0}}$, {\rm \textbf{DO}}

\smallskip

\begin{quote}
Compute $B_S$

\smallskip
{\rm \textbf{FOR}} each $\textbf{b}=(b_1,\dots ,b_k)\in B_S$ {\rm \textbf{DO}}
\begin{quote}Reduce $\sum_{j=1}^kb_jx^{\gamma_j}\textbf{g}_{i_j}\xrightarrow{\,\, G'\,\, }_+ \textbf{r}$,
with $\textbf{r}$ reduced with respect to $G'$ and $\gamma_j$ defined as in Definition
\ref{BFformodules}
\begin{quote}{\rm \textbf{IF}} $\textbf{r}\neq \textbf{\emph{0}}$ {\rm \textbf{THEN}}
\begin{quote}$G':=G'\cup \{\textbf{r}\}$
\end{quote}
\end{quote}
\end{quote}
\end{quote}
\end{quote}
\end{description}}}
\end{center}
\end{corollary}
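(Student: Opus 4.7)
The plan is to verify three properties of the procedure: a loop invariant $\langle G'\rangle = \langle F\rangle$, correctness of the output when the WHILE loop exits, and termination of the loop.

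First, the invariant. At initialization $G' = F$, and each candidate $\textbf{r}$ produced by the inner FOR loop is the remainder after reducing $\sum_{j=1}^{k} b_j x^{\gamma_j}\textbf{g}_{i_j}$ by the current $G'$; since this combination lies in $\langle G'\rangle$ and every one-step reduction preserves membership in $\langle G'\rangle$, we have $\textbf{r} \in \langle G'\rangle$. Hence throughout the execution $\langle G'\rangle = \langle F\rangle$.

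Second, correctness. When the loop exits we have $G' = G$; denote this common value by $G$. An induction on iterations shows that every subset $S = \{\textbf{g}_{i_1},\dots,\textbf{g}_{i_k}\} \subseteq G$ with $\textbf{X}_S \neq \textbf{0}$ is examined at the first iteration at which $S \in D = P(G') - P(G)$. There, for every $\textbf{b} = (b_1,\dots,b_k) \in B_S$, the combination $\sum_j b_j x^{\gamma_j}\textbf{g}_{i_j}$ is reduced by the then-current $G'$ to some remainder $\textbf{r}$; either $\textbf{r} = \textbf{0}$ already, or $\textbf{r}$ is appended to $G'$ and hence lies in the final $G$. In either case $\sum_j b_j x^{\gamma_j}\textbf{g}_{i_j} \xrightarrow{\,G\,}_{+} \textbf{0}$, because the reducing set used is a subset of $G$ and, when necessary, $\textbf{r}$ itself reduces to $\textbf{0}$ by $\{\textbf{r}\}\subseteq G$. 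The theorem immediately preceding the corollary then identifies $G$ as a Gröbner basis of $\langle G\rangle = \langle F\rangle$.

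Third, termination, which is the main obstacle. Suppose the loop never exits; then we obtain an infinite strictly increasing chain $G'_0 \subsetneq G'_1 \subsetneq \cdots$ with $G'_k = G'_{k-1} \cup \{\textbf{r}_k\}$ and each $\textbf{r}_k$ reduced with respect to $G'_{k-1}$. Writing $u_k := \mathrm{ind}(lm(\textbf{r}_k))$ and $\alpha_k := \exp(lm(\textbf{r}_k))$, reducedness (Definition \ref{reductionsigmapbw}) is precisely the statement that $lc(\textbf{r}_k)$ lies outside the left ideal
\[
J_k := \langle \sigma^{\beta}(lc(\textbf{g}))\, c_{\beta,\textbf{g}} \mid \textbf{g} \in G'_{k-1},\ \mathrm{ind}(lm(\textbf{g})) = u_k,\ \beta + \exp(lm(\textbf{g})) = \alpha_k \}
\]
of $R$. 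To convert this into a genuine ascending chain of submodules I would pass to the associated graded ring $Gr(A)$, which by Theorems \ref{1.3.2} and \ref{1.3.3} is a quasi-commutative bijective skew $PBW$ extension and is left Noetherian by Theorem \ref{1.3.4}. In $Gr(A)$ multiplication sends monomials to scalar multiples of monomials without lower-order correction, so comparing the top-degree components in any identity $lt(\textbf{r}_k) = \sum_j h_j\, lt(\textbf{g}_j)$ inside the left $Gr(A)$-submodule of $Gr(A)^m$ generated by $\{lt(\textbf{g}) : \textbf{g} \in G'_{k-1}\}$ forces $lc(\textbf{r}_k) \in J_k$. Hence that submodule enlarges strictly at each step $k$, contradicting Noetherianity of $Gr(A)^m$. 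Termination, together with the first two steps and Corollary \ref{1117}(i), delivers the corollary and in particular shows that every submodule of $A^m$ admits a Gröbner basis.
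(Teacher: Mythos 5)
Your proof is correct and is the argument the paper intends but does not write out (the corollary is stated there without proof): partial correctness is exactly the implication (ii)$\Rightarrow$(i) of the theorem immediately preceding the corollary, applied to the final $G$ together with your invariant $\langle G'\rangle=\langle F\rangle$, so termination is the only substantive point. Your decision to run the ascending-chain argument in $Gr(A)^m$ rather than in $A^m$ is the right one and is genuinely necessary --- the chain $\langle G'_k\rangle\subseteq A^m$ is constant by your own invariant, and since elements of $R$ need not commute with the variables the leading-term module must live in the quasi-commutative ring $Gr(A)$ (equivalently $A^{\sigma}$ of Proposition \ref{associatedpbw}), where products of monomials acquire no lower-order terms and the left $R$-freeness of $Gr(A)^m$ on the monomials lets you read off $lc(\textbf{r}_k)\in J_k$, contradicting reducedness of $\textbf{r}_k$ with respect to $G'_{k-1}$.
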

From Theorem \ref{1.3.4} and the previous corollary we get the following direct conclusion.
\begin{corollary}
Every submodule of $A^m$ has a Gröbner basis.
\end{corollary}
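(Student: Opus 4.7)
The plan is a short synthesis of the previous corollary with the Hilbert Basis Theorem already proved in Theorem \ref{1.3.4}. I would proceed in essentially two steps.

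First, I would show that every submodule of $A^m$ is finitely generated. Since $R$ is required to be left $LGS$ (see Definition \ref{LGSring}), in particular $R$ is left Noetherian. Because $A = \sigma(R)\langle x_1,\dots,x_n\rangle$ is bijective, Theorem \ref{1.3.4} yields that $A$ itself is left Noetherian, so the free module $A^m$ is a Noetherian left $A$-module. Consequently every submodule $M \subseteq A^m$ has a finite set of generators.

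Second, given such $M$, I would separate the two possible cases. If $M = 0$, the stated convention gives $\{\mathbf{0}\}$ as a Gr�bner basis. Otherwise, choose a finite set $F = \{\mathbf{f}_1, \dots, \mathbf{f}_s\}$ of non-zero generators of $M$ and feed it to the algorithm of Corollary \ref{algorithmforbijectivemodules}. By the content of that corollary, the output $G$ is a Gr�bner basis of $\langle F\rangle = M$, which is what we want.

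The only subtle point I foresee is the termination of the Buchberger-type algorithm, which is already implicitly claimed by Corollary \ref{algorithmforbijectivemodules} but deserves a brief justification. The argument I would supply is that whenever a non-zero reduced vector $\mathbf{r}$ is appended to $G'$, either $lm(\mathbf{r})$ is divisible by no leading monomial already in $G'$, or the equation on leading coefficients from Definition \ref{reductionsigmapbw}(ii) is insoluble at the position of $\mathbf{r}$. In either case, the left $R$-ideals $\langle \alpha, \langle G'\rangle\}_u$ introduced in Theorem \ref{teogrobnersigmapbwformodules}(iv) enlarge strictly at some $(\alpha, u)$ after the update. Combining Noetherianity of $R$ with the Dickson-type finiteness on $\mathbb{N}^n$ (or, equivalently, the Noetherianity of $A^m$ established in the first step) forces the process to stabilize in finitely many steps, so the algorithm terminates and the desired Gr�bner basis exists.
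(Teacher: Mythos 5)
Your proposal is correct and follows exactly the paper's route: Theorem \ref{1.3.4} gives left Noetherianity of $A$ (hence finite generation of every submodule of $A^m$), and Corollary \ref{algorithmforbijectivemodules} then produces a Gr\"obner basis from any finite generating set. The extra remarks on termination are a reasonable supplement but are already subsumed in the claim of that corollary, so no further argument is needed here.
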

\begin{example}
We will illustrate the above algorithm with the bijective skew $PBW$ extension $\mathcal{R}$ of
Example \ref{1.1.21}. For computational reasons, we rewrite the generators and relations for this
algebra in the following way:
\[x:=b,\ \ \ \ y:=a,\ \ \ \ z:=c,\ \ \ \ w:=d,\ \ \ \ \]
and
\begin{align*}
yx&=q^{-1}xy,\ \ \ \ wx=qxw,\ \ \ \ zy=qyz,\ \ \ \ wz=qzw\\
zx&=\mu^{-1}xz,\ \ \ \ wy=yw+(q-q^{-1})xz,
\end{align*}
and, therefore, $\mathcal{R}\cong \sigma(k[x])\langle y,z,w\rangle$. On $Mon(\mathcal{R})$ we
consider the order deglex with $y\succ z\succ w$ and in $Mon(A^2)$ the TOPREV order,  whence
$\textbf{\emph{e}}_1> \textbf{\emph{e}}_2$. Moreover, we will take $K=\mathbb{Q}$,
$\mu=\frac{1}{2}$ and $q=\frac{2}{3}$. From above relations, we obtain that
$\sigma_1(x)=\frac{3}{2}x$, $\sigma_2(x)=2x$ and $\sigma_{3}(x)=\frac{2}{3}x$. Let
$\textbf{\emph{f}}_1=xyw\textbf{\emph{e}}_1+w\textbf{\emph{e}}_2$ and
$\textbf{\emph{f}}_2=zw\textbf{\emph{e}}_1+xy\textbf{\emph{e}}_2$. We will construct a Gr\"obner
basis for $M:=\langle \textbf{\emph{f}}_1,\textbf{\emph{f}}_2\rangle$.

\textit{Step 1.} We start with $G:=\varnothing$, $G':=\{\textbf{\emph{f}}_1,\textbf{\emph{f}}_2\}$. Since $G'\neq G$, we make $D:=P(G')-P(G)$, i.e., $D:=\{S_1,S_2,S_{1,2}\}$, where $S_1:=\{\textbf{\emph{f}}_1\}$, $S_2:=\{\textbf{\emph{f}}_2\}$, $S_{1,2}:=\{\textbf{\emph{f}}_1,\textbf{\emph{f}}_2\}$. We also make $G:=G'$, and for every $S\in D$ such that $\textbf{\emph{X}}_{S}\neq \textbf{\emph{0}}$ we compute $B_S$:\\
$\centerdot$ For $S_1$ we have $Syz_{\mathbb{Q}[x]}[\sigma^{\gamma_1}(lc(\textbf{\emph{f}}_1))c_{\gamma_1,\beta_1}]$, where $\beta_1=\exp(lm(\textbf{\emph{f}}_1))=(1,0,1)$, $\gamma_1=(0,0,0)$ and $c_{\gamma_1,\beta_1}=1$; thus $B_{S_1}=\{0\}$ and we do not add any vector to $G'$.\\
$\centerdot$ For $S_2$ we have an identical situation.\\
$\centerdot$ For $S_{1,2}$ we have $X_{1,2}=lcm\{lm(f_1),lm(f_2)\}=yzw\textbf{\emph{e}}_{1}$, thus
$\gamma_1=(0,1,0)$ and $\gamma_{2}=(1,0,0)$. Since $zyw=\frac{2}{3}yzw$, then
$c_{\gamma_1,\beta_1}=\frac{2}{3}$ and $\sigma^{\gamma_1}(lc(f_1))=\sigma_2(x)=2x$. Analogously,
$c_{\gamma_2,\beta_2}=1$ and $\sigma^{\gamma_2}(lc(f_2))=\sigma_{1}(x^2)=\frac{9}{4}x^2$.  Hence,
we must computing a system of generators for  $Syz_{\mathbb{Q}[x]}[\frac{4}{3}x,\frac{9}{4}x^2]$.
Such generator set can be $B_{S_{1,2}}=\{(\frac{3}{4}x,-\frac{4}{9})\}$. From this we get
\begin{align*}
\frac{3}{4}xz\textbf{\emph{f}}_1-\frac{4}{9}y\textbf{\emph{f}}_{2}=
&\frac{3}{4}xz(xyw\textbf{\emph{e}}_1+w\textbf{\emph{e}}_2)-
\frac{4}{9}y(x^2zw\textbf{\emph{e}}_1+xy\textbf{\emph{e}}_2)\\
=&x^2zyw{\emph{e}}_1+\frac{3}{4}xzw\textbf{\emph{e}}_2-x^2yzw\textbf{\emph{e}}_1-\frac{2}{3}xy^2\textbf{\emph{e}}_2\\
=& -\frac{2}{3}xy^2\textbf{\emph{e}}_2+\frac{3}{4}xzw\textbf{\emph{e}}_2:=\textbf{\emph{f}}_3,
\end{align*}
Observe that $\boldsymbol{f}_3$ is reduced with respect to $G'$. We make $G':=\{\textbf{\emph{f}}_1,\textbf{\emph{f}}_2,\textbf{\emph{f}}_3\}$.\\

\textit{Step 2}: since $G =\{\boldsymbol{f}_1, \boldsymbol{f}_2\} \neq G' = \{\boldsymbol{f}_1,
\boldsymbol{f}_2, \boldsymbol{f}_3\}$, we make $D:= {\mathcal{P}}(G') - {\mathcal{P}}(G)$, i.e.,
$D:= \{S_3, S_{1, 3}, S_{2, 3}, S_{1, 2, 3}\}$, where $S_1:= \{\boldsymbol{f}_1\}, S_{1, 3}:=
\{\boldsymbol{f}_1, \boldsymbol{f}_3\},$ \linebreak $ S_{2, 3}:= \{\boldsymbol{f}_2,
\boldsymbol{f}_3\}, S_{1, 2, 3}:= \{\boldsymbol{f}_1, \boldsymbol{f}_2, \boldsymbol{f}_3\}$. We
make $G := G'$, and for every $S \in D$ such that $\boldsymbol{X}_S \ne \textbf{0}$ we must compute
$B_{S}$. Since $\boldsymbol{X}_{S_{1,3}} = \boldsymbol{X}_{S_{2,3}}= \boldsymbol{X}_{S_{1,2,3}}=
\textbf{0}$, we only need to consider $S_3$.

$\centerdot$ We have to compute
\[Syz_{\mathbb{Q}[x]}[\sigma^{\gamma_3}(lc(\boldsymbol{f}_3))c_{\gamma_3, \beta_3}],\]
where $\beta_3 =$ $\exp(lm(\boldsymbol{f}_3)) = (2,0,0)$; $\boldsymbol{X}_{S_{3}}= lcm
\{lm(\boldsymbol{f}_3)\} = lm(\boldsymbol{f}_3) = y^2\boldsymbol{e}_2$;
$\exp(\boldsymbol{X}_{S_{3}}) = (0,2,0)$; $\gamma_3 =$ $\exp(\boldsymbol{X}_{S_{3}}) - \beta_3$ =
(0,0, 0); $x^{\gamma_3}x^{\beta_3} = y^2$, so $c_{\gamma_3, \beta_3} = 1$. Hence
\begin{align*}
\sigma^{\gamma_3}(lc(\boldsymbol{f}_3))c_{\gamma_3, \beta_3} &= \sigma^{\gamma_3}(-\frac{2}{3}x) 1
= \sigma_2^0 \sigma_3^0(-\frac{2}{3}x) = -\frac{2}{3}x,
\end{align*}
and $Syz_{\mathbb{Q}[x]}[-\frac{2}{3}x] = \{0\}$, i.e., $B_{S_{3}} = \{0\}$. This means that we not
add any vector to $G'$ and hence $G = \{\boldsymbol{f}_1, \boldsymbol{f}_2, \boldsymbol{f}_3\}$ is
a Gröbner basis for $M$.

\end{example}

\begin{remark}
There are some classical and elementary applications of Gröbner theory that we will study in a
forthcoming paper, for example, we can solve the membership problem, we can compute the syzygy
module, the intersection and quotient of ideals and submodules, the matrix presentation of a
finitely presented module, the kernel and the image of homomorphism between modules, the one side
inverse of a matrix, etc. With this, we can make constructive the theory of projective modules,
stably free modules and Hermite rings studied in this work.
\end{remark}


\end{document}